\title{Mukai duality on adiabatic coassociative fibrations }
\author{Yang Li 	\thanks{Y.L. is supported by the Engineering and Physical Sciences Research Council [EP/L015234/1], the EPSRC Centre for Doctoral Training in Geometry and Number Theory (The London School of Geometry and Number Theory), University College London. The author is also funded by Imperial College London for his PhD studies. }}
\date{\today}
\newtheorem{thm}{Theorem}[section]
\newtheorem{lem}[thm]{Lemma}
\theoremstyle{definition}
\newtheorem{eg}[thm]{Example}
\newtheorem{cor}[thm]{Corollary}
\newtheorem*{rmk}{Remark}
\newtheorem{prop}[thm]{Proposition}
\newtheorem{Def}[thm]{Definition}
\newtheorem*{Question}{Question}
\newtheorem*{Acknowledgement}{Acknowledgment}
\newcommand{\ie}{\emph{i.e.} }
\newcommand{\cf}{\emph{cf.} }
\newcommand{\R}{\mathbb{R}}
\newcommand{\Lap}{\Delta}
\DeclareMathOperator{\Hom}{Hom}
\DeclareMathOperator{\End}{End}
\DeclareMathOperator{\Lie}{Lie}
\DeclareMathOperator{\Tr}{Tr}
\begin{document}
	\maketitle
	
\begin{abstract}
This paper studies the formal adiabatic limit of coassociative K3 fibred torsion free $G_2$ manifolds fibred over a contractible base, shows how to put this structure on a different fibration obtained by fibrewise performing Mukai duality of K3 surfaces, and furthermore relates the gauge theories on both fibrations by a Nahm transform. This gives a mathematical interpretation to the physical speculations of Gukov, Yau and Zaslow.
\end{abstract}

\section{Introduction}

This paper studies three circles of interrelated questions about torsion-free $G_2$ manifolds with a coassociative K3 fibration $\pi: M\to B$ over a contractible local base $B$. These are \textbf{formal adiabatic structures}, \textbf{duality} and \textbf{gauge theory}.

In \cite{Donaldson}, Donaldson proposed the study of  formal   adiabatic  structures for such K3 fibrations, meaning that there is a parameter $\epsilon$ controlling the size of the fibre in this problem, and one takes the formal limit as $\epsilon\to 0$ of the structural equations. \textbf{Donaldson's adiabatic fibration}, which shall be surveyed more fully in the text, involves 3-forms $\underline{\omega}$, $\underline{\lambda}$ and 4-forms $\underline{\Theta}$ and $\underline{\mu}$ on $M$ described pointwise by a linear algebraic model, and an Ehresmann connection $H$ on $\pi: M\to B$, satisfying the adiabatic equations
\begin{equation}
\begin{cases}
d_f \underline{\omega}=0, \quad d_H \underline{\omega}=0, \quad d_f \underline{\lambda}=0, \\
d_H\underline{\mu}=0, \quad d_f \underline{\Theta}=0, \quad d_H\underline{\Theta}=0,
\end{cases}
\end{equation}
In particular, the K3 fibres are endowed with \textbf{hyperK\"ahler} structures.
Furthermore, these data are encoded into a positive section satisfying the \textbf{maximal submanifold equation} (\cf reviews in Section \ref{Donaldsonadiabaticfibration}).

This propels one to ask further

\begin{Question}
Can we give an adiabatic description of various geometric objects over $M$, such as $G_2$ instantons, $G_2$ monopoles, associative sections, the Levi-Civita connection, and the spin structure?
\end{Question}

As a preliminary discussion, 
we offer a unified formal treatment via a basic linear algebraic model, and write down the limiting equations of these objects (\cf Chapter \ref{Adiabaticlimitingstructures}). Solutions of the limiting equations are refered to as `adiabatic' objects. We show, among many other results, that on Donaldson's adiabatic fibrations, there is an adiabatic analogue of the well known characterisation of $G_2$ manifolds in terms of the existence of parallel spinors (\cf Chapter \ref{Adiabaticspinstructures}); the adiabatic associative sections are governed by the Fueter equation; 
the adiabatic $G_2$ instantons are essentially equivalent to adiabatic $G_2$ monopoles.

\begin{rmk}
This is formal in the sense that no analytic result is yet proven concerning when an adiabatic solution can be perturbed into a genuine finite $\epsilon$ solution, or whether a sequence of finite $\epsilon$ solutions would necessarily converge in any analytic sense to some adiabatic solution.
\end{rmk}

\textbf{Duality} has been studied extensively in the past decades in the context of mirror symmetry, notably in relation to the SYZ conjecture for Calabi-Yau 3-folds \cite{Joyce}\cite{Chan}\cite{StromingerYauZaslow}, and Fourier-Mukai transforms in algebraic geometry \cite{BraamBaal}\cite{Huy2}\cite{Mukai}. In the $G_2$ setting, Gukov, Yau and Zaslow speculate in \cite{GukovYauZaslow} that $G_2$ manifolds with calibrated fibration structures may arise in dual pairs, from motivations in physics. The rough idea is that the mirror space should also have some calibrated fibration structure, where the base of the mirror is the same as the original base, and the fibres of the mirror are moduli spaces parametrising `branes' on the original fibres. A more mathematical perspective is discussed by  Leung and Lee \cite{Conan}, who attempt to find canonical special geometric structures on moduli spaces of submanifold theoretic and gauge theoretic objects on a given $G_2$ manifold.
The recent paper of Braun and Del Zotto \cite{Braun} suggests that mirror families could arise in dual pairs for Kovalev's twisted connected sum construction of $G_2$ manifolds, by some combinatorial study of examples produced using polytopes, and gives some topological evidence by Betti number computations.

In our setting of Donaldson's adiabatic fibrations, the fibres are hyperk\"ahler K3 surfaces, and the natural source to look for the mirror spaces is to perform Mukai duality on each K3 fibre. This requires the input of a principal $U(r)$ bundle $P\to M$, sometimes treated as a Hermitian vector bundle, with certain topological conditions. (A technical variant with $PU(r)$ structure group is also important in this paper, since Hermitian Yang-Mills connections on $P$ can be equivalently thought as ASD connections on the associated $PU(r)$ bundle, and this formulation does not favour any particular complex structure). Restricted to each K3 fibre $X=M_b$ for $b\in B$, we can take the moduli space of irreducible HYM connections on $P|_{X}\to X$, which in general gives a hyperk\"ahler manifold, and for some special topological numbers are known to be K3 surfaces (assuming compactness and non-emptiness), called the Mukai dual K3 surface $X^\vee$.  At the level of differential topology, these fit into a K3 fibration $\pi^\vee: M^\vee \to B$. We refer to $\pi^\vee$ as the \textbf{Mukai dual fibration}, and its analogue with higher dimensional hyperk\"ahler fibres are called \textbf{moduli bundles}.
One naturally asks

\begin{Question}
Can we assign canonical geometric structures to $\pi^\vee: M^\vee \to B$, to satisfy the conditions of Donaldson's adiabatic fibration in their own right? 
\end{Question}

This question can be answered at two levels. The first viewpoint is to dualise the positive section $h: B\to H^2(X)$ satisfying the maximal submanifold equation. The dual positive section $h^\vee: B\to H^2(X^\vee)$, is simply obtained by composing $h$ with Donaldson's $\mu$-map which arises in the context of 4-manifold polynomial invariants. It follows rather formally from facts in the theory of ASD instantons, that 

\begin{prop}(\cf Section \ref{Dualityofmaximalsubmanifoldequations})
The positive section $h^\vee$ also satisfies the maximal submanifold equation, so encodes another Donaldson's adiabatic fibration $M^\vee\to B$, and is compatible with the hyperk\"ahler periods on the Mukai dual K3 fibres.
\end{prop}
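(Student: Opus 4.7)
The plan is to reduce the proposition to linear algebra by exploiting the fact that Donaldson's $\mu$-map is a fibrewise isometry intertwining the relevant hyperK\"ahler periods. First I would recall that for the chosen $PU(r)$ (or $U(r)$) bundle $P\to M$ with Mukai vector arranged so that each fibrewise moduli space $X^\vee = M_b^\vee$ is again a K3 surface, the map
\[
\mu : H^2(X;\R) \to H^2(X^\vee;\R),
\]
defined via slant product with the second Chern class of the universal ASD family on $P|_X$, is up to an overall sign an isometric embedding with respect to the intersection forms. This is a packaging of Mukai's theorems on moduli of stable sheaves on K3 surfaces together with Donaldson--Kronheimer's computation of the intersection form on instanton moduli spaces.

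Next I would use that, under this identification, the positive three-plane of self-dual harmonic $2$-forms on $X$, parametrised by the hyperK\"ahler triple, is carried by $\mu$ to the positive three-plane on $X^\vee$. This is essentially Mukai's theorem that the moduli space inherits a natural hyperK\"ahler structure whose periods are precisely the $\mu$-images of the original periods. Consequently $h^\vee := \mu\circ h$ takes values in the positive three-plane of $X^\vee$ at each $b\in B$, endowing it with the structure of a positive section, and the compatibility with hyperK\"ahler periods on the dual K3 fibres is automatic from the construction.

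It then remains to verify the maximal submanifold equation for $h^\vee$. The key observation is that $\mu$ is assembled from purely topological data of the universal bundle, so it is parallel with respect to the Gauss--Manin connection on the local system $R^2\pi_*\R$ over the contractible base $B$. Since the maximal submanifold equation for $h$ is a geometric PDE expressed intrinsically in terms of the intersection form and the Gauss--Manin connection, essentially asserting that the graph of $h$ in the flat vector bundle over $B$ is a maximal spacelike submanifold, composing with a fibrewise linear isometry that is flat for this local structure sends solutions to solutions.

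The main obstacle is the first step: extracting from the theory of moduli of sheaves and ASD instantons on K3 the clean statement that $\mu$ is an isometry intertwining hyperK\"ahler periods. For general bundles one may need to choose the Mukai vector carefully so that $X^\vee$ is again a K3 rather than a higher dimensional hyperK\"ahler manifold, and to track overall signs in the intersection form. Once this is in place, the remaining content of the proposition is the naturality of the maximal submanifold equation under fibrewise linear isometries, which is purely formal.
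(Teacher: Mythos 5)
Your proposal is correct and follows essentially the same route as the paper: dualise by composing with the $\mu$-map, quote from the moduli theory (the companion paper) that $\tilde{\mu}$ is an isometry on $H^2$ carrying the hyperk\"ahler periods of $X$ to those of $X^\vee$, and conclude that maximality is preserved. The only cosmetic difference is in the last step: the paper argues variationally (the isometry preserves the area functional, and maximal submanifolds are its critical points), whereas you invoke naturality of the maximal submanifold equation under the flat fibrewise isometry --- these are equivalent, since the equation is the Euler--Lagrange equation of the area functional.
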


The second and more geometric answer turns out to be intricately tied to a circle of questions in higher dimensional \textbf{gauge theory} \cite{Thomas}. 
To provide some general context, it is a common phenomenon for adiabatic equations in gauge theory to be encoded by information on some moduli bundle. To take a well known example related to the Atiyah-Floer conjecture \cite{SalamonDostoglou}, ASD instantons on the product of two Riemann surfaces are encoded into a holomorphic mapping equation when one Riemann surface collapses to zero size. In a similar vein, Haydys \cite{Haydys} describes the formal adiabatic limits of Spin(7) instantons on spin bundles over a Riemannian 4-fold, in terms of the Fueter equation on some appropriate moduli bundle. Analogously, we show (\cf Section \ref{Fueterequation$G_2$instantons})

\begin{thm}(\cf Chapter \ref{Fueterequationsadiabatic$G_2$instantonsmodulibundles})
Under appropriate smoothness assumptions,
Adiabatic $G_2$ instantons on $M$ are  equivalent to solutions of the Fueter equation on the moduli bundle, up to gauge equivalence and twisting by central $u(1)$-valued 1-forms pulled back from $B$.
\end{thm}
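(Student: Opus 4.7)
The strategy is to decompose the adiabatic $G_2$ instanton equation using the Ehresmann connection $H$ coming with Donaldson's adiabatic fibration, and to identify the resulting orders with, respectively, an ASD constraint on each K3 fibre and the Fueter equation on the moduli bundle. I would begin by splitting $TM = \mathcal{V} \oplus H$, inducing a bigraded decomposition of $\Omega^*(M)$ and, correspondingly, of the 4-form $\underline{\Theta}$ into pieces of fixed (vertical, horizontal) bidegree. A connection $A$ on $P\to M$ is likewise written, after a choice of horizontal lift, as $A = A^v + A^h$ with curvature components $F_A = F^{vv} + F^{vh} + F^{hh}$. Substituting into the adiabatic $G_2$ instanton equation and collecting by bidegree against $\underline{\Theta}$ yields a system of three equations: a purely vertical ASD condition on $F^{vv}$, a mixed condition on $F^{vh}$, and a Coulomb-type gauge fixing condition for $A^h$.

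The vertical ASD equation says that $A^v|_{M_b}$ is an irreducible ASD connection on $P|_{M_b}$ for every $b \in B$. Under the smoothness hypothesis, this defines a smooth section $s: B \to \mathcal{M}$ of the moduli bundle. Horizontal differentiation of $A^v$ along $B$, followed by projection to the Coulomb slice, identifies $\partial_i s$ with a $d_{A^v}$-harmonic $\mathrm{ad}(P)$-valued 1-form on the fibre, \ie an element of $T_{s(b)}\mathcal{M}_b$. This identification is compatible with the hyperk\"ahler triple on $\mathcal{M}_b$ induced, via cup product and harmonic projection, from the three self-dual harmonic 2-forms on $M_b$ associated with the positive section $h$.

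I would then rewrite the mixed equation. Restricted to each fibre, the self-dual part of $\underline{\Theta}$ spans the three-dimensional space of self-dual harmonic 2-forms on $M_b$, which pair with horizontal directions through the image of $h$ in $H^2(M_b)\otimes T_b^*B$. Contracting these structures with $F^{vh}$ and re-expressing in terms of the harmonic representatives converts the mixed constraint into the quaternionic linear combination
\[
I\,\partial_1 s + J\,\partial_2 s + K\,\partial_3 s = 0,
\]
which is precisely the Fueter equation for sections of the hyperk\"ahler bundle $\mathcal{M}\to B$ over the 3-dimensional base.

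Finally I would analyse the residual gauge freedom. Different horizontal lifts of an adiabatic $G_2$ instanton produce the same section $s$ but differ by vertical gauge transformations; irreducibility of $A^v$ pins these down up to the centre $U(1)$ of the vertical gauge group, leaving a residual ambiguity in $A^h$ of the form $A^h \mapsto A^h + \pi^*\alpha$ for a central $u(1)$-valued 1-form $\alpha$ on $B$, exactly accounting for the stated twisting. Conversely, given a Fueter section and a consistent local choice of irreducible representatives, the above identifications can be reversed to build an adiabatic $G_2$ instanton. I expect the main obstacle to be the content of the third paragraph: showing that the mixed curvature condition packages precisely into the combination $I\partial_1 s + J\partial_2 s + K\partial_3 s$, rather than a different quaternionic linear expression, will require a delicate contraction of the decomposition of $\underline{\Theta}$ with $F^{vh}$, using the fact that both the hyperk\"ahler structures on the K3 fibres and the identification of $T\mathcal{M}_b$ with harmonic 1-forms vary with $b\in B$ through the maximal submanifold equation satisfied by $h$.
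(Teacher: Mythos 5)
Your overall strategy matches the paper's: decompose $F_A$ by horizontal/vertical bidegree against $\underline\Theta$, recognise the $(0,2)$ piece as the fibrewise ASD constraint (hence a section $s$ of $\mathcal{M}\to B$), and recognise the $(1,1)$ piece as the Fueter equation once the canonical horizontal distribution on $\mathcal{M}$ is understood. The paper sets this up in Section \ref{Themodulibundle}, using the hyperk\"ahler-quotient decomposition
\[
\Omega^1(M_b, ad P) = T_A\mathcal{M}\oplus (\Lie\mathcal{G}_b)A \oplus I_1(\Lie\mathcal{G}_b)A \oplus I_2(\Lie\mathcal{G}_b)A \oplus I_3(\Lie\mathcal{G}_b)A,
\]
so that $\nabla^{\mathcal{M}}_{\partial_{t_i}} s$ is the $T_A\mathcal{M}$-projection of $\iota_{\partial_{t_i}}F_A$ and $\slashed D s = pr_{T_A\mathcal{M}}\sum I_i(\iota_{\partial_{t_i}}F_A)$. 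Your third paragraph, which you flag as the technical crux, is indeed where the identification is made, and the paper handles it with a direct computation using $*_4(\sum I_i\iota_{\partial_{t_i}}F_A)=-\iota_{\partial_{t_i}}F_A\wedge\omega_i$.

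There is, however, a genuine gap in your converse direction. You write that ``the above identifications can be reversed to build an adiabatic $G_2$ instanton,'' but this does not follow just from choosing irreducible representatives. Given a Fueter section $s$, any representing connection $A'$ only has the $T_{A'}\mathcal{M}$-component of $\sum I_i\iota_{\partial_{t_i}}F_{A'}$ vanishing; the components along $(\Lie\mathcal{G}_b)A'$ and its quaternionic rotations can be nonzero, so $A'$ need not satisfy (\ref{adiabatic$G_2$instantonhorizontal}). The paper's Proposition \ref{Fueterimpliesmonopole} deals with this by writing
\[
\sum I_i\iota_{\partial_{t_i}}F_{A'} = d_{A'}\Phi_0 + I_1 d_{A'}\Phi_1 + I_2 d_{A'}\Phi_2 + I_3 d_{A'}\Phi_3,
\]
replacing $A'$ by $A = A' + \sum\Phi_i\,dt_i$, and observing that the result satisfies the adiabatic $G_2$ monopole equation (\ref{adiabaticmonopole11typecurvature}) with Higgs field $\Phi_0$. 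One then invokes Proposition \ref{Adiabatic$G_2$monopoleadiabatic$G_2$instanton} — which uses the closedness of $\underline\Theta$ and an integration-by-parts argument — to conclude $d_A\Phi_0 = 0$, so $A$ is in fact an adiabatic $G_2$ instanton. This monopole detour is a substantive step, not a formality, and your proposal as written does not supply a substitute for it.

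A smaller point: you list ``a Coulomb-type gauge fixing condition for $A^h$'' as a third component of the decomposed instanton equation. The equation $F_A\wedge\underline\Theta=0$ decomposes into only two constraints, (\ref{ASDfibre}) and (\ref{adiabatic$G_2$instantonhorizontal}); the Coulomb slice enters as a choice made in defining the canonical horizontal distribution on $\mathcal{M}$, not as an equation carried by the instanton. Your residual-gauge analysis in the last paragraph, on the other hand, is essentially the content of Proposition \ref{instantonFuetercorrespondenceuniqueness} and is correct.
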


The starting point of this theorem is the simple fact that adiabatic $G_2$ instantons restrict to ASD connections on the K3 fibres.
The main challenge here is to assign the canonical structures to the moduli bundle to make sense of the Fueter equation: they are the fibrewise hyperk\"ahler forms, and a horizontal distribution. These \emph{gauge theoretic} structures turn out to be exactly what is needed to put a Donaldson's adiabatic fibration structure $\underline{\omega}^\vee, \underline{\Theta}^\vee, \underline{\mu}^\vee$  on the \emph{Mukai dual fibration} $\pi^\vee: M^\vee\to B$.

We can then provide a deeper explanation of the above \textbf{instanton-Fueter correspondence}. The adiabatic $G_2$ instantons over $M$ are critical points of a gauge theoretic Chern-Simons functional, and the solutions of the Fueter equation on the moduli bundle are critical points of a submanifold theoretic Chern-Simons functional. Now a connection over $M$ which is ASD on every K3 fibre represents a section of the moduli bundle. Under this natural identification,

\begin{thm}(\cf Chapter \ref{Fueterequationsadiabatic$G_2$instantonsmodulibundles})
The two Chern Simons functionals are  equal.
\end{thm}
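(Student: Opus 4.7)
The plan is to reduce the identity to a Chern-Weil computation on the fibre product $M\times_B M^\vee$ combined with Fubini along $\pi:M\to B$, using the $\mu$-map description of $\underline{\omega}^\vee$. The gauge-theoretic functional on connections $A$ on $P\to M$ which are ASD on every K3 fibre is
$$CS_g(A)=\int_M\Tr\!\left(A\wedge dA+\tfrac{2}{3}A\wedge A\wedge A\right)\wedge\underline{\Theta},$$
whose first variation $\int_M\Tr(\delta A\wedge F_A)\wedge\underline{\Theta}$ gives the adiabatic $G_2$ instanton equation $F_A\wedge\underline{\Theta}=0$. The submanifold-theoretic functional on sections $s:B\to M^\vee$ is
$$CS_s(s)=\int_B s^{\ast}\underline{\omega}^\vee,$$
whose first variation, using $d\underline{\omega}^\vee=0$ and the vertical nature of $\delta s$, yields the Fueter equation.

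Next I would unpack $\underline{\omega}^\vee$ via the $\mu$-map construction from the preceding section. Picking a universal $PU(r)$-connection $\mathbb{A}$ on the universal bundle $\mathcal{P}\to M\times_B M^\vee$, the Mukai-dual hyperk\"ahler triple on each fibre $(M^\vee)_b$ is the slant product of $\tfrac{1}{8\pi^2}\Tr(F_{\mathbb{A}}\wedge F_{\mathbb{A}})$ with the K3 hyperk\"ahler triple of $M_b$; the horizontal distribution globalising these into the 3-form $\underline{\omega}^\vee$ on $M^\vee$ is, by the previous theorem, the adiabatic Ehresmann connection of $\pi^\vee$. Symbolically,
$$\underline{\omega}^\vee=(\pi^\vee)_{\ast}\!\left(\tfrac{1}{8\pi^2}\Tr(F_{\mathbb{A}}\wedge F_{\mathbb{A}})\wedge p_M^{\ast}\underline{\omega}\right).$$
The section $(\mathrm{id},s_A):M\to M\times_B M^\vee$ pulls $\mathcal{P}$ back to $P$ with a connection differing from $A$ only by a central pulled-back 1-form from $B$ and a fibrewise gauge transformation, neither of which affects Chern-Weil representatives. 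Base change and Fubini along $\pi$ then give
$$CS_s(s_A)=\tfrac{1}{8\pi^2}\int_M\Tr(F_A\wedge F_A)\wedge\underline{\omega}.$$

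The remaining task is to match this integral with $CS_g(A)$ via bidegree analysis in the decomposition $T^{\ast}M=V^{\ast}\oplus H^{\ast}$ from the Ehresmann connection on $\pi$. Using the ASD property of $A|_{M_b}$, which forces the purely vertical component of $F_A$ into the span of the hyperk\"ahler triple $\omega_1,\omega_2,\omega_3$, and the explicit K3-fibred $G_2$ model $\underline{\omega}=e^{123}+\sum_i e^i\wedge\omega_i$, $\underline{\Theta}=\mathrm{vol}_K-\sum_{i,j,k}\tfrac{1}{2}\epsilon_{ijk}e^{ij}\wedge\omega_k$, both integrands collapse to the same top $(4,3)$-bidegree component modulo fibrewise-exact pieces that die under $\int_M$. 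This is the fibrewise transgression $d\Tr(AdA+\tfrac{2}{3}A^3)=\Tr(F_A\wedge F_A)$ married to the $G_2$/hyperk\"ahler linear-algebra pairing between $\underline{\omega}$ and $\underline{\Theta}$.

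The main obstacle is this bidegree bookkeeping: the striking feature is that $CS_g$ pairs with the 4-form $\underline{\Theta}$ while $CS_s$ pulls back $\underline{\omega}^\vee$ built from $\underline{\omega}$, and the equality only surfaces after exploiting both the ASD fibre condition and the explicit $G_2$ linear-algebra duality between $\underline{\omega}$ and $\underline{\Theta}$. Finally one verifies that the remaining ambiguities — gauge transformations of $A$, choices of universal connection $\mathbb{A}$, and the central $u(1)$-twists pulled back from $B$ — all contribute only the standard topological periods shared by both Chern-Simons constructions, so that the identification descends to the moduli picture.
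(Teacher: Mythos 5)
There is a genuine gap, and it starts with the objects you chose to compare. In the paper both Chern--Simons functionals are \emph{path-based}: $CS^{instanton}(A)=\frac{-1}{4\pi^2}\int_{M\times[0,1]}\Tr(F_{A_t}\wedge \partial_t A_t)\wedge\underline\Theta\wedge dt$ as in (\ref{ChernSimons}), and $CS^{associative}(s)=\int_{[0,1]\times B}N^*\underline{\Theta}^{\mathcal M}$, the integral of the canonical \emph{four}-form over a cylinder of sections; the theorem asserts these agree when the homotopy of sections is represented by a path of connections that are ASD on every K3 fibre. Your $CS_s(s)=\int_B s^*\underline\omega^\vee$ is a different object: since $\underline\omega^\vee$ has horizontal-vertical type $(1,2)$, one has $s^*\underline\omega^\vee=\sum_{cyc}\omega_i^\vee(\nabla_{j}s,\nabla_{k}s)\,dt_1dt_2dt_3$, which is the ``topological'' cross-term in the energy identity for the Fueter operator, not a functional whose critical points are Fueter sections (in the product case it is a homotopy invariant of $s$, so every section is critical). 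Moreover your variational argument invokes $d\underline\omega^\vee=0$, which is not available: the paper only establishes $d_f\underline\omega^{\mathcal M}=0$ and $d_{\nabla^{\mathcal M}}\underline\omega^{\mathcal M}=0$, leaving a possible $(3,1)$ component of $d\underline\omega^{\mathcal M}$. So even if every step of your computation closed up, you would have related two quantities neither of which is the one in the theorem.

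Second, the step you yourself flag as the main obstacle --- matching $\frac{1}{8\pi^2}\int_M\Tr(F_A\wedge F_A)\wedge\underline\omega$ with $\int_M\Tr(A\,dA+\frac23 A^3)\wedge\underline\Theta$ ``modulo fibrewise-exact pieces'' --- is asserted rather than proved, and it fails: the two functionals do not even have the same first variation. Using $d\underline\Theta=0$, $\delta\int_M\Tr(A\,dA+\frac23A^3)\wedge\underline\Theta=2\int_M\Tr(\delta A\wedge F_A)\wedge\underline\Theta$ up to boundary terms, whereas $\delta\int_M\Tr(F_A\wedge F_A)\wedge\underline\omega=2\int_M\Tr(\delta A\wedge F_A)\wedge d\underline\omega$ up to boundary terms, and $d\underline\omega$ is only a $(3,1)$-form; these do not coincide, even restricted to fibrewise ASD connections. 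The paper's proof never passes through $\Tr(F\wedge F)$ or the $\mu$-map: it evaluates $N^*\underline{\Theta}^{\mathcal M}$ directly, using the $L^2$ hyperk\"ahler forms (\ref{hyperkahlerforms}) and the fact that the canonical horizontal derivative of $s$ is represented by $pr_{T_{A_t}\mathcal M}\,\iota_{\partial/\partial t_i}F_{A_t}$, which produces the integrand $\frac{-1}{4\pi^2}\Tr(F_{A_t}\wedge pr_{T_{A_t}\mathcal M}\partial_t A_t)\wedge\underline\Theta\wedge dt$; the projection is then removed using the Bianchi identity, $d\underline\Theta=0$, Stokes over $\pi^{-1}(\partial B)$ and the fibrewise ASD condition. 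To rescue your route you would at minimum have to work with the cylinder integral of $\underline{\Theta}^{\mathcal M}$ rather than $\int_B s^*\underline\omega^\vee$, at which point you are led back to the moduli-bundle computation above rather than a Chern--Weil/Fubini argument.
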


\begin{rmk}
This suggests that there may be an underlying equivalence of quantum theories, for which the instanton-Fueter correspondence is the classical manifestation.
\end{rmk}

Quite remarkably, the well definition of these Chern-Simons functionals, which are integral expressions, is precisely based on the differential relations for the data in Donaldson's adiabatic fibration. To make a familiar analogy, this resembles how charge conservation in electrodynamics is related to the continuity equation. This observation, with some additional arguments, enables us to show

\begin{thm}(\cf Chapter \ref{Mukaidualfibrations})
The Mukai dual fibration $\pi^\vee: M^\vee\to B$ admits canonical geometric structures which satisfy all the requirements of Donaldson's adiabatic fibration. The relation between hyperk\"ahler structures on K3 fibres  $X=M_b$  and $X^\vee=M^\vee_b$ agrees with the Mukai duality of hyperk\"ahler K3 surfaces.
\end{thm}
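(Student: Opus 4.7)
The plan is to construct the five pieces of dual data $(\underline{\omega}^\vee, \underline{\lambda}^\vee, \underline{\Theta}^\vee, \underline{\mu}^\vee, H^\vee)$ on $M^\vee$ from ingredients already produced in earlier chapters, verify the pointwise linear algebraic model and the six adiabatic equations, and finally identify the fibrewise hyperk\"ahler structure with classical Mukai duality. For the fibrewise data, on each $X^\vee = M^\vee_b$ the tangent vectors at $[A]$ are represented by harmonic $\End_0(P)$-valued $1$-forms $\alpha$ on $X = M_b$, and the hyperk\"ahler triple is given by the classical gauge-theoretic Mukai pairing
\[
\omega_i^\vee(\alpha,\beta) \;=\; -\int_X \omega_i \wedge \Tr(\alpha \wedge \beta).
\]
Direct inspection shows that the resulting period map on $X^\vee$ is the image of the period of $X$ under Donaldson's $\mu$-map, so $X^\vee$ is identified with the classical Mukai dual K3 surface; this also matches the dualised positive section $h^\vee = \mu \circ h$ from Section \ref{Dualityofmaximalsubmanifoldequations}. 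Packaged over $B$, these pieces assemble into $3$-forms $\underline{\omega}^\vee, \underline{\lambda}^\vee$ satisfying the required pointwise model.

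Next, I would take $H^\vee$ to be the horizontal distribution already constructed on the moduli bundle in Chapter \ref{Fueterequationsadiabatic$G_2$instantonsmodulibundles}, namely $H$-parallel transport of ASD connections corrected by an elliptic gauge-fixing condition, and define $\underline{\Theta}^\vee, \underline{\mu}^\vee$ via the linear algebraic model applied to $(\underline{\omega}^\vee, \underline{\lambda}^\vee, H^\vee)$. The three $d_f$-equations are then automatic, since each fibre carries a genuine hyperk\"ahler structure. The substantive content lies in the three $d_H$-equations, and these I would deduce not by a direct computation — formidable, since $\underline{\omega}^\vee$ involves integration over the varying fibre $X$ — but by exploiting the equality of the two Chern-Simons functionals established in the preceding theorem. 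Following the continuity equation analogy emphasised in the text, the well-definedness of each Chern-Simons integral (gauge invariance plus independence of reference configuration) is equivalent to a precise package of closedness identities on the ambient forms. On the $M$-side these identities are exactly the Donaldson adiabatic equations for $(\underline{\omega}, \underline{\lambda}, \underline{\Theta}, \underline{\mu}, H)$, which hold by hypothesis; the equality of functionals then forces the mirror identities on the $M^\vee$-side, which are precisely the $d_H$-equations for the dual data.

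The main obstacle is to make this last transfer rigorous: one must carefully bookkeep how a horizontal variation on $M$ decomposes into a genuine moduli variation at $[A]$ plus a gauge-fixing correction intrinsic to $H^\vee$, and verify a \emph{term-by-term} correspondence between the two packages of closedness identities rather than merely an identity between their sums. Under the smoothness and ellipticity assumptions already in force for the instanton-Fueter correspondence this reduces to a controlled linear-algebraic bookkeeping on the moduli bundle, but it is the only step which is not purely formal. Once it is done, the second assertion of the theorem follows immediately from the defining formula for $\omega_i^\vee$ above.
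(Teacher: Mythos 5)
Your construction of the dual data and your mechanism for the closedness equations largely coincide with the paper's route: the fibrewise forms $\omega_i^\vee$ are the paper's $\omega_i^{\mathcal{M}}$ from Section \ref{Themodulibundle}, $H^\vee$ is the canonical horizontal distribution there, the identification with classical Mukai duality via the $\mu$-map is Proposition \ref{Mukaidualpositivesection}, and the deduction of closedness of the dual forms from the homotopy-invariance (well-definedness) of Chern--Simons integrals is exactly how the paper proves $d\underline{\Theta}^{\mathcal{M}}=0$ and the restricted closedness of $\underline{\omega}^{\mathcal{M}}$ in Section \ref{Canonicalformsonmodulibundle}. One caveat: the single equality $CS^{associative}=CS^{instanton}$ from the preceding theorem only controls $\underline{\Theta}^\vee$. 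To get $d_f\underline{\omega}^\vee=0$ and $d_{H^\vee}\underline{\omega}^\vee=0$ you need the second Chern--Simons functional $CS^{holo}$, defined over $[0,1]\times\Sigma$ for $2$-dimensional submanifolds $\Sigma\subset B$, whose homotopy invariance rests on the closedness of $\underline{\omega}$ restricted to $M|_\Sigma$; this is a separate (if parallel) argument, not a consequence of the functional equality you cite.

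The genuine gap is the fibre-volume equation $d_{H^\vee}\underline{\mu}^\vee=0$. Neither Chern--Simons functional involves $\underline{\mu}^\vee$, so it cannot be extracted from the ``package of closedness identities'' underlying their well-definedness, and your plan as stated gives no argument for it. The paper proves it by a separate direct computation (Section \ref{Canonicalfibrevolumeform}): one expresses the horizontal variation of the moduli volume form through the variation of the $L^2$ metric on $T_A\mathcal{M}_b$, uses the quaternionic module property of the tangent space to the instanton moduli space to average over $a_j, I_1a_j, I_2a_j, I_3a_j$, and reduces the integrand pointwise to a multiple of the variation of the fibre volume of $M_b$, which vanishes precisely by $d_H\underline{\mu}=0$ upstairs. (Alternatively, since the dual fibres here are $4$-dimensional, one could try to deduce $d_{H^\vee}\underline{\mu}^\vee=0$ from $d_{H^\vee}\underline{\omega}^\vee=0$ and $d_{H^\vee}\underline{\Theta}^\vee=0$ by the coefficient bookkeeping of (\ref{hyperkahlerformvariationakbk}), but that is an additional argument you would have to supply, and it does not generalise to higher-dimensional moduli fibres.) Until this equation is handled, the verification that $M^\vee\to B$ meets \emph{all} of Donaldson's conditions is incomplete; the remaining steps of your proposal are sound and essentially the paper's.
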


\begin{rmk}
More generally, when the fibrewise moduli spaces are higher dimensional hyperK\"ahler manifolds rather than K3 surfaces, the moduli bundle still admits canonical geometric structures which satisfy the natural generalisation of conditions in Donaldson's adiabatic fibration (\cf Chapter \ref{Fueterequationsadiabatic$G_2$instantonsmodulibundles}). 
\end{rmk}

From now on we assume further  for each $b\in B$ the existence of a universal family $\mathcal{E}|_b\to X\times X^\vee$
of HYM connections on $P|_X\to X$ parametrised by $X^\vee$. This leads naturally to the concept of  (twisted) triholomorphic connection, whose existence question is treated in the companion paper \cite{Mukaidualitypaper}. Here `triholomorphic' means that the curvature is of Dolbeault type $(1,1)$ with respect to all the 3 complex structures on $X\times X^\vee$, and twisting deals with the subtlety that the central $U(1)$ part of the curvature may obstruct the triholomorphic condition via a nontrivial first Chern class.   Viewed differently, this provides
a universal family of irreducible HYM connections  on the fibres $X^\vee$ of $M^\vee\to B$, parametrised by the corresponding fibre $X$ of $M\to B$. 
The Mukai dual fibration of $M^\vee \to B$  is well defined if for all $b\in B$ these families of HYM connections are irreducible. Then

\begin{thm}(\cf Chapter \ref{Mukaidualfibrations})
The Mukai dual fibration of $M^\vee\to B$ is isomorphic to $\pi: M\to B$ as Donaldson's adiabtic fibrations.
\end{thm}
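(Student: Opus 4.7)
The plan is to exhibit a canonical fibrewise diffeomorphism $\Phi: M\to M^{\vee\vee}$ over $B$ and to verify that it is an isomorphism of Donaldson adiabatic fibrations. For $b\in B$ and $x\in X=M_b$, the restriction $\mathcal{E}|_b|_{\{x\}\times X^\vee}$ is by construction an irreducible HYM connection on $P^\vee|_{X^\vee}$ in the correct topological class---this is precisely the hypothesis ensuring $M^{\vee\vee}\to B$ is well defined---and I set $\Phi(x)$ to be its gauge equivalence class in $(X^\vee)^\vee=M^{\vee\vee}_b$.

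The first step is the fibrewise statement: on each K3 fibre, $\Phi$ is the classical Mukai double duality isomorphism of hyperk\"ahler K3 surfaces. This is a standard Fourier-Mukai computation and implies that $\Phi$ is a fibrewise diffeomorphism intertwining the fibrewise hyperk\"ahler forms. In cohomology it yields $h^{\vee\vee}=h$, because Donaldson's $\mu$-map squared is the identity on the relevant primitive part of the Mukai lattice. Combined with the earlier proposition that a Donaldson adiabatic fibration is determined by its positive section, this already pins down the vertical components of $\underline{\omega}^{\vee\vee}$, $\underline{\Theta}^{\vee\vee}$, $\underline{\mu}^{\vee\vee}$ on the fibres.

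The second step is to intertwine the Ehresmann connections, $\Phi_*H=H^{\vee\vee}$. By the construction in Chapter \ref{Mukaidualfibrations}, $H^\vee$ on $M^\vee$ is assembled from $\mathcal{E}$ together with $(H,\underline{\omega},\underline{\Theta})$ so that the fibrewise ASD condition propagates under horizontal transport. Applying the same recipe with the roles of $X$ and $X^\vee$ swapped through $\mathcal{E}$ produces a distribution on $M^{\vee\vee}$; using the $(x,\xi)\leftrightarrow(\xi,x)$ symmetry of the universal family I would argue that its pullback by $\Phi$ satisfies the defining property of $H$, and hence equals $H$ by uniqueness. The remaining horizontal and mixed components of the Donaldson data then fall into place automatically, because they are built canonically out of $H$ together with the fibrewise forms already matched in the first step.

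The main obstacle is this horizontal step, since $H^\vee$ is a differential geometric rather than purely cohomological datum. The strategy is to leverage the Chern-Simons equality of the preceding theorem: the gauge theoretic and submanifold theoretic Chern-Simons functionals coincide, and the critical point equations governing horizontal propagation of HYM connections (which define $H^\vee$) are manifestly invariant under swapping $(X,P)$ and $(X^\vee,P^\vee)$ via $\mathcal{E}$. This self-duality of the universal family construction should force $H^{\vee\vee}=\Phi_*H$ as a formal consequence, completing the identification of $M^{\vee\vee}$ with $M$ as Donaldson adiabatic fibrations.
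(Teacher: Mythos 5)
Your first step is essentially the paper's entire proof, while your second step both contains the only real gap and is in fact unnecessary. As written, the horizontal matching $\Phi_*H=H^{\vee\vee}$ is not proved: the appeal to the Chern--Simons equality and to a symmetry of the universal family that ``should force'' the identification is a heuristic, not an argument, and you flag it yourself as the main obstacle. Carrying it out directly would require the generalised adiabatic $G_2$ instanton property of $\nabla^{univ}$ and the explicit description of the canonical horizontal distribution on a moduli bundle (Lemma \ref{canonicalconnectionmodulibundle} and its corollaries), none of which your sketch actually invokes.

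The paper's route shows that no such step is needed, because you underuse the fact you yourself quote. By the review in Section \ref{Donaldsonadiabaticfibration}, a positive section determines the whole Donaldson adiabatic fibration up to isomorphism: Torelli recovers the fibrewise hyperk\"ahler structures up to fibre diffeomorphism, $\underline{\lambda}$ is fixed by the volume normalisation, and the Ehresmann connection is the \emph{unique} one satisfying $d_H\underline{\omega}=0$ and $d_H\underline{\mu}=0$ --- so the horizontal distribution is pinned down by the positive section as well, not only the vertical forms as you claim. Since $M^{\vee\vee}\to B$ is already a Donaldson adiabatic fibration (Theorem \ref{MukaidualfibrationisDonaldsonadiabaticfibration} applied to $M^\vee$), with underlying differentiable fibration identified with $M\to B$ through the universal family, it suffices to verify $h^{\vee\vee}=h$. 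This is exactly what the paper does: it follows from the fibrewise input (from the companion paper) that the hyperk\"ahler structure induced on $X=M_b$ by its interpretation as the moduli space of connections over $X^\vee$ agrees with the original one, so the fibrewise hyperk\"ahler periods, hence the positive sections, coincide; your cohomological paraphrase via the involutivity of the $\mu$-map is the same fact. Once that is in place you are done, and the entire second paragraph of your proposal can be deleted.
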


Continuing with the gauge theory thread, a reformulation of the instanton-Fueter correspondence is 
\begin{thm}(\cf Chapter \ref{Mukaidualfibrations})
Adiabatic associative sections on $\pi^\vee: M^\vee\to B$ are equivalent to (twisted) adiabatic $G_2$ instantons on the Hermitian vector bundles $E\to M$ associated to $P\to M$.
\end{thm}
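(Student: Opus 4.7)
The strategy is to chain together three already-stated results: the instanton--Fueter correspondence on $M$, the identification of the moduli bundle of $\pi:M\to B$ with $\pi^\vee:M^\vee\to B$ as Donaldson's adiabatic fibrations, and the observation that adiabatic associative sections are governed by the Fueter equation. Each step rewrites one side of the target equivalence as a ``Fueter section'' condition, and the compositions line up.

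First I would recall the instanton--Fueter correspondence: an adiabatic $G_2$ instanton $A$ on the Hermitian vector bundle $E\to M$ restricts on each K3 fibre $X_b$ to an irreducible HYM connection and so defines a section $s_A:B\to M^\vee$ of the moduli bundle, with the remaining horizontal component of the instanton equation giving precisely the Fueter equation for $s_A$ with respect to the fibrewise hyperk\"ahler forms and the canonical horizontal distribution on the moduli bundle. Conversely, any such Fueter section assembles into a fibrewise-HYM connection on $E\to M$, well defined up to gauge transformations and up to twisting by central $u(1)$-valued 1-forms pulled back from $B$; the latter ambiguity arises from the freedom in lifting the universal $PU(r)$ family to a $U(r)$ family along the base direction.

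Next I would apply the theorem that the moduli bundle of $\pi:M\to B$, with its canonical fibrewise hyperk\"ahler forms and horizontal distribution, coincides as a Donaldson's adiabatic fibration with $\pi^\vee:M^\vee\to B$. This is the crucial input, because it ensures that the Fueter equation produced from the gauge theoretic side and the Fueter equation intrinsic to the Donaldson adiabatic geometry of $M^\vee$ are literally the same equation, not merely isomorphic up to conventions. Combined with the general fact that adiabatic associative sections of any Donaldson adiabatic fibration are exactly Fueter sections, this identifies adiabatic associative sections of $\pi^\vee:M^\vee\to B$ with Fueter sections of the moduli bundle of $\pi:M\to B$, hence with twisted adiabatic $G_2$ instantons on $E\to M$.

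The main obstacle I anticipate is the careful bookkeeping of the twist. One must verify that the indeterminacy in reconstructing a $U(r)$ connection on $E\to M$ from the underlying section of the $PU(r)$-moduli bundle matches exactly the ``twisting by central $u(1)$-valued 1-forms pulled back from $B$'' clause in the instanton--Fueter correspondence, and that the universal family $\mathcal{E}|_b\to X\times X^\vee$ can be patched coherently across $b\in B$ even when its first Chern class has a nontrivial base component. This is precisely the role of the twisted universal family constructed in the companion paper, but closing the loop requires making the sign and normalisation of the twist explicit so that the two ambiguities cancel rather than accumulate.
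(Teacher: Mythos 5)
Your proposal follows essentially the same route as the paper: the paper's proof is exactly the chain consisting of the instanton--Fueter correspondence of Section \ref{Fueterequation$G_2$instantons} (with the $U(r)$/$PU(r)$ twisting handled as a cosmetic modification), the identification of $M^\vee\to B$ with the moduli bundle carrying its canonical structures from Theorem \ref{MukaidualfibrationisDonaldsonadiabaticfibration}, and Proposition \ref{associativesectionisFueter} identifying adiabatic associative sections with Fueter sections. Your extra attention to matching the central $u(1)$ twisting ambiguities is consistent with how the paper treats the twisted case via Proposition \ref{instantonFuetercorrespondenceuniqueness} and the lift of the $PU(r)$ data.
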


When we restrict to the K3 fibres, this is merely the tautological correspondence of points on $X^\vee$ with HYM connections on $E|_X\to X$. This can be understood algebraically as a special case of Fourier-Mukai transforms of coherent sheaves, where points are viewed as skyscraper sheaves. Thinking more generally, if we take the Fourier-Mukai transform of a topologically different stable holomorphic bundle $\mathcal{F}|_X\not \simeq E|_X\to X$, we can sometimes get another bundle over $X^\vee$ instead of a skyscraper sheaf.

The  Hitchin-Kobayashi correspondence translates stable bundles into HYM connections. In the differential geometric context the Fourier-Mukai transform is known as the \textbf{Nahm transform} on K3 surfaces, treated in the companion paper \cite{Mukaidualitypaper} taylored to the need of the present paper. This Nahm transform is built out of solutions to the coupled Dirac equation. The basic picture is that under the condition that the slope of $\mathcal{F}|_X$ is  equal to the slope of $E$, irreducible HYM connections $\alpha|_X$ on  $\mathcal{F}|_X\to X$ will be transformed to HYM connections $\hat{\alpha}|_{X^\vee}$ on another vector bundle $\hat{\mathcal{F}}\to X^\vee$ with a certain slope.  Morever, if $\hat{\alpha}|_X$ is also irreducible, then we can define the inverse Nahm transform of $(\hat{\mathcal{F}}|_X, \hat{\alpha}|_X)$, which is  canonically isomorphic to $(\mathcal{F}|_X, \alpha|_X)$.

There turns out to be an analogous picture for Donaldson's adiabatic fibrations. Starting from a (twisted) adiabatic $G_2$ instanton $\alpha$ on a Hermitian bundle $\mathcal{F}\to M$ which is fibrewise topologically distinct from $E\to M$. Restricted to every K3 fibre, we have a HYM connection. Assuming the slope condition and irreducibility condition on every fibre, we can build a Hermitian bundle $\hat{\mathcal{F}}\to M^\vee$ by fibrewise performing the Nahm transform on K3 surfaces. This is analogous to relative Fourier-Mukai transform in algebraic geometry, but the absence of a global complex structure makes the problem much harder.

The main subtlety is to put a canonical connection $\hat{\alpha}$ on $\hat{\mathcal{F}}\to M^\vee$, which fibrewise agrees with the Nahm transform on K3 surfaces. The pair $(\hat{\mathcal{F}}, \hat{\alpha})$ is called the \textbf{Nahm transform} of $(\mathcal{F}, \alpha)$. This construction requires two major inputs. First, we need 
a universal bundle $\mathcal{E}\to M\times_B M^\vee$ over the fibred product $M\times_B M^\vee$, carrying a very special connection $\nabla^{univ}$ which we call a \textbf{twisted generalised adiabatic $G_2$ instanton}, to be defined in Chapter \ref{Mukaidualfibrations}. Second, we need a canonical connection on the spinor bundle of $M$ adapted to our adiabatic problem, introduced in Chapter \ref{Adiabaticspinstructures}.
Then by performing some very delicate calculations mainly involving Dirac operators on coupled spinor fields, we show

\begin{thm}
(\cf Chapter \ref{Dualityforgaugetheories})
In this setup, 
the Nahm transform $(\hat{\mathcal{F}}, \hat{\alpha})$ is a twisted adiabatic $G_2$ instanton over $M^\vee$. 
\end{thm}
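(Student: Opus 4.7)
The plan is to reduce the global problem over $M^\vee$ to a fibrewise single-surface Nahm transform handled in \cite{Mukaidualitypaper} plus a horizontal compatibility check driven by the universal connection $\nabla^{univ}$ on $\mathcal{E}\to M\times_B M^\vee$ and the spin-connection calculus of Chapter \ref{Adiabaticspinstructures}. The vertical part is essentially automatic: on each fibre $X^\vee=M^\vee_b$ the connection $\hat{\alpha}|_{X^\vee}$ is by construction the K3 Nahm transform of $\alpha|_X$, which \cite{Mukaidualitypaper} shows to be irreducible HYM under the stated slope and irreducibility assumptions; this yields the fibrewise ASD condition of the twisted adiabatic $G_2$ instanton equation on $M^\vee$.

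For the horizontal equations I would realise $\hat{\mathcal{F}}$ as the family kernel bundle $\Ker\dirac_{\mathcal{E},\alpha}$ of the coupled Dirac operator over $M\times_B M^\vee\to M^\vee$ twisted by the tensor product of $\nabla^{univ}$ with the pullback of $\alpha$. The connection $\hat{\alpha}$ is then the $L^2$-orthogonal projection of the universal covariant derivative, and its curvature $F_{\hat{\alpha}}$ admits the standard push-forward representation as a direct contribution from $F_{\nabla^{univ}}$ and $F_\alpha$, corrected by a Green's operator term involving $(\dirac\dirac^*)^{-1}$ that measures the failure of the kernel projection to commute with horizontal differentiation. The twisted adiabatic $G_2$ instanton equation on $M^\vee$, of the form $F_{\hat{\alpha}}\wedge\underline{\Theta}^\vee\equiv 0$ modulo a central $u(1)$-valued form pulled back from $B$, is then tested by contracting this expression with $\underline{\Theta}^\vee$ and integrating along the $X$ fibre.

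The direct contribution is accounted for by the hypotheses already in place: the assumption that $\nabla^{univ}$ is a twisted generalised adiabatic $G_2$ instanton is engineered precisely so that a prescribed algebraic combination of $F_{\nabla^{univ}}$ with $\underline{\Theta}$ and $\underline{\Theta}^\vee$ vanishes up to central twists, and the hypothesis on $\alpha$ provides the matching identity for $F_\alpha\wedge\underline{\Theta}$ on $M$. The main technical obstacle, as I see it, is controlling the Green's operator correction. Here I would lean heavily on the canonical spinor connection on $M$ from Chapter \ref{Adiabaticspinstructures}: its defining adiabatic compatibilities with $\underline{\omega}$, $\underline{\lambda}$, $\underline{\Theta}$ are chosen exactly so that commutators $[\nabla^{\mathrm{horiz}}, \dirac]$ expand into tensorial expressions governed by Donaldson's adiabatic equations, which hold by assumption. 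Combined with integration by parts against Dirac zero modes and the fibrewise index-theoretic identification of $\hat{\mathcal{F}}$, this should reduce the Green's operator term to an expression pulled back from $B$, delivering the central-twisted adiabatic $G_2$ instanton identity on $M^\vee$. The overall strategy is the Donaldson-adiabatic analogue of the Braam--van Baal proof that the Nahm transform preserves the ASD condition on the 4-torus, where the absence of a global complex structure on $M$ is circumvented by systematically substituting the adiabatic spin-connection identities for complex-analytic ones.
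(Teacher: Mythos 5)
Your overall skeleton matches the paper's: fibrewise the statement is delegated to the K3 Nahm transform of \cite{Mukaidualitypaper}, and the horizontal part is attacked through the curvature of the projected connection $\hat{\nabla}=P\hat{d}$ on the kernel bundle, using the twisted generalised adiabatic $G_2$ instanton property of $\nabla^{univ}$ and the Dirac-operator variation identities of Chapter \ref{Adiabaticspinstructures}. However, there is a genuine gap precisely at the step you flag as the main technical obstacle, and your proposed way around it is not the mechanism that makes the proof work. In the paper the curvature matrix is $\hat{F}_{ij}=\langle G_\tau[D^-_{\alpha_\tau},\hat{d}]\hat{f}^i,\wedge[D^-_{\alpha_\tau},\hat{d}]\hat{f}^j\rangle+\langle\hat{f}^i,\wedge\hat{d}^2\hat{f}^j\rangle$, and only the $(\partial/\partial\tau,\partial/\partial t)$ component of $F(\nabla^{univ})$ enters as a ``direct'' term through $\hat{d}^2$ (killed immediately by the generalised instanton equation); the curvatures $F(\alpha)^{(1,1)}$ and the mixed component $\Omega$ of $F(\nabla^{univ})$ do not appear as direct push-forward contributions at all, but sit inside the commutator $[D^-_{\alpha_\tau},\hat{d}]$, i.e.\ inside the Green-operator term you hoped to treat as a correction. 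That term does not ``reduce to an expression pulled back from $B$'' by integration by parts against zero modes or by index-theoretic identifications: it vanishes identically after wedging with $\underline{\Theta}^\vee$, and the vanishing requires two specific inputs you have not identified. First, the triholomorphic property of the vertical-vertical mixed curvature $\Omega'$ of $\nabla^{univ}$ is used to convert the action of $I_k$ on the $d\tau_j$ legs into $-I_k^{S^+}$ acting on the positive-spinor factor. Second, and this is the crux of the paper's ``miraculous cancellation'', the Green operator $G_\tau$ commutes with $I_k^{S^+}$, because the Lichnerowicz formula $\Lap_{\alpha_\tau}=D^-_{\alpha_\tau}D^+_{\alpha_\tau}$ holds on the hyperk\"ahler fibre for the ASD connection $\alpha_\tau$ and the resulting Laplacian commutes with the quaternionic operators. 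Only after passing $I_k^{S^+}$ through $G_\tau$ does one land on the operator identity $\sum_k I_k^{S^+}\{[D^-,\hat{d}]+\Omega\cdot+F(\alpha)^{(1,1)}\cdot\}=0$, which packages the delicate computation $\sum_k I_k^{S^+}[\nabla^{LC}_{\partial/\partial t_k},D^-]=0$ of Section \ref{Curvatureoperators} together with the adiabatic instanton equation for $\alpha$ and the generalised one for $\nabla^{univ}$. Without these two ingredients the cross terms in $\hat{F}^{(1,1)}\wedge\underline{\Theta}^\vee$ have no reason to cancel, so the argument as you outline it cannot be completed.

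A smaller point: the conclusion you aim for, namely $F_{\hat{\alpha}}^{(1,1)}\wedge\underline{\Theta}^\vee=0$ only modulo a central $u(1)$-valued form pulled back from $B$, is weaker than the theorem. The residual central twisting by forms from $B$ appears in the Fourier inversion statement, not here; in this theorem the Nahm transform is an honest twisted adiabatic $G_2$ instanton, with ``twisted'' referring only to the fibrewise prescription of the central curvature via the harmonic representative of $\tfrac{1}{r}c_1$ and the slope potential of $E'\to M^\vee$.
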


On a more technical level, there are two main ways the conditions of Donaldson's adiabatic fibrations enter into the story of the Nahm transform. The first is a characterisation of these conditions in terms of a \textbf{curvature operator} acting on negative spinors, which is developed in the later half of Chapter \ref{Adiabaticspinstructures}, and in turn relies on a precise understanding of the deformation theory of the fibrewise hyperk\"ahler structures on K3 surfaces. The second is that the twisted generalised adiabatic $G_2$ instanton equation is highly overdetermined, and its existence theory relies substantially on the \textbf{integrability conditions} inherent in Donaldson's adiabatic fibrations.

Furthermore, there is an analogue of the \textbf{Fourier inversion} theorem. Assuming that the Nahm transform $\hat{\alpha}$ is irreducible on every K3 fibre, and $M$ is identified as the Mukai dual fibration of $M^\vee$, then we can define the inverse Nahm transform $(\hat{\hat{\mathcal{F}}}, \hat{\hat{\alpha}})$ by globalising the fibrewise construction on K3 surfaces.

\begin{thm}
(\cf Chapter \ref{Dualityforgaugetheories}) The inverse Nahm transform is gauge equivalent to the original twisted adiabatic $G_2$ instanton $(\mathcal{F}, \alpha)$ up to possibly twisting by a $u(1)$-valued 1-form pulled back from $B$.
\end{thm}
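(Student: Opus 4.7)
The plan is to leverage the fibrewise Fourier inversion on K3 surfaces established in the companion paper \cite{Mukaidualitypaper}, and then to promote the pointwise identifications to a global statement using the structural uniqueness provided by the instanton-Fueter correspondence.

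First I would restrict the inverse Nahm transform to each K3 fibre $X=M_b$. There the fibrewise Nahm inversion theorem yields a canonical isomorphism of Hermitian bundles with connection $(\hat{\hat{\mathcal{F}}}|_X, \hat{\hat{\alpha}}|_X) \simeq (\mathcal{F}|_X, \alpha|_X)$, constructed from the kernel of the coupled Dirac operator. Because this construction depends smoothly on $b\in B$, the fibrewise isomorphisms assemble into a global bundle isomorphism $\Phi:\hat{\hat{\mathcal{F}}}\to \mathcal{F}$ over $M$, under which the vertical components of $\Phi^*\hat{\hat{\alpha}}$ and $\alpha$ agree by construction. The difference $\Phi^*\hat{\hat{\alpha}}-\alpha$ is therefore a purely horizontal $u(r)$-valued 1-form.

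Next I would invoke the rigidity built into the instanton-Fueter correspondence from Chapter \ref{Fueterequationsadiabatic$G_2$instantonsmodulibundles}. Both $(\mathcal{F},\alpha)$ and $(\Phi^*\hat{\hat{\mathcal{F}}}, \Phi^*\hat{\hat{\alpha}})$ are twisted adiabatic $G_2$ instantons whose fibrewise restrictions define the same section of the moduli bundle over $B$. The cited theorem states that such a section lifts to an adiabatic $G_2$ instanton uniquely up to gauge equivalence and twisting by central $u(1)$-valued 1-forms pulled back from $B$. Applied here, this forces $\Phi^*\hat{\hat{\alpha}}-\alpha = \pi^*\beta$ for some $u(1)$-valued 1-form $\beta$ on $B$, which is precisely the ambiguity allowed in the statement.

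The main obstacle is to ensure that the fibrewise Dirac kernel identifications assemble smoothly over $B$ and that the horizontal derivatives of the induced bundle isomorphism are tracked correctly. This requires the canonical spinor connection adapted to the adiabatic problem (\cf Chapter \ref{Adiabaticspinstructures}), together with the symmetry of the universal data $(\mathcal{E}, \nabla^{univ})$ on $M\times_B M^\vee$ under the identification of $M$ as the Mukai dual of $M^\vee$, so that the inverse Nahm transform can itself be realised as a Nahm transform with respect to the dualised universal bundle. Once this symmetry and the smoothness of the family of Dirac kernels are verified, the combination of fibrewise Fourier inversion and the instanton-Fueter rigidity closes the argument.
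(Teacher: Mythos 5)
Your proposal is correct and follows essentially the same route as the paper: the paper likewise combines the fibrewise Fourier inversion on K3 surfaces from \cite{Mukaidualitypaper} (giving the canonical fibrewise identification of $(\hat{\hat{\mathcal{F}}},\hat{\hat{\alpha}})$ with $(\mathcal{F},\alpha)$) with the observation that both connections define the same Fueter section, and then applies the uniqueness statement of Proposition \ref{instantonFuetercorrespondenceuniqueness} (adapted to the HYM setting) to conclude they differ only by a gauge transformation and a central $u(1)$-valued 1-form pulled back from $B$.
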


This means the gauge theory over Donaldson's adiabatic fibration $M$ is in some sense dual to the gauge theory over the Mukai dual fibration $M^\vee$.  The technical input is the difficult Fourier inversion theorem in the K3 context \cite{Mukaidualitypaper}, and the instanton-Fueter correspondence in Section \ref{Fueterequation$G_2$instantons}.

We mention some directions of \textbf{open problems} following naturally from this work, which we leave for future investigation.

\begin{itemize}
\item There is  likely an analogous picture involving $Spin(7)$ manifolds with Cayley fibrations.

\item  In this paper we work over a local base to avoid singular fibres and reducible connections. What happens when we include these singularities? (The analogous problem for Calabi-Yau 3-folds has been studied in algebraic geometry, e.g.\cite{Thomas1}.) Calculations by the author suggest that even if $M\to B$ is smooth submersion, its Mukai dual fibration can acquire singularity if we allow for reducible connections.

\item  How can we produce global examples of Mukai dual fibrations, especially non-algebraic examples?

\item One of the primary motivation for studying adiabatic structures is to understand highly collapsed bona fide torsion free $G_2$ manifolds. When can we perturb Donaldson's adiabatic fibrations into genuine coassociative fibrations, and what happens to the gauge theory thereon?

\end{itemize}

\begin{Acknowledgement}
	The author thanks his PhD supervisor Simon Donaldosn and cosupervisor Mark Haskins for inspirations, and Simon Center for hospitality.
	
\end{Acknowledgement}

\section{Adiabatic limiting structures}\label{Adiabaticlimitingstructures}

We discuss in a unified way using a simple linear algebraic model, how to formally write down the adiabatic limiting conditions for a number of interesting geometric objects, such as K3 fibred torsion free $G_2$ structures, associative sections, $G_2$ instantons, and $G_2$ monopoles. The discussion is formal in the sense that we do not pursue the question of perturbing an adiabatic solution to an actual solution for small parameters.

The torsion free $G_2$ structure case is treated by Donaldson \cite{Donaldson}, which we outline in Section \ref{Donaldsonadiabaticfibration}. The associative section case is treated in Section \ref{adiabaticlimitofassociative}, where we show the formal adiabatic limiting condition can be expressed in terms of solutions of the Fueter equation, which is a nonlinear version of the Dirac equation. We treat adiabatic $G_2$ instantons and adiabatic $G_2$ monopoles in Section \ref{adiabaticlimitof$G_2$instantons} and \ref{adiabaticlimitof$G_2$monopoles}, where we show they are essentially equivalent, just like $G_2$ instantons are equivalent to $G_2$ monopoles on compact manifolds.

A common paradigm to these equations, is that they fit into some variational framework. This interacts nicely with formal limits, and we shall define a number of Chern-Simons type functionals whose critical points characterise the adiabatic solutions.

\subsection{The basic linear algebraic model}\label{linearalgebraicmodelsection}

The basic linear algebra of coassociative fibration in a $G_2$ manifold is given by the following simple model. One takes an orthogonal splitting $\R^7=\R^3\oplus \R^4$, and thinks of $\R^3$ as isomorphic to the space of self dual 2-forms on $\R^4$. The $G_2$ structure on $\R^7$ is given by the 3-form
\[
\phi=\underline{\omega}+\underline{\lambda}=\sum \omega_i dt_i-dt_1dt_2dt_3, 
\]
where $\omega_i$ for $i=1,2,3$ are the standard basis of self dual 2-forms on $\R^4$, and $t_1, t_2, t_3$ are the standard coordinates on $\R^3$. Alternatively, one can think of the 4-form
\[
*\phi=\underline{\Theta}+\underline{\mu}= -\sum_{cyc} \omega_i dt_j dt_k + \frac{1}{2}\omega_1^2.
\]
The induced metric is just the standard Euclidean metric
\[
g=\sum dx_i^2+ \sum dt_j^2,
\]
where $x_i$ are the standard coordinates on $\R^4$. The oriented volume form is $-dt_1dt_2dt_3dx_1dx_2dx_3dx_4$. From these structures, one can write down a cross product $\times: \R^7\times \R^7\to \R^7$,
\[
g( x\times y, z)=\phi(x,y,z),
\]
and an alternating trilinear map $\chi: \R^7\times \R^7\times \R^7 \to \R^7$,
\[
g(\chi(x,y,z),w)=*\phi(x,y,z,w).
\]

Now we scale the $\R^4$ factor by a small number $\epsilon$. The defining 3-form is changed to 
\[
\phi_\epsilon= \epsilon \underline{\omega}+\underline{\lambda}= \epsilon \sum \omega_i dt_i-dt_1dt_2dt_3.
\]
Accordingly, 
\[
*_\epsilon \phi_\epsilon= \epsilon \underline{\Theta}+\epsilon^2 \underline{\mu}= -\epsilon\sum_{cyc} \omega_i dt_j dt_k + \frac{\epsilon^2}{2}\omega_1^2, \quad g_\epsilon=\epsilon \sum dx_i^2+ \sum dt_j^2,
\]
\[
g_\epsilon( x\times_\epsilon y, z)=\phi_\epsilon(x,y,z), \quad g_\epsilon(\chi_\epsilon(x,y,z),w)=*\phi_\epsilon(x,y,z,w).
\]
It is easy to describe the effect of scaling on $\chi_\epsilon$.
\[
\chi_\epsilon (x,y,z)= \begin{cases}
\epsilon \chi(x,y,z)  & x, y \in \R^4 \\
\chi(x,y,z) & x\in \R^4, y, z\in \R^3 \\
0 & x,y,z\in \R^3
\end{cases}
\]
Other possibilities are clear from alternating property. Thus in the formal limit, only the combinations of $x,y,z$ with one factor in $\R^4$ and two factors in $\R^3$ can contribute.
For example, for $y=\frac{\partial}{\partial t_2}$, $z=\frac{\partial}{\partial t_3}$, $x\in \R^4$, the limit of $\chi_\epsilon$ gives $-I_1 x$, where $I_1$ is the complex structure corresponding to $\omega_1$. We remark in passing that complex structures $I_i$ act on 1-forms on $\R^4$ by the negative of  precomposition:
\[
I_i a =-a\circ I_i, \quad a\in (\R^4)^*.
\]
This convention is to ensure compatibility with quaternionic ring structure.

Similarly, contributions to the formal limit of the cross product require at least one factor from $\R^3$.

It is also interesting to see the effect of scaling on the Hodge star $*_7$ on $(\R^7)^*$. Given a 1-form $a\in (\R^4)^*$, then $*_7 a= *_4 a\wedge \underline{\lambda}$. If instead $a\in (\R^3)^*$, then $*_7 a=\underline{\mu}\wedge *_3 a$. Here $*_3$, $*_4$ are the Hodge stars on the duals of $\R^3$ and $\R^4$. Now when we scale the 3-form, 
\[
*_{7\epsilon} a= \begin{cases}
\epsilon *_4 a\wedge \underline{\lambda}
& a\in (\R^4)^* \\
\epsilon^2\underline{\mu}\wedge *_3 a
& a\in (\R^3)^*
\end{cases}
\]

\subsection{Donaldson's adiabatic fibration}\label{Donaldsonadiabaticfibration}

Following Donaldson \cite{Donaldson}, we consider a very collapsed $G_2$ manifold $M$ with a coassociative submersive K3 fibration $\pi: M\to B$ over a contractible base $B$, with fibres diffeomorphic to $X=K3$. The natural data to describe such a situation involve the 3-form $\phi_\epsilon= \epsilon \underline{\omega}+ \underline{\lambda}$, the dual 4-form $*_\epsilon\phi_\epsilon$ and the metric $g_\epsilon$ as in the linear algebraic model. The metric induces an orthogonal splitting of $TM$ into vertical and horizonal directions, \ie the data of a connection $H$. This induces a decomposition of forms \[
\Lambda^nT^*M=\bigoplus_{p+q=n} \Lambda^{p,q} T^*M,
\]
with $p$ terms from the base and $q$ terms from the fibre. The exterior differentiation operator $d: \Omega^{p,q}\to \Omega^{p, q+1}+ \Omega^{p+1, q} +\Omega^{p+2, q-1}$ splits into types
\[
d= d_f+ d_H + F_H,
\]
where $F_H$ is an algebraic operator. If we impose the torsion free condition on $\phi_\epsilon$, and take the formal limit as $\epsilon\to 0$, we get the adiabatic limiting structure
\begin{equation}\label{adiabaticclosed}
d_f \underline{\omega}=0, \quad  d_H \underline{\omega}=0, \quad d_f \underline{\lambda}=0,
\end{equation}
\begin{equation}\label{adiabaticcoclosed}
d_H\underline{\mu}=0, \quad d_f \underline{\Theta}=0, 
\end{equation}
and
\begin{equation}\label{maximalsubmanifold}
d_H \underline{\Theta}=0.
\end{equation}
The equations (\ref{adiabaticclosed}) come from the closed condition, and the equations (\ref{adiabaticcoclosed}), (\ref{maximalsubmanifold}) come from the coclosed condition. They imply that $\underline{\omega}$ fibrewise defines a hyperk\"ahler triple. At the linear algebraic level, the data $\underline{\omega}$, $\underline{\Theta}$, $\underline{\lambda}$, $\underline{\mu}$, $H$ specifies a Riemannian metric on the base $B$, which one heuristically imagines to be the Riemannian collapsing limit of a sequence of torsion free $G_2$ metrics. We shall refer to the data solving (\ref{adiabaticclosed}), (\ref{adiabaticcoclosed}), (\ref{maximalsubmanifold}) as \textbf{Donaldson's adiabatic fibration}. Some of these equations have simple interpretations. For example, $d_f\underline{\lambda}$ means $\underline{\lambda}$ is pulled back from the base, and $d_H\underline{\mu}=0$ means the horizontal distribution $H$ preserves the fibrewise volume form. The fibrewise $\underline{\mu}$ volume is thus a constant, which Donaldson fixes to be 1, and then the compatibility of the base metric $g^{base}$ with the other data is given by
\begin{equation}
g^{base}(\frac{\partial }{\partial t_i},\frac{\partial }{\partial t_j} )
=\frac{1}{2}\int_{M_b} \omega_i \wedge \omega_j.
\end{equation}

Donaldson shows how to compress these data into a maximal submanifold equation. We briefly outline this theory. The $\omega_i$'s give 3 class maps $B\to H^2(X)=H^2(K3)$, so define an $H^2(X)$-valued 1-form on $B$. The integrability conditions on $\omega_i$ implies this 1-form is closed, hence on a local base $B$ can be written as the derivative of a function $h:B\to H^2(X)$, which is well defined up to an additive constant.  
The derivatives $\frac{\partial h}{\partial t_i}$ give the class of $\omega_i$ in $H^2(X)$, so the tangent spaces of the image of $h$ are maximal positive subspaces in $H^2(X, \R)$, whose natural nondegenerate cup product form has signature (3,19). Donaldson calls such maps $h$ \textbf{positive sections}.

Using the Torelli theorem for hyperk\"ahler structures on K3 surfaces, this map $h$ recovers the fibrewise hyperk\"ahler metric up to diffeomorphism of fibres. The quantity $\underline{\lambda}$ is a volume form on $B$, chosen appropriately to ensure the corresponding fibrewise $\underline{\mu}$ volumes are 1. Donaldson shows there is a unique choice of $H$ to satisfy $d_H \underline{\omega}=0$ and $d_H \underline{\mu}=0$. The rest of the data can be determined from here using linear algebra, and they solve the equations (\ref{adiabaticclosed}), (\ref{adiabaticcoclosed}).

\begin{prop}
(\cf \cite{Donaldson}) The data set determined above by the positive section $h$ satisfies (\ref{maximalsubmanifold}) if and only if $h$ defines a maximal submanifold of $H^2(X)$.
\end{prop}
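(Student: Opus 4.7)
The plan is to unravel $d_H\underline{\Theta}=0$ into a cohomological PDE for $h$ and then match that PDE with the maximal submanifold equation for the image $h(B)\subset H^2(X,\R)\simeq \R^{3,19}$. Writing $\underline{\Theta}=-\sum_{\text{cyc}}\omega_i\wedge dt_j\wedge dt_k$ from the linear algebraic model of Section \ref{linearalgebraicmodelsection}, a direct type-by-type computation (using that $dt_j,dt_k$ are pulled back from the base, and that in the wedge with $dt_j\wedge dt_k$ only the horizontal derivative along $\partial/\partial t_i$ survives) gives
\[
d_H\underline{\Theta}=-\Bigl(\sum_{i=1}^{3}\partial_i^{H}\omega_i\Bigr)\wedge dt_1\wedge dt_2\wedge dt_3,
\]
where $\partial_i^{H}$ denotes the horizontal Lie derivative along $\partial/\partial t_i$ acting on fibre 2-forms. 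Thus $d_H\underline{\Theta}=0$ is the pointwise condition $\sum_i\partial_i^{H}\omega_i=0$ on every fibre.

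Next I would reduce this pointwise vanishing to a cohomological one. Donaldson's connection $H$ is pinned down uniquely by $d_H\underline{\omega}=0$ and $d_H\underline{\mu}=0$, and the key consequence is that each $\partial_i^{H}\omega_j$ is the harmonic representative of its cohomology class on the fibre, for the fibrewise hyperk\"ahler metric determined by $(\omega_1,\omega_2,\omega_3)$. Indeed, $d_H\underline{\omega}=0$ yields both the symmetry $\partial_i^{H}\omega_j=\partial_j^{H}\omega_i$ and, via $d_f\omega_j=0$, the closedness $d_f(\partial_i^{H}\omega_j)=0$; co-closedness $d_f^{*}(\partial_i^{H}\omega_j)=0$ then follows from $d_H\underline{\mu}=0$ together with the self-duality of hyperk\"ahler forms. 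Since $[\omega_j]_b=\partial_jh(b)$ by definition of the positive section, differentiating along $\partial_i$ yields $[\partial_i^{H}\omega_j]=\partial_i\partial_j h$ in $H^2(X,\R)$. Hence the harmonic form $\sum_i\partial_i^{H}\omega_i$ vanishes on every fibre if and only if its cohomology class $\sum_i\partial_i^2 h$ vanishes.

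Finally I would identify $\sum_i\partial_i^2 h=0$ with the maximal submanifold equation. The normalization $\int_{M_b}\underline{\mu}=1$ and the hyperk\"ahler identity $\omega_i\wedge\omega_j=2\delta_{ij}\underline{\mu}$ yield $\partial_i h\cdot\partial_j h=2\delta_{ij}$ in the cup-product pairing on $H^2(X,\R)$, so $h$ is an isometric immersion (up to a constant homothety) with flat induced metric $2\sum_i dt_i^2$. The Christoffel symbols on $B$ therefore vanish, and the tension field reduces to $\tau(h)=\tfrac12\sum_i\partial_i^2 h$; constancy of $\partial_i h\cdot\partial_j h$ forces $\tau(h)$ to be normal to $h(B)$, so $\tau(h)$ coincides, up to a dimensional constant, with the mean curvature vector of $h(B)\subset\R^{3,19}$. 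The equation $\tau(h)=0$ is then exactly the condition that $h(B)$ is a maximal submanifold.

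The principal technical obstacle is the harmonicity claim of the second step, which relies on a careful use of Donaldson's gauge fixing for the horizontal distribution together with fibrewise hyperk\"ahler deformation theory. Once that input is accepted, the rest of the argument assembles formally from the type decomposition of $d$ and elementary properties of the cup-product pairing on $H^2(X,\R)$.
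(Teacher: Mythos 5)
Your overall strategy is the natural one and is consistent with what the paper itself verifies in Section \ref{Adiabaticspinstructure$G_2$geometry}: there, $d_H\underline{\Theta}=0$ is identified with $\sum_i\mathcal{L}_{\partial/\partial t_i}\omega_i=0$, and the conditions $d_H\underline{\omega}=0$, $d_H\underline{\mu}=0$ are shown (via the coefficients $a^k_{ij}, b^k$ of (\ref{hyperkahlerformvariationakbk})) to force each $\mathcal{L}_{\partial/\partial t_k}\omega_i$ to be a closed anti-self-dual, hence harmonic, form, so that the pointwise condition is detected by the class $\sum_i\partial_i^2h$. Note the paper does not prove this Proposition itself (it is quoted from Donaldson), so the comparison is with that supporting computation rather than with a written-out proof.

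The genuine gap is in your last step. You assert that $\partial_ih\cdot\partial_jh=2\delta_{ij}$ globally, hence that the induced metric is flat, the Christoffel symbols vanish, and maximality becomes $\sum_i\partial_i^2h=0$ in the given coordinates. In fact $\partial_ih\cdot\partial_jh=\int_{M_b}\omega_i\wedge\omega_j=2\,g^{base}(\partial_{t_i},\partial_{t_j})$, which varies over $B$ and is in general not flat; the $t_i$ are arbitrary coordinates and no choice of coordinates makes the period matrix identically $2\delta_{ij}$ unless the induced metric happens to be flat. Consequently the maximal submanifold equation is the vanishing of the tension field $\sum_{i,j} g^{ij}\bigl(\partial_i\partial_jh-\Gamma^k_{ij}\partial_kh\bigr)$, not of $\sum_i\partial_i^2h$, and your argument that "constancy of $\partial_ih\cdot\partial_jh$ forces $\tau(h)$ to be normal" rests on a false premise. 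The same issue infects the harmonicity step: if $\mathcal{L}_{\partial_k}\omega_i$ were ASD in arbitrary coordinates, its class $\partial_k\partial_ih$ would be orthogonal to all $[\omega_l]$, which fails precisely when the tangential (Christoffel) part of $\partial_k\partial_ih$ is nonzero; the vanishing of $a^k_{ij},b^k$ in the paper is derived in geodesic coordinates at a chosen point. The repair is exactly that device: fix $b_0$, take geodesic coordinates for $g^{base}$ at $b_0$ (so the period matrix is $2\delta_{ij}$ with vanishing first derivatives and $\Gamma^k_{ij}(b_0)=0$); then $\sum_i\mathcal{L}_{\partial_i}\omega_i$ is harmonic on $X_{b_0}$, it vanishes iff $\sum_i\partial_i^2h(b_0)=0$, and at $b_0$ this vector is automatically normal and proportional to the mean curvature, so (\ref{maximalsubmanifold}) at $b_0$ is equivalent to maximality at $b_0$; since $b_0$ is arbitrary and both conditions are pointwise and coordinate-invariant, the equivalence follows. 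As written, however, the global flatness claim is an error, not just an omission.
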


One can take a variational viewpoint on this. The area functional on the positive section $h$ induced from the immersion into $H^2(X)$,
\begin{equation}\label{areafunctional}
Area(h)=\int_B det ^{1/3}(\int_X[\omega_i]\cup [\omega_j]) dt_1dt_2dt_3, \quad [\omega_i]=\frac{\partial h}{\partial t_i},
\end{equation}
is the limiting manifestation in the adiabatic situation of the Hitchin volume functional on closed positive 3-forms. Thus the critical points correspond unsurprisingly to Donaldson's adiabatic fibrations, which are supposed to be collapsed limits of torsion free $G_2$ structures.

\begin{rmk}
At present there is no known existence result in the direction of obtaining an actual $G_2$ structure from perturbing the adiabatic fibration data. Our discussions therefore will have a formal character, and we shall be interested in the limiting equations in their own right.
\end{rmk}

\subsection{Adiabatic limit of associative sections}\label{adiabaticlimitofassociative}

An associative section is a section $s$ of $\pi: M\to B$ which is an associative submanifold. The associative condition can be characterised as $\chi_\epsilon(x,y,z)=0$ where $x,y,z$ is a basis for the tangent space of the submanifold $s(B)$. Let $t_i$ be a set of coordinates on $B$, such that $\frac{\partial}{\partial t_i}$ is orthonormal at a given point of interest.
Using the horizontal distribution $H$, one can decompose $ds(\frac{\partial}{\partial t_i}) \in TM$ into the horizontal lift of $\frac{\partial}{\partial t_i}$, still denoted $\frac{\partial}{\partial t_i}$, and the vertical part, denoted $\nabla_{ \frac{\partial}{\partial t_i} } s$:
\[
ds(\frac{\partial}{\partial t_i})=\frac{\partial}{\partial t_i}+ \nabla_{ \frac{\partial}{\partial t_i} } s.
\]
This notation is compatible with viewing $H$ as an Ehresmann connection. We warn the reader that writing $\frac{\partial}{\partial t_i}$ for horizontal vectors must not be taken to imply $H$ is an integrable distribution.
Using the basic linear algebraic model in Section \ref{linearalgebraicmodelsection}, we can take the formal limit as $\epsilon\to 0$ of the equation
\[
\chi_\epsilon( ds(\frac{\partial}{\partial t_1}), ds(\frac{\partial}{\partial t_2}), ds(\frac{\partial}{\partial t_3})         )=0,
\]
to obtain
\begin{equation}\label{adiabaticassociativesection}
\sum_{i}I_i \nabla_{ \frac{\partial}{\partial t_i}  } s =0.
\end{equation}
Here $I_1, I_2, I_3$ are the 3 complex structures on the K3 fibres, corresponding to the symplectic forms $\omega_i$ obtained by contracting $\frac{\partial}{\partial t_i}$ with $\underline{\omega}$.

\begin{rmk}
Here by the formal limit we mean that we na\"ively pretend as if the section $s$ converges smoothly, and there exists  some actual torsion free $G_2$ structure $\phi_\epsilon$ very closely approximated by our linear algebraic model. There is no claim that actual associative sections would have this sort of convergence behaviour. Similar remarks will apply to the analogous settings elsewhere in this paper.
\end{rmk}

This limiting equation has an invariant meaning: it is the Fueter equation for the section of the hyperk\"ahler K3 bundle $\pi: M\to B$. More discussions on the Fueter equation will be given later in Section \ref{TheFueterequationrecap}. Thus

\begin{prop}\label{associativesectionisFueter}
The formal adiabatic limiting condition for an associative section is the Fueter equation.
\end{prop}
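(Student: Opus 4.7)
The plan is to verify that the expansion of $\chi_\epsilon\bigl(ds(\partial_{t_1}), ds(\partial_{t_2}), ds(\partial_{t_3})\bigr)=0$ in the formal limit $\epsilon \to 0$ reproduces exactly the equation $\sum_i I_i \nabla_{\partial_{t_i}} s = 0$ already written in (\ref{adiabaticassociativesection}), and then to identify this equation with the Fueter equation for sections of a bundle of hyperk\"ahler manifolds. Since the trilinear vanishing is the defining condition of an associative submanifold and since no other limiting equation survives, this will suffice.

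First, I would fix a point $p \in s(B)$ and a local coordinate chart $(t_1,t_2,t_3)$ on $B$ in which $\partial_{t_i}$ is orthonormal at $\pi(p)$ with respect to $g^{base}$. Writing $ds(\partial_{t_i}) = \partial_{t_i} + \nabla_{\partial_{t_i}} s$ with the first summand horizontal and the second vertical, I would expand the alternating trilinear form $\chi_\epsilon$ fully on these three vectors. By the linear algebra recalled in Section \ref{linearalgebraicmodelsection}, among the eight resulting terms, those with three horizontal factors vanish identically, the terms with two vertical factors enter with a coefficient $\epsilon$ and hence disappear in the limit, and the (necessarily unique, by alternation) term with three vertical factors likewise carries at least one power of $\epsilon$. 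What survives as $\epsilon \to 0$ are precisely the three contributions with one vertical and two horizontal factors, namely
\begin{equation*}
\sum_{(ijk) \text{ cyclic}} \chi\bigl(\nabla_{\partial_{t_i}} s,\, \partial_{t_j},\, \partial_{t_k}\bigr).
\end{equation*}

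Next I would evaluate each surviving term using the explicit computation in Section \ref{linearalgebraicmodelsection}: for a vertical vector $x$ and the horizontal basis vectors $\partial_{t_j}, \partial_{t_k}$, the limit of $\chi_\epsilon$ is $-I_i x$, where $I_i$ is the complex structure corresponding to the self-dual form $\omega_i = \iota_{\partial_{t_i}}\underline{\omega}$ and $(ijk)$ is a cyclic permutation. Substituting and using the alternation to fix signs, the sum collapses (up to an overall minus sign absorbed into the vanishing condition) to
\begin{equation*}
\sum_{i=1}^{3} I_i \nabla_{\partial_{t_i}} s \;=\; 0,
\end{equation*}
which is (\ref{adiabaticassociativesection}).

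Finally I would observe that the right-hand side is manifestly the Fueter operator acting on $s$, regarded as a section of the bundle of hyperk\"ahler K3 fibres $\pi \colon M \to B$: the $I_i$ are the fibrewise complex structures determined by the hyperk\"ahler triple $\underline{\omega}$ produced by Donaldson's adiabatic structure, the orthonormal frame $\{\partial_{t_i}\}$ identifies $T_{\pi(p)} B$ with $\Im \mathbb{H}$, and $\nabla_{\partial_{t_i}} s$ uses the Ehresmann connection $H$ to make sense of derivatives of $s$ as a vertical vector. Invariance under change of orthonormal frame follows from the quaternionic equivariance of the $I_i$'s, since any $SO(3)$ rotation of $\partial_{t_i}$ corresponds to a simultaneous rotation of $(I_1, I_2, I_3)$ within $\Im \mathbb{H}$. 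This will be recorded once the Fueter equation is reviewed in Section \ref{TheFueterequationrecap}, so the only real work is the linear algebraic expansion. There is no serious obstacle: the one delicate point is confirming the sign and cyclic labelling of $I_i$, which I would check by testing on a single component (say $y = \partial_{t_2}, z = \partial_{t_3}$, whence the coefficient is $-I_1$ as noted explicitly in Section \ref{linearalgebraicmodelsection}), and the rest follows by cyclic symmetry of $\chi$.
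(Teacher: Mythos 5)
Your proposal is correct and follows essentially the same route as the paper: decompose $ds(\partial_{t_i})$ into horizontal and vertical parts, use the scaling of $\chi_\epsilon$ from the linear algebraic model of Section \ref{linearalgebraicmodelsection} so that only the one-vertical-two-horizontal terms survive, evaluate them via the $-I_i$ computation, and identify the resulting equation (\ref{adiabaticassociativesection}) with the Fueter equation on the hyperk\"ahler K3 bundle. The only difference is that you spell out the bookkeeping of the eight terms and the frame-independence explicitly, which the paper leaves implicit.
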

We sometimes refer to the solutions of (\ref{adiabaticassociativesection}) as \textbf{adiabatic associative sections}.

It is interesting to understand this in terms of variational formulations. Associative submanifolds are calibrated by the 3-form, so in particular are critical points of the area functional for compactly supported variations. Small perturbations of $s(B)$ will remain being a section. This suggests evaluating the formal limit of the area functional as $\epsilon\to 0$ on the space of all sections $s: B\to M$.
One finds 
\[
\text{Area}(s(B))=\int_B \underline{\lambda},
\]
\ie the area functional is constant.

Alternatively, one can think of the \textbf{Chern-Simons type functional}. Fix an arbitrary section $s_0$ as a base point, so $s(B)$ and $s_0$ are viewed as the boundary of a 4-fold $N: [0,1]\times B\to M$. We integrate the 4-form over the 4-fold $N$ to define a functional, which is independent of $N$ under boundary fixing homotopies, and the critical points are associative sections. We can take a scaled formal limit of this functional, which is 
\[
CS^{associative}(s)=\int_{[0,1]\times B} N^*\underline{\Theta}.
\]
The critical points are just adiabatic  associative sections.

\subsection{Adiabatic limit of $G_2$ instantons}\label{adiabaticlimitof$G_2$instantons}

The $G_2$ instanton equation for a connection $A$ on a principal bundle $P$ over $M$ (usually with structure group $U(r)$ or $PU(r)$; the reader should bear in mind both cases), can be written as
\[
F_A \wedge *_\epsilon \phi_\epsilon=0.
\]
The formal limit of this equation after scaling, as $\epsilon\to 0$, is
\begin{equation}\label{limiting$G_2$instanton}
F_A\wedge \underline\Theta=0.
\end{equation}
Solutions of (\ref{limiting$G_2$instanton}) will be called \textbf{adiabatic $G_2$ instantons}.
We understand this limiting equation by decomposing the curvature $F_A$ into horizontal and vertical $(p,q)$ types using the Ehresmann connection $H$ (the reader should take care not to confuse this with Dolbeault type decomposition):
\[
F_A=F_A^{(2,0)}+ F_A^{(1,1)}+F_A^{(0,2)}.
\]
Then (\ref{limiting$G_2$instanton}) can be written as the ASD instanton condition on each fibre
\begin{equation}\label{ASDfibre}
F_A^{(0,2)} \wedge \omega_i=0,
\end{equation}
and some condition specifying variation with the fibres
\begin{equation}\label{adiabatic$G_2$instantonhorizontal}
F_A^{(1,1)} \wedge \underline\Theta=0.
\end{equation}

We compare this to the variational formulations. $G_2$ instantons are critical points of the Yang-Mills functional. If one formally take the $\epsilon\to 0$ limit of the YM functional
\[
YM(A)=\lim_{\epsilon\to 0} \frac{1}{8\pi^2}\int_M |F|^2 d\text{Vol},
\]
the scaling behaviour on the norm of 2-forms will pick out vertical components, so one obtains
\[
YM(A)= \frac{1}{8\pi^2} \int_B \underline{\lambda}(y) \int_{M_y}|F^{(0,2)}|^2 \underline{\mu}.
\]
The minima of this functional are just the connections $A$ which fibrewise restrict to an ASD connection. The minimum value is up to a factor of topological charge, just $\int_B \underline{\lambda}$. We don't see any information about the variation with fibres.

Alternatively, one can think of the \textbf{Chern-Simons type functional}. One fixes an arbitrary connection $A_0$ as a base point, a path joining $A_0$ and $A$, and consider the integral 
\[
\frac{-1}{4\pi^2}\int_{M\times [0,1]} \Tr (F_A(t)\wedge \frac{\partial {A(t)} }{\partial t}) \wedge *_\epsilon \phi_\epsilon \wedge  dt . 
\]
Taking the scaled formal limit, one obtains
\begin{equation}\label{ChernSimons}
CS^{instanton}(A)=\frac{-1}{4\pi^2}\int_{M\times [0,1]} \Tr (F_A(t)\wedge \frac{\partial {A(t)} }{\partial t}) \wedge \underline\Theta \wedge dt .
\end{equation}
The critical points are just the adiabatic $G_2$ instantons. The invariance of the Chern-Simons functional under boundary fixing homotopies uses the fact that $\underline\Theta$ is closed, much in the same way the closedness of $*\phi$ is the basic integrability condition underpinning the usual $G_2$ instanton equation.

When the structure group is $U(r)$, then the existence of adiabatic $G_2$ instantons, or more specifically the fibrewise ASD condition, necessarily implies the first Chern class $c_1(P)$ of the associated vector bundle is orthogonal to the fibrewise hyperk\"ahler triple. This is quite restrictive because it forces the underlying Donaldson's adiabatic fibration to be \textbf{non-generic}. As a remedy to broaden the applicability, we introduce

\begin{Def}\label{slopepotentialdef}
A $U(r)$ connection $A$ over a K3 fibre is called \textbf{Hermitian Yang-Mills} if it satisfies \begin{equation}\label{twistedadiabatic$G_2$instantonfibrewiseHYM}
\frac{\sqrt{-1}}{2\pi }F_A\wedge \omega_i= (\mathcal{B}\wedge \omega_i) I= (\frac{1}{r}\int_X c_1(P|_{X})\cup [\omega_i])\underline{\mu}\otimes I      ,
\end{equation}
where $\mathcal{B}$ is the harmonic 2-form representing $\frac{1}{r}c_1(P)|_{K3}$, and $I$ is the identity matrix. A $U(r)$ connection over $M$ is a \textbf{twisted adiabatic $G_2$ instanton} if it is  HYM  on every fibre, and morever it satisfies (\ref{adiabatic$G_2$instantonhorizontal}). The \textbf{slope potential function} of the $U(r)$ bundle is \[
B\to \R, \quad b\mapsto \frac{1}{r} \int_{M_b} c_1(P)\cup h,
\]
where $h$ is the positive section of the Donaldson's adiabatic fibration.

\end{Def}

\begin{rmk}
The class $c_1$ is orthogonal to some choice of $\omega_2$ and $\omega_3$, this is the usual notion of Hermitian Yang-Mills connection with respect to the preferred complex structure defined by $\omega_2+i\omega_3$. When $\mathcal{B}=0$ we recover ASD connections. The slope potential is equal to $\int_{M_b} \mathcal{B}\wedge h$, and its gradient encodes the slope of the HYM connection in the holomorphic case.
\end{rmk}

\begin{rmk}
It follows easily by taking the trace of (\ref{twistedadiabatic$G_2$instantonfibrewiseHYM}) that 
$
\frac{\sqrt{-1}}{2\pi r}\Tr F_A= \mathcal{B}
$ on K3 fibres. It is useful to think of twisted adiabatic $G_2$ instantons as given by a $PU(r)$  adiabatic $G_2$ instanton, and a prescription on the central part of the curvature.
\end{rmk}

\subsection{Adiabatic limit of $G_2$ monopoles}\label{adiabaticlimitof$G_2$monopoles}

The $G_2$ monopole equation involves a connection $A$ on a principal bundle $P$ over $M$, and a Higgs field $\Phi$, \ie a section of $ad(P)$. The equation is
\[
F_A \wedge *_\epsilon \phi + *_\epsilon d_A \Phi=0.
\]
This equation arises from dimensional reduction of the Spin(7) instanton equation. When the Higgs field is parallel, this reduces to the $G_2$ instanton equation; this happens automatically on a compact $G_2$ manifold. From the analytical viewpoint, this is an elliptic equation, whereas the $G_2$ instanton equation is overdetermined.

Now we take the formal limit after scaling the equation by $\epsilon^{-1}$. The resulting equation is 
\begin{equation}
F_A \wedge \underline\Theta+ *_4 (d_A \Phi)^{(0,1)} \wedge \underline{\lambda}=0.
\end{equation}
Here $(d_A \Phi)^{(0,1)}$ is the vertical component of $d_A \Phi$. Solutions will be called the \textbf{adiabatic $G_2$ monopoles}. 
 A perhaps surprising feature of the limiting equation is that it does not see the horizontal variation of $\Phi$. The Higgs field on different fibres decouple.

\begin{prop}\label{Adiabatic$G_2$monopoleadiabatic$G_2$instanton}
If $(A, \Phi)$ is an adiabatic $G_2$ monopole, then $d_A\Phi$ vanishes necessarily, so adiabatic $G_2$ monopoles are actually equivalent to adiabatic $G_2$ instantons.
\end{prop}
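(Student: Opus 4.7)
The plan is to combine a horizontal/vertical type decomposition of the adiabatic monopole equation with an integration-by-parts argument, where the effective closedness $d\underline\Theta=0$ on a three-dimensional base provides the decisive cancellation.

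First I decompose $F_A = F_A^{(2,0)}+F_A^{(1,1)}+F_A^{(0,2)}$ according to the bigrading induced by $H$. The piece $F_A^{(2,0)}\wedge\underline\Theta$ has bidegree $(4,2)$ and vanishes identically since $\dim B = 3$. Matching types in the equation then splits it cleanly: the $(2,4)$ part yields the fibrewise ASD condition $F_A^{(0,2)}\wedge\omega_i = 0$, while the $(3,3)$ part becomes
$$F_A^{(1,1)}\wedge\underline\Theta + *_4(d_A\Phi)^{(0,1)}\wedge\underline\lambda = 0.$$

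Next I pair this $(3,3)$ relation with the 1-form $d_A\Phi$, take the invariant trace, and integrate over $M$. By type considerations the only contributions to a top form come from $(d_A\Phi)^{(0,1)}\wedge F_A^{(1,1)}\wedge\underline\Theta$ and $(d_A\Phi)^{(0,1)}\wedge *_4(d_A\Phi)^{(0,1)}\wedge\underline\lambda$; the only other a priori candidate, namely $(d_A\Phi)^{(1,0)}\wedge F_A^{(0,2)}\wedge\underline\Theta$, is killed by the fibrewise ASD condition just established. Hence I can replace $F_A^{(1,1)}$ by the full $F_A$ in the first summand, and the Bianchi identity gives $\Tr(d_A\Phi\wedge F_A) = d\,\Tr(\Phi F_A)$. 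Stokes' theorem (with compactly supported test data in the local base, or compact fibres) then converts the first summand into $-\int_M \Tr(\Phi F_A)\wedge d\underline\Theta$, which vanishes term by term: $d_f\underline\Theta=0$ and $d_H\underline\Theta=0$ are Donaldson's adiabatic equations, and the remaining $F_H\underline\Theta$ piece has bidegree $(4,1)$ and vanishes because $\dim B=3$.

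The second summand reduces, via the pointwise identity $-\Tr(\gamma\wedge *_4\gamma)=|\gamma|^2\underline\mu$ for vertical $ad(P)$-valued 1-forms $\gamma$, to a positive multiple of $\int_M |(d_A\Phi)^{(0,1)}|^2\,\underline\lambda\wedge\underline\mu$. Positivity forces $(d_A\Phi)^{(0,1)}\equiv 0$, and feeding this back into the $(3,3)$ equation gives precisely the horizontal adiabatic $G_2$ instanton condition $F_A^{(1,1)}\wedge\underline\Theta=0$. Finally, since $\Phi|_{M_b}$ is then covariantly constant with respect to a fibrewise ASD connection, in the generic irreducible case $\Phi|_{M_b}$ must be central; for $SU(r)$ or $PU(r)$ structure groups this forces $\Phi\equiv 0$, so $d_A\Phi=0$ identically on $M$, while in the $U(r)$ case the residual central freedom is absorbed into a twist by a pulled-back $u(1)$-valued 1-form from $B$, consistent with the twistings used elsewhere in the paper. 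The main obstacle I anticipate is the careful bookkeeping of bidegrees, signs, and boundary contributions in the integration by parts, together with the clean treatment of the reducible versus irreducible split when promoting the vertical vanishing to the global statement $d_A\Phi=0$.
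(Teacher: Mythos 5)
Your argument is correct in substance but follows a genuinely different route from the paper. The paper applies $d_A$ to the monopole equation: Bianchi plus $d\underline{\Theta}=0$ give $d_A\bigl(*_4(d_A\Phi)^{(0,1)}\wedge\underline{\lambda}\bigr)=0$, which (since $\underline{\lambda}$ is closed and pulled back) is exactly the fibrewise Laplace equation $d_A^*d_A\Phi=0$ on each compact K3 fibre, and a fibrewise integration by parts kills the vertical derivative of $\Phi$; the problem thus decouples fibre by fibre and the noncompactness of $M$ never enters. You instead pair the $(3,3)$ component with $d_A\Phi$, trace, and run a global energy/Stokes argument over $M$, with the same two ingredients (Bianchi and $d\underline{\Theta}=0$, including the observation that the $F_H$ piece dies for degree reasons on a $3$-dimensional base) providing the cancellation, and positivity of $-\Tr(\gamma\wedge *_4\gamma)$ forcing $(d_A\Phi)^{(0,1)}=0$. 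The one step you gloss is the boundary contribution: since $B$ is not compact, Stokes over $\pi^{-1}(B')$ for a compact subdomain $B'\subset B$ leaves the term $\int_{\pi^{-1}(\partial B')}\Tr(\Phi F_A)\wedge\underline{\Theta}$, and "compactly supported test data" is not available because $\Phi$ and $A$ are given; what actually kills this term is that its restriction to the boundary only involves $\Tr(\Phi F_A^{(0,2)})\wedge\omega_i$, which vanishes by the fibrewise ASD condition you established first --- the same mechanism used in the paper's Chern--Simons computation. With that line added, your proof is complete, and it is arguably more careful than the paper on one point: the equation only ever controls the vertical part $(d_A\Phi)^{(0,1)}$, which is all that is needed for the equivalence with adiabatic $G_2$ instantons; your closing discussion of centrality and irreducibility (which imports an assumption not present in the statement) is tangential and can be dropped, since neither the paper's proof nor yours establishes vanishing of the horizontal variation of $\Phi$, and the paper itself remarks that the limiting equation does not see it.
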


\begin{proof}
We notice that $\underline\Theta$ in Donaldson's adiabatic fibration is closed. Therefore the Bianchi identity $d_A F_A=0$ implies $d_A( F_A \wedge \underline\Theta)=0$, so \[d_A *_4 (d_A \Phi)^{(0,1)} \wedge \underline{\lambda}=0.\]
Here $\underline{\lambda}$ is just pulled back from the base $B$, and is closed. A moment of thought reveals $d_A^* d_A \Phi=0$ on each K3 fibre independently. Integration by part shows the $L^2$ norm of $d_A \Phi$ vanishes, hence the claim.
\end{proof}

\section{Adiabatic spin structures}\label{Adiabaticspinstructures}

We study the adiabatic limiting description of spinors on Donaldson's adiabatic fibrations. In Section \ref{AdiabaticLeviCivita}, \ref{AdiabaticspinstructureRiemanniangeometry}, we set out the general Riemannian framework of adiabatic spinors on $7=3+4$ dimensions, and special attention is paid to the Levi-Civita connection. This is essentially a standard construction in family index theory \cite[Chapter 1]{Bismut}. The general principle is that adiabatic spin structure on $M$ is encoded in the variation of the fibrewise spin structures, and the spinor bundle is better understood by a chiral decomposition. 

We then specialise to Donaldson's setting. In Section \ref{Adiabaticspinstructure$G_2$geometry} we show the adiabatic analogue of the well known characterisation of torsion free $G_2$ structures in terms of existence of parallel spinors. This gives a satisfactory understanding of the positive spinors.

Characterising the negative spinors is a deeper question. We prepare ourselves with some calculations about the variations of the fibrewise hyperk\"ahler metrics in Section \ref{Digressionhyperkahlermetric}. We then discuss in Section \ref{Curvatureoperators}  a certain curvature operator which captures information about Donaldson's adiabatic fibrations in a very essential way. This operator is intimately related to the variation of the fibrewise Dirac operator, which is a crucial ingredient in our study of the Nahm transform in Chapter \ref{TheNahmtransform}.

\subsection{Adiabatic Levi-Civita connection}\label{AdiabaticLeviCivita}

We wish to define the analogue of the Levi-Civita connection on Donaldson's adiabatic fibration $\pi: M\to B$. We first recall the usual formula of the Levi-Civita connection on a bona fide Riemannian manifold, written in terms of Lie brackets of tangent vectors:
\begin{equation}\label{LeviCivita}
\begin{split}
g(\nabla_X Y, Z)=\frac{1}{2}\{X g(Y,Z)+Y g(Z,X)-Z g(X,Y) \\
+g([X,Y],Z)-g([Y,Z], X  )-g([X,Z],Y) \}.
\end{split}
\end{equation}
In general, suppose we are given a horizontal distribution $H$ inducing a splitting of tangent bundle into vertical and horizontal subbundles $TM=TM_b\oplus H$, the fibrewise metrics $g^{fibre}$, and a metric $g^{base}$ on $B$, then there is a family of metrics $g_\epsilon=\epsilon g^{fibre}+\pi^*g^{base}$ such that $\pi:M\to B$ is a Riemannian submersion. The Levi-Civita connection induces subbundle connections $\nabla^\epsilon$ on $TM_b$ and $H$, for which we can take limits $\nabla^{LC}$ as $\epsilon\to 0$. We describe the results in cases:
\begin{itemize}
\item 
For vertical $X, Y$, the limit $\nabla^{LC}_X Y$ agrees with the Levi-Civita on the fibre.

\item 
Let $X$ be a horizontal lift of a vector field from the base, and $Y$ be vertical. Thus $[X,Y]$ is vertical, by the property of the Lie bracket. If $Z$ is vertical, one sees
\[
\begin{split}
\lim \frac{1}{\epsilon} g_\epsilon (\nabla^{g_\epsilon}_Y X, Z) &=\frac{1}{2}\{X g^{fibre} (Z,Y)+ g^{fibre}([Y,X], Z)- g^{fibre}([X,Z], Y) \}\\
& =\frac{1}{2} (\mathcal{L}_X g^{fibre})(Y, Z)
\end{split}
\]
where $\mathcal{L}$ is the Lie derivative. Using that the Levi-Civita of $g_\epsilon$ is a symmetric connection, we take the limit to get
\begin{equation}\label{adiabaticLeviCivitacase3}
g^{fibre} (\nabla^{LC}_X Y, \cdot{})=\frac{1}{2} (\mathcal{L}_X g^{fibre})(Y, \cdot{})+g^{fibre}([X,Y]. \cdot).\end{equation}

\item If $X$ and $Y$ are both horizontal lift of vector fields from the base, then $\nabla^{LC}_X Y$ agrees with the Levi-Civita connection on $B$.

\end{itemize}

We can think of $\nabla^{LC}$ as the \textbf{adiabatic Levi-Civita} on Donaldson's adiabatic fibration, a connection on the subbundles $TM_b$ and $H$ compatible with the metric on fibres and on the base. The reader is warned not to confuse this with the horizontal distribution $H$. The adiabatic Levi-Civita connection also acts on horizontal/vertical 1-forms as the natural connection on the dual bundle. Using the metrics, we can identify horizontal/vertical vectors $Y$ with horizontal/vertical 1-forms $Y^\sharp$, and $(\nabla^{LC}_X Y)^\sharp= \nabla^{LC}_X Y^\sharp$.

We take the opportunity to discuss the \textbf{curvature} $R(X,Y)=\nabla^{LC}_X \nabla^{LC}_Y-\nabla^{LC}_Y \nabla^{LC}_X-\nabla^{LC}_{[X,Y]}$. The $\nabla^{LC}$ acting on the horizontal vectors/1-forms agree with the Levi-Civita on the base $B$. So it is enough to understand the curvature tensor $R$ of $\nabla^{LC}$ acting on vertical vectors/ 1-forms. Suppose $X$, $Y$ are vertical, then  $R(X,Y)$  agrees with the Riemannian curvature operator for the fibre metric. The more interesting mixed term of $R$ is given by

\begin{lem}
Let $X, Y, Z$ be vertical vectors, and $\frac{\partial}{\partial t}$ be horizontal. Then
\begin{equation}\label{adiabaticLeviCivitacurvature}
\langle R(X,\frac{\partial}{\partial t} )Y, Z\rangle= \frac{1}{2}\{
(\nabla^{fibre}_Z \mathcal{L}_{ \frac{\partial }{\partial t}  }g^{fibre})(X,Y)
-
(\nabla^{fibre}_Y \mathcal{L}_{ \frac{\partial }{\partial t}  }g^{fibre})(X,Z)
\},
\end{equation}
where $\nabla^{fibre}$  means the Levi-Civita connection for the fibrewise metric, and here it is acting on the symmetric 2-tensor $\mathcal{L}_{ \frac{\partial }{\partial t}  }g^{fibre}$.
\end{lem}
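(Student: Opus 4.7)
The plan is to expand $R(X,\partial/\partial t)Y$ directly from its definition and the case-by-case formulas for $\nabla^{LC}$ already established above. Since $X$ is vertical and $\partial/\partial t$ is the horizontal lift of a coordinate vector field from the base, projectability forces $d\pi[X,\partial/\partial t]=[d\pi X,d\pi\,\partial/\partial t]=0$, so $[X,\partial/\partial t]$ is vertical. Consequently every covariant derivative that appears in the curvature expansion is either $\nabla^{LC}_{\partial/\partial t}$ applied to a vertical field, governed by (\ref{adiabaticLeviCivitacase3}), or $\nabla^{LC}$ of one vertical field in a second vertical direction, which by the first bullet of the case analysis reduces to the fibrewise Levi--Civita $\nabla^{fibre}$. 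I extend $X,Y,Z$ to vertical vector fields; tensoriality of $R$ guarantees the final expression is independent of the extension.

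Introduce the symmetric vertical $(0,2)$-tensor $S:=\tfrac{1}{2}\mathcal{L}_{\partial/\partial t}g^{fibre}$ and its metric dual endomorphism $\tilde S$ of the vertical bundle, characterised by $g^{fibre}(\tilde S W,W')=S(W,W')$. Formula (\ref{adiabaticLeviCivitacase3}) then reads $\nabla^{LC}_{\partial/\partial t}W=\tilde S W+[\partial/\partial t,W]$ for any vertical $W$. Substituting this identity into
\[
R(X,\partial/\partial t)Y=\nabla^{LC}_X\nabla^{LC}_{\partial/\partial t}Y-\nabla^{LC}_{\partial/\partial t}\nabla^{LC}_XY-\nabla^{LC}_{[X,\partial/\partial t]}Y,
\]
replacing each $\nabla^{LC}$ between vertical fields by $\nabla^{fibre}$, and observing that the two occurrences of $\tilde S(\nabla^{fibre}_XY)$ cancel, one collects the remainder into the invariant form
\[
R(X,\partial/\partial t)Y=(\nabla^{fibre}_X\tilde S)Y-(\mathcal{L}_{\partial/\partial t}\nabla^{fibre})(X,Y),
\]
where $(\mathcal{L}_{\partial/\partial t}\nabla^{fibre})(X,Y):=[\partial/\partial t,\nabla^{fibre}_XY]-\nabla^{fibre}_{[\partial/\partial t,X]}Y-\nabla^{fibre}_X[\partial/\partial t,Y]$ is the $(1,2)$-tensor measuring the rate of change of the fibre Levi--Civita under the flow of $\partial/\partial t$.

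Pairing against $Z$, metric compatibility of $\nabla^{fibre}$ gives $g^{fibre}((\nabla^{fibre}_X\tilde S)Y,Z)=(\nabla^{fibre}_XS)(Y,Z)$. For the remaining term, the local flow of the horizontal lift $\partial/\partial t$ identifies neighbouring fibres, and under this identification the fibre metrics form a $1$-parameter family on a single fibre whose initial derivative is $\mathcal{L}_{\partial/\partial t}g^{fibre}=2S$; the classical first variation of the Christoffel symbols then yields
\[
g^{fibre}((\mathcal{L}_{\partial/\partial t}\nabla^{fibre})(X,Y),Z)=(\nabla^{fibre}_XS)(Y,Z)+(\nabla^{fibre}_YS)(X,Z)-(\nabla^{fibre}_ZS)(X,Y).
\]
Cancelling the common $(\nabla^{fibre}_XS)(Y,Z)$ and re-inserting $S=\tfrac{1}{2}\mathcal{L}_{\partial/\partial t}g^{fibre}$ produces exactly (\ref{adiabaticLeviCivitacurvature}). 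The main obstacle is the careful handling of $\mathcal{L}_{\partial/\partial t}\nabla^{fibre}$: a priori it is not a tensor on a single fibre, and its clean expression in terms of $\mathcal{L}_{\partial/\partial t}g^{fibre}$ rests on the above horizontal trivialisation combined with the classical variation-of-Levi--Civita formula; once that bookkeeping is made, the remaining cancellations are purely algebraic.
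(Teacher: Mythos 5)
Your proof is correct, and it reaches the formula by a genuinely more invariant route than the paper. The paper works in coordinates $x_i,t_i$ adapted so that Lie derivatives become $\partial/\partial t$, expands $(\nabla^{LC}_X\nabla^{LC}_{\partial/\partial t}Y-\nabla^{LC}_{\partial/\partial t}\nabla^{LC}_XY)^\sharp$ with Christoffel symbols, and in the course of the computation effectively \emph{rederives} the first-variation identity $g_{is}\partial_t\Gamma^s_{jk}=\tfrac12\{(\nabla^{fibre}_k\partial_tg)_{ij}+(\nabla^{fibre}_j\partial_tg)_{ik}-(\nabla^{fibre}_i\partial_tg)_{kj}\}$; the commutator $\nabla^{LC}_{[X,\partial/\partial t]}Y$ is suppressed by the coordinate choice. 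You instead keep everything tensorial: writing $\nabla^{LC}_{\partial/\partial t}W=\tilde SW+[\partial/\partial t,W]$ (which is exactly the paper's (\ref{adiabaticLeviCivitacase3})), you organise the curvature as $(\nabla^{fibre}_X\tilde S)Y-(\mathcal{L}_{\partial/\partial t}\nabla^{fibre})(X,Y)$ and then import the classical first variation of the Levi--Civita connection, justified by the flow of the horizontal lift identifying neighbouring fibres (so that the pulled-back fibre connections are the Levi--Civita connections of the pulled-back fibre metrics, with metric derivative $\mathcal{L}_{\partial/\partial t}g^{fibre}$). Your cancellations and signs check out, including the verticality of $[X,\partial/\partial t]$ and the tensoriality of $\mathcal{L}_{\partial/\partial t}\nabla^{fibre}$. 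What your version buys is a cleaner treatment of the bracket term and a transparent structural statement (curvature in a mixed direction $=$ derivative of the shape-type tensor $S$ minus the variation of the fibre connection); what the paper's version buys is self-containedness, since the Christoffel manipulation proves the variation formula you quote as a classical fact, and it matches the index form in which the result is subsequently used.
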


\begin{proof}
The problem makes sense over a first order neighbourhood of $b\in B$, so we can take coordinates $x_1, x_2, x_3, x_4, t_i$, and write Lie derivatives $\mathcal{L}_{\frac{\partial}{\partial t}}=\frac{\partial}{\partial t}$, where $t$ can be any of the $t_i$. We compute in coordinates with Einstein's summation convention, and write $\Gamma$ for the Levi-Civita connection of the fibre metric. For $X=\frac{\partial}{\partial x_j}$ and $Y=\frac{\partial}{\partial x_k}$, using the compatibility of $\sharp$ with the  adiabatic Levi-Civita connection,
\[
(\nabla^{LC}_{   \frac{\partial}{\partial t}}Y)^\sharp= \frac{1}{2}\frac{\partial g_{ik}}{\partial t} dx^i, 
\quad
(\nabla^{LC}_X\nabla^{LC}_{   \frac{\partial}{\partial t}}Y)^\sharp
=\frac{1}{2}\{
\frac{\partial^2 g_{ik}}{\partial x_j\partial t} -\Gamma^s_{ji} \frac{\partial g_{sk}}{\partial t}
\}dx^i,
\]
\[
\nabla^{LC}_{   X} Y= \Gamma^s_{jk} \frac{\partial}{\partial x_s},
\quad
(\nabla^{LC}_{   \frac{\partial}{\partial t}}\nabla^{LC}_XY)^\sharp= \{  
\frac{1}{2}\frac{\partial g_{is}}{ \partial t} \Gamma^s_{jk}+
 g_{is} \frac{\partial}{\partial t}\Gamma^s_{jk}
\}dx^i.
\]
Hence by taking the difference,
\[
(R(X,\frac{\partial}{\partial t})Y)^\sharp=(\nabla^{LC}_X \nabla^{LC}_{ \frac{\partial}{\partial t} }Y-\nabla^{LC}_{ \frac{\partial}{\partial t}} \nabla^{LC}_X Y)^\sharp= \{\frac{1}{2} (\nabla^{fibre}_j \frac{\partial g}{\partial t})_{ik}
- g_{is} \frac{\partial}{\partial t}\Gamma^s_{jk} \}dx^i.
\]
We calculate further
\[
\begin{split}
g_{is} \frac{\partial}{\partial t}\Gamma^s_{jk}&= \frac{\partial}{\partial t}(g_{is} \Gamma^s_{jk})- \frac{\partial g_{is}}{\partial t} \Gamma^s_{jk} \\
& =\frac{1}{2}\frac{\partial}{\partial t}(
g_{ij,k}+g_{ik,j}-g_{jk,i}
)- \frac{\partial g_{is}}{\partial t} \Gamma^s_{jk} \\
& =\frac{1}{2}(
\frac{\partial^2 g_{ij}}{\partial x_k\partial t}+
\frac{\partial^2 g_{ik}}{\partial x_j\partial t}
-\frac{\partial^2 g_{jk}}{\partial x_i\partial t}
)- \frac{\partial g_{is}}{\partial t} \Gamma^s_{jk} \\
& =\frac{1}{2}\{
(\nabla^{fibre}_k\frac{\partial g}{\partial t})_{ij}+
(\nabla^{fibre}_j\frac{\partial g}{\partial t})_{ik}
-(\nabla^{fibre}_i\frac{\partial g}{\partial t})_{kj} \},
\end{split}
\]
so after cancellation,
\[
(R(X,\frac{\partial}{\partial t})Y)^\sharp= \frac{1}{2}\{
-
(\nabla^{fibre}_k\frac{\partial g}{\partial t})_{ij}
+(\nabla^{fibre}_i\frac{\partial g}{\partial t})_{kj} \} dx^i
.
\]
This is the claimed result in coordinate form.
\end{proof}

\subsection{Adiabatic spin structure: Riemannian geometry}\label{AdiabaticspinstructureRiemanniangeometry}

It is well known that genuine $G_2$ manifolds can be characterised in terms of the existence of parallel spinors. It is therefore interesting to examine how to describe spinors on Donaldson's adiabatic fibrations. In this Section we lay out the Riemannian geometric aspects of the adiabatic limit of the spin structure.

We first describe how to build up the spinor bundle on the 7-fold $M$ with Riemannian metric $\epsilon g^{fibre}+g^{base}$ before taking the adiabatic limit. This is quite general and is not tied to $G_2$ geometry. Let $S_B$ be the spinor bundle on the base $B$, so the pullback $S_B\to M$ is a complex rank 2 bundle, with a canonical complex volume form compatible with the $SU(2)$ structure. Let $S_{X}=S^+_X\oplus S^-_X$ be the spinor bundle on any fibre $X=M_b$, so these fit into a complex rank 4 bundle $S_X\to M$. Here $S^+_X$ and $S^-_X$ each has a complex volume form compatible with the fibrewise $SU(2)$-structure. The bundles $S_B$ and $S_X$ are equipped with Clifford multiplication actions $c_B$, $c_X$ by tangent vectors $\frac{\partial}{\partial t_i}$ on $B$ and  $\frac{\partial}{\partial x_i}$ on $X$, respectively. In our choice of conventions, $c_B(dt_1)c_B(dt_2)c_B(dt_3)=-1$.
We can make the pointwise construction to obtain a complex rank 8 bundle $S=S_X\otimes S_B\to M$. The tangent bundle and the cotangent bundle on $M$ act on $S$ by
\begin{equation}\label{Cliffordactiononfibration}
\begin{cases}
c(\frac{\partial}{\partial x_i})= c_X( \frac{\partial}{\partial x_i}  )\otimes 1, \quad S^-_X\otimes S_B\to S^+_X\otimes S_B, \\
c(dx_i)= c_X( dx_i  )\otimes 1, \quad S^+_X\otimes S_B\to S^-_X\otimes S_B, \\
c(\frac{\partial}{\partial t_i})=1\otimes   c_B( \frac{\partial}{\partial t_i}  ), \quad S^-_X\otimes S_B\to S^-_X\otimes S_B, \\
c(dt_i)= -1\otimes   c_B( dt_i  ), \quad S^+_X\otimes S_B\to S^+_X\otimes S_B.
\end{cases}
\end{equation}
Here we have hidden the dependence on $\epsilon$ by carefully arranging the appearance of tangent vectors and cotangent vectors. To write in full,
\begin{equation}\label{Cliffordactiononfibration1}
\begin{cases}
c_\epsilon(\frac{\partial}{\partial x_i})= \epsilon c_X( \frac{\partial}{\partial x_i}  )\otimes 1, \quad S^+_X\otimes S_B\to S^-_X\otimes S_B, \\
c_\epsilon(dx_i)= \frac{1}{\epsilon}c_X( dx_i  )\otimes 1, \quad S^-_X\otimes S_B\to S^+_X\otimes S_B, \\
c(dt_i)=1\otimes   c_B( dt_i  ), \quad S^-_X\otimes S_B\to S^-_X\otimes S_B, \\
c(\frac{\partial}{\partial t_i})= -1\otimes   c_B( \frac{\partial}{\partial t_i} ), \quad S^+_X\otimes S_B\to S^+_X\otimes S_B.
\end{cases}
\end{equation}
where $c_\epsilon$ is written to emphasize the $\epsilon$ dependence.
This action satisfies the Clifford relations: 
\begin{equation}
\begin{split}
\begin{cases}
c_\epsilon(\frac{\partial}{\partial x_i}) c_\epsilon(\frac{\partial}{\partial x_j})+ c_\epsilon(\frac{\partial}{\partial x_j}) c_\epsilon(\frac{\partial}{\partial x_i})=-2\epsilon g^{fibre}(\frac{\partial}{\partial x_i}, \frac{\partial}{\partial x_j}),\\
c_\epsilon(\frac{\partial}{\partial x_i}) c_\epsilon (\frac{\partial}{\partial t_j})+c_\epsilon(\frac{\partial}{\partial t_j}) c_\epsilon (\frac{\partial}{\partial x_i}) =0, \\
c_\epsilon(\frac{\partial}{\partial t_i}) c_\epsilon(\frac{\partial}{\partial t_j})+ c_\epsilon(\frac{\partial}{\partial t_j}) c_\epsilon(\frac{\partial}{\partial t_i})=-2 g^{base}(\frac{\partial}{\partial t_i}, \frac{\partial}{\partial t_j}).
\end{cases}
\end{split}
\end{equation}
This means $S$ is a Clifford module bundle on $M$. Morever, the bundle $S$ has a canonical complex volume form (\ie a section of the determinant bundle) independent of $\epsilon$, induced by the complex volume forms on $S^+_X$, $S^-_X$ and $S_B$. Thus $S$ can be identified as the spinor bundle on $M$. (In our setting $M$ is simply connected, so topologically the spinor bundle is unique). The subbundles $S^+_X\otimes S_B$ and $S^-_X\otimes S_B$ are each equipped with a canonical Hermitian metric. We leave the simple exercise for the reader to determine the scaling convention to build a Hermitian metric on $S$ compatible with Clifford multiplication $c_\epsilon$ and the complex volume form.

\begin{rmk}
The part of the Clifford action (\ref{Cliffordactiononfibration}) is defined  independent of $\epsilon$. This motivates us to think of $S$ as the \textbf{adiabatic spinor bundle} on Donaldson's adiabatic fibration. 
\end{rmk}

The next aim is to understand the Levi-Civita connection on $S$ (also known as the spin connection in the literature) when $\epsilon$ is finite, before taking the adiabatic limit. We write $\nabla^{\epsilon, ++}$ as the subbundle connection on $S^+_X\otimes S_B$. Similarly with $\nabla^{\epsilon, --}$ on $S^-_X\otimes S_B$. These are characterised by the Leibniz rules
\begin{equation}\label{LeviCivitaLeibnizrule1}
\begin{cases}
\nabla^{\epsilon, ++}_v (c(w)\cdot s)= c(w)\cdot \nabla^{\epsilon, --}_v s+  c(\nabla^{\epsilon}_v w)\cdot s, \quad \text{w is vertical}, \\
\nabla^{\epsilon, --}_v (c(w)\cdot s)= c(w)\cdot \nabla^{\epsilon, --}_v s+  c(\nabla^{\epsilon}_v w)\cdot s, \quad \text{w is horizontal},  
\end{cases}
\end{equation}
and the preservation of the canonical complex volume forms.

\begin{lem}
The components $\nabla^{\epsilon, ++}$ and $\nabla^{\epsilon, --}   $ of the Levi-Civita connection  on $S$ have limits $\nabla^{LC, ++ }$ and $\nabla^{LC, -- }$ as $\epsilon\to 0$, which satisfy the appropriate Leibniz rules similar to (\ref{LeviCivitaLeibnizrule1}) with $c(\nabla^{\epsilon}_v w)$ replaced by $c(\nabla^{LC}_v w)$.
\end{lem}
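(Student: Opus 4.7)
I will exhibit $\nabla^{\epsilon,\pm\pm}$ as connections built entirely from those pieces of the Levi-Civita data whose $\epsilon\to 0$ limits are already established in Section \ref{AdiabaticLeviCivita}, and then pass the Leibniz rule (\ref{LeviCivitaLeibnizrule1}) to the limit. Work in an adapted $g_\epsilon$-orthonormal frame $\{e_a\}_{a=1}^7$ whose first four vectors are $1/\sqrt{\epsilon}$ times a $g^{fibre}$-orthonormal vertical frame, and whose last three are horizontal lifts of a $g^{base}$-orthonormal frame on $B$. In such a frame the spin connection has the standard local expression $\nabla^\epsilon = d + \tfrac{1}{4}\sum_{a,b}\omega^\epsilon_{ab}\, c_\epsilon(e_a)c_\epsilon(e_b)$, with $\omega^\epsilon_{ab}(v)=g_\epsilon(\nabla^{g_\epsilon}_v e_a, e_b)$ the Levi-Civita connection 1-forms.

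The key algebraic observation is that $c_\epsilon(e_a)c_\epsilon(e_b)$ preserves the $S_X$-chirality grading $S=(S^+_X\oplus S^-_X)\otimes S_B$ exactly when $a,b$ are of the same type (both vertical or both horizontal), and flips the grading in the mixed case. Hence the diagonal components $\nabla^{\epsilon,\pm\pm}$ collect precisely the same-type contributions, while $\nabla^{\epsilon,\pm\mp}$ absorbs the mixed ones. For both-vertical $a,b$, the form $\omega^\epsilon_{ab}$ is the subbundle Levi-Civita connection form on $TM_b$ described in Section \ref{AdiabaticLeviCivita}, whose $\epsilon\to 0$ limit is read off from (\ref{adiabaticLeviCivitacase3}); for both-horizontal $a,b$, $\omega^\epsilon_{ab}$ converges to the base Levi-Civita form. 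Meanwhile the product $c_\epsilon(e_a)c_\epsilon(e_b)$ is itself independent of $\epsilon$ on same-type pairs: the factors of $\epsilon$ in $c_\epsilon(\partial/\partial x_i)$ cancel against the $1/\sqrt{\epsilon}$ normalisation of the $e_a$'s, leaving the plain fibrewise Clifford product $c_X(\cdot)c_X(\cdot)$, while the horizontal factors are $\epsilon$-independent by inspection of (\ref{Cliffordactiononfibration1}). Coefficient-wise convergence then gives well-defined subbundle connections $\nabla^{LC,\pm\pm}$ on $S^\pm_X\otimes S_B$.

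It remains to derive the Leibniz rule in the limit, which is done by taking $\epsilon\to 0$ in the $\epsilon$-finite identity (\ref{LeviCivitaLeibnizrule1}). Projecting the full Leibniz rule for $\nabla^\epsilon$ onto the diagonal forces the contribution $c(\nabla^\epsilon_v w)$ to retain only the vertical part of $\nabla^\epsilon_v w$ when $w$ is vertical, and only the horizontal part when $w$ is horizontal, for chirality-matching reasons; these are precisely the subbundle Levi-Civita connections whose limits $\nabla^{LC}_v w$ were discussed in Section \ref{AdiabaticLeviCivita}. Passing to the limit yields the desired identity with $c(\nabla^{LC}_v w)$ replacing $c(\nabla^\epsilon_v w)$.

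The main obstacle is not conceptual but is a bookkeeping check: one must verify that the scaling conventions in (\ref{Cliffordactiononfibration1}), combined with the musical isomorphism between $TM$ and $T^*M$ under $g_\epsilon$ (which involves inverse metric factors that scale as $1/\epsilon$ in the vertical directions), conspire so that no singular factors of $\epsilon$ survive on the diagonal. The conventions in (\ref{Cliffordactiononfibration}) and (\ref{Cliffordactiononfibration1}) were arranged precisely for this cancellation on same-type pairs, so the verification is direct, although the off-diagonal components genuinely do not admit limits — an asymmetry that is the reason one must restrict attention to the diagonal parts in the statement.
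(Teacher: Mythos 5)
Your proposal is correct, but it takes a genuinely different route from the paper. You argue by direct computation: in an adapted $g_\epsilon$-orthonormal frame you write the spin connection as $d+\tfrac14\sum\omega^\epsilon_{ab}\,c_\epsilon(e_a)c_\epsilon(e_b)$, note that the chirality-preserving part is exactly the same-type part of the sum, check that same-type Clifford bilinears are $\epsilon$-independent under the conventions (\ref{Cliffordactiononfibration1}) while the same-type connection coefficients are the subbundle Levi-Civita coefficients whose limits were obtained in Section \ref{AdiabaticLeviCivita}, and then pass the chirality-projected Leibniz rule to the limit. The paper argues abstractly instead: the Leibniz rules (\ref{LeviCivitaLeibnizrule1}) together with preservation of the complex volume forms form an inhomogeneous linear system for $\nabla^{\epsilon,++}_v$, $\nabla^{\epsilon,--}_v$ whose homogeneous part can be written $\epsilon$-independently and admits only the trivial solution (an endomorphism commuting with all Clifford multiplications is scalar, hence zero by the trace condition), so convergence of the inhomogeneous term $c(\nabla^\epsilon_v w)\to c(\nabla^{LC}_v w)$ forces convergence of the solutions. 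Your route buys explicit local formulas for the limit connections at the cost of the scaling bookkeeping you acknowledge; the paper's route is frame-free and delivers uniqueness for free, which it reuses in the subsequent remark and proposition. One small caveat: your closing assertion that the off-diagonal components ``genuinely do not admit limits'' is not accurate in the paper's rescaled identification of $S$ --- the mixed connection forms are $O(\sqrt\epsilon)$ while the mixed Clifford bilinears are $O(\epsilon^{-1/2})$ on one chirality block and $O(\epsilon^{1/2})$ on the other, so one off-diagonal block has a finite (generally nonzero) limit involving $\mathcal{L}_v g^{fibre}$ and the curvature of $H$ while the other tends to zero --- but this aside plays no role in your proof of the stated lemma.
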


\begin{proof}
The Levi-Civita connection makes sense if the data are given on the first order neighbourhood of a point. Viewed this way, (\ref{LeviCivitaLeibnizrule1}) coupled with the complex volume preserving property, amount to some finite dimensional inhomogeneous system of linear equations on the matrix valued unknowns  $\nabla^{\epsilon, ++}_v$ and $\nabla^{\epsilon, --}_v$. The associated homogeoneous linear equation is the following system of equations on the matrix $B\in \End(S^+_X\otimes S_B)|_x \oplus \End(S^-_X\otimes S_B)|_x$ at a given point $x\in M$:
\[
\begin{cases}
B(c(w)\cdot s   )=c(w) \cdot Bs,\quad \forall w\in T_x M, s\in S|_x, \\
\quad \Tr B=0.
\end{cases}
\]
By writing tangent vectors and cotangent vectors in appropriate places we can make this homogeneous equation \emph{independent of $\epsilon$}.
This system of equations only has the trivial solution, because the first equation says $B$ commutes with all Clifford multiplications, which constrains $B$ to a scalar multiple of the identity, so must be zero by the trace condition.

As $\epsilon\to 0$, the inhomogeneous term $c(\nabla^{\epsilon}_v w) \cdot s$ in (\ref{LeviCivitaLeibnizrule1}) converges to $c(\nabla^{LC}_v w) \cdot s$, so elementary property of the linear equation says that the finite $\epsilon$ solutions must converge to the unique solution when $\epsilon=0$. 
\end{proof}

\begin{rmk}
By a limiting argument  $\nabla^{LC,++}$ and $\nabla^{LC,--}$ are compatible with the Hermitian metric and complex volume forms on $S^+_X\otimes S_B$ and $S^-_X\otimes S_B$.
\end{rmk}

\begin{prop}
The connections $\nabla^{LC,++}$ and $\nabla^{LC,--}$ are induced by the tensor product of the Levi-Civita connection on $S_B\to B$, with certain connections $\nabla^{LC,+}$ on the bundle $S^+_X\to M$ and $\nabla^{LC, -}$ on $S^-_X\to M$ satisfying the Leibniz rule
\begin{equation}\label{adiabaticspinstructurevariationLeibnizrule}
\begin{cases}
\nabla^{LC,+}_v (c_X(w)\cdot s)= c_X(w)\cdot \nabla^{LC,-}_v s + c_X(\nabla^{LC}_v w)\cdot s, \quad \text{w is vertical}\\
\nabla^{LC,-}_v (c_X(w)\cdot s)= c_X(w)\cdot \nabla^{LC,+}_v s + c_X(\nabla^{LC}_v w)\cdot s, \quad \text{w is vertical},
\end{cases}
\end{equation}
and morever $\nabla^{LC,+}$ and $\nabla^{LC,-}$ are compatible with the Hermitian structures and the complex volume forms on $S^+_X\to M$ and $S^-_X\to M$. If $v$ is vertical, then the derivatives $\nabla_v^{LC, +}$ and $\nabla_v^{LC, -}$ agree with the spin connections on fibres.
\end{prop}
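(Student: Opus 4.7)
The plan is to define $\nabla^{LC,\pm}$ by peeling off the contribution from the base spin connection. Concretely, pull back the Levi-Civita spin connection $\nabla^{S_B}$ on $S_B\to B$ to a connection $\pi^*\nabla^{S_B}$ on $\pi^*S_B\to M$, and stipulate the ansatz
\[
\nabla^{LC,\pm\pm}_v(s\otimes\sigma) = (\nabla^{LC,\pm}_v s)\otimes\sigma + s\otimes(\pi^*\nabla^{S_B}_v\sigma).
\]
Using a local orthonormal frame $\{e_\alpha\}$ of $S_B$ near a base point, this determines $\nabla^{LC,\pm}_v$ in coordinates. Frame independence is equivalent to the \emph{splitting property}: the tensorial difference $D_v := \nabla^{LC,\pm\pm}_v - 1\otimes\pi^*\nabla^{S_B}_v$ must factor as an endomorphism $\tilde D_v\otimes 1$ of $S_X^\pm\otimes S_B$.

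The splitting is the main content of the proof. In the adiabatic limit, the Clifford multiplications in horizontal directions factor as $\pm 1\otimes c_B$ by (\ref{Cliffordactiononfibration}), touching only the $S_B$ factor and preserving chirality, while vertical Clifford acts only on $S_X^\pm$ and flips chirality. Combining the Leibniz rule (\ref{LeviCivitaLeibnizrule1}) for $\nabla^{LC,\pm\pm}$ in horizontal Clifford with the Leibniz rule for $\nabla^{S_B}$, and using the third bullet of Section \ref{AdiabaticLeviCivita} which identifies the horizontal-horizontal piece of the adiabatic $\nabla^{LC}$ with the Levi-Civita on $B$, one obtains $[D_v,\, 1\otimes c_B(\bar w)] = 0$ for $v$ horizontal and $\bar w\in T^*B$. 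For $v$ vertical, $\pi^*\nabla^{S_B}_v$ vanishes, and (\ref{adiabaticLeviCivitacase3}) shows that $\nabla^{LC}_v w$ for $w$ horizontal lies in the vertical subbundle; the resulting Clifford contribution $c(\nabla^{LC}_v w)s$ flips chirality, hence lands in the opposite chirality block and is invisible to the chirality-preserving $\nabla^{LC,\pm\pm}$. Thus $[D_v,\, 1\otimes c_B(\bar w)] = 0$ in vertical directions as well. Since $c_B(T^*B)$ generates $\End(S_B)$ (the $SU(2)$-spinor representation being irreducible), a Schur-type argument forces $D_v = \tilde D_v\otimes 1$.

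The remaining claims follow directly. The Leibniz rule (\ref{adiabaticspinstructurevariationLeibnizrule}) is deduced by applying (\ref{LeviCivitaLeibnizrule1}) to vertical Clifford, where $c(w) = c_X(w)\otimes 1$ and $\nabla^{LC}_v w$ is vertical by the case analysis of Section \ref{AdiabaticLeviCivita}, and substituting the splitting; the $S_B$ factor is entirely carried by $\pi^*\nabla^{S_B}$ and cancels. Hermitian and complex-volume compatibility of $\nabla^{LC,\pm}$ follow from the corresponding compatibilities for $\nabla^{LC,\pm\pm}$ and $\pi^*\nabla^{S_B}$. Finally, for $v$ vertical, $\nabla^{LC,\pm}_v$ gives a unitary, volume-preserving connection on $S_X^\pm\to M_b$ obeying the spinor Leibniz rule with respect to the fibrewise Clifford action and the fibrewise Levi-Civita, which are precisely the defining properties of the K3-fibre spin connection.

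The main obstacle is the splitting, in particular the vertical-direction case. The argument hinges on a careful chirality book-keeping: although $\nabla^{LC}$ has genuinely mixed vertical-horizontal components in the adiabatic limit, the relevant inhomogeneous term in the spin Leibniz rule lies in the orthogonal chirality block and is therefore projected out by $\nabla^{LC,\pm\pm}$. Once this pointwise commutation is secured, the remaining step is a small instance of the Schur lemma applied to the $SU(2)$ Clifford representation.
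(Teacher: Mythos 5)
Your proposal is correct and takes essentially the same route as the paper: the heart of both arguments is that the chirality-preserving blocks $\nabla^{LC,\pm\pm}_v$ commute with the horizontal Clifford action $1\otimes c_B$ (you cancel the inhomogeneous term against the pulled-back base spin connection $\pi^*\nabla^{S_B}$, where the paper makes it vanish pointwise via geodesic coordinates on $B$), followed by the same centralizer/Schur argument in the Clifford algebra to force the tensor-product splitting, with the remaining compatibilities and the fibrewise statement as easy consequences. One minor slip worth noting: the adiabatic $\nabla^{LC}$ is by construction a subbundle connection, so for $v$ vertical and $w$ horizontal the term $\nabla^{LC}_v w$ appearing in the limiting Leibniz rule is horizontal rather than vertical as you assert via (\ref{adiabaticLeviCivitacase3}); the correct (and equally short) point is that this horizontal component vanishes on basic lifts (the chirality-flipping vertical part of the full covariant derivative is projected out of the block, and the horizontal part is $O(\epsilon)$), which still gives exactly the commutation your argument needs.
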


\begin{proof}
We work in geodesic coordinates $t_1, t_2, t_3$ on $B$. It is enough to understand the derivatives $\nabla_v^{LC, ++}$ and $\nabla_v^{LC, --}$ when $v$ is horizontal. The Leibniz rules similar to (\ref{LeviCivitaLeibnizrule1}) imply
\[
\nabla^{LC,++}_v \circ c(\frac{\partial}{\partial t_j})=c(\frac{\partial}{\partial t_j})\circ \nabla^{LC,++}_v, \quad \nabla^{LC,--}_v \circ c(\frac{\partial}{\partial t_j})=c(\frac{\partial}{\partial t_j})\circ \nabla^{LC,--}_v,
\]
and therefore by thinking about the centralizer in the Clifford algebra, $\nabla^{LC,++}$ and $\nabla^{LC,--}$ must be tensor connections. The compatibility conditions are easy consequences.
\end{proof}

\begin{rmk}
Recall that $\nabla^{LC}$ encodes Lie derivatives of the fibrewise metric. Then the above Proposition expresses the principle that adiabatic spin structures on the total space are encoded by the variation of fibrewise spin structures.
\end{rmk}

\begin{rmk}
The Leibniz rule and the compatibility with the complex volume form uniquely characterise $\nabla^{LC,+}$ and $\nabla^{LC,-}$.
\end{rmk}

\subsection{Adiabatic spin structure: $G_2$ geometry}\label{Adiabaticspinstructure$G_2$geometry}

We now bring in the $G_2$ aspects of Donaldson's adiabatic fibration. The forms $\underline{\lambda}$, $\underline{\mu}$, $\underline{\omega}$ and $\underline{\Theta}$ act on elements of $S$. By our convention $\underline{\lambda}$ act as 1 on $S_B$, so by (\ref{Cliffordactiononfibration}), (\ref{Cliffordactiononfibration1}) it acts on $S$ by
\begin{equation}
c(\underline\lambda)=\begin{cases}
1 \quad &\text{on } S^-_X\otimes S_B \\
-1 \quad &\text{on } S^+_X\otimes S_B. 
\end{cases}
\end{equation}
To define the action of $\underline{\mu}$ on the adiabatic spinor bundle, we simply scale away the $\epsilon$ dependence. Thus
\begin{equation}
c(\underline\mu)=\begin{cases}
1 \quad &\text{on } S^-_X\otimes S_B \\
-1 \quad &\text{on } S^+_X\otimes S_B. 
\end{cases}
\end{equation}
Hence $\underline{\lambda}\wedge \underline{\mu}$ acts as 1 on $S$, as it should.
The more interesting actions come from $\underline{\omega}$ and $\underline{\Theta}$. We take an orthonormal basis $dt_i$ at a point on $B$, to write
\begin{equation}
c(\underline{\omega})=-\sum_i c_X(\omega_i)\otimes c_B(dt_i)=\begin{cases}
0 \quad & \text{on } S^-_X\otimes S_B, \\
-2\sum_i I_i^{S^+}\otimes c_B(dt_i) \quad & \text{on } S^+_X\otimes S_B.
\end{cases}
\end{equation}
Here the action on negative spinors is trivial, because $\omega_i$ are self-dual. The minus sign is inserted to be compatible with (\ref{Cliffordactiononfibration}). The operators $I_i^{S^+}=\frac{1}{2}c_X(\omega_i)$ are the natural operators on the spin bundle of a hyperk\"ahler 4-fold (\cf Appendix of \cite{Mukaidualitypaper}). Similarly,
\begin{equation}
c(\underline{\Theta})=-\sum_{cyc} c_X(\omega_i)\otimes c_B(dt_j)c_B(dt_k)=-\sum_i c_X(\omega_i)\otimes c_B(dt_i)=c(\underline{\omega}).
\end{equation}
Here we use $c_B(dt_i)c_B(dt_j)=c_B(dt_k)$ for cyclic $i, j, k$.

The next aim is to study the covariant derivatives of the operator $c(\underline{\Theta})$ acting on $S^+_X\otimes S_B$. Evantually we will show $c(\underline{\Theta})$ is parallel on $M$.

\begin{lem}
The operator $c(\underline{\Theta})$ is parallel along fibres of $M$.
\end{lem}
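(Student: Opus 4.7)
The plan is to exploit the tensor-product decomposition of $\nabla^{LC,++}$ established in the preceding Proposition, reducing the claim to parallel-ness of each factor. Since $c(\underline{\Theta}) = c(\underline{\omega})$ acts as zero on $S^-_X \otimes S_B$ (being built from self-dual fibrewise 2-forms), the assertion is trivial there, so I work on $S^+_X \otimes S_B$ where at a chosen base point with orthonormal frame $\{dt_i\}$,
\[
c(\underline{\Theta}) = -\sum_i c_X(\omega_i) \otimes c_B(dt_i).
\]
For a vertical vector $v$, expanding with the Leibniz rule for the tensor-product connection gives
\[
[\nabla^{LC,++}_v,\, c_X(\omega_i) \otimes c_B(dt_i)] = [\nabla^{LC,+}_v, c_X(\omega_i)] \otimes c_B(dt_i) + c_X(\omega_i) \otimes [\nabla^{LC,S_B}_v, c_B(dt_i)].
\]

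For the first term, when $v$ is vertical $\nabla^{LC,+}_v$ coincides with the spin connection of the fibrewise metric (last clause of the preceding Proposition). The equations \eqref{adiabaticclosed}, \eqref{adiabaticcoclosed} force the fibrewise metric to be hyperk\"ahler with K\"ahler triple $\omega_i$; consequently the $\omega_i$ are parallel with respect to the fibrewise Levi-Civita, and hence $c_X(\omega_i)$ is parallel under the fibrewise spin connection. This makes the first commutator vanish. For the second term, extend $\{dt_i\}$ to a local orthonormal frame on $B$ by base-parallel transport and pull back to $M$; because the connection on $S_B\to M$ is the pullback of the base spin connection, it annihilates pulled-back sections in vertical directions, so $c_B(dt_i)\in \End(S_B)$ is parallel along fibres and the second commutator also vanishes.

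Summing over $i$, $\nabla^{LC,++}_v c(\underline{\Theta}) = 0$ for every vertical $v$, which is the claim. There is no genuine obstacle: the only point requiring verification is the parallel-ness of $\omega_i$ on each fibre, and this is exactly the hyperk\"ahler property recorded after the adiabatic equations in Section \ref{Donaldsonadiabaticfibration}, so no new input beyond the structural setup is needed.
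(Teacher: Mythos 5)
Your argument is correct and is essentially the paper's own proof, spelled out: the paper simply notes that the fibres are hyperk\"ahler K3 surfaces, so the $c_X(\omega_i)$ are parallel under the fibrewise spin connection (which is what $\nabla^{LC,\pm}_v$ reduces to for vertical $v$), while the $S_B$ factor is pulled back and hence contributes nothing along fibres. Your expansion via the tensor-product Leibniz rule and the trivial action on $S^-_X\otimes S_B$ adds detail but no new ideas, and every structural fact you invoke is already recorded in the preceding Proposition.
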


\begin{proof}
The fibres are just hyperk\"ahler K3 surfaces, so all the $c(\omega_i)$ are parallel with respect to $\nabla^{LC}$.
\end{proof}

Thus it suffices to understand the horizontal variation. Let $t_i$ be the geodesic coordinates on $B$ at the chosen point, so $\omega_i$ can be treated as orthonormal over the first order neighbourhood of $b\in B$. By the Leibniz rule, for a horizontal vector $v$,
\begin{equation}\label{cThetacovariantderivative}
\nabla_v c(\underline{\Theta})=\sum_i c_X(\nabla_v^{LC} \omega_i) \otimes c_B(dt_i) .
\end{equation}
We begin with a formula for $\nabla^{LC}$ on vertical 1-forms, which can be readily deduced from the discussions in Section \ref{AdiabaticLeviCivita}. 
\begin{lem}\label{adiabaticLeviCivitaonvertical1forms}
If $\alpha$ is a vertical 1-form, and $v=\frac{\partial}{\partial t_k}$ is a horizontal vector, then when regarded as 1-forms on the fibres,
\[
\nabla^{LC}_v \alpha= \mathcal{L}_v \alpha-\frac{1}{2}  (\mathcal{L}_v g^{fibre})(\alpha^{\sharp}, \_),
\]
where we recall $\alpha^{\sharp}$ means the vertical vector dual to $\alpha$ with respect to $g^{fibre}$.
\end{lem}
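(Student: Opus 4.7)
The plan is to derive the identity by dualising formula (\ref{adiabaticLeviCivitacase3}) from Section \ref{AdiabaticLeviCivita} via the fibrewise metric, and then re-expressing the resulting Lie bracket term in terms of $\mathcal{L}_v \alpha$ using the standard Leibniz identity for Lie derivatives of a $(0,2)$-tensor.

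Concretely, I would set $Y := \alpha^{\sharp}$, the vertical vector field characterised by $g^{fibre}(Y,W)=\alpha(W)$ for every vertical $W$. Since the adiabatic Levi-Civita acts on the vertical cotangent bundle as the connection dual to its action on the vertical tangent bundle (\cf the remarks at the end of Section \ref{AdiabaticLeviCivita}), one has $\nabla^{LC}_v \alpha = (\nabla^{LC}_v Y)^{\sharp}$. Pairing with a vertical test vector $W$ and applying (\ref{adiabaticLeviCivitacase3}) for the horizontal lift $v=\frac{\partial}{\partial t_k}$ gives
\begin{equation*}
(\nabla^{LC}_v \alpha)(W) \;=\; g^{fibre}(\nabla^{LC}_v Y, W) \;=\; \tfrac{1}{2}(\mathcal{L}_v g^{fibre})(Y,W) + g^{fibre}([v,Y],W).
\end{equation*}

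Next I would eliminate the Lie bracket term. The tensorial Leibniz rule for the Lie derivative of $g^{fibre}$ reads
\begin{equation*}
(\mathcal{L}_v g^{fibre})(Y,W) \;=\; v\bigl(g^{fibre}(Y,W)\bigr) - g^{fibre}([v,Y],W) - g^{fibre}(Y,[v,W]),
\end{equation*}
so solving for $g^{fibre}([v,Y],W)$ and using $g^{fibre}(Y,\cdot)=\alpha(\cdot)$ yields
$g^{fibre}([v,Y],W) = v(\alpha(W)) - \alpha([v,W]) - (\mathcal{L}_v g^{fibre})(\alpha^{\sharp},W) = (\mathcal{L}_v \alpha)(W) - (\mathcal{L}_v g^{fibre})(\alpha^{\sharp},W)$. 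Substituting this back into the previous display and collecting the two $(\mathcal{L}_v g^{fibre})(\alpha^{\sharp},W)$ contributions produces exactly the claimed formula.

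The only subtlety is interpretive: $v$ is merely the horizontal lift of $\frac{\partial}{\partial t_k}$, and $\alpha$, $Y$ are only genuinely defined on vertical vectors, so one must check that the symbols $\mathcal{L}_v \alpha$ and $[v,Y]$ are unambiguous when restricted to vertical inputs. For any vertical $W$, $\pi_*[v,W]=[\frac{\partial}{\partial t_k},0]=0$, so $[v,W]$ is automatically vertical; consequently $(\mathcal{L}_v \alpha)(W)=v(\alpha(W))-\alpha([v,W])$ depends only on the values of $\alpha$ on vertical vectors, and the analogous remark handles $[v,Y]$. This bookkeeping, rather than any analytic input, is the only point requiring care; as a sanity check one may verify in coordinates $(x_i,t_j)$ at a point where the Ehresmann connection is trivialised to first order that the formula reduces to $(\nabla^{LC}_v\alpha)_j = \partial_t \alpha_j - \tfrac{1}{2}g^{li}\alpha_l\,\partial_t g_{ij}$, which agrees with the coordinate computation already used in the proof of (\ref{adiabaticLeviCivitacurvature}).
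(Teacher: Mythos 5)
Your argument is correct and is exactly the deduction the paper has in mind: the lemma is stated as "readily deduced" from Section \ref{AdiabaticLeviCivita}, and your proof carries this out by dualising (\ref{adiabaticLeviCivitacase3}) via the compatibility of $\sharp$ with $\nabla^{LC}$ and absorbing the bracket term with the Lie-derivative Leibniz rule, with the verticality of $[v,W]$ correctly checked. Your coordinate sanity check also matches the computation in the proof of (\ref{adiabaticLeviCivitacurvature}), so nothing further is needed.
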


This entails by a short calculation
\begin{lem}
The covariant derivative 
\begin{equation}\label{LeviCivitaactingonhyperkahlerforms}
\nabla_v^{LC} \omega_i= \mathcal{L}_v \omega_i +\frac{1}{2}\sum_j I_i e_j^\sharp \wedge \iota_{e_j}( \mathcal{L}_v g^{fibre}),
\end{equation}
where $e_j$ is a local orthonormal basis of tangent vectors on $X=M_b$, and $\iota_{e_j}\mathcal{L}_v g^{fibre} $ means contracting $e_j$ with the first entry of the symmetric 2-tensor $\mathcal{L}_v g^{fibre}$.
\end{lem}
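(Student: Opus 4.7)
The formula (\ref{LeviCivitaactingonhyperkahlerforms}) follows from Lemma \ref{adiabaticLeviCivitaonvertical1forms} --- equivalently, from the dual expression (\ref{adiabaticLeviCivitacase3}) --- combined with the fact that $\nabla^{LC}_v$ acts as a derivation on the exterior algebra of vertical forms. The plan is to compute the difference $(\nabla^{LC}_v \omega_i - \mathcal{L}_v\omega_i)(X,Y)$ for arbitrary vertical vector fields $X, Y$ directly, using the hyperk\"ahler identity $\omega_i(X,Y) = g^{fibre}(I_iX, Y)$, and then to match the answer with the right-hand side of the claimed formula via an orthonormal-frame calculation.

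First I would subtract the standard derivation identity for $\mathcal{L}_v\omega_i$ from that for $\nabla^{LC}_v \omega_i$, reducing the difference to $-\omega_i(\nabla^{LC}_v X - [v,X], Y) - \omega_i(X, \nabla^{LC}_v Y - [v,Y])$. By (\ref{adiabaticLeviCivitacase3}), each correction $\nabla^{LC}_v Z - [v, Z]$ equals $\tfrac{1}{2}(\mathcal{L}_v g^{fibre})(Z,\_)^\sharp$. Substituting this in and using the antisymmetry $g(I_i A, B) = -g(A, I_iB)$ (which follows from $I_i^2 = -1$ together with the orthogonality of $I_i$ with respect to $g^{fibre}$) yields
\[
(\nabla^{LC}_v \omega_i - \mathcal{L}_v\omega_i)(X,Y) = \tfrac{1}{2}(\mathcal{L}_v g^{fibre})(X, I_iY) - \tfrac{1}{2}(\mathcal{L}_v g^{fibre})(Y, I_iX).
\]

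To identify this with $\tfrac{1}{2}\sum_j I_i e_j^\sharp \wedge \iota_{e_j}(\mathcal{L}_v g^{fibre})$ evaluated on $(X, Y)$, I would use the convention $I_i a = -a\circ I_i$ on 1-forms (equivalently $I_i e_j^\sharp = (I_i e_j)^\sharp$) to expand the wedge as a sum over the orthonormal basis, and then collapse the sum via the completeness identity $\sum_j g(e_j, W)(\mathcal{L}_v g^{fibre})(e_j, Z) = (\mathcal{L}_v g^{fibre})(W, Z)$. This reproduces exactly the previous display and closes the argument.

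The only obstacle --- if one can call it that --- is purely notational: carefully tracking the sign conventions for $I_i$ acting on 1-forms versus on vectors, and the antisymmetry of $I_i$ against $g^{fibre}$. Nothing in this particular identity requires the hyperk\"ahler parallelism $\nabla^{fibre} I_i = 0$ on the fibre; that condition enters only downstream, in ensuring that $\mathcal{L}_v\omega_i$ and $\mathcal{L}_v g^{fibre}$ genuinely record the variation of a compatible hyperk\"ahler triple on nearby fibres.
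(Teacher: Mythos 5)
Your calculation is correct and is exactly the ``short calculation'' that the paper leaves implicit after Lemma~\ref{adiabaticLeviCivitaonvertical1forms}: subtract the derivation identities for $\nabla^{LC}_v$ and $\mathcal{L}_v$, substitute $\nabla^{LC}_v Z-[v,Z]=\tfrac12\big((\mathcal{L}_v g^{fibre})(Z,\_)\big)^\sharp$ from~(\ref{adiabaticLeviCivitacase3}), and unwind the orthonormal-frame expression using $I_ie_j^\sharp=(I_ie_j)^\sharp$ and skew-adjointness of $I_i$. Your closing observation that the identity uses only $\omega_i(X,Y)=g^{fibre}(I_iX,Y)$ pointwise, and not $\nabla^{fibre}I_i=0$, is also accurate.
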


We are thus lead to calculate $\mathcal{L}_v g^{fibre}$ in terms of variations of $\omega_i$. As a remark, equalities in this Section means equalities when restricted to fibres. We start from a relatively standard linear algebraic fact.

\begin{lem}
A hyperk\"ahler metric $g^{fibre}$ can be expressed in terms of the hyperk\"ahler forms $\omega_i$ by
\begin{equation}
g^{fibre}(Y,Z)\underline{\mu}=\iota_Y \omega_1\wedge \iota_Z \omega_2 \wedge \omega_3= \iota_Y \omega_2\wedge \iota_Z \omega_3 \wedge \omega_1=\iota_Y \omega_3\wedge \iota_Z \omega_1 \wedge \omega_2.
\end{equation}
\end{lem}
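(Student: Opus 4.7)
The plan is to reduce the claim to a standard identity on oriented Riemannian $4$-manifolds relating wedge products with self-dual $2$-forms to the Hodge star, and then to exploit the quaternion relations.

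First I would observe that for any vertical vector $Y$ and any of the K\"ahler forms $\omega_i(A,B) = g^{fibre}(I_i A, B)$, the contraction $\iota_Y\omega_i$ equals the $1$-form $I_i Y^\flat$, consistent with the convention $I_i\alpha = -\alpha\circ I_i$ recalled in Section \ref{linearalgebraicmodelsection} and the antisymmetry of $I_i$ with respect to $g^{fibre}$. This converts each right hand side into an expression involving only $1$-forms and a single self-dual $2$-form.

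The key intermediate step is the identity
\[
\alpha\wedge \omega_i = -\ast(I_i\alpha)
\]
for any $1$-form $\alpha$, which I would verify in a quaternionic orthonormal frame $e_1,\dots, e_4$ adapted to $(I_1,I_2,I_3)$ (for which $\omega_i$ take the standard form and $\underline{\mu} = e^1\wedge e^2\wedge e^3\wedge e^4$). Applied to $\alpha = I_2 Z^\flat$, together with the quaternion relation $I_3 I_2 = -I_1$, which passes unchanged to the induced action on $1$-forms, this rewrites
\[
\iota_Y\omega_1\wedge \iota_Z\omega_2\wedge \omega_3 \;=\; (I_1 Y^\flat)\wedge \ast(I_1 Z^\flat) \;=\; \langle I_1 Y^\flat, I_1 Z^\flat\rangle\,\underline{\mu} \;=\; g^{fibre}(Y,Z)\,\underline{\mu},
\]
where the last equality uses that $I_1$ is an isometry on $1$-forms. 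The remaining two equalities follow by the same argument after cyclically permuting $(1,2,3)$, using $I_1 I_3 = -I_2$ and $I_2 I_1 = -I_3$ respectively, which are symmetries of the hyperk\"ahler data.

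I expect the only real obstacle to be sign bookkeeping: one must reconcile the paper's conventions ($I_i\alpha = -\alpha\circ I_i$, self-duality of $\omega_i$, and the orientation $\underline{\mu} = \tfrac{1}{2}\omega_1^2$) with the signs in the formula $\alpha\wedge \omega_i = -\ast(I_i\alpha)$. Once the frame-based verification of this auxiliary identity is done carefully at one point, the rest of the argument is formal quaternionic manipulation and involves no geometric content beyond the pointwise definition of a hyperk\"ahler structure.
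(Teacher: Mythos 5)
Your proof is correct. The paper itself offers no argument here: the lemma is introduced as ``a relatively standard linear algebraic fact'' and stated without proof, so there is no proof of record to compare against; your write-up simply supplies the pointwise verification the paper leaves to the reader. Your route is sound with the paper's conventions: with $\omega_i(A,B)=g^{fibre}(I_iA,B)$, the convention $I_i\alpha=-\alpha\circ I_i$ does give $\iota_Y\omega_i=I_i(Y^\flat)=(I_iY)^\flat$, so the action on $1$-forms is conjugation by $\flat$ and the quaternion relations $I_3I_2=-I_1$, $I_1I_3=-I_2$, $I_2I_1=-I_3$ hold as you use them (this is worth saying explicitly, since $\alpha\mapsto-\alpha\circ I$ taken literally is only a homomorphism after identifying forms with vectors). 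The auxiliary identity $\alpha\wedge\omega_i=-*(I_i\alpha)$ checks out in the standard frame with the orientation $\underline{\mu}=\tfrac12\omega_1^2$ (e.g.\ $e^1\wedge\omega_1=e^{134}=-*e^2=-*(I_1e^1)$), and from there your chain
$\iota_Y\omega_1\wedge\iota_Z\omega_2\wedge\omega_3=(I_1Y^\flat)\wedge *(I_1Z^\flat)=g^{fibre}(Y,Z)\,\underline{\mu}$
and its two cyclic variants is exactly right. Since both sides are pointwise, bilinear in $(Y,Z)$, and the frame computation is done at an arbitrary point in a quaternionic orthonormal frame, nothing further is needed.
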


The variation of this formula is
\begin{cor}
The Lie derivative of the fibrewise hyperk\"ahler metric is given in terms of $\mathcal{L}_v\omega_i$ by
\begin{equation}\label{hyperkahlermetricvariation}
\begin{split}
& (\mathcal{L}_v g^{fibre})(X, Y)\underline{\mu}+ g(X,Y)\mathcal{L}_v \underline{\mu}
\\
=&\iota_X \mathcal{L}_v\omega_1\wedge \iota_Y \omega_2 \wedge \omega_3+\iota_X \omega_1\wedge \iota_Y\mathcal{L}_v \omega_2 \wedge \omega_3+\iota_X \omega_1\wedge \iota_Y \omega_2 \wedge \mathcal{L}_v\omega_3,
\end{split}
\end{equation}
where $X,Y$ are vertical vectors.
\end{cor}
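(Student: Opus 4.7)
The approach is simply to differentiate the identity $g^{fibre}(X,Y)\underline{\mu}=\iota_X \omega_1\wedge \iota_Y \omega_2 \wedge \omega_3$ from the preceding lemma along the horizontal vector field $v$, and expand both sides with the Leibniz rule. Both sides of the proposed formula are tensorial in the two vertical arguments $X$ and $Y$, so it suffices to prove the formula at an arbitrary point $p\in M$, and I can choose local vertical extensions of $X, Y$ near $p$ whose Lie brackets with $v$ vanish at $p$, i.e.\ $[v,X]_p=[v,Y]_p=0$. Such extensions exist: in local coordinates adapted to the fibration, one can just take $X, Y$ to have constant components along the $v$-direction through $p$. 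With this choice, the standard identity $\mathcal{L}_v \iota_X \alpha = \iota_{[v,X]}\alpha + \iota_X \mathcal{L}_v \alpha$ reduces at $p$ to $\mathcal{L}_v \iota_X = \iota_X \mathcal{L}_v$, and similarly for $Y$.

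Given this, the left-hand side unfolds by the Leibniz rule for $\mathcal{L}_v$ applied to the product $g^{fibre}(X,Y)\underline{\mu}$:
\[
\mathcal{L}_v\bigl( g^{fibre}(X,Y)\underline{\mu}\bigr)=(\mathcal{L}_v g^{fibre})(X,Y)\,\underline{\mu}+g^{fibre}(X,Y)\,\mathcal{L}_v\underline{\mu},
\]
where the cross terms $g^{fibre}([v,X],Y)$ and $g^{fibre}(X,[v,Y])$ vanish at $p$ by the choice of extensions. The right-hand side unfolds by the Leibniz rule for $\mathcal{L}_v$ on the triple wedge $\iota_X\omega_1\wedge\iota_Y\omega_2\wedge\omega_3$, commuting $\mathcal{L}_v$ past the interior products using the observation above, to produce exactly the three terms on the right of the claimed identity.

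Equating the two expansions yields the formula. The only content beyond bookkeeping is the justification for killing $[v,X]$ and $[v,Y]$ at the chosen point, which is the slight subtlety one should not skip; otherwise it is a direct consequence of the Leibniz rule. I expect no substantive obstacle, and indeed the cited $(\mathcal{L}_v g^{fibre})(X,Y)$ will emerge with the correct sign because the metric-formula from the preceding lemma is symmetric in the roles played by the $\omega_i$'s up to cyclic relabelling; the only asymmetry in the resulting expansion is the placement of $\mathcal{L}_v$ among the three factors, which is precisely what appears in the statement.
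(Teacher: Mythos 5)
Your proposal is correct and is essentially the paper's own (implicit) argument: the corollary is simply the Lie derivative along $v$ of the identity $g^{fibre}(X,Y)\underline{\mu}=\iota_X\omega_1\wedge\iota_Y\omega_2\wedge\omega_3$ from the preceding lemma, expanded by the Leibniz rule. One small repair to your handling of the brackets: taking $X,Y$ with constant components in fibration-adapted coordinates does not quite give $[v,X]_p=0$ (with the horizontal lift $v=\partial_{t_k}-\sum_l A^l\partial_{x_l}$ one gets $[v,X]_p=\sum_l X(A^l)(p)\,\partial_{x_l}$), so either extend $X,Y$ by Lie transport along the flow of $v$ (these extensions stay vertical and satisfy $[v,X]=[v,Y]=0$), or, more simply, keep the bracket terms and observe that $\iota_{[v,X]}\omega_1\wedge\iota_Y\omega_2\wedge\omega_3=g([v,X],Y)\underline{\mu}$ and $\iota_X\omega_1\wedge\iota_{[v,Y]}\omega_2\wedge\omega_3=g(X,[v,Y])\underline{\mu}$ by the same lemma, so they cancel against the corresponding terms in the expansion of $(\mathcal{L}_v g^{fibre})(X,Y)$ and no special choice of extensions is needed.
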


To proceed further, we differentiate the hyperk\"ahler relations $\omega_1^2=\omega_2^2=\omega_3^2=2\underline{\mu}$ and $\omega_i\wedge \omega_j=0$ for $i\neq j$, to write
\begin{equation}\label{hyperkahlerformvariationakbk}
\begin{cases}
\mathcal{L}_{ \frac{\partial}{\partial t_k}} \omega_1= b^k \omega_1+ a^k_{12}\omega_2+ a^k_{13} \omega_3 \quad \mod \text{ ASD terms, } \\
\mathcal{L}_{ \frac{\partial}{\partial t_k}} \omega_2= a^k_{21}\omega_1+ b^k \omega_2+ a^k_{23} \omega_3 \quad \mod \text{ ASD terms, }  \\
\mathcal{L}_{ \frac{\partial}{\partial t_k}} \omega_3= a^k_{31}\omega_1+ a^k_{32} \omega_2+ b^k \omega_3   \quad \mod \text{ ASD terms. }
\end{cases}
\end{equation}
Here the coefficients satisfy $a^k_{ij}=-a^k_{ji}$, and $\mathcal{L}_{ \frac{\partial}{\partial t_k}} \underline{\mu}=2b^k \underline{\mu}$. We claim

\begin{lem}
The Clifford action 
\begin{equation}\label{LeviCivitaactingonhyperkahlerformscliffordmultiplicationoperator}
c_X(\nabla^{LC}_{ \frac{\partial}{\partial t_k}      } \omega_i)=c_X( \mathcal{L}_{ \frac{\partial}{\partial t_k}      }\omega_i   )-b^k c_X(\omega_i)=c_X( \sum_{j\neq i} a^k_{ij}\omega_j   ).
\end{equation}
\end{lem}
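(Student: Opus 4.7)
The plan is to start from (\ref{LeviCivitaactingonhyperkahlerforms}), reduce the correction term $\tfrac{1}{2}\sum_j I_i e_j^\sharp\wedge \iota_{e_j}(\mathcal{L}_v g^{fibre})$ to manageable pieces using the decomposition (\ref{hyperkahlerformvariationakbk}) of $\mathcal{L}_v\omega_i$, and then exploit the fact that anti-self-dual $2$-forms act trivially under $c_X$ on the positive spinor bundle $S^+_X$, which is where $c(\underline{\Theta})$ is supported.

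First I would use (\ref{hyperkahlermetricvariation}), combined with $\mathcal{L}_v \underline{\mu} = 2b^k\underline{\mu}$, to decompose $\mathcal{L}_v g^{fibre}$ according to the three types of contributions in (\ref{hyperkahlerformvariationakbk}). The diagonal $b^k$ piece simultaneously scales all three $\omega_i$; plugging into (\ref{hyperkahlermetricvariation}) gives $3 b^k g^{fibre}\underline{\mu}$ on the right, which after subtracting $g^{fibre}\cdot 2b^k \underline{\mu}$ on the left leaves the conformal piece $\mathcal{L}_v g^{fibre} \supset b^k g^{fibre}$. The off-diagonal rotational terms $a^k_{ij}\omega_j$, bound by $a^k_{ij}=-a^k_{ji}$, generate infinitesimal $SO(3)$ rotations of the hyperk\"ahler triple that preserve $g^{fibre}$; concretely they cancel pairwise in (\ref{hyperkahlermetricvariation}) because $\iota_Y\omega_i\wedge\iota_Z\omega_i\wedge\omega_k$ is (up to a common sign) $\omega_k(Y,Z)\underline{\mu}$ independent of $i$, and the two contributions from the pair $(a^k_{ij},a^k_{ji})$ appear with opposite signs. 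The ASD remainder $\eta^-_i$ of $\mathcal{L}_v\omega_i$ contributes only traceless symmetric terms.

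Next I substitute these three pieces into the correction term of (\ref{LeviCivitaactingonhyperkahlerforms}). The conformal piece gives $\tfrac{b^k}{2}\sum_j I_i e_j^\sharp\wedge e_j^\sharp = -b^k\omega_i$ by a direct check in an orthonormal frame using the definition $I_i a = -a\circ I_i$. The rotational piece does not appear since it contributes zero to $\mathcal{L}_v g^{fibre}$. The traceless symmetric piece produces an ASD $2$-form: the linear map $h\mapsto \sum_j I_i e_j^\sharp\wedge \iota_{e_j}h$ from $\mathrm{Sym}^2_0 T^*X$ to $\Lambda^2 T^*X$ lands in $\Lambda^2_-$, as one verifies on any basis of trace-free symmetric tensors.

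Assembling, $\nabla^{LC}_v \omega_i = \mathcal{L}_v\omega_i - b^k\omega_i$ modulo an anti-self-dual $2$-form. Since the Clifford action of an ASD $2$-form annihilates $S^+_X$, this yields the first equality. The second equality then follows immediately from (\ref{hyperkahlerformvariationakbk}): the difference $\mathcal{L}_v\omega_i - b^k\omega_i - \sum_{j\neq i}a^k_{ij}\omega_j$ is by construction ASD, hence again invisible to $c_X$ on $S^+_X$. The delicate point of the argument is the pairwise cancellation of the rotational $a^k_{ij}$ contributions to $\mathcal{L}_v g^{fibre}$; once that is disposed of, everything else reduces to the elementary fact that ASD $2$-forms act as zero on positive spinors.
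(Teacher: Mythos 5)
Your proof is correct and is essentially the paper's own argument: substitute (\ref{hyperkahlerformvariationakbk}) into (\ref{hyperkahlermetricvariation}), observe that the rotational $a^k_{ij}$ terms cancel (hyperk\"ahler rotations preserve the metric) so that $\mathcal{L}_v g^{fibre}=b^k g^{fibre}$ modulo traceless pieces coming from the ASD parts, feed this into (\ref{LeviCivitaactingonhyperkahlerforms}), and discard the resulting ASD $2$-forms because they act trivially on positive spinors. The only differences are cosmetic: you spell out the computation $\tfrac{b^k}{2}\sum_j I_i e_j^\sharp\wedge e_j^\sharp=-b^k\omega_i$ and the fact that trace-free symmetric tensors map into $\Lambda^2_-$, which the paper leaves implicit, and your explicit cancellation identity covers the cross terms where both contractions hit the same $\omega_i$, the remaining pairs cancelling by the analogous identity.
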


\begin{proof}
One can show that when we substitute (\ref{hyperkahlerformvariationakbk}) into
 formula (\ref{hyperkahlermetricvariation}), the terms involving $a^{k}_{ij}$ vanish by an explicit calculation. The geometric interpretation is that the coefficients $a^k_{ij}$ come from hyperk\"ahler rotation, which does not change the metric. 

The formula (\ref{hyperkahlermetricvariation}) thus yields
\begin{equation}
\mathcal{L}_{\frac{\partial}{\partial t_k}    } g^{fibre}= b^k g^{fibre} \quad \text{ mod ASD terms}.
\end{equation}
The geometric interpretation of $b^k$ is just the scaling of volume forms. We substitute this into (\ref{LeviCivitaactingonhyperkahlerforms}) to get
\[
\nabla^{LC}_{ \frac{\partial}{\partial t_k}      }\omega_i=\mathcal{L}_{ \frac{\partial}{\partial t_k}      }\omega_i -b^k \omega_i \quad \text{ mod ASD terms}.
\]
But when we use this to calculate $c_X(\nabla^{LC}_v \omega_i)$, the ASD terms we are ignoring cannot contribute,
because by the representation theory of $Spin(4)=SU(2)\times SU(2)$, the ASD terms cannot act on positive spinors. The result follows. The fact that $b^k$ drops out of the final formula is because spinors have certain conformal invariance properties.
\end{proof}

\begin{prop}
Assuming the conditions of Donaldson's adiabatic fibration, then the operator $c(\underline{\Theta})$ acting on $S^+_X\otimes S_B\to M$ is parallel.
\end{prop}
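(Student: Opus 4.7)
The plan is to use the preceding lemma (vertical parallelism of $c(\underline{\Theta})$) to reduce everything to checking horizontal derivatives, and then read off from the two Donaldson adiabatic conditions that the relevant coefficients vanish. Fix geodesic coordinates $t_1,t_2,t_3$ at $b\in B$ and take $v=\partial/\partial t_k$. Combining (\ref{cThetacovariantderivative}) with the formula (\ref{LeviCivitaactingonhyperkahlerformscliffordmultiplicationoperator}) from the previous lemma, the covariant derivative $\nabla_v c(\underline{\Theta})$ on $S^+_X\otimes S_B$ equals
\[
-\sum_{i}\sum_{j\neq i} a^k_{ij}\, c_X(\omega_j)\otimes c_B(dt_i),
\]
so the entire task is to show that every $a^k_{ij}$ vanishes under the standing hypotheses.

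The inputs I would invoke are $d_H\underline{\mu}=0$ and $d_H\underline{\omega}=0$. The first is immediate: since $\mathcal{L}_{\partial/\partial t_k}\underline{\mu}=2b^k\underline{\mu}$, horizontal closedness $d_H\underline{\mu}=\sum_k 2b^k\,\underline{\mu}\wedge dt_k=0$ forces $b^k=0$ for all $k$. For the second, expanding $d_H\underline{\omega}=\sum_{i,k}(\mathcal{L}_{\partial/\partial t_k}\omega_i)\wedge dt_k\wedge dt_i$ (the purely fibrewise part vanishes because $\omega_i$ is closed on fibres), the condition becomes the identity $\mathcal{L}_{\partial/\partial t_k}\omega_i=\mathcal{L}_{\partial/\partial t_i}\omega_k$ of 2-forms on each fibre for all $i\neq k$. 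Substituting the self-dual expansion (\ref{hyperkahlerformvariationakbk}) and matching the three self-dual coefficients produces a short linear system in the $a^k_{ij}$ and $b^k$; the ASD parts are irrelevant for the final conclusion because ASD forms annihilate positive spinors, so they do not affect the Clifford action on $S^+_X$.

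Solving the system is short once $b^k=0$. The matching of $\omega_1$- and $\omega_2$-coefficients in the pair $(k,i)=(2,1)$ already yields $a^1_{12}=-b^2=0$ and $a^2_{12}=b^1=0$, and the same pattern for the pairs $(2,3)$ and $(1,3)$ kills every $a^k_{ij}$ with $k\in\{i,j\}$. The surviving unknowns are the fully off-diagonal coefficients with $\{i,j,k\}=\{1,2,3\}$; matching the remaining third coefficient in the three pairs produces the chain
\[
a^2_{13}=a^1_{23},\qquad a^3_{12}=a^2_{13},\qquad a^3_{12}=-a^1_{23},
\]
which collapses to $a^1_{23}=-a^1_{23}$ and hence $a^1_{23}=a^2_{13}=a^3_{12}=0$; antisymmetry $a^k_{ij}=-a^k_{ji}$ handles the other three. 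Thus $\nabla_v c(\underline{\Theta})=0$ for every horizontal $v$, and combined with the vertical case we obtain $\nabla c(\underline{\Theta})=0$ on $S^+_X\otimes S_B$.

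The main obstacle I anticipate is not the linear algebra itself, which is a few lines once set up, but keeping the sign and orientation conventions straight: the identification $c(\underline{\Theta})=c(\underline{\omega})$ on $S^+_X\otimes S_B$ from the preceding discussion, the antisymmetries $a^k_{ij}=-a^k_{ji}$, and the precise normalizations in the expansion (\ref{hyperkahlerformvariationakbk}) all have to be deployed consistently. Once the dictionary is fixed, the argument is essentially a matching-of-coefficients computation driven entirely by the two closedness conditions $d_H\underline{\mu}=0$ and $d_H\underline{\omega}=0$.
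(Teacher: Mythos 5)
Your proposal is correct and follows essentially the same route as the paper: reduce via (\ref{cThetacovariantderivative}) and (\ref{LeviCivitaactingonhyperkahlerformscliffordmultiplicationoperator}) to showing all $a^k_{ij}$ vanish, then extract $b^k=0$ from $d_H\underline{\mu}=0$ and the symmetry $\mathcal{L}_{\partial/\partial t_k}\omega_i=\mathcal{L}_{\partial/\partial t_i}\omega_k$ from $d_H\underline{\omega}=0$, and close the argument with the antisymmetry $a^k_{ij}=-a^k_{ji}$ exactly as in the paper's sample computations (your chain $a^2_{13}=a^1_{23}$, $a^3_{12}=a^2_{13}$, $a^3_{12}=-a^1_{23}$ is the paper's chain in slightly different order). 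The only deviation is an inessential overall sign in your displayed formula for $\nabla_v c(\underline{\Theta})$, which does not affect the vanishing conclusion.
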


\begin{proof}
By formula (\ref{cThetacovariantderivative}) and (\ref{LeviCivitaactingonhyperkahlerformscliffordmultiplicationoperator}), it is enough to check $a^k_{ij}=0$.

Observe now that the condition $d_H\underline{\omega}=0$ in Donaldson's adiabatic fibration means precisely that
\[
\mathcal{L}_{ \frac{\partial}{\partial t_j}      }\omega_i= \mathcal{L}_{ \frac{\partial}{\partial t_i}      }\omega_j, 
\]
which implies $a^{k}_{ij}=a^i_{kj}$, where we write $b^k=a^k_{ii}$. 
The condition $d_H \underline{\mu}=0$ precisely means $b^k=0$.

 Using also the antisymmetry $a^k_{ij}=-a^k_{ji}$, this shows that $a^k_{ij}=0$ as required. To give some sample calculations,
\[
a^1_{12}=-a^1_{21}=-a^2_{11}=-b^2=0,
\]
and
\[
a^1_{23}=-a^1_{32}=-a^3_{12}=a^3_{21}=a^2_{31}=-a^2_{13}=-a^1_{23}=0.
\]
The other calculations are entirely analogous.
\end{proof}

\begin{rmk}
The condition $d_H\underline{\Theta}=0$ means precisely 
\[
\sum_i \mathcal{L}_{ \frac{\partial}{\partial t_i}      }\omega_i=0,
\]
although we did not need this in the above argument. So the parallel nature of $c(\underline\Theta)$ is weaker than the full strength of Donaldson's conditions.
\end{rmk}

\begin{rmk}
We saw in the proof above that $a^k_{ij}=0$, $b^k=0$. This means $\mathcal{L}_{\frac{\partial}{\partial t_k}} \omega_i$ are all ASD 2-forms. This is crucial in the next Section.

\end{rmk}

Now we use the operator $c(\underline{\omega})=c(\underline{\Theta})$ to decompose $S^+_X\otimes S_B$ into eigenspace subbundles. A short linear algebraic computation yields

\begin{lem}
The vector space $(S^+_X\otimes S_B)|_x$ at any $x\in X$ splits into a complex 1-dimensional eigenspace of $c(\underline{\omega})$ with eigenvalue $-6$, and a complex 3-dimensional eigenspace with eigenvalue $2$. Any nonzero vector in the 1-dimensional eigenspace defines an isomorphism between $S^+_X$ and $S_B$ at the point.
\end{lem}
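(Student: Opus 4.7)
The statement is a pointwise linear-algebra claim on a $4$-dimensional complex vector space, so the plan is to diagonalise $c(\underline{\omega})$ by an explicit algebraic calculation. Choose an orthonormal basis $dt_1, dt_2, dt_3$ at the point and set
\[
K:=\sum_{i=1}^{3} I_i^{S^+}\otimes c_B(dt_i),
\]
so that $c(\underline{\omega})=-2K$ on $(S^+_X\otimes S_B)|_x$ by the formula displayed just before the lemma. The desired eigenvalues $-6$ and $2$ of $c(\underline{\omega})$ correspond to eigenvalues $3$ and $-1$ of $K$.

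The first step is to compute $K^2$. Using $(I_i^{S^+})^2=-1=(c_B(dt_i))^2$, the three diagonal terms $I_i^2\otimes c_B(dt_i)^2$ sum to $+3$. For the off-diagonal pairs $i\ne j$, I would combine $(i,j)$ with $(j,i)$; double anticommutativity turns this into $2\,I_i^{S^+}I_j^{S^+}\otimes c_B(dt_i)c_B(dt_j)$. The quaternionic relations $I_1^{S^+}I_2^{S^+}=I_3^{S^+}$ cyclically, together with the Clifford relations $c_B(dt_1)c_B(dt_2)=c_B(dt_3)$ cyclically (a direct consequence of $c_B(dt_1)c_B(dt_2)c_B(dt_3)=-1$ and $c_B(dt_i)^2=-1$), then show that each pair contributes $2\,I_k^{S^+}\otimes c_B(dt_k)$ for the remaining index $k$: the signs produced by $I_i^{S^+}I_j^{S^+}$ and by $c_B(dt_i)c_B(dt_j)$ match and cancel in the tensor. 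Summing the three pairs gives $2K$, so
\[
K^2=3+2K,\qquad (K-3)(K+1)=0,
\]
and the eigenvalues of $K$ lie in $\{3,-1\}$, those of $c(\underline{\omega})$ in $\{-6,2\}$.

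To fix the multiplicities I would use tracelessness: each $I_i^{S^+}$ and each $c_B(dt_i)$ has trace zero (being a square root of $-1$ on a two-dimensional space), hence $\Tr K=0$. Combined with $\dim(S^+_X\otimes S_B)|_x=4$ and the two admissible eigenvalues, the only option is a $1$-dimensional $3$-eigenspace and a $3$-dimensional $(-1)$-eigenspace for $K$, which translates to the eigenvalues and dimensions asserted for $c(\underline{\omega})$.

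For the isomorphism claim, I would observe that $K$ is invariant under the diagonal $SU(2)$-action that rotates $(I_1^{S^+},I_2^{S^+},I_3^{S^+})$ and $(c_B(dt_1),c_B(dt_2),c_B(dt_3))$ in parallel, and that under this action $S^+_X|_x$ and $S_B|_x$ are each isomorphic to the standard two-dimensional representation. By Clebsch--Gordan, $\mathbf{2}\otimes\mathbf{2}=\mathbf{1}\oplus\mathbf{3}$, and the $(-6)$-eigenspace of $c(\underline{\omega})$ must coincide with the singlet, which is spanned by the canonical antisymmetric tensor. Interpreting this tensor as an element of $\Hom(S_B^{*}|_x,S^+_X|_x)$, it has full rank and so defines an isomorphism; composing with the canonical identification $S_B\simeq S_B^{*}$ given by the complex volume form yields the asserted isomorphism $S^+_X|_x\simeq S_B|_x$. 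There is no serious obstacle here, and the main step is simply the $K^2$ computation; the only point requiring any care is the bookkeeping of the $SU(2)$-identifications and the complex-volume conventions, which are all standard.
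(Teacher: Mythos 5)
Your computation is correct and is exactly the ``short linear algebraic computation'' the paper leaves unwritten: squaring $K=\sum_i I_i^{S^+}\otimes c_B(dt_i)$ via the quaternionic and Clifford relations, pinning multiplicities by the trace, and identifying the $1$-dimensional eigenspace with the $SU(2)$ singlet to get the full-rank homomorphism. One tiny point: tracelessness of $I_i^{S^+}$ and $c_B(dt_i)$ does not follow merely from being square roots of $-1$ (e.g.\ $\sqrt{-1}\,\mathrm{Id}$), but it does follow instantly from the anticommutation relations, e.g.\ $I_2^{S^+}I_1^{S^+}(I_2^{S^+})^{-1}=-I_1^{S^+}$, so the argument stands.
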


\begin{cor}
There is a canonical complex line bundle which is a parallel subbundle of $S^+_X\otimes S_B\to M$.
\end{cor}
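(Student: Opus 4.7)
The plan is to combine the pointwise eigenspace decomposition from the preceding Lemma with the parallelism of $c(\underline{\Theta}) = c(\underline{\omega})$ established in the preceding Proposition. Concretely, I would first argue that the $-6$ eigenspace defines a smooth complex line subbundle $L \subset S^+_X \otimes S_B$, and then show that $L$ is preserved by $\nabla^{LC,++}$.

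For smoothness of $L$, I would note that $c(\underline{\omega})$ is a smooth endomorphism of $S^+_X \otimes S_B \to M$ whose spectrum $\{-6, 2\}$ is the same at every point, with fixed multiplicities $1$ and $3$. Under these hypotheses the eigenspaces vary smoothly; one can exhibit $L$ explicitly as the image of the smooth endomorphism $\tfrac{1}{-6-2}\bigl(c(\underline{\omega}) - 2\cdot \mathrm{Id}\bigr)$, which is a rank-one projection onto the $-6$ eigenspace at every point. This realises $L$ as a smooth subbundle of rank one.

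For parallelism, I would use the preceding Proposition: $\nabla^{LC,++}\bigl(c(\underline{\omega})\bigr) = 0$ as an endomorphism. Then for any local section $s$ of $L$ and any vector field $v$ on $M$, the Leibniz rule gives
\[
c(\underline{\omega}) \cdot \nabla^{LC,++}_v s = \nabla^{LC,++}_v \bigl( c(\underline{\omega}) \cdot s \bigr) = \nabla^{LC,++}_v (-6 s) = -6\, \nabla^{LC,++}_v s,
\]
so $\nabla^{LC,++}_v s$ again lies in the $-6$ eigenspace, i.e.\ in $L$. Thus $L$ is parallel.

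No step here is a serious obstacle, since all the hard work was done in the preceding Proposition and Lemma; the only thing to be careful about is canonicity, which follows because the eigenvalues $-6$ and $2$ are distinct, so the $-6$ eigenspace (and hence $L$) is intrinsically defined by the operator $c(\underline{\omega})$ without any auxiliary choices.
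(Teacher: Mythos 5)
Your proof is correct and follows essentially the same route the paper intends: the Corollary is a direct consequence of the eigenspace Lemma together with the Proposition that $c(\underline{\Theta})=c(\underline{\omega})$ is parallel, exactly as you argue via the Leibniz rule and the rank-one spectral projection onto the $-6$ eigenspace. Nothing is missing.
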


\begin{thm}
There is a \textbf{canonical bundle isomorphism} $\Phi: S^+_X\to S_B$ over $M$, which intertwines the Levi-Civita connection on $S_B$ and the connection $\nabla^{LC,+}$ on $S^+_X$. Morever $\Phi$ preserves the Hermitian metrics and the complex volume forms on $S^+_X$ and $S_B$.
\end{thm}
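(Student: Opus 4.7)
The previous corollary supplies a canonical parallel complex line subbundle $L\subset S^+_X\otimes S_B$. Using the complex volume form on $S^+_X$ to identify $S^+_X\simeq (S^+_X)^*$, we get an identification $S^+_X\otimes S_B \simeq \Hom(S^+_X, S_B)$ under which each nonzero $\sigma\in L|_x$ corresponds to a linear isomorphism $\phi_\sigma\colon S^+_X|_x\to S_B|_x$, with $\phi_{\lambda\sigma}=\lambda\phi_\sigma$. My strategy is to pick out a canonical parallel section $\sigma_0$ of $L$ using the volume-form data, then set $\Phi:=\phi_{\sigma_0}$.

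To pin down $\sigma_0$ pointwise I introduce the $\C$-bilinear symmetric pairing $q$ on $S^+_X\otimes S_B$ with values in the canonically trivialised line bundle $\det S^+_X\otimes \det S_B$, defined on decomposables by $q(a\otimes b, a'\otimes b')=(a\wedge a')\otimes(b\wedge b')$. A one-line computation in a basis gives $q(\sigma,\sigma)=2\det\phi_\sigma$, so $q|_L$ is nonvanishing by the previous lemma. The two conditions $|\sigma|^2=2$ (Hermitian norm) and $q(\sigma,\sigma)=2$ then cut each fibre $L|_x$ down to a $\{\pm 1\}$-torsor, and they automatically force $\phi_\sigma$ to be a unitary isomorphism with determinant $1$: indeed the positive self-adjoint operator $\phi_\sigma^*\phi_\sigma$ has trace $\|\phi_\sigma\|_{\mathrm{HS}}^2=|\sigma|^2=2$ and determinant $|\det\phi_\sigma|^2=1$, so both its eigenvalues equal $1$.

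The key step is to promote this pointwise normalization to a global parallel section. Both the Hermitian metric on $L$ and the restriction of $q$ to $L$ are parallel with respect to the connection induced from $\nabla^{LC,++}$: the Hermitian metric by Hermitian compatibility, and $q$ because it is assembled from the complex volume forms on $S^+_X$ and $S_B$, which are parallel by construction of $\nabla^{LC,+}$ and the Levi-Civita connection on $S_B$. Consequently parallel transport on $L$ is a $U(1)$-rotation $\sigma\mapsto e^{i\theta}\sigma$ preserving both $|\sigma|^2$ and the complex quantity $q(\sigma,\sigma)$, so $e^{2i\theta}=1$ and the holonomy lies in $\{\pm 1\}$; since a nontrivial $\{\pm 1\}$-holonomy cannot emerge continuously from the identity on contractible loops, the connection on $L$ is in fact flat. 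The simple-connectedness of $M$ (the base $B$ is contractible and the K3 fibres are simply connected) then yields a global parallel section $\sigma_0$ satisfying the above normalization, unique up to an overall sign.

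Setting $\Phi:=\phi_{\sigma_0}$, the preservation of the Hermitian metrics and complex volume forms is built into the normalization. Under the $\Hom$-identification, $\nabla^{LC,++}$ corresponds to the standard Hom-connection $\psi\mapsto\nabla^{S_B}\circ\psi-\psi\circ\nabla^{LC,+}$ (legitimate because the volume form on $S^+_X$ is $\nabla^{LC,+}$-parallel), so $\nabla^{LC,++}\sigma_0=0$ translates precisely into $\nabla^{S_B}\circ\Phi=\Phi\circ\nabla^{LC,+}$, the required connection-intertwining property. I expect the main obstacle to be the flatness argument in the third paragraph, which must couple the parallelness of \emph{both} the Hermitian metric and $q$ to confine the holonomy to $\{\pm 1\}$ before invoking simple-connectedness to trivialize it.
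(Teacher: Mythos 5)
Your route (the determinant-type pairing $q$ plus a holonomy argument on the canonical eigenspace line bundle $L$) is genuinely different from the paper's, and most of it is sound: the identification $S^+_X\otimes S_B\simeq\Hom(S^+_X,S_B)$ via the parallel symplectic form, the parallelism of $q$, the reduction of the holonomy of $L$ to $\{\pm 1\}$ and its triviality on the simply connected $M$, and the final translation of $\nabla^{LC,++}\sigma_0=0$ into the intertwining property are all fine. But there is a genuine gap at the normalization step. You assert that the two conditions $|\sigma|^2=2$ and $q(\sigma,\sigma)=2$ cut each fibre of $L$ down to a $\{\pm 1\}$-torsor, and your trace/determinant argument for unitarity presupposes that both conditions hold simultaneously. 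For a general nonzero $\sigma\in L|_x$ one has $|q(\sigma,\sigma)|=2|\det\phi_\sigma|\le\|\phi_\sigma\|_{HS}^2=|\sigma|^2$, with equality if and only if $\phi_\sigma$ is a complex multiple of a unitary map; rescaling by $\lambda\in\C^*$ changes $q$ by $\lambda^2$ and $|\sigma|^2$ by $|\lambda|^2$, so the two normalizations are compatible precisely when this equality case holds. The lemma you quote only guarantees that $\phi_\sigma$ is an isomorphism, not that it is conformal, so as written your parallel section $\sigma_0$ can be scaled to preserve the complex volume forms, or to have controlled Hilbert--Schmidt norm, but you have not shown it can be made to preserve the Hermitian metrics.

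The missing ingredient is exactly what the paper supplies by a different device: the complex volume forms are complex symplectic, hence induce antilinear (quaternionic) structures on $S^+_X$ and $S_B$, whose tensor product is a parallel real structure on $S^+_X\otimes S_B$ commuting with $c(\underline{\omega})$; a real element of $L$ corresponds to an $\mathbb{H}$-linear map between one-dimensional quaternionic Hermitian spaces, which is automatically a real multiple of a unitary, and the paper then takes a parallel unit-norm section of the resulting real line bundle. In your setup the gap can be closed without the real structure by observing that the $(-6)$-eigenspace condition means $\phi_\sigma$ intertwines the two irreducible unitary $su(2)$-actions generated by $I_i^{S^+}$ on $S^+_X$ and by $c_B(dt_i)$ on $S_B$, so by Schur's lemma $\phi_\sigma$ is a complex multiple of a unitary intertwiner; this gives $|q(\sigma,\sigma)|=|\sigma|^2$ on $L$ and makes your simultaneous normalization, and hence the rest of your argument, go through.
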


\begin{proof}
We observe that $S^+_X\otimes S_B\to M$ has a Hermitian metric compatible with $\nabla^{LC,++}$. Morever, the complex volume forms on $S^+_X$ and $S_B$ are complex symplectic, so induce antilinear structures on $S^+_X$ and $S_B$, hence $S^+_X\otimes S_B$ has a canonical real structure, which is compatible with the connection $\nabla^{LC,++}$. This real structure commutes with $c(\underline{\omega})$, so is well defined on the canonical eigenspace line bundle. This produces a canonical real line bundle, whose unit norm sections must be parallel with respect to $\nabla^{LC,++}$. Since we are working on a K3 fibration over a topologically trivial base, $M$ is simply connected, so the real line bundle is orientable, and the two unit norm sections are globally defined. Choose any of them.

This section of $S^+_X\otimes S_B$ can be viewed as a section of $\Hom(S^+_X, S_B)\to M$. Since $\nabla^{LC,++}$ is a tensor connection by the discussions in Section \ref{AdiabaticspinstructureRiemanniangeometry}, the fact that the section is parallel is precisely saying it intertwines $\nabla^{LC,+}$ and the Levi-Civita connection on $S_B$. If we scale the section by a global real constant to have unit operator norm, then it preserves the Hermitian metric. Denote this scaled section as $\Phi$. Morever, since $\Phi$ is a real section, tautologically it intertwines the antilinear structures on $S_B$ and $S_X$, so must preserve the complex volume form.
\end{proof}

\begin{rmk}
Viewed as a section of $S^+_X\otimes S_B$, we think of $\Phi$ as the adiabatic analogue of the parallel spinor which characterises the torsion free $G_2$ condition.
\end{rmk}

\begin{rmk}
Via the isomorphism $\Phi$ the operator $c_B(dt_i)$ on $S_B$ is identified with $I_i^{S^+}=\frac{1}{2}c_X(\omega_i)$ on $S^+_X$, at the given point.
\end{rmk}

\subsection{Digression on variation of hyperk\"ahler metric}\label{Digressionhyperkahlermetric}

We make a slightly technical detour to better understand the linear algebraic relation between the variation of the hyperk\"ahler metric $\mathcal{L}_v g^{fibre}$ and the variation of the hyperk\"ahler forms $\mathcal{L}_v \omega_i$, where $v=\frac{\partial}{\partial t_k}$ is a horizontal vector. We begin by observing that the variations $\mathcal{L}_v\omega_i$ for $i=1,2,3$ are independent deformations, in the sense that we can recover each one individually from $\mathcal{L}_vg^{fibre}$.

\begin{lem}
We have the inverse formula of (\ref{hyperkahlermetricvariation})
\begin{equation}\label{hyperkahlerformvariationintermsofmetricvariation}
\frac{1}{2}\sum_j I_i e_j^\sharp \wedge \iota_{e_j}\mathcal{L}_v g^{fibre}= -\mathcal{L}_v \omega_i.
\end{equation}
where $v=\frac{\partial}{\partial t_k}$ is a horizontal vector field and $e_j$ is an orthonormal basis on the tangent space of the fibre.
\end{lem}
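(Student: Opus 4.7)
The identity is a pointwise linear-algebraic statement at each point of the fibre, so I would reduce it to a direct verification in a standard hyperk\"ahler basis. First I would simplify the left hand side: using the convention $I_i a = -a\circ I_i$ on 1-forms together with the $g$-skew-symmetry of $I_i$, one has $I_i e_j^\sharp = (I_i e_j)^\sharp$, and a short expansion yields, for any symmetric 2-tensor $h$ and orthonormal frame $\{e_j\}$,
\[
\Bigl(\tfrac{1}{2}\sum_j I_i e_j^\sharp \wedge \iota_{e_j} h\Bigr)(X,Y) \;=\; \tfrac{1}{2}\bigl(h(X, I_iY)-h(Y, I_iX)\bigr),
\]
i.e. the left hand side is just the antisymmetric part of the 2-tensor $h(\cdot, I_i\,\cdot)$, and in particular is manifestly basis-independent and is a genuine 2-form.

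Next I would invoke the input already extracted in the proof of (\ref{LeviCivitaactingonhyperkahlerformscliffordmultiplicationoperator}): under $d_H\underline{\omega}=0$ and $d_H\underline{\mu}=0$ all the coefficients $a^k_{ij}$ and $b^k$ of (\ref{hyperkahlerformvariationakbk}) vanish, so each $\mathcal{L}_v\omega_i$ is anti-self-dual on the fibre and $\mathcal{L}_v\underline{\mu}=0$; hence $h := \mathcal{L}_v g^{fibre}$ is trace-free with respect to $g^{fibre}$. This aligns both sides of (\ref{hyperkahlerformvariationintermsofmetricvariation}) in matching pieces of the $\operatorname{Spin}(4)$-decomposition $S^2_0(\R^4)\cong \Lambda^2_+\otimes \Lambda^2_-$: the left hand side lands in $\Lambda^2_-$, matching the ASD-valued right hand side. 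By linearity in the three deformation 2-forms $\alpha_i := \mathcal{L}_v\omega_i$, it then suffices to check (\ref{hyperkahlerformvariationintermsofmetricvariation}) when a single $\alpha_i$ equals one basis element $\omega_k^-$ of $\Lambda^2_-$ and the other two vanish. In a standard hyperk\"ahler basis $e_1, e_2=I_1e_1, e_3=I_2e_1, e_4=I_3e_1$, the direct formula (\ref{hyperkahlermetricvariation}) (with $\mathcal{L}_v\underline{\mu}=0$) gives explicit values for the $h_{jk}$, and substituting these into the antisymmetrization formula above one reads off by a short tabulation that the result is $-\omega_k^-$ when the $i$-indices agree and $0$ otherwise; summing the three independent components yields (\ref{hyperkahlerformvariationintermsofmetricvariation}).

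The main obstacle is purely bookkeeping: one has to reconcile the sign conventions for the $\sharp$-isomorphism, the action of $I_i$ on 1-forms, and the ordered bases of $\Lambda^2_\pm$ between the antisymmetrization formula of Step 1 and the pattern of cyclic indices appearing in (\ref{hyperkahlermetricvariation}). There is no analytic difficulty. Conceptually, (\ref{hyperkahlermetricvariation}) and (\ref{hyperkahlerformvariationintermsofmetricvariation}) exhibit the two mutually inverse $\operatorname{Spin}(4)$-equivariant isomorphisms associated to the splitting $S^2_0(\R^4)\cong \Lambda^2_+\otimes \Lambda^2_-$, which is what makes the inversion so clean.
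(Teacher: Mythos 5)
Your argument is correct, but it is not the route the paper takes. The paper's proof is indirect: by (\ref{LeviCivitaactingonhyperkahlerforms}) the identity (\ref{hyperkahlerformvariationintermsofmetricvariation}) is equivalent to $\nabla^{LC}_v \omega_i=0$, and this is read off from the parallelism of the operator $c(\underline\Theta)$ --- the positive-spinor case was established in Section \ref{Adiabaticspinstructure$G_2$geometry}, while on negative spinors $c(\underline\Theta)$ vanishes identically, so its covariant derivative vanishes and the ASD part of $\nabla^{LC}_v\omega_i$ is zero as well. You instead give a spinor-free, purely pointwise verification: your antisymmetrization formula for $\tfrac12\sum_j I_i e_j^\sharp\wedge\iota_{e_j}h$ is right under the paper's conventions (the rule $I_ia=-a\circ I_i$ gives $I_ie_j^\sharp=(I_ie_j)^\sharp$), the geometric input you import --- $a^k_{ij}=b^k=0$, hence $\mathcal{L}_v\omega_i$ ASD and $\mathcal{L}_v\underline\mu=0$ --- is exactly the same consequence of $d_H\underline\omega=0$, $d_H\underline\mu=0$ that the paper uses, and with the self-dual coefficients killed, (\ref{hyperkahlermetricvariation}) does express $\mathcal{L}_vg^{fibre}$ linearly in the triple $(\mathcal{L}_v\omega_1,\mathcal{L}_v\omega_2,\mathcal{L}_v\omega_3)$, so the reduction by linearity to basis ASD forms is legitimate and the tabulation does close the argument (it extends Example \ref{exampleforspincalculationonhyperkahlerformvariation}, and the paper itself remarks that such a computation plus $Sp(1)$-equivariance proves the companion Clifford-action lemma). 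As for what each approach buys: the paper's one-line argument recycles the spin-geometric machinery already in place and delivers the stronger statement $\nabla^{LC}_v\omega_i=0$ with no basis computation; yours is self-contained linear algebra that makes the equivariant splitting $S^2_0(\R^4)\cong\Lambda^2_+\otimes\Lambda^2_-$ and the mutual inversion of (\ref{hyperkahlermetricvariation}) and (\ref{hyperkahlerformvariationintermsofmetricvariation}) explicit, at the cost of the sign and basis bookkeeping you acknowledge.
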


\begin{proof}
This is equivalent to $\nabla^{LC}_v \omega_i=0$ by (\ref{LeviCivitaactingonhyperkahlerforms}), which is equivalent to $c(\underline\Theta)$ being parallel on both positive and negative spin. We have showed the positive spin case in Section \ref{Adiabaticspinstructure$G_2$geometry}, and for negative spin this is trivially true
because $c(\underline\Theta)=0$ there.		
\end{proof}

\begin{eg}\label{exampleforspincalculationonhyperkahlerformvariation}
	We give a sample calculation.
	Working in a standard orthonormal basis on $T_x M_b\simeq \R^4$, if $\mathcal{L}_v \omega_1=0$, $\mathcal{L}_v \omega_2=0$, $\mathcal{L}_v\omega_3= dx_1dx_2-dx_3 dx_4$, then by (\ref{hyperkahlermetricvariation}),
	\[
	\mathcal{L}_v g^{fibre}= -dx_1\otimes dx_3- dx_3\otimes dx_1 + dx_2\otimes dx_4+ dx_4\otimes dx_2,
	\]
	so $\frac{1}{2}\sum_j I_i e_j^\sharp \wedge \iota_{e_j} \mathcal{L}_v g^{fibre}$ is zero for $i=1,2$ and is $-\mathcal{L}_v \omega_3$ when $i=3$.
\end{eg}

We can interpret the linear algebraic relation between $\mathcal{L}_v \omega_i$
and $\mathcal{L}_v g^{fibre}$ from the perspective of the $Sp(1)=SU(2)$ holonomy on the K3 surfaces and representation theory. The variation of the metric is a symmetric 2-tensor, which is traceless because of $d_H\underline{\mu}=0$. When we have $Sp(1)$ holonomy, the traceless symmetric 2-tensors decompose into holonomy representations, 
\[
S^2_0(\R^4)= \text{Im }\mathbb{H}\otimes \Lambda^2_- =\R^3\otimes \Lambda^2_-.
\]
This means the deformation of the metric naturally has 3 components corresponding to 3 ASD 2-forms, which in our concrete description is given by (\ref{hyperkahlerformvariationintermsofmetricvariation}) and (\ref{hyperkahlermetricvariation}). One can either see abstractly or using these formulae that the identification of these components with $\mathcal{L}_v \omega_i$ respects the Levi-Civita connection on the fibre.

We give some alternative formulations.

\begin{lem}
The variation of the hyperk\"ahler metric is given in terms of $\mathcal{L}_{\frac{\partial}{\partial t_k}  }\omega_i$ by
\begin{equation}\label{hyperkahlervariationcyclicsymmetry}
\begin{cases}
(\mathcal{L}_{\frac{\partial}{\partial t_k}  }g^{fibre}) ( Y, Z)=
-\mathcal{L}_{\frac{\partial}{\partial t_k}  }\omega_1 (I_1 Y, Z) -\mathcal{L}_{\frac{\partial}{\partial t_k}  }\omega_2 (I_2 Y, Z)
-\mathcal{L}_{\frac{\partial}{\partial t_k}  }\omega_3 ( I_3  Y, Z) 
\\
(\mathcal{L}_{\frac{\partial}{\partial t_k}  }g^{fibre}) (I_1 Y, Z)=
\mathcal{L}_{\frac{\partial}{\partial t_k}  }\omega_1 ( Y, Z) +\mathcal{L}_{\frac{\partial}{\partial t_k}  }\omega_2 (I_3 Y, Z)
-\mathcal{L}_{\frac{\partial}{\partial t_k}  }\omega_3 ( I_2  Y, Z) 
\\
	(\mathcal{L}_{\frac{\partial}{\partial t_k}  }g^{fibre}) (I_2 Y, Z)=
	-\mathcal{L}_{\frac{\partial}{\partial t_k}  }\omega_1 (I_3 Y, Z) +\mathcal{L}_{\frac{\partial}{\partial t_k}  }\omega_2 (Y, Z)
	+\mathcal{L}_{\frac{\partial}{\partial t_k}  }\omega_3 ( I_1  Y, Z)
	\\
	(\mathcal{L}_{\frac{\partial}{\partial t_k}  }g^{fibre}) (I_3 Y, Z)=
	\mathcal{L}_{\frac{\partial}{\partial t_k}  }\omega_1 (I_2 Y, Z) -\mathcal{L}_{\frac{\partial}{\partial t_k}  }\omega_2 (I_1 Y, Z)
	+\mathcal{L}_{\frac{\partial}{\partial t_k}  }\omega_3 (  Y, Z).
	\end{cases}
	\end{equation}

\end{lem}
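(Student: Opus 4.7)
The plan is to derive the first formula of (\ref{hyperkahlervariationcyclicsymmetry}) from the already-established inverse formula (\ref{hyperkahlerformvariationintermsofmetricvariation}), and then obtain the other three by substituting $I_jY$ for $Y$ and expanding via the quaternionic relations. Throughout, write $h = \mathcal{L}_{\partial/\partial t_k} g^{fibre}$ and $\sigma_i = \mathcal{L}_{\partial/\partial t_k}\omega_i$. I would first unpack (\ref{hyperkahlerformvariationintermsofmetricvariation}) pointwise on a vertical pair $(Y,Z)$: using the identity $\sum_j g(I_i e_j, Y)e_j = -I_iY$, which follows from the skewness $g(I_iA, B) = -g(A, I_iB)$, this gives the scalar relation
\[
h(I_i Y, Z) - h(Y, I_i Z) = 2\sigma_i(Y,Z), \qquad i = 1, 2, 3.
\]
Substituting $I_i Y$ for $Y$ and using $I_i^2 = -1$ yields $\sigma_i(I_iY, Z) = -\frac{1}{2}\bigl(h(Y,Z) + h(I_iY, I_iZ)\bigr)$, hence
\[
-\sum_{i=1}^3 \sigma_i(I_iY, Z) = \frac{3}{2}h(Y,Z) + \frac{1}{2}\sum_{i=1}^3 h(I_iY, I_iZ).
\]

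The first formula of (\ref{hyperkahlervariationcyclicsymmetry}) then reduces to the identity
\[
h(Y,Z) + \sum_{i=1}^3 h(I_iY, I_iZ) = 0
\]
for any traceless symmetric 2-tensor $h$ on a hyperk\"ahler 4-manifold. This is the representation-theoretic content of the decomposition $S^2_0 = \R^3 \otimes \Lambda^2_-$ already invoked in Section \ref{Digressionhyperkahlermetric}: a direct check using the quaternion relations $I_iI_j = \pm I_k$ shows that the operator $h \mapsto h + \sum_i h(I_i \cdot, I_i \cdot)$ produces a tri-Hermitian symmetric 2-tensor, which must lie in the $Sp(1)$-invariant subspace $\R \cdot g$ by Schur's lemma; taking the trace identifies its output as $(\operatorname{tr} h)\, g$. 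In our situation $\operatorname{tr} h = 0$ because $d_H \underline{\mu} = 0$ forces $b^k = 0$ (as in Section \ref{Adiabaticspinstructure$G_2$geometry}), so the identity follows and yields the first line of (\ref{hyperkahlervariationcyclicsymmetry}).

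The remaining three formulas are then formal consequences of the first: replacing $Y$ by $I_1 Y$, $I_2 Y$, $I_3 Y$ in turn and expanding using $I_iI_j = I_k$ for $(i,j,k)$ cyclic, $I_jI_i = -I_k$, together with $I_i^2 = -\mathrm{id}$, reproduces lines 2--4. For instance, the substitution $Y \mapsto I_1 Y$ converts $-\sum_i \sigma_i(I_iI_1Y, Z)$ into $\sigma_1(Y,Z) + \sigma_2(I_3Y, Z) - \sigma_3(I_2Y, Z)$, exactly matching line 2. The main obstacle is the traceless identity above; once established, the rest is bookkeeping. An alternative route, avoiding representation theory, checks all four formulas by direct expansion in an orthonormal basis using sample tensors as in Example \ref{exampleforspincalculationonhyperkahlerformvariation}, but this is noticeably more tedious and obscures the underlying $Sp(1)$-equivariance.
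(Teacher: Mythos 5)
Your proposal is correct and follows essentially the same route as the paper's own proof: the first line is deduced from the established inverse formula (\ref{hyperkahlerformvariationintermsofmetricvariation}), and the remaining three lines follow by precomposing with $I_1, I_2, I_3$ and using the quaternion relations. The only difference is that you make explicit the step the paper leaves as ``independent contributions'': your identity $h + \sum_i h(I_i\cdot, I_i\cdot) = (\operatorname{tr} h)\, g$ together with the tracelessness of $\mathcal{L}_{\partial/\partial t_k} g^{fibre}$ (from $d_H\underline{\mu}=0$) is a correct and clean way to close that gap.
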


\begin{proof}
The first equation is equivalent to (\ref{hyperkahlerformvariationintermsofmetricvariation}) and is perhaps most easily verified by thinking about independent contributions of $\mathcal{L}_{\frac{\partial}{\partial t_k}  }\omega_i$ to $\mathcal{L}_{\frac{\partial}{\partial t_k}  }g^{fibre}$. The other equations are obtained by precomposing the operators $I_1, I_2, I_3$.
\end{proof}

We recall any 2-form $F$ on $X$ acts on spinors by 
\[
c(F)\cdot s=\sum_{i<j} F(e_i, e_j) c_ic_j \cdot s,
\]
where $e_i$ is an orthonormal basis of the tangent space. We shall apply this to forms such as $\mathcal{L}_v\omega_i$.

\begin{lem}
	The Clifford action of $\mathcal{L}_v g^{fibre}$ on the negative spinors is given in terms of $\mathcal{L}_v \omega_i$ by
	\begin{equation}\label{hyperkahlermetricvariationformulaCliffordaction}
	c( \mathcal{L}_v \omega_k  )=\frac{1}{2}\sum_{i,j} (\mathcal{L}_v g^{fibre})(I_ke_i, e_j) c_ic_j 
	=-\frac{1}{2}\sum_{i,j} (\mathcal{L}_v g^{fibre})(e_i, I_k e_j) c_ic_j 
	.
	\end{equation}
The action on positive spinors are  zero by the ASD property of $\mathcal{L}_v \omega_k$.
\end{lem}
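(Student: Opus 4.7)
The plan is to reduce everything to a direct calculation starting from the inverse formula (\ref{hyperkahlerformvariationintermsofmetricvariation}) established in the previous lemma, which expresses $\mathcal{L}_v\omega_k$ in terms of $\mathcal{L}_v g^{fibre}$. First I would expand the wedge product in (\ref{hyperkahlerformvariationintermsofmetricvariation}) on the pair $(e_i,e_j)$, using the convention $I_k a=-a\circ I_k$ to write $(I_k e_\ell^\sharp)(e_i)=-g(e_\ell, I_k e_i)$, and then collapse the sum over $\ell$ by completeness of the orthonormal basis. This yields
\[
(\mathcal{L}_v\omega_k)(e_i,e_j)=\tfrac{1}{2}\bigl[(\mathcal{L}_v g^{fibre})(I_k e_i,e_j)-(\mathcal{L}_v g^{fibre})(I_k e_j,e_i)\bigr],
\]
the antisymmetry in $(i,j)$ being consistent with $\mathcal{L}_v\omega_k$ being a 2-form.

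Next, I would substitute this into the definition $c(\mathcal{L}_v\omega_k)=\tfrac{1}{2}\sum_{i,j}(\mathcal{L}_v\omega_k)(e_i,e_j)c_ic_j$ (the $\tfrac{1}{2}$ absorbing the 2-form antisymmetry) and process the two pieces separately. In the second piece I relabel $i\leftrightarrow j$ and use the Clifford relation $c_jc_i=-c_ic_j-2\delta_{ij}$; the diagonal contribution is proportional to $\sum_i(\mathcal{L}_v g^{fibre})(I_ke_i,e_i)$, which is $\mathrm{Tr}(\tilde S\,I_k)$ for the symmetric endomorphism $\tilde S$ associated to $\mathcal{L}_v g^{fibre}$. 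Since $\tilde S$ is symmetric and $I_k$ is skew, this trace vanishes. The two remaining pieces then combine with matching signs to give the first asserted equality
\[
c(\mathcal{L}_v\omega_k)=\tfrac{1}{2}\sum_{i,j}(\mathcal{L}_v g^{fibre})(I_ke_i,e_j)c_ic_j.
\]

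For the second equality I would invoke the symmetry of $\mathcal{L}_v g^{fibre}$ to write $(\mathcal{L}_v g^{fibre})(I_ke_i,e_j)=(\mathcal{L}_v g^{fibre})(e_j,I_ke_i)$ and then repeat the $i\leftrightarrow j$/Clifford-anticommutator manoeuvre, whose only surviving diagonal term $\sum_i(\mathcal{L}_v g^{fibre})(e_i,I_ke_i)$ again vanishes by the same trace argument; the sign flip produced by $c_jc_i=-c_ic_j$ delivers the claimed minus sign. Finally, the statement that the action vanishes on positive spinors is immediate from the remark at the end of Section \ref{Adiabaticspinstructure$G_2$geometry} that $\mathcal{L}_v\omega_k$ is an ASD 2-form on the fibre, together with the standard decomposition $\mathfrak{so}(4)=\mathfrak{su}(2)_+\oplus\mathfrak{su}(2)_-$ under which $\Lambda^2_-$ acts through $\mathfrak{su}(2)_-$ and therefore annihilates $S^+_X$.

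The calculation is essentially bookkeeping; the only subtlety is recognising that the diagonal term produced by $c_jc_i=-c_ic_j-2\delta_{ij}$ vanishes by the skew/symmetric trace identity, which is really the representation-theoretic shadow of $\mathcal{L}_v\omega_k$ being ASD. Everything else is sign tracking powered by the skew-adjointness of $I_k$ versus the symmetry of $\mathcal{L}_v g^{fibre}$.
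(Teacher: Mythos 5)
Your computation is correct and fully rigorous, but it takes a genuinely different route from the paper. The paper does not give a direct algebraic derivation of (\ref{hyperkahlermetricvariationformulaCliffordaction}); instead, in the example immediately following the lemma, it verifies both sides of the identity on the single sample deformation $\mathcal{L}_v\omega_3=dx_1dx_2-dx_3dx_4$ and then observes that the general case follows ``by a Schur's lemma argument in the representation theory of $Sp(1)=SU(2)$'' --- both sides are $Sp(1)$-equivariant linear maps from the module of traceless symmetric $2$-tensors into $\End(S^-_X)$, so uniqueness up to scalar is automatic and one sample calculation pins down the scalar. Your approach, by contrast, expands the inverse formula (\ref{hyperkahlerformvariationintermsofmetricvariation}) componentwise, converts it to the Clifford picture, and handles the two asserted equalities by relabelling together with the anticommutator $c_jc_i=-c_ic_j-2\delta_{ij}$. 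This is entirely elementary and self-contained, at the cost of more sign bookkeeping; the paper's argument is shorter but leans on the reader's comfort with $SU(2)$-representation theory. One small correction to your commentary: the vanishing of the diagonal terms $\sum_i(\mathcal{L}_v g^{fibre})(I_ke_i,e_i)=\Tr(\tilde S I_k)$ is not really the ``representation-theoretic shadow'' of $\mathcal{L}_v\omega_k$ being ASD --- it holds for any symmetric tensor $\tilde S$ and any skew $I_k$, purely by taking the transpose inside the trace, with no input from Donaldson's conditions. The ASD property is used only for the final (and separate) claim that the Clifford action annihilates $S^+_X$, which you correctly justify via the splitting $\mathfrak{so}(4)=\mathfrak{su}(2)_+\oplus\mathfrak{su}(2)_-$.
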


\begin{eg}
	In example \ref{exampleforspincalculationonhyperkahlerformvariation}, we have $c( \mathcal{L}_v \omega_3  )=c_1c_2-c_3c_4=2c_1c_2$, and
	$\sum_{i,j} (\mathcal{L}_v g^{fibre})(I_ke_i, e_j) c_ic_j$ is zero for $k=1, 2$, and is $4c_1c_2$ for $k=3$. Similarly $
	\sum_{i,j} (\mathcal{L}_v g^{fibre})(e_i,I_k e_j) c_ic_j
	$ is zero for $k=1,2$, and is $-4c_1c_2$ for $k=3$.
	In fact this calculation implies the above lemma by a Schur's lemma argument in the representation theory of $Sp(1)=SU(2)$.
\end{eg}

Finally, holonomy principle implies

\begin{cor}\label{hyperkahlermetricvariationderivative}
Let $w$ be a tangent vector to the fibre.
The fibre covariant derivatives $\nabla^{fibre}_w\mathcal{L}_v\omega_i$ are ASD 2-forms, and they are related to $\nabla^{fibre}_w\mathcal{L}_vg^{fibre}$ in exactly the same way $\mathcal{L}_v\omega_i$ is related to $\mathcal{L}_vg^{fibre}$, \ie we can replace $\mathcal{L}_v\omega_i$ by $\nabla^{fibre}_w\mathcal{L}_v\omega_i$, and $\mathcal{L}_vg^{fibre}$ by $\nabla^{fibre}_w\mathcal{L}_v\omega_i$ in the equations (\ref{hyperkahlerformvariationintermsofmetricvariation}), (\ref{hyperkahlervariationcyclicsymmetry}), (\ref{hyperkahlermetricvariationformulaCliffordaction}), and then the new equations still hold.
\end{cor}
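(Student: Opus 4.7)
The plan is to reduce the corollary to the fact that on a hyperkähler 4-manifold the relevant tensorial structures are all parallel, so that $\nabla^{fibre}_w$ commutes with the linear algebraic operations used in the formulas being invoked.

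First I would verify the ASD claim for $\nabla^{fibre}_w\mathcal{L}_v\omega_i$. The orthogonal decomposition $\Lambda^2 T^*X=\Lambda^2_+\oplus \Lambda^2_-$ on any oriented Riemannian 4-manifold is determined by the metric and orientation, both of which are parallel for the Levi-Civita connection; equivalently, the Hodge star $*_4$ is parallel. Since the preceding remark established that $\mathcal{L}_v\omega_i$ is ASD, its covariant derivative $\nabla^{fibre}_w\mathcal{L}_v\omega_i$ is again ASD.

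Next I would show that each of the identities (\ref{hyperkahlerformvariationintermsofmetricvariation}), (\ref{hyperkahlervariationcyclicsymmetry}), (\ref{hyperkahlermetricvariationformulaCliffordaction}) is preserved under $\nabla^{fibre}_w$. The ingredients appearing on the right-hand side of each identity are (i) the complex structures $I_i$, (ii) the metric $g^{fibre}$, (iii) the symplectic forms $\omega_i$, and (iv) the Clifford multiplications $c_i$ by a (locally) parallel orthonormal frame. Because the holonomy is contained in $Sp(1)$, all four are parallel with respect to $\nabla^{fibre}$. Consequently $\nabla^{fibre}_w$ passes freely through these linear algebraic operations: for instance, differentiating (\ref{hyperkahlerformvariationintermsofmetricvariation}) gives
\[
\tfrac{1}{2}\sum_j I_i e_j^\sharp \wedge \iota_{e_j}\,\nabla^{fibre}_w\mathcal{L}_v g^{fibre}=-\nabla^{fibre}_w\mathcal{L}_v\omega_i,
\]
and analogously for (\ref{hyperkahlervariationcyclicsymmetry}) and (\ref{hyperkahlermetricvariationformulaCliffordaction}). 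The only point requiring care is the orthonormal frame $e_j$ appearing in the formulas; it is enough to evaluate at a chosen point and extend $e_j$ by parallel transport along a geodesic in the direction $w$, so that $\nabla^{fibre}_w e_j=0$ there and no spurious terms appear.

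The main (and only slightly substantive) obstacle will be book-keeping: making sure that the Lie derivative $\mathcal{L}_v$, which is defined using $v=\partial/\partial t_k$ as a horizontal vector over the total space, interacts correctly with the fibrewise covariant derivative $\nabla^{fibre}_w$. The key point is that once we freeze $b\in B$, both $\mathcal{L}_v\omega_i$ and $\mathcal{L}_v g^{fibre}$ are genuine tensor fields on the fibre $M_b$, and the identities we are differentiating are pointwise linear algebraic identities in those tensors, so $\nabla^{fibre}_w$ operates tensorially and the argument is clean.
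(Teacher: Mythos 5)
Your argument is correct and is essentially the paper's own: the paper justifies the corollary by the "holonomy principle," which is exactly your observation that $I_i$, $g^{fibre}$, $\omega_i$, the Hodge star and the Clifford multiplications are all $\nabla^{fibre}$-parallel (by $Sp(1)$ holonomy), so covariant differentiation in a fibre direction commutes with the linear-algebraic identities and preserves the ASD property. Your care about evaluating in a frame parallel at the point is the right way to make this precise; nothing further is needed.
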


\subsection{Curvature operators}\label{Curvatureoperators}

In this Section we examine the curvature tensor $R$ introduced in Section \ref{AdiabaticLeviCivita}, in the context of Donaldson's adiabatic fibration, and we show how a certain operator constructed out of $R$ encodes much of the information contained in Donaldson's conditions, which is not captured by the existence of the parallel positive spinor in Section \ref{Adiabaticspinstructure$G_2$geometry}. We then interpret this operator as arising from the variation of the fibrewise Dirac operators.

Since $S^+_X$ is canonically isomorphic to $S_B\to M$ preserving all structures, only the negative spin remains to be understood. Given tangent vectors $v,w$ on $M$, the curvature operator $R(v,w)\in \End(T_x M_b)\oplus \End(T_b B)  $ canonically acts on $S^-_X\otimes S_B$.
Because $\nabla^{LC,--}$ is a tensor product connection, we see easily that the $\End(T_x M_b)$ matrix component is responsible for the action on $S^-_X$, and the $\End(T_b B)$ matrix component is responsible for the action on $S_B$. The latter is essentially just the Riemannian curvature of $g^{base}$. We are thus only interested in the $\End{T_x M_b}$ matrix component. We can write out the curvature operator concretely as
\[
R(v,w)\cdot s=-\frac{1}{4}\sum_{i,j} \langle R(v,w  )e_j, e_i \rangle c_ic_j\cdot s,
\]
where $e_i$ is an orthonormal basis of $T_xM_b\simeq\R^4$. The coefficients here come from the theory of spin representation of $so(4)$. This $R(v,w)$ Clifford action is the same as the curvature operator of $\nabla^{LC,-}$ acting on spinors, by the compatibility of $\nabla^{LC,-}$ with the adiabatic Levi-Civita connection.

For $v, w$ both vertical, the curvature operator $R(v, w)$ agrees with the Riemannian curvature of the fibre metric. The more interesting case is when $v=\frac{\partial}{\partial t_k}$ is some horizontal vector, and $w$ is vertical. 

We define three operators  $\tilde{R}_k: S^-_X\to S^+_X$ at any given point,
\[
\tilde{R}_k= \sum_l c(e_l) R(e_l,\frac{\partial }{\partial t_k}  ) = -\frac{1}{4}\sum_{i,j,l} \langle R(e_l, \frac{\partial }{\partial t_k}   )e_j, e_i\rangle c_l c_i c_j,
\]
which can be compressed into a $\Hom(S^-_X, S^+_X)$-valued 1-form $\tilde{R}=\sum \tilde{R}_k dt_k$. We recall also the operators $I^{S^+}_k$ acting on $S^+_X$. The following delicate result is a manifestation of Donaldson's conditions in  adiabatic spin geometry.

\begin{prop}
The operators $\tilde{R}_k$ satisfies 
$
\sum_{k=1}^3 I^{S^+}_k\circ \tilde{R}_k=0,
$	
or in other words,
\begin{equation}
( I_1^{S^+} dt_2dt_3+ I_2^{S^+} dt_3dt_1+ I_3^{S^+} dt_1dt_2      )\tilde{R}=0.
\end{equation}
\end{prop}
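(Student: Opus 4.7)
The plan is to unpack $\tilde{R}_k$ using the explicit curvature formula (\ref{adiabaticLeviCivitacurvature}), translate the resulting expression into the language of the ASD 2-forms $\mathcal{L}_{\partial/\partial t_k}\omega_m$ by means of Corollary \ref{hyperkahlermetricvariationderivative}, and then exploit the symmetry $\mathcal{L}_{\partial/\partial t_k}\omega_m = \mathcal{L}_{\partial/\partial t_m}\omega_k$ (which comes from $d_H\underline{\omega}=0$) together with the trace condition $\sum_k \mathcal{L}_{\partial/\partial t_k}\omega_k = 0$ (which comes from $d_H\underline{\Theta}=0$) to force $\sum_k I_k^{S^+} \tilde{R}_k$ to vanish. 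The ASD character of each $\mathcal{L}_{\partial/\partial t_k}\omega_m$, established at the end of Section \ref{Adiabaticspinstructure$G_2$geometry}, will be decisive, since it means the corresponding Clifford action annihilates $S^+_X$.

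First I would substitute (\ref{adiabaticLeviCivitacurvature}) into the definition of $\tilde{R}_k$, yielding
\[
\tilde{R}_k = -\tfrac{1}{8}\sum_{i,j,l}\bigl[(\nabla^{fibre}_{e_i}\mathcal{L}_{\partial/\partial t_k} g^{fibre})(e_l,e_j) - (\nabla^{fibre}_{e_j}\mathcal{L}_{\partial/\partial t_k}g^{fibre})(e_l,e_i)\bigr]\,c_l c_i c_j,
\]
and then relabel indices and use $c_j c_i = -c_i c_j - 2\delta_{ij}$ to collapse the bracket into a single sum. Next I would invoke Corollary \ref{hyperkahlermetricvariationderivative}, which, via the cyclic relations (\ref{hyperkahlervariationcyclicsymmetry}), converts $\nabla^{fibre}_{e_i}\mathcal{L}_{\partial/\partial t_k}g^{fibre}$ into a linear combination of the $\nabla^{fibre}_{e_i}\mathcal{L}_{\partial/\partial t_k}\omega_m$ through the three fibrewise complex structures $I_m$. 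This rewrites $\tilde{R}_k$ as a triple sum over $i,l,m$ of quantities built from $\nabla^{fibre}_{e_i}\mathcal{L}_{\partial/\partial t_k}\omega_m$ and Clifford products $c_l c_i c_j$.

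Applying $I_k^{S^+} = \tfrac{1}{2}c_X(\omega_k)$ on the left and summing over $k$ produces a double sum over $(k,m)$, weighted by $c_X(\omega_k)$. Because $\mathcal{L}_{\partial/\partial t_k}\omega_m$ is symmetric in $(k,m)$ and trace-free in the sense that $\sum_k\mathcal{L}_{\partial/\partial t_k}\omega_k = 0$, one can reorganize the sum symmetrically in $k\leftrightarrow m$. Since each $\mathcal{L}_{\partial/\partial t_k}\omega_m$ is ASD, the operator $c_X(\mathcal{L}_{\partial/\partial t_k}\omega_m)$ annihilates $S^+_X$, while $c_X(\omega_k)$ acts there as $2I_k^{S^+}$; combining this with the standard $Sp(1)$-equivariant identities in the Clifford algebra of a hyperk\"ahler 4-fold produces pairwise cancellation of all off-diagonal terms, while the diagonal ones (those with $k=m$) are killed by the trace-free condition.

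The main obstacle will be the careful bookkeeping of signs and orderings in the Clifford-algebra manipulation, in particular in tracking how products like $c_X(\omega_k) c_l c_i c_j$ reduce under the hyperk\"ahler structure. A conceptually cleaner route, which I would keep in reserve as a cross-check, is to identify $\tilde{R}_k$ (up to sign) with the variation of the fibrewise Dirac operator in the $\partial/\partial t_k$ direction, so that the identity $\sum_k I_k^{S^+}\tilde{R}_k = 0$ reads as a Fueter-type compatibility condition on the family of fibrewise Dirac operators. Donaldson's conditions $d_H\underline{\omega}=0$ and $d_H\underline{\Theta}=0$ are then precisely what guarantees this compatibility.
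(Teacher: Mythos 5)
Your plan follows the paper's own argument in all essentials: substitute the curvature formula (\ref{adiabaticLeviCivitacurvature}) into $\tilde{R}_k$, use Corollary \ref{hyperkahlermetricvariationderivative} and the relations (\ref{hyperkahlervariationcyclicsymmetry}) to trade $\nabla^{fibre}\mathcal{L}_{\partial/\partial t_k}g^{fibre}$ for the ASD forms $\nabla^{fibre}\mathcal{L}_{\partial/\partial t_k}\omega_m$ (whose Clifford action kills positive spinors), and then let $d_H\underline{\Theta}=0$ kill the diagonal $\sum_k\mathcal{L}_{\partial/\partial t_k}\omega_k$ part and $d_H\underline{\omega}=0$ kill the off-diagonal part, exactly as in the paper; even your fallback interpretation via the variation of the fibrewise Dirac operator is what the paper does immediately afterwards. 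The only work left is the Clifford-algebra bookkeeping you acknowledge, which the paper carries out in the same spirit.
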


\begin{proof}
Using the curvature formula (\ref{adiabaticLeviCivitacurvature}), we have
\[
2\langle R(e_l, \frac{\partial }{\partial t_k}   )e_j, e_i\rangle= (\nabla^{fibre}_i \mathcal{L}_{ \frac{\partial }{\partial t_k}} g^{fibre  })( e_l, e_j  )- (\nabla^{fibre}_j \mathcal{L}_{ \frac{\partial }{\partial t_k}  }g^{fibre})( e_l, e_i  ).
\]
Thus writing in summation convention for the indices $i, j, l$,
\[
\begin{split}
2\tilde{R}_k& =-\frac{1}{4}\{(\nabla^{fibre}_i \mathcal{L}_{ \frac{\partial }{\partial t_k}  }g^{fibre})( e_l, e_j  )- (\nabla^{fibre}_j \mathcal{L}_{ \frac{\partial }{\partial t_k}  }g^{fibre})( e_l, e_i  )\} c( e_l)c_ic_j \\
&=-\frac{1}{4}\{(\nabla^{fibre}_i \mathcal{L}_{ \frac{\partial }{\partial t_k}  }g^{fibre})( I_k e_l, e_j  )- (\nabla^{fibre}_j \mathcal{L}_{ \frac{\partial }{\partial t_k}  }g^{fibre})( I_k e_l, e_i  )\} c(I_k e_l)c_ic_j \\
&= -\frac{1}{4}I_k^{S^+}\{(\nabla^{fibre}_i \mathcal{L}_{ \frac{\partial }{\partial t_k}  }g^{fibre})( I_k e_l, e_j  )- (\nabla^{fibre}_j \mathcal{L}_{ \frac{\partial }{\partial t_k}  }g^{fibre})( I_k e_l, e_i  )\} c_lc_ic_j .
\end{split}
\]
The last equality uses that $I_k^{S^+} (v\cdot s)=c(I_k v) \cdot s$ for negative spinor $s$ and tangent vector $v$. We can  simplify further by Corollary \ref{hyperkahlermetricvariationderivative} and equation (\ref{hyperkahlermetricvariationformulaCliffordaction}):
\[
\begin{split}
2\tilde{R}_k = &-\frac{1}{4}I_k^{S^+}(\nabla^{fibre}_i \mathcal{L}_{ \frac{\partial }{\partial t_k}  }g^{fibre})( I_k e_l, e_j  )c_lc_ic_j+ \frac{1}{2} I_k^{S^+} c(\nabla^{fibre}_j \mathcal{L}_{ \frac{\partial }{\partial t_k}  }\omega_k    )c_j \\
=&\frac{1}{4}I_k^{S^+}(\nabla^{fibre}_i \mathcal{L}_{ \frac{\partial }{\partial t_k}  }g^{fibre})( I_k e_l, e_j  )c_ic_lc_j 
+\frac{1}{2} I_k^{S^+}(\nabla^{fibre}_i \mathcal{L}_{ \frac{\partial }{\partial t_k}  }g^{fibre})( I_k e_i, e_j  )c_j \\
&+ \frac{1}{2} I_k^{S^+} c(\nabla^{fibre}_j \mathcal{L}_{ \frac{\partial }{\partial t_k}  }\omega_k    )c_j \\
=&
\frac{1}{2} I_k^{S^+} \{c_i c( \nabla^{fibre}_i \mathcal{L}_{ \frac{\partial }{\partial t_k}  }\omega_k        )+  
(\nabla^{fibre}_i \mathcal{L}_{ \frac{\partial }{\partial t_k}  }g^{fibre})( I_k e_i, e_j  )c_j \\
& +c(\nabla^{fibre}_i \mathcal{L}_{ \frac{\partial }{\partial t_k}  }\omega_k    )c_i
 \}\\
 =& \frac{1}{2} I_k^{S^+} \{c_i c( \nabla^{fibre}_i \mathcal{L}_{ \frac{\partial }{\partial t_k}  }\omega_k        )+  
(\nabla^{fibre}_i \mathcal{L}_{ \frac{\partial }{\partial t_k}  }g^{fibre})( I_k e_i, e_j  )c_j \}.
\end{split}
\]
The last step is because $(\nabla^{fibre}_i \mathcal{L}_{ \frac{\partial }{\partial t_k}  }\omega_k)$ acts trivially on positive spinors by the ASD property.

Now we apply $-I_k^{S^+}$ and sum over $k=1,2,3$. We get
\[
-\sum_k I^{S^+}_k\tilde{R}_k= \frac{1}{4}
\sum_{k} \{c_i c( \nabla^{fibre}_i \mathcal{L}_{ \frac{\partial }{\partial t_k}  }\omega_k        )+  
(\nabla^{fibre}_i \mathcal{L}_{ \frac{\partial }{\partial t_k}  }g^{fibre})( I_k e_i, e_j  )c_j \}.
\]
We recall that $\sum_k \mathcal{L}_{ \frac{\partial }{\partial t_k}  }\omega_k=0$ is precisely the condition $d_H\underline\Theta=0$ in Donaldson's adiabatic fibration. Therefore the first summand is zero, and
\[
\sum_k I^{S^+}_k\tilde{R}_k= -\frac{1}{4}\sum_{k,i,j} (\nabla^{fibre}_i \mathcal{L}_{ \frac{\partial }{\partial t_k}  }g^{fibre})( I_k e_i, e_j  )c_j.
\]
Now we write out the RHS by Corollary \ref{hyperkahlermetricvariationderivative} and equation (\ref{hyperkahlervariationcyclicsymmetry}). One obtains
\[
\begin{split}
\sum_k I^{S^+}_k\tilde{R}_k=
& -\frac{1}{4}\sum_{i,j}\nabla^{fibre}_i 
(\sum_k  \mathcal{L}_{ \frac{\partial }{\partial t_k}  }\omega_k) (e_i, e_j) c_j \\
& -\frac{1}{4}\sum_{i,j}\nabla^{fibre}_i 
 ( \mathcal{L}_{ \frac{\partial }{\partial t_2}  }\omega_3  - \mathcal{L}_{ \frac{\partial }{\partial t_3}  }\omega_2) (I_1 e_i, e_j) c_j \\
& -\frac{1}{4}\sum_{i,j}\nabla^{fibre}_i 
( \mathcal{L}_{ \frac{\partial }{\partial t_3}  }\omega_1  - \mathcal{L}_{ \frac{\partial }{\partial t_1}  }\omega_3) (I_2 e_i, e_j) c_j  \\
& -\frac{1}{4}\sum_{i,j}\nabla^{fibre}_i 
( \mathcal{L}_{ \frac{\partial }{\partial t_1}  }\omega_2  - \mathcal{L}_{ \frac{\partial }{\partial t_2}  }\omega_1) (I_3 e_i, e_j) c_j .
\end{split}
\]
But we recall that 
$
 \mathcal{L}_{ \frac{\partial }{\partial t_i}  }\omega_j=  \mathcal{L}_{ \frac{\partial }{\partial t_j}  }\omega_i
$
is precisely Donaldson's condition $d_H \underline{\omega}=0$. Thus this whole expression $\sum_k I^{S^+}_k\tilde{R}_k$ vanishes as required.
\end{proof}

\begin{rmk}
This proof makes essential use of Donaldson's adiabatic fibration conditions. The reader may compare this with the proof of the fact that $c(\underline\Theta)$ is parallel.
\end{rmk}

Geometrically, the operator $\tilde{R}$ arises from the variation of the \textbf{fibrewise Dirac operators} 
\[
D^-=\sum_{i=1}^4 c(e_i) \nabla^{LC,-}_i : \Gamma(M, S^-_X)\to \Gamma(M, S^+_X).
\]

\begin{lem}
The variation of the fibrewise Dirac operators is
\begin{equation}
[ \nabla_{\frac{\partial}{\partial t_k}   }^{LC}, D^-       ]=-\tilde{R}_k  - \frac{1}{2}\sum_{i,j} (\mathcal{L}_{ \frac{\partial}{\partial t_k}   }   g^{fibre})(e_i, e_j) c_j \nabla^{LC,-}_{ e_i   }.
\end{equation}
for an orthonormal basis $e_i$ on the fibre.
\end{lem}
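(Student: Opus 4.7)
The plan is to evaluate the commutator at a chosen point $x \in M_b$ using a convenient extension of the frame. I choose an orthonormal basis $\{e_i\}_{i=1}^4$ of $T_xM_b$ and extend it to a neighbourhood as follows: along the integral curve of $\partial_{t_k}$ through $x$ by horizontal $\nabla^{LC}$-parallel transport, and arbitrarily inside each fibre. A direct check using (\ref{adiabaticLeviCivitacase3}) and the symmetry of $\mathcal{L}_{\partial_{t_k}}g^{fibre}$ shows that $\nabla^{LC}$ is compatible with $g^{fibre}$, so the frame remains orthonormal along the horizontal curve, and the formula $D^-=\sum_i c(e_i)\nabla^{LC,-}_{e_i}$ makes sense in a neighbourhood of $x$ in the $\partial_{t_k}$ direction. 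By construction, $\nabla^{LC}_{\partial_{t_k}}e_i=0$ at $x$.

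Applying the Leibniz rule (\ref{adiabaticspinstructurevariationLeibnizrule}) for $\nabla^{LC,\pm}$, the term $c(\nabla^{LC}_{\partial_{t_k}}e_i)$ vanishes, and one obtains
\[
[\nabla^{LC}_{\partial_{t_k}}, D^-]s = \sum_i c(e_i)\bigl[\nabla^{LC,-}_{\partial_{t_k}}, \nabla^{LC,-}_{e_i}\bigr]s = \sum_i c(e_i)R^-(\partial_{t_k}, e_i)s + \sum_i c(e_i)\nabla^{LC,-}_{[\partial_{t_k}, e_i]}s,
\]
where $R^-$ is the curvature of $\nabla^{LC,-}$ on $S^-_X$. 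By the definition of $\tilde{R}_k$ in Section \ref{Curvatureoperators} and the antisymmetry of $R^-$ in its two vector arguments, the first sum equals $-\tilde{R}_k s$.

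For the second sum, I apply (\ref{adiabaticLeviCivitacase3}) with $X=\partial_{t_k}$ and $Y=e_i$; combined with the condition $\nabla^{LC}_{\partial_{t_k}}e_i=0$ at $x$, this yields $g^{fibre}([\partial_{t_k}, e_i], e_j) = -\tfrac{1}{2}(\mathcal{L}_{\partial_{t_k}}g^{fibre})(e_i, e_j)$, so that at $x$
\[
[\partial_{t_k}, e_i] = -\tfrac{1}{2}\sum_j (\mathcal{L}_{\partial_{t_k}}g^{fibre})(e_i, e_j)\,e_j.
\]
Substituting into the Lie-bracket sum and swapping the dummy indices $i\leftrightarrow j$ (permitted by symmetry of $\mathcal{L}_{\partial_{t_k}}g^{fibre}$) gives exactly the claimed frame-variation term.

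The calculation is essentially mechanical once the right frame is in hand; the only subtle bookkeeping is verifying metric-compatibility of $\nabla^{LC}$, which is tantamount to a Koszul-type identity packaged inside (\ref{adiabaticLeviCivitacase3}). Tensoriality of both sides in the chosen orthonormal basis justifies the pointwise computation, and in contrast to the previous Proposition no further appeal to Donaldson's adiabatic conditions is required.
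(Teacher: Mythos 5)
Your proof is correct, but it takes a different path through the same underlying computation, so it is worth recording the contrast. The paper expands $D^-$ in a fibrewise coordinate frame $\partial_{x_i}$, where the Lie bracket $[\partial_{t_k},\partial_{x_i}]$ vanishes automatically; the two surviving contributions to the commutator are then $c(\nabla^{LC}_{\partial_{t_k}}dx_i)\nabla^{LC,-}_{\partial_{x_i}}$, evaluated via Lemma \ref{adiabaticLeviCivitaonvertical1forms}, and the curvature term $-\tilde R_k$. You instead adapt the frame to $\nabla^{LC}$: choosing $e_i$ orthonormal and horizontally $\nabla^{LC}$-parallel kills the Clifford-factor commutator $c(\nabla^{LC}_{\partial_{t_k}}e_i)$ outright, and the information that the paper reads off from $\nabla^{LC}_{\partial_{t_k}}dx_i$ reappears for you in the nonvanishing Lie bracket $[\partial_{t_k},e_i]$, which you extract from (\ref{adiabaticLeviCivitacase3}). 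The two are equivalent reshufflings of the same Koszul data, and both land on the same frame-variation term after the index swap you note. Your route has the advantage of working throughout in an honest orthonormal frame (the statement is phrased in one), at the modest cost of having to justify that horizontal $\nabla^{LC}$-parallel transport preserves the fibre inner product — the metric-compatibility check you flag, which indeed follows from (\ref{adiabaticLeviCivitacase3}) together with the Leibniz rule for $\mathcal{L}_{\partial_{t_k}}g^{fibre}$. The paper's coordinate route avoids that check but momentarily conflates coordinate covectors with orthonormal ones, and has to invoke the separate one-form Lemma \ref{adiabaticLeviCivitaonvertical1forms}. Your closing observation is also accurate: like the paper's argument, this is a purely Riemannian statement about the adiabatic Levi-Civita connection, and none of Donaldson's conditions $d_H\underline{\omega}=0$, $d_H\underline{\mu}=0$, $d_H\underline{\Theta}=0$ are used until the subsequent Proposition.
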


\begin{proof}
We compute over the first order neighbourhood of $b\in B$ by taking coordinates $x_i$, $i=1,2,3,4$, and using the summation convention:
\[
\begin{split}
[ \nabla_{\frac{\partial}{\partial t_k}   }^{LC}, D^-       ]
&=[ \nabla_{\frac{\partial}{\partial t_k}   }^{LC},c( dx_i)  \nabla^{LC,-}_{\frac{\partial}{\partial x_i}   }      ]=[ \nabla_{\frac{\partial}{\partial t_k}   }^{LC}, c(dx_i)           ]\nabla^{LC,-}_{\frac{\partial}{\partial x_i}   }+c(dx_i) [ \nabla_{\frac{\partial}{\partial t_k}   }^{LC} , \nabla^{LC,-}_{\frac{\partial}{\partial x_i}   }   ] \\
&=c(  \nabla_{\frac{\partial}{\partial t_k}   }^{LC} dx_i       )\nabla^{LC,-}_{\frac{\partial}{\partial x_i}   }- c(dx_i) R(\frac{\partial}{\partial x_i}, \frac{\partial}{\partial t_k}   ) \\
& =c(  \nabla_{\frac{\partial}{\partial t_k}   }^{LC} dx_i       )\nabla^{LC,-}_{\frac{\partial}{\partial x_i}   }- \tilde{R}_k.
\end{split}
\]
Now using the formula for taking derivatives on 1-forms, Lemma \ref{adiabaticLeviCivitaonvertical1forms},
\[
\nabla^{LC}_{\frac{\partial}{\partial t_k}   } dx_i= \mathcal{L}_{ \frac{\partial}{\partial t_k}   }dx_i- \frac{1}{2}(\mathcal{L}_{ \frac{\partial}{\partial t_k}   }   g^{fibre})( (dx_i)^\sharp, \_    )= - \frac{1}{2}(\mathcal{L}_{ \frac{\partial}{\partial t_k}   }   g^{fibre})( (dx_i)^\sharp, \_    ).
\]
Now if we take $e_i$ an orthonormal basis on the tangent space of the fibre,
then
\[
c(  \nabla_{\frac{\partial}{\partial t_k}   }^{LC} dx_i       )\nabla^{LC,-}_{\frac{\partial}{\partial x_i}   }=- \frac{1}{2}(\mathcal{L}_{ \frac{\partial}{\partial t_k}   }   g^{fibre})(e_i, e_j) c_j \nabla^{LC,-}_{ e_i   }.
\]
The result follows.
\end{proof}

\begin{prop}
The variation of the Dirac operators satisfies
\begin{equation}\label{variationofuncoupledDiracoperator$G_2$instantonequation}
\sum_k I_k^{S^+} [ \nabla_{\frac{\partial}{\partial t_k}   }^{LC}, D^-       ]=0.
\end{equation}
\end{prop}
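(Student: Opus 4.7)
The plan is to substitute the variation formula from the preceding Lemma into the left-hand side of (\ref{variationofuncoupledDiracoperator$G_2$instantonequation}) and verify that the two resulting terms each vanish. After applying $I_k^{S^+}$ and summing over $k$, this yields
\begin{equation*}
\sum_k I_k^{S^+}[\nabla_{\partial/\partial t_k}^{LC}, D^-]
= -\sum_k I_k^{S^+}\tilde R_k
- \frac{1}{2} \sum_{k,i,j}(\mathcal{L}_{\partial/\partial t_k} g^{fibre})(e_i,e_j)\, I_k^{S^+} c_j\, \nabla^{LC,-}_{e_i}.
\end{equation*}
The first sum vanishes by the preceding Proposition, so the whole task reduces to showing the second sum also vanishes.

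Using the identity $I_k^{S^+}(v\cdot s)=c(I_k v)\cdot s$ on negative spinors (already recalled in the previous proof), the second sum becomes $-\frac{1}{2} \sum_{k,i}c(I_k T_k e_i)\,\nabla^{LC,-}_{e_i}$, where $T_k\colon TX\to TX$ is the endomorphism dual to the symmetric tensor $\mathcal{L}_{\partial/\partial t_k}g^{fibre}$ via the fibre metric. So it is enough to prove the pointwise linear-algebraic identity $\sum_k I_k T_k=0$ on $TX$.

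To establish this, I use the first line of (\ref{hyperkahlervariationcyclicsymmetry}) to write $T_k = -\sum_l M_{k,l}I_l$, where $M_{k,l}$ is the endomorphism of $TX$ dual to the ASD 2-form $\mathcal{L}_{\partial/\partial t_k}\omega_l$. Being both anti-self-adjoint and anti-self-dual, each $M_{k,l}$ corresponds to a right multiplication in the quaternionic model of the fibre tangent space, and therefore commutes with the left multiplications $I_k$. Consequently, using $I_k I_l = -\delta_{kl}+\epsilon_{klm}I_m$,
\begin{equation*}
\sum_{k,l} I_k M_{k,l}I_l = \sum_{k,l} M_{k,l} I_k I_l = -\sum_k M_{k,k} + \sum_{k,l,m}\epsilon_{klm} M_{k,l}I_m.
\end{equation*}
The diagonal piece $-\sum_k M_{k,k}$ vanishes because $d_H\underline{\Theta}=0$ says $\sum_k \mathcal{L}_{\partial/\partial t_k}\omega_k=0$; the remaining piece vanishes because $d_H\underline{\omega}=0$ forces the symmetry $M_{k,l}=M_{l,k}$, which is then contracted against the antisymmetric $\epsilon_{klm}$.

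There is no real obstacle beyond careful bookkeeping; the interpretive point is that the two Donaldson integrability conditions $d_H\underline{\omega}=0$ and $d_H\underline{\Theta}=0$ enter in exactly the same pattern as in the preceding Proposition on $\sum_k I_k^{S^+}\tilde R_k=0$. The vanishing of the uncoupled-Dirac variation sum and the vanishing of $\sum_k I_k^{S^+}\tilde R_k$ are thus two parallel manifestations of the same adiabatic integrability, which was to be expected given that $\tilde R$ arose from the variation of $D^-$ in the first place.
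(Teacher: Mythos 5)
Your proof is correct and takes essentially the same route as the paper: substitute the variation lemma, dispose of the $\tilde{R}_k$ term by the preceding proposition, and make the remaining metric-variation term vanish by combining (\ref{hyperkahlervariationcyclicsymmetry}) with the quaternionic relations and the two conditions $d_H\underline{\omega}=0$ and $d_H\underline{\Theta}=0$. The only difference is packaging: the paper keeps the bookkeeping on spinors via $\sum_l I_l^{S^+}c(\iota_{e_i}\mathcal{L}_{\frac{\partial}{\partial t_k}}\omega_l)$, while you reduce to the pointwise identity $\sum_k I_kT_k=0$ on the fibre tangent bundle, where commuting $M_{k,l}$ past $I_k$ is legitimately justified by the ASD property of $\mathcal{L}_{\frac{\partial}{\partial t_k}}\omega_l$ established earlier in the paper.
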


\begin{proof}
We apply (\ref{hyperkahlervariationcyclicsymmetry}) to get
\[
\begin{split}
&-(\mathcal{L}_{ \frac{\partial}{\partial t_k}   }   g^{fibre})(e_j, e_i)c_j \\
=& \{(\mathcal{L}_{ \frac{\partial}{\partial t_k}   }   \omega_1)(I_1 e_j, e_i)
+(\mathcal{L}_{ \frac{\partial}{\partial t_k}   }   \omega_2)(I_2 e_j, e_i)
+(\mathcal{L}_{ \frac{\partial}{\partial t_k}   }  \omega_3)(I_3 e_j, e_i)\} c_j
\\
=& \sum_{l=1}^3 (\mathcal{L}_{ \frac{\partial}{\partial t_k}   } \omega_l)(e_j, e_i) c(-I_l e_j  ) 
= - \sum_{l=1}^3 I^{S^+}_l(\mathcal{L}_{ \frac{\partial}{\partial t_k}   } \omega_l)(e_j, e_i) c( e_j  ) \\
=&  \sum_{l=1}^3 I^{S^+}_l c(\iota_{e_i} \mathcal{L}_{ \frac{\partial}{\partial t_k}   } \omega_l ),
\end{split}
\]
Hence 
by the previous lemma,
\[
[ \nabla_{\frac{\partial}{\partial t_k}   }^{LC}, D^-       ]=-\tilde{R}_k+  \frac{1}{2}\sum_{l=1}^3 I^{S^+}_l c(\iota_{e_i} \mathcal{L}_{ \frac{\partial}{\partial t_k}   } \omega_l ) \nabla^{LC,-}_{ e_i} .
\]
We have seen that $\sum_k I_k^{S^+} \tilde{R}_k=0$, so it remains to calculate
\[
\begin{split}
&\sum_{k,l,i} I_k^{S^+}  I^{S^+}_l c(\iota_{e_i} \mathcal{L}_{ \frac{\partial}{\partial t_k}   } \omega_l ) \nabla^{LC,-}_{ e_i} \\
=& \sum_{k,l,i,j} 
\epsilon_{klj} I^{S^+}_j 
 c(\iota_{e_i} \mathcal{L}_{ \frac{\partial}{\partial t_k}   } \omega_l ) \nabla^{LC,-}_{ e_i}
 -\sum_{k,i} c(\iota_{e_i} \mathcal{L}_{ \frac{\partial}{\partial t_k}   } \omega_k ) \nabla^{LC,-}_{ e_i} \\
=& \sum_{i,j} 
 I^{S^+}_j 
c(\iota_{e_i} 
\sum_{k,l}\epsilon_{klj}
\mathcal{L}_{ \frac{\partial}{\partial t_k}   } \omega_l ) \nabla^{LC,-}_{ e_i}
-\sum_{i} c(\iota_{e_i} \sum_k\mathcal{L}_{  \frac{\partial}{\partial t_k}   } \omega_k ) \nabla^{LC,-}_{ e_i}.
\end{split}
\]
Here we used that the operators $I_i^{S^+}$ satisfy the quaternionic relations 
\[
I_i I_j =-\delta_{ij}+\sum_k \epsilon_{ijk} I_k. \]
Now we observe $\sum_{k,l}\epsilon_{klj}
\mathcal{L}_{ \frac{\partial}{\partial t_k}   } \omega_l =0$ is equivalent to $d_H\underline{\omega}=0$, and $\sum_k\mathcal{L}_{  \frac{\partial}{\partial t_k}   } \omega_k =0$ is equivalent to $d_H\underline\Theta=0$, so the whole expression vanishes, and the result follows.
\end{proof}

\section{Fueter equations, adiabatic $G_2$ instantons, and moduli bundles}\label{Fueterequationsadiabatic$G_2$instantonsmodulibundles}

The work of Haydys \cite{Haydys}, whose ideas we shall borrow here, relates a degenerate version of Spin(7) instantons over the spinor bundle of a Riemannian 4-fold, to the Fueter equation for sections of some ASD moduli bundle over the 4-fold. This picture of higher dimensional gauge theory reducing to lower dimensional Fueter type equations, is a quite common phenomenon.

Here we wish to understand how this works for solutions of the adiabatic $G_2$ instanton equation (\ref{limiting$G_2$instanton}). After briefly recalling the definitions of Fueter equations in Section \ref{TheFueterequationrecap}, we introduce in Section \ref{Themodulibundle} a moduli bundle $\mathcal{M}$ over the base $B$, whose fibres are moduli spaces of ASD connections. We show how to put the relevant structures on the moduli bundle to make sense of the Fueter equation, and how to do computations with them. In Section \ref{Fueterequation$G_2$instantons} we show that under appropriate smoothness assumptions, the adiabatic $G_2$ instantons are equivalent to solutions of the Fueter equations on the moduli bundle, in a sense to be made clear. We then relate this to the variational viewpoint in Section \ref{Canonicalformsonmodulibundle}, and interpret this correspondence as a manifestation of the equality of a submanifold theoretic Chern-Simons functional with a gauge theoretic Chern-Simons functional. 

This turns out to be intimately related to the fact that, we can define a canonical 3-form, a canonical 4-form, and a fibre volume form on the moduli bundle, exactly as the data appearing in Donaldson's adiabatic fibration, and satisfying exactly the analogues of the relevant integrability conditions. (\cf Section \ref{Canonicalformsonmodulibundle}, \ref{Canonicalfibrevolumeform}).

\subsection{The Fueter equation}\label{TheFueterequationrecap}

The Fueter equation over a Riemannian 3-fold $B$ involves a fibre bundle $\mathcal{M}$ over $B$ with hyperk\"ahler fibres, equipped with an Ehresmann connection $\nabla^{\mathcal{M}}$, and we require that each tangent space of $B$ acts as the imaginary quaternions on the tangent bundle of the fibre. More concretely, take an orthonormal basis $\frac{\partial }{\partial t_1}$,$\frac{\partial }{\partial t_2}$,$\frac{\partial }{\partial t_3}$ of $T_b B$, then these act on the tangent bundle of the fibre $\mathcal{M}_b$, by the standard hypercomplex triple $I_1, I_2, I_3$. Using this set of data, one can define the (nonlinear) Dirac type operator \[\slashed{D}=I_1 \nabla^{\mathcal{M}}_{\frac{\partial }{\partial t_1}   }+I_2 \nabla^{\mathcal{M}}_{\frac{\partial }{\partial t_2}   }+I_3 \nabla^{\mathcal{M}}_{\frac{\partial }{\partial t_3}   } \]
on sections of the bundle $\mathcal{M}\to B$. Here $\nabla^{\mathcal{M}}_{\frac{\partial }{\partial t_i}   }s$ is the vertical projection of the vector $ds(\frac{\partial }{\partial t_i})$. The \textbf{Fueter equation} is
\begin{equation}
\slashed{D}s=0.
\end{equation}
\begin{rmk}
There is also a parallel story related to $Spin(7)$ manifolds over 4-folds as discussed in \cite{Haydys}.	
\end{rmk}

\subsection{The moduli bundle}\label{Themodulibundle}

We need to construct an appropriate \textbf{moduli bundle} $\mathcal{M}$ over $B$ with additional data, to make sense of the Fueter equation. For backgrounds on moduli space of bundles on K3 surfaces, see \cite{Mukaidualitypaper}. Recall we have a principal $U(r)$ bundle $P$ over the 7-fold $M$. The restriction to each K3 fibres $M_b$ specifies the topological data to determine a Mukai vector $v$, thus giving the ASD instanton moduli space on $M_b$ with Mukai vector $v$, which in general is a hyperk\"ahler manifold under the appropriate smoothness assumptions. One can think of this moduli space as obtained by applying to each K3 fibre $M_b$ a standard hyperk\"ahler quotient construction from the affine Euclidean space $\mathcal{A}_b$ of all connections on $P|_{M_b}\to M_b$, under the gauge group action, \ie
\[
\mathcal{M}_b = \mathcal{A}_b /// \mathcal{G}_b= \mu_b^{-1}(0)/\mathcal{G}_b,
\]
where $\mu_b$ is the fibrewise hyperk\"ahler moment map, and $\mathcal{G}_b$ is the gauge group of $P|_{M_b}\to M_b$. We assume $0$ is a regular value for all the moment maps $\mu_b$, and the gauge groups act freely (modulo central $S^1$ which acts trivially), so no analytical issue arises and every construction can be done smoothly.
 At least set theoretically, these ASD moduli spaces $\mathcal{M}_b$ shall be the fibres of the moduli bundle $\mathcal{M}$. Similarly, the spaces $\mathcal{A}_b$ shall be the fibres of the inifinite dimensional bundle $\mathcal{A}\to B$, and the spaces $\mu_b^{-1}(0)$ shall be the fibres of the bundle $\mu^{-1}(0)\to B$.
 Schematically, one is encouraged to think of the moduli bundle as a `relative hyperk\"ahler quotient':  $\mathcal{M}=\mathcal{A}///\mathcal{G}=\mu^{-1}(0)/\mathcal{G}$, meaning by this no more than a fibrewise construction.

\begin{rmk} 
 The convention we are using for the hyperk\"ahler metric on K3 surfaces is (\cf \cite{Mukaidualitypaper})
 \begin{equation}\label{hyperkahlermetric}
 g^{\mathcal{M}_b }(a,b)=\frac{1}{4\pi^2}\int_{M_b} \langle a, b \rangle d\text{Vol}_{M_b}, \quad a, b\in T_A \mathcal{M}_b,
 \end{equation}
 \begin{equation}
 \label{hyperkahlerforms}
 \omega_i^{\mathcal{M}}(a, b)=g^{\mathcal{M}_b }(I_i a, b) =-\frac{1}{4\pi^2} \int_{M_b} \Tr (a\wedge b)\wedge \omega_i.
 \end{equation}
 \begin{equation}
 I_ia=-a\circ I_i.
 \end{equation} 
The tangent space to the moduli space $T_A\mathcal{M}_b$ is identified as the subspace of $T_A\mathcal{A}_b=\Omega^1(M_b, ad P|_{{M}_b})$ defined by the linearised ASD equation and the Coulumb gauge condition. For later use, we will denote the orthogonal projection operator to this subspace as $pr_{T_A\mathcal{M} }$.
\end{rmk}

The tangent vectors of $B$ are naturally identified, via contraction with $\underline{\omega}$, with the hyperk\"ahler forms on the fibres of $M\to B$, so naturally induce hyperk\"ahler forms on the fibres of $\mathcal{M}\to B$. This gives the imaginary quaternion action needed to define the Fueter equation.

Our next goal is to define a \textbf{canonical horizontal distribution} on the moduli bundle.

Now $P\to M\to B$ displays $P$ as a fibre bundle over $B$. Donaldson's adiabatic fibration gives a horizontal distribution $H$ for $M\to B$. Given a connection $A$ for the bundle $P\to M$, there is a naturally induced connection $\tilde{H}$ on $P\to B$ covering $H$; viewing connections as a way to lift vector fields, this is just the composition of lifting from $B$ to $M$, and then from $M$ to $P$.

A choice of $\tilde{H}$ covering $H$ gives an infinitesimal trivialisation of $P\to B$ over a first order neighbourhood of $b\in B$, so induces an infinitesimal trivialisation, \ie a connection $\nabla^{\tilde{H} }$ of the  bundle $\mathcal{A}\to B$. As a covariant derivative, this takes a section $\tilde{s}$ of $\mathcal{A}\to B$, and outputs $\nabla^{\tilde{H} }_{\frac{\partial }{\partial t_i}} \tilde s$, which at each point $b\in B$ takes value in the infinite dimesnional Euclidean space $\Omega^1(M_b, ad P|_{M_b})$.
 This induces a submanifold connection $\overline{\nabla}^{\tilde{H} }$ on the fibre bundle $\mu^{-1}(0)\to B$. Given a section $\tilde{s}$ for $\mu^{-1}(0)\to B$, the induced connection  $\overline{\nabla}^{\tilde{H} }\tilde{s}$ at the point $b\in B$ is just $\nabla^{\tilde{H} }\tilde{s}$ orthogonally projected in $\Omega^1(M_b, ad P|_{M_b})$ to the tangent space of $\mu_b^{-1}(0)$.

Now we can describe the canonical horizontal distribution on the moduli bundle $\mathcal{M}\to B$. Take a section $s$, lift it to a section $\tilde{s}$ of the bundle $\mu^{-1}(0)\to B$, calculate $\overline{\nabla}^{\tilde{H} }\tilde{s}$, and then project down to the quotient $\mathcal{M}\to B$: this is the covariant derivative of $s$ with respect to the canonical horizontal distribution. This construction depends on the choice of the lift $\tilde{H}$ and the lift $\tilde{s}$, but the 
 ambiguity lies in the gauge group $\mathcal{G}_b$ action and is invisible to the quotient. 

\begin{prop}
There exists a canonical horizontal distribution $\nabla^{\mathcal{M}}$ on the moduli bundle $\mathcal{M}\to B$. In particular the Fueter equation is well defined.
\end{prop}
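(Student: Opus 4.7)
The plan is to verify that the construction of $\nabla^{\mathcal{M}}$ is independent of the two choices involved, namely (i) a lift $\tilde{H}$ of the Ehresmann connection $H$ from $M\to B$ to $P\to B$ (equivalently, a choice of connection $A$ on $P\to M$), and (ii) a local lift $\tilde{s}$ of the section $s:B\to\mathcal{M}$ to a section of $\mu^{-1}(0)\to B$. In each case I will show that the resulting ambiguity in $\overline{\nabla}^{\tilde H}\tilde{s}\in T_{\tilde{s}(b)}\mu_b^{-1}(0)$ is tangent to the $\mathcal{G}_b$-orbit through $\tilde{s}(b)$, hence is killed upon projecting to the quotient $\mathcal{M}_b=\mu_b^{-1}(0)/\mathcal{G}_b$. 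Combined with the existence of local lifts (on a contractible base $B$, a global connection $A$ on $P\to M$ exists), this gives the canonical horizontal distribution $\nabla^{\mathcal{M}}$.

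For the first ambiguity, two lifts $\tilde{H}_1,\tilde{H}_2$ covering $H$ correspond to connections $A_1,A_2$ on $P\to M$ whose difference $a=A_1-A_2$ is an $adP$-valued 1-form on $M$. The horizontal lift of $\partial/\partial t_i$ through the composite $P\to M\to B$ then changes by the vertical vector in $TP$ determined by $a\bigl(\partial/\partial t_i\bigr)|_{M_b}$, which is a section of $adP|_{M_b}$, i.e. an element of $\Lie\mathcal{G}_b$. By the very definition of the infinitesimal trivialisation underlying $\nabla^{\tilde H}$, this shifts $\nabla^{\tilde H}_{\partial/\partial t_i}\tilde{s}(b)$ by the infinitesimal gauge action of that Lie algebra element on $\tilde{s}(b)\in\mathcal{A}_b$, which lies in the tangent direction of the $\mathcal{G}_b$-orbit. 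For the second ambiguity, two lifts $\tilde{s}_1,\tilde{s}_2$ of the same section $s$ are related fibrewise by a gauge transformation $g:B\to\mathcal{G}$; since $\mu_b$ is gauge invariant, $\mu^{-1}(0)$ is $\mathcal{G}$-invariant, and an identical computation shows that the induced covariant derivatives differ by a tangent vector to the gauge orbit. Orthogonal projection via $pr_{T_A\mathcal{M}}$ onto $T_{[A]}\mathcal{M}_b$ therefore yields an unambiguous $\nabla^{\mathcal{M}}$.

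Finally, the fibres $\mathcal{M}_b$ carry the hyperk\"ahler structure $(g^{\mathcal{M}_b},I_1,I_2,I_3)$ given by (\ref{hyperkahlermetric})--(\ref{hyperkahlerforms}), and the orthonormal frame $\partial/\partial t_i$ of $T_bB$ acts on $T_{[A]}\mathcal{M}_b$ via the induced operators $I_i$, which inherit the quaternionic relations $I_iI_j=-\delta_{ij}+\sum_k\epsilon_{ijk}I_k$ directly from the fibrewise hyperk\"ahler structure on $M_b$. With both the Ehresmann connection $\nabla^{\mathcal{M}}$ and this quaternionic action in place, the Dirac-type operator $\slashed{D}=\sum_i I_i\nabla^{\mathcal{M}}_{\partial/\partial t_i}$ on sections of $\mathcal{M}\to B$ is defined invariantly, and the Fueter equation $\slashed{D}s=0$ makes sense.

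The main technical point is the first ambiguity check: one must track carefully how the two-stage horizontal lift $B\leadsto M\leadsto P$ composes, and verify that changing only the second stage produces exactly an infinitesimal gauge transformation of $P|_{M_b}$, rather than any genuine deformation of the connection on the fibre. Once this is in hand, the rest of the proof is essentially the standard fact that a relative hyperk\"ahler quotient inherits connections from equivariant connections upstairs.
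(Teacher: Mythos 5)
Your proof is correct and takes essentially the same approach as the paper: the paper simply asserts in one sentence that "the ambiguity lies in the gauge group $\mathcal{G}_b$ action and is invisible to the quotient," and your two paragraphs supply the verification of exactly that claim (identifying the change of $\tilde H$ with an infinitesimal gauge transformation $-d_{A'}(a(\partial/\partial t_i))\in(\Lie\mathcal{G}_b)A'$, and similarly for the change of lift $\tilde s$). Your final paragraph on the quaternion action also matches the paper's discussion of the identification $T_bB\cong\operatorname{Im}\mathbb{H}$ via contraction with $\underline\omega$.
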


We need to understand how to compute with this canonical horizontal distribution in practice. Let $A$ be a connection on $P\to M$, which induces the lift $\tilde{H}$ of $H$. The section $s$ of $\mathcal{M}\to B$ is represented by a section $\tilde{s}$ of $\mathcal{A}\to B$, which in turn is represented by a connection $A'$ of $P\to M$. Here $A'$ restricted to the K3 fibres precisely gives the fibrewise values of $\tilde{s}$ in $\mathcal{A}_b$. We can think of connections $A$, $A'$ as covariant derivatives $\nabla^A$, $\nabla^{A'}$. The quantity $\nabla^{\tilde{H} }\tilde{s}$ takes value at each $b\in B$ in the vector space $\Omega^1(M_b, ad P|_{M_b}  )$.

\begin{lem}\label{canonicalconnectionmodulibundle}
The covariant derivative $\nabla^{\tilde{H} }_{\frac{\partial }{\partial t_i}} \tilde{s} $ at the point $b\in B$  can be represented as the commutator $[ \nabla^A_{ \frac{\partial }{\partial t_i}   }, \nabla^{A'}  ]$ restricted to the fibre $M_b$. Here we use the horizontal distribution $H$ to lift $\frac{\partial }{\partial t_i}$ to $M$.
\end{lem}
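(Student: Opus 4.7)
The strategy is to compute both sides in a local gauge adapted simultaneously to the Ehresmann connection $H$ on $M \to B$ and the connection $A$ on $P \to M$ in a neighborhood of a chosen point $b \in B$. Since each side is defined intrinsically (the LHS by the connection $\nabla^{\tilde H}$ on $\mathcal{A} \to B$, the RHS by a commutator of covariant derivatives), agreement in one such gauge implies the general identity.

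Concretely, I would first introduce geodesic coordinates $t_i$ on $B$ centered at $b$ and propagate the fibre $M_b$ through a tubular neighborhood via $H$-parallel transport along $\partial/\partial t_i$, yielding coordinates $(t_i, x_\mu)$ on $M$ in which the horizontal lift $X_i$ of $\partial/\partial t_i$ coincides with $\partial/\partial t_i$ along $M_b$ to first order. Then I would use $\tilde H$-parallel transport, i.e.\ the $A$-horizontal lift of $X_i$, to trivialise $P$ over this neighborhood so that the connection 1-form of $A$ has no $dt_i$-components along $M_b$, to first order. The existence of this adapted gauge is a routine application of flowing by the vector fields $X_i$ on $M$ and then by their $A$-horizontal lifts on $P$.

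In this gauge, the $\tilde H$-parallel transport is literal translation in $t_i$, so the section $\tilde s$, whose value at $b'$ is $A'|_{M_{b'}} = \sum_\mu A'_\mu(t, x)\, dx^\mu$, pulls back trivially and its covariant derivative reduces to the $t_i$-partial of the components:
\[
\nabla^{\tilde H}_{\partial/\partial t_i} \tilde s\big|_b \;=\; \sum_\mu \partial_{t_i} A'_\mu\big|_{t = 0}\, dx^\mu \;\in\; \Omega^1(M_b, \mathrm{ad}(P|_{M_b})).
\]
On the other hand, in the same gauge $\nabla^A_{X_i}$ acts as $\partial/\partial t_i$ along $M_b$, while $\nabla^{A'}_{\partial/\partial x_\mu} = \partial/\partial x_\mu + A'_\mu$, so
\[
[\nabla^A_{X_i}, \nabla^{A'}_{\partial/\partial x_\mu}] \;=\; [\partial_{t_i},\, \partial_{x_\mu} + A'_\mu] \;=\; \partial_{t_i} A'_\mu,
\]
matching the previous display as an $\mathrm{ad}(P|_{M_b})$-valued 1-form on $M_b$. (Note that in this adapted gauge $[X_i, \partial/\partial x_\mu]$ vanishes to first order along $M_b$, so the commutator truly has no residual first-order piece.)

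A coordinate-free alternative, which I would include as a remark, is to express the LHS as the Lie derivative $\mathcal{L}_{\tilde X_i} \alpha'$ of the connection 1-form $\alpha'$ of $A'$ on $P$, restricted to $P|_{M_b}$, and expand via the Cartan formula together with $\alpha' = \alpha + \tilde a$ where $a = A' - A$; the terms reorganise into $F_A(X_i,\cdot) + \nabla^A_{X_i}(a(\cdot)) - a([X_i,\cdot])$, which is precisely the commutator above written invariantly. No substantive obstacle arises beyond the gauge bookkeeping; the main thing to be careful about is verifying that the commutator on the right genuinely defines a 1-form on $M_b$ (i.e.\ its first-order part in the vertical direction disappears), which is automatic in the adapted gauge.
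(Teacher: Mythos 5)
Your proposal is correct and is essentially the paper's own argument: the paper likewise works in a first-order adapted trivialisation in which $P\to M$ is the Cartesian product of $P|_{M_b}\to M_b$ with the first-order neighbourhood of $b$ and $\nabla^A_{\partial/\partial t_i}$ is just $\partial/\partial t_i$, whereupon both sides visibly reduce to $\sum_\mu \partial_{t_i}A'_\mu\,dx^\mu$. You have simply spelled out the gauge construction and the computation that the paper compresses into "then the result is clear."
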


\begin{proof}
The discussion only involves the first order neighbourhood of $b\in B$. We can suppose the local structure of the  bundle $P\to M$ to be the Cartesian product of $P|_{M_b}\to M_b$ with the first order neighbourhood of $b\in B$, and the component of the connection $ \nabla^A_{ \frac{\partial }{\partial t_i}   }$ to be just the trivial $\frac{\partial }{\partial t_i}$. Then the result is clear.
\end{proof}

\begin{cor}
If the connection $A$ on $P\to M$ represents the section $s$ of $\mathcal{M}\to B$, then $ \nabla^\mathcal{M}_{\frac{\partial }{\partial t_i}} s$ is represented by the contraction of the curvature $ \iota_{ \frac{\partial }{\partial t_i}}  F_A $ orthogonally projected to $T_A\mathcal{M} \subset \Omega^1(M_b, ad P|_{M_b}  )$.
\end{cor}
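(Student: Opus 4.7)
Proof plan: The strategy is to specialise Lemma \ref{canonicalconnectionmodulibundle} to the case $A'=A$, so that the section $\tilde s$ of $\mathcal{A}\to B$ is literally $\tilde s(b)=A|_{M_b}$, and the section $s$ of $\mathcal{M}\to B$ it represents is the gauge equivalence class of this. By the Lemma, $\nabla^{\tilde H}_{\frac{\partial}{\partial t_i}}\tilde s$ at $b$ is represented by $[\nabla^A_{\frac{\partial}{\partial t_i}},\nabla^A]|_{M_b}\in\Omega^1(M_b,ad P|_{M_b})$. The task is then to identify this with $\iota_{\frac{\partial}{\partial t_i}}F_A|_{M_b}$ up to a term orthogonal to $T_A\mathcal M$.

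The cleanest way to see the identification is to choose a trivialisation in which the computation trivialises. Recall $\tilde H$ was built by first lifting from $B$ to $M$ via $H$, then lifting from $M$ to $P$ via $A$. Hence in the product gauge of $P\to B$ determined by $\tilde H$ on the first order neighbourhood of $b$, the horizontally lifted vector field $\tilde{\frac{\partial}{\partial t_i}}$ satisfies $A(\tilde{\frac{\partial}{\partial t_i}})=0$ at $b$, and parallel transport of fibres of $\mathcal A$ is trivial. In this gauge $\nabla^{\tilde H}_{\frac{\partial}{\partial t_i}}\tilde s=\partial_{t_i}(A|_{M_b})$. On the other hand, using Cartan's formula and $A(\tilde{\frac{\partial}{\partial t_i}})=0$,
\begin{equation*}
\iota_{\tilde{\frac{\partial}{\partial t_i}}}F_A
=\iota_{\tilde{\frac{\partial}{\partial t_i}}}(dA+A\wedge A)
=\mathcal{L}_{\tilde{\frac{\partial}{\partial t_i}}}A-d(\iota_{\tilde{\frac{\partial}{\partial t_i}}}A)+[\iota_{\tilde{\frac{\partial}{\partial t_i}}}A,A]
=\mathcal{L}_{\tilde{\frac{\partial}{\partial t_i}}}A,
\end{equation*}
whose restriction to $M_b$ is $\partial_{t_i}(A|_{M_b})$. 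Thus both expressions agree in this preferred gauge.

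To handle a general gauge, one can either repeat the calculation without the simplifying assumption $A(\tilde{\frac{\partial}{\partial t_i}})=0$, which yields $\iota_{\tilde{\frac{\partial}{\partial t_i}}}F_A|_{M_b}-\nabla^{\tilde H}_{\frac{\partial}{\partial t_i}}\tilde s=-d^{A|_{M_b}}A_i$ where $A_i=A(\tilde{\frac{\partial}{\partial t_i}})$, or invoke gauge equivariance directly. Either way, the discrepancy lies in the image of the linearised gauge action $\xi\mapsto d^{A|_{M_b}}\xi$, which is $L^2$-orthogonal to $T_A\mathcal M$ by construction of the hyperkähler quotient (this is the Coulomb gauge complement). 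Hence $pr_{T_A\mathcal M}$ annihilates the discrepancy, and the projection of $\iota_{\frac{\partial}{\partial t_i}}F_A|_{M_b}$ recovers the gauge invariant quantity $\nabla^{\mathcal M}_{\frac{\partial}{\partial t_i}}s$.

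The only potential obstacle is pedantry about the meaning of the commutator in Lemma \ref{canonicalconnectionmodulibundle} when $A'=A$, since $\nabla^A$ also acts on the horizontal directions; one has to specify that only the vertical component of the output 1-form is being retained (which is what "restricted to the fibre $M_b$" means). Once this is clarified, the Cartan calculus calculation above is straightforward, and the projection argument disposes of the gauge ambiguity.
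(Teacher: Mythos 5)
Your proposal is correct and follows essentially the same route as the paper: specialise Lemma \ref{canonicalconnectionmodulibundle} to $A'=A$ so that $\iota_{\frac{\partial}{\partial t_i}}F_A$ represents $\nabla^{\tilde H}_{\frac{\partial}{\partial t_i}}\tilde s$, and then observe that the orthogonal projection to $T_A\mathcal{M}$ is forced by the decomposition (\ref{decompositionhyperkahlerquotient}) underlying the hyperk\"ahler quotient. Your Cartan-calculus verification and the explicit tracking of the gauge-dependent term $-d^{A|_{M_b}}A_i$ (which lies in $(\Lie\mathcal{G}_b)A$ and is killed by $pr_{T_A\mathcal{M}}$) simply make explicit what the paper treats as immediate.
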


\begin{proof}
If we take $A=A'$ in the above lemma, we find $\iota_{ \frac{\partial }{\partial t_i}}  F_A $ represents $\nabla^{\tilde{H} }_{\frac{\partial }{\partial t_i}} \tilde{s} $. Now we recall the description of hyperk\"ahler quotient construction (\cf \cite{Mukaidualitypaper}) gives a canonical orthogonal decomposition
\begin{equation}\label{decompositionhyperkahlerquotient}
\begin{split}
\Omega^1(M_b, ad P|_{M_b}  )= & T_A\mathcal{M}  \bigoplus \\ & (\Lie \mathcal{G}_b)  A \oplus I_1(\Lie \mathcal{G}_b)  A \oplus I_2(\Lie \mathcal{G}_b)  A \oplus I_3(\Lie \mathcal{G}_b)  A,
\end{split}
\end{equation}
where $T_A\mathcal{M}$ is invariant under the quaternionic action, and $(\Lie \mathcal{G}_b)  A$ are the linearised deformations of $A|_{M_b}$ generated by the gauge group action, \ie
\[
(\Lie \mathcal{G}_b)  A=\{ d_{A|_{M_b}}\Phi: \Phi\in \Omega^0(M_b, ad P|_{M_b})      \}
.\] The orthogonal projection to $T_A\mathcal{M}$ is just the natural consequence of our prescription.
\end{proof}

\begin{cor}
The expression of the nonlinear Dirac operator defining the Fueter equation is 
\[
\slashed{D} s=pr_{T_A \mathcal{M} }\sum I_i (\iota_{ \frac{\partial }{\partial t_i}}  F_A ).
\]
\end{cor}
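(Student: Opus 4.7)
The plan is to combine the definition of $\slashed{D}$ with the previous corollary, and then justify moving the projection operator outside the sum. Unpacking the definition of the Fueter operator on the moduli bundle gives
\[
\slashed{D}s = \sum_{i=1}^3 I_i \, \nabla^{\mathcal{M}}_{\partial/\partial t_i} s,
\]
and by the immediately preceding corollary each term $\nabla^{\mathcal{M}}_{\partial/\partial t_i} s$ is represented by $pr_{T_A\mathcal{M}}(\iota_{\partial/\partial t_i} F_A)$. So after substitution what remains is to verify the commutation relation
\[
\sum_i I_i \, pr_{T_A\mathcal{M}}(\iota_{\partial/\partial t_i} F_A) = pr_{T_A\mathcal{M}} \sum_i I_i (\iota_{\partial/\partial t_i} F_A).
\]

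The key point is that the projection $pr_{T_A\mathcal{M}}$ commutes with every $I_i$. This is read off directly from the orthogonal decomposition (\ref{decompositionhyperkahlerquotient}): the summand $T_A\mathcal{M}$ is explicitly stated to be invariant under the quaternionic action, and the complementary summand $(\Lie \mathcal{G}_b)A \oplus I_1(\Lie \mathcal{G}_b)A \oplus I_2(\Lie \mathcal{G}_b)A \oplus I_3(\Lie \mathcal{G}_b)A$ is tautologically a quaternionic subspace by its construction, using the relations $I_i I_j = -\delta_{ij} + \sum_k \epsilon_{ijk} I_k$. Thus both pieces of the decomposition are $I_i$-invariant, so the orthogonal projection to $T_A\mathcal{M}$ is quaternion-linear, and may be pulled outside the sum.

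I do not anticipate any obstacle; the statement is essentially a bookkeeping consequence of the earlier corollary plus the standard fact that hyperk\"ahler quotients inherit a quaternionic tangent structure. The only mild care needed is to note explicitly that the ambiguity present in the representing connection $A$ lives in the complementary directions $(\Lie\mathcal{G}_b)A$ and its $I_i$-images, so it is killed uniformly by $pr_{T_A\mathcal{M}}$ both before and after applying the quaternionic action, which is precisely what makes the formula well defined independent of the choice of lift.
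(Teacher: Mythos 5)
Your proof is correct and takes essentially the same approach as the paper: the paper's one-line remark that "the action of complex structure on bundle valued 1-forms is compatible with the action on the tangent bundle of the moduli spaces" is precisely the commutation $pr_{T_A\mathcal{M}}\circ I_i = I_i\circ pr_{T_A\mathcal{M}}$, which you spell out via the $I_i$-invariance of both pieces of the decomposition (\ref{decompositionhyperkahlerquotient}).
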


\begin{proof}
The only subtlety is to notice is that the action of complex structure on bundle valued 1-forms is compatible with the action on the tangent bundle of the moduli spaces, when we do the hyperk\"ahler reduction. 
\end{proof}

\subsection{Fueter equation and adiabatic $G_2$ instantons}\label{Fueterequation$G_2$instantons}

We can now establish the picture that under appropriate smoothness assumptions (the moment maps have zero as a regular value, and the gauge group action is free modulo central $S^1$), adiabatic $G_2$ instantons should be equivalent to Fueter sections on the moduli bundle.

\begin{rmk}
For $U(r)$ connections on a hyperk\"ahler K3 surface, Serre duality implies that the obstruction space for deformations of ASD connections vanishes if and only if the ASD connection is irreducible. Thus the smoothness assumption just means irreducibility. This Section presents the case of $U(r)$ adiabatic $G_2$-instantons, but the arguments also works for $PU(r)$ adiabatic $G_2$-instantons with cosmetic changes.
\end{rmk}

\begin{prop}
In our previous setup,
if $A$ is an adiabatic $G_2$ instanton, then the section $s$ of the moduli bundle $\mathcal{M}\to B$ represented by $A$ satisfies the Fueter equation.
\end{prop}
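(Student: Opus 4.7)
The plan is to directly unwind both the adiabatic $G_2$ instanton equation and the Fueter equation into the same fibrewise identity on vertical ad-valued 1-forms. Decomposing $F_A$ by $(p,q)$ bidegrees, the equation $F_A\wedge\underline{\Theta}=0$ splits into the fibrewise ASD condition \eqref{ASDfibre} and the mixed condition \eqref{adiabatic$G_2$instantonhorizontal}. The first, together with the smoothness (irreducibility) hypothesis, ensures $A|_{M_b}$ determines a point of $\mathcal{M}_b$ for every $b$, so $A$ indeed represents a section $s:B\to\mathcal{M}$.

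Next, in an orthonormal frame $\tfrac{\partial}{\partial t_i}$ of $T_bB$, set $B_i:=\iota_{\partial/\partial t_i}F_A|_{M_b}\in\Omega^1(M_b,adP|_{M_b})$, which captures exactly the vertical part of the contraction since $F_A^{(2,0)}$ pulls back trivially to the fibre and $F_A^{(0,2)}$ is killed by contraction with a horizontal vector. By the corollary to Lemma \ref{canonicalconnectionmodulibundle}, $\nabla^{\mathcal{M}}_{\partial/\partial t_i}s$ is represented by $pr_{T_A\mathcal{M}}B_i$, so that
\[
\slashed{D}s \;=\; pr_{T_A\mathcal{M}}\sum_i I_iB_i.
\]

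It therefore suffices to prove the stronger fibrewise identity $\sum_iI_iB_i=0$. Using $\underline{\Theta}=-\sum_{\text{cyc}}\omega_i\wedge dt_j\wedge dt_k$, a short wedge computation shows that \eqref{adiabatic$G_2$instantonhorizontal} is equivalent to $\sum_i B_i\wedge\omega_i=0$ fibrewise. On any hyperk\"ahler 4-fold, the elementary identity $\alpha\wedge\omega_i=\pm *_4(I_i\alpha)$ for 1-forms (with our sign convention $I_i\alpha=-\alpha\circ I_i$) then converts $\sum_i B_i\wedge\omega_i=0$ into $\sum_iI_iB_i=0$, completing the argument.

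There is no real obstacle beyond careful bookkeeping; the only delicate point is the overall sign in $\alpha\wedge\omega_i=\pm *_4(I_i\alpha)$, which however does not affect the conclusion. Observe that the argument gives strictly more than the stated proposition: $\sum_iI_iB_i$ vanishes already \emph{before} projection to $T_A\mathcal{M}$, reflecting the fact that \eqref{adiabatic$G_2$instantonhorizontal} implicitly encodes both the Fueter equation and a piece of gauge data. This stronger vanishing will presumably be relevant for the converse direction (reconstructing an adiabatic $G_2$ instanton from a Fueter section).
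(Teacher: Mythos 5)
Your proposal is correct and follows essentially the same route as the paper: identify $\slashed{D}s$ with the projection of $\sum_i I_i\,\iota_{\partial/\partial t_i}F_A$, rewrite $F_A^{(1,1)}\wedge\underline{\Theta}=0$ fibrewise as $\sum_i \iota_{\partial/\partial t_i}F_A\wedge\omega_i=0$, and convert via the pointwise identity $*_4(I_i\alpha)=-\alpha\wedge\omega_i$ (whose sign, as you note, is immaterial here). Your observation that the vanishing holds before projecting to $T_A\mathcal{M}$ is also implicit in the paper's computation and is indeed what makes the converse (via the monopole argument) work.
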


\begin{proof}
 We compute $\slashed{D}s$ using the basic linear algebraic model in Section \ref{linearalgebraicmodelsection}:
\[
 *_4 (\sum I_i  \iota_{\frac{\partial }{\partial t_i}   } F_A )=  -\iota_{\frac{\partial }{\partial t_i}   } F_A \wedge \omega_i,
 \]
Here complex structures $I_i$ act on 1-forms by the minus of precomposition, so picks up a negative sign.
On the other hand, the (1,1) type component of $F_A$  in the horizontal-vertical decomposition of forms can be computed by $F_A^{(1,1)}=\sum dt_i \wedge \iota_{\frac{\partial }{\partial t_i}   } F_A$, so
\[
\begin{split}
F^{(1,1)} \wedge \underline\Theta= -\sum_{cyc} dt_i \wedge\iota_{\frac{\partial }{\partial t_i}   } F_A \wedge \omega_i dt_j dt_k
=(\sum_i  \iota_{\frac{\partial }{\partial t_i}   } F_A \wedge \omega_i )\wedge dt_1dt_2dt_3. 
\end{split}
\]
Recall that adiabatic $G_2$ instantons are ASD when restricted to K3 fibres. The claim is now clear.
\end{proof}

The next aim is to show the converse.

\begin{prop}\label{Fueterimpliesmonopole}
If $s$ is a section of the moduli bundle solving the Fueter equation, then we can find a representing connection $A$ which is an adiabatic $G_2$-instanton.
\end{prop}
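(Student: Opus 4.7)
The strategy is to take any smooth representative $A_0$ of $s$ on $P\to M$ and add a purely horizontal correction $\sum_k\psi_k\,dt_k$ with $\psi_k\in \Omega^0(M,ad(P))$. Such a modification leaves the fibre restrictions $A_0|_{M_b}$ unchanged and hence preserves the property of representing $s$, but it does alter the mixed curvature components. By the previous proposition, $A$ is an adiabatic $G_2$-instanton if and only if the fibre $1$-form
$$
\eta_A \;:=\; \sum_i I_i\,\bigl(\iota_{\partial/\partial t_i} F_A\bigr)\big|_{M_b}
$$
vanishes identically on every fibre, whereas the Fueter hypothesis $\slashed{D}s=0$ gives only the projected statement $pr_{T_A\mathcal{M}}\,\eta_{A_0}=0$. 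The task is to use the remaining horizontal freedom to kill the orthogonal components.

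The key observation is that $(\iota_{\partial/\partial t_i} F_{A_0})|_{M_b}$ represents the infinitesimal variation (via the horizontal lift, as in Lemma \ref{canonicalconnectionmodulibundle}) of the ASD connection $A_0|_{M_b}$ in the $\frac{\partial}{\partial t_i}$ direction, and since $A_0|_{M_b}$ is ASD for every $b$, this variation lies in the linearised-ASD subspace $\ker d_{A_0|_{M_b}}^{+}$. In the decomposition (\ref{decompositionhyperkahlerquotient}) this subspace is exactly $T_A\mathcal{M}\oplus (\Lie\mathcal{G}_b)A_0$, so we may write $(\iota_{\partial/\partial t_i} F_{A_0})|_{M_b} = \zeta_i^{\mathcal{M}} + d_{A_0|_{M_b}}\psi_i^{(b)}$ with $\zeta_i^{\mathcal{M}}\in T_A\mathcal{M}$ and $\psi_i^{(b)}\in \Omega^0(M_b,ad(P)|_{M_b})$, having no $I_k(\Lie\mathcal{G}_b)A_0$ components. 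The Fueter hypothesis kills $\sum_i I_i\zeta_i^{\mathcal{M}}=0$, leaving
$$
\eta_{A_0}\;=\;\sum_i I_i\, d_{A_0|_{M_b}}\psi_i^{(b)}.
$$
The irreducibility assumption gives $\ker d_{A_0|_{M_b}}=i\mathbb{R}$ (central constants), so $\psi_i^{(b)}$ is unique up to this kernel, fixed by the normalisation $\int_{M_b}\Tr\psi_i^{(b)}\,\underline\mu=0$; standard family elliptic theory applied to the smoothly $b$-varying Laplacian $d_{A_0|_{M_b}}^{*}d_{A_0|_{M_b}}$ produces smooth global sections $\psi_i$ of $ad(P)\to M$, with no patching obstruction since $B$ is contractible.

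Setting $A:=A_0+\sum_k\psi_k\,dt_k$, the quadratic piece $\sum_{j<k}[\psi_j,\psi_k]\,dt_j\wedge dt_k$ in $F_A-F_{A_0}$ is of pure horizontal bidegree $(2,0)$ and does not enter $F^{(1,1)}$, so a direct bidegree computation gives
$$
\iota_{\partial/\partial t_k} F_A^{(1,1)}\big|_{M_b}\;=\;\iota_{\partial/\partial t_k} F_{A_0}^{(1,1)}\big|_{M_b}\;-\;d_{A_0|_{M_b}}\psi_k^{(b)},
$$
hence $\eta_A=\eta_{A_0}-\sum_k I_k\,d_{A_0|_{M_b}}\psi_k^{(b)}=0$ and $A$ is the required adiabatic $G_2$-instanton. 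The main nontrivial input is the second paragraph's observation that $(\iota_{\partial/\partial t_i}F_{A_0})|_{M_b}$ has no $I_k(\Lie\mathcal{G}_b)A_0$ components — this is essential, because the horizontal perturbation modifies exactly the $I_k$ summands and cannot touch a $(\Lie\mathcal{G}_b)A_0$ component were one present. Pleasantly, no small-parameter perturbation is needed: the equation for the correction is effectively linear, the only nonlinearity being absorbed harmlessly into the $(2,0)$ part of the curvature.
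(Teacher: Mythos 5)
Your argument has a genuine gap at its central claim, namely that $b_i:=(\iota_{\partial/\partial t_i}F_{A_0})|_{M_b}$ lies in the linearised-ASD subspace $\ker d^+_{A_0|_{M_b}}$ "since $A_0|_{M_b}$ is ASD for every $b$". The fibrewise ASD condition is $F_{A_0|_{M_b}}\wedge\omega_j(b)=0$ with the hyperk\"ahler forms depending on $b$, so differentiating it in $t_i$ gives $d_{A_0}b_i\wedge\omega_j=-F_{A_0}\wedge\mathcal{L}_{\partial/\partial t_i}\omega_j$, not $d_{A_0}^+b_i=0$. Under Donaldson's conditions the forms $\mathcal{L}_{\partial/\partial t_i}\omega_j$ are ASD, and $F_{A_0}$ is fibrewise ASD, so the right-hand side is pointwise $\langle F_{A_0},\mathcal{L}_{\partial/\partial t_i}\omega_j\rangle\,\underline{\mu}$, which does not vanish in general. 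Consequently $b_i$ may have components in the $I_k(\Lie\mathcal{G}_b)A_0$ summands of (\ref{decompositionhyperkahlerquotient}), and then $\eta_{A_0}=\sum_i I_i b_i$ may have a nonzero $(\Lie\mathcal{G}_b)A_0$ component $d_{A_0}\Phi_0$. Your purely horizontal correction $\sum_k\psi_k\,dt_k$ only moves $\eta$ inside $\oplus_k I_k(\Lie\mathcal{G}_b)A_0$, so it can never cancel such a term: as you yourself note, this is exactly the component your method "cannot touch". What your construction genuinely achieves is $\sum_i I_i\iota_{\partial/\partial t_i}F_A=d_A\Phi_0$, i.e.\ the adiabatic monopole equation, not yet the instanton equation.

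This missing step is precisely where the paper's proof does extra work: it writes the full orthogonal complement of $T_{A}\mathcal{M}$, including the $d_{A'}\Phi_0$ term, corrects by $\sum\Phi_k dt_k$ to reach the adiabatic $G_2$ monopole equation, and then invokes Proposition \ref{Adiabatic$G_2$monopoleadiabatic$G_2$instanton} (Bianchi identity, $d\underline{\Theta}=0$, and fibrewise integration by parts) to force $d_A\Phi_0=0$. The honest version of your "no gauge-orbit component" assertion is the key lemma of Section \ref{Theuniversalconnection2}: after imposing the fibrewise Coulomb gauge $d_A^*b_k=0$, the specific combination $I_1b_1+I_2b_2+I_3b_3$ lies in $T_A\mathcal{M}$, but the proof of that uses the integrability conditions $d_H\underline{\omega}=0$ and $d_H\underline{\Theta}=0$ together with the gauge normalisation — none of which your argument invokes, and it applies to the sum, not to each $b_i$ separately. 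The remaining ingredients of your write-up (the decomposition (\ref{decompositionhyperkahlerquotient}), the bidegree computation for the corrected curvature, smooth dependence on $b$ over the contractible base) are sound and coincide with the paper's mechanism; the proof is repaired either by adding the monopole-to-instanton step or by a genuine gauge-fixing argument as above.
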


\begin{proof}
Since we are working over a contractible basis, and we are assuming the gauge group action is essentially free,  we can always lift $s$ to be represented by some connection $A'$ on $P\to B$.	
Using the orthogonal decomposition (\ref{decompositionhyperkahlerquotient}) in the description of the hyperk\"ahler quotient, if $A'$ defines a Fueter section, then $\sum_i I_i\iota_{ \frac{\partial }{\partial t_i} } F_{A'} $ has vanishing $T_{A'}\mathcal{M}$ component, so can be written as \[d_{A'} \Phi_0+ I_1 d_{A'} \Phi_1+I_2 d_{A'} \Phi_2+ I_3 d_{A'} \Phi_3,\]
where $\Phi_i$ live in $\Gamma(M_b, ad P|_{M_b})$ on each K3 fibre.
Modulo the issue of central constant in $u(1)$, the choices of $\Phi_i$ are unique. Now we modify $A'$ to $A=A'+\sum \Phi_i dt_i$, which can be defined in a smooth way independent of the orthonormal basis $dt_i$. Then we have $\sum I_i \iota_{ \frac{\partial }{\partial t_i} } F_A= d_A \Phi_0$, \ie
\begin{equation}\label{adiabaticmonopole11typecurvature}
F_A^{(1,1)} \wedge \underline\Theta=\sum
\iota_{ \frac{\partial }{\partial t_i} } F_A \wedge \omega_i dt_1dt_2dt_3= (*_4 d_A \Phi_0) \wedge \underline{\lambda}.
\end{equation}
Thus we have produced the adiabatic monopole equation. But by Section \ref{adiabaticlimitof$G_2$instantons}, adiabatic $G_2$ instantons are equivalent to adiabatic $G_2$ monopoles.
\end{proof}

\begin{prop}\label{instantonFuetercorrespondenceuniqueness}
Two adiabatic $G_2$ instantons representing the same Fueter section are related by two kinds of operations. The first is to apply a gauge transformation of $P\to M$. The second is to add to the connection $A$ a 1-form on $B$ with values in the central $u(1)\subset \Gamma(M_b, ad P|_{M_b})$.
\end{prop}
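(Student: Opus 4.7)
The plan is to first bring the two connections into fibrewise agreement by a global gauge transformation, and then analyze the residual horizontal difference using the orthogonal hyperk\"ahler quotient decomposition (\ref{decompositionhyperkahlerquotient}), in the same spirit as the proof of Proposition \ref{Fueterimpliesmonopole}.

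First, since $A|_{M_b}$ and $A''|_{M_b}$ are irreducible ASD connections representing the same point $s(b)\in\mathcal{M}_b$, there is for each $b\in B$ a fibrewise gauge transformation $g_b$ with $g_b\cdot A|_{M_b}=A''|_{M_b}$, unique modulo the stabilizer, which under our smoothness assumption is the central $U(1)$. The collection $\{g_b\}$ therefore defines a principal $U(1)$-bundle over $B$; contractibility of $B$ forces this bundle to be trivial and supplies a smooth global gauge transformation $g$ of $P\to M$. Replacing $A$ by $g\cdot A$, we may assume henceforth that $A$ and $A''$ agree on every fibre.

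The difference $\beta:=A''-A$ is then an $ad P$-valued 1-form on $M$ with vanishing vertical part, hence purely horizontal, and we write $\beta=\sum_i b_i\,dt_i$ with $b_i\in\Gamma(M,\,ad P)$. A direct bidegree computation from $F_{A''}=F_A+d_A\beta+\tfrac{1}{2}[\beta,\beta]$ gives $(F_{A''}-F_A)^{(1,1)}=\sum_i d_{A|_{M_b}} b_i\wedge dt_i$; the $(0,2)$ contribution vanishes since $A$ and $A''$ are fibrewise equal, and $\beta\wedge\beta$ is of pure $(2,0)$ type. Subtracting the adiabatic $G_2$-instanton equations $F^{(1,1)}\wedge\underline\Theta=0$ for $A''$ and $A$, and expanding $\underline\Theta$ as in the proof of Proposition \ref{Fueterimpliesmonopole}, yields the fibrewise identity $\sum_i d_{A|_{M_b}} b_i\wedge\omega_i=0$, equivalently
\begin{equation*}
\sum_{i=1}^3 I_i\, d_{A|_{M_b}} b_i=0.
\end{equation*}

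Each summand $I_i d_{A|_{M_b}} b_i$ lies in the subspace $I_i(\Lie\mathcal{G}_b)A$ of the orthogonal decomposition (\ref{decompositionhyperkahlerquotient}); since these three subspaces are mutually orthogonal, the vanishing of the sum forces each summand to vanish, so $d_{A|_{M_b}} b_i=0$ for every $i$. Under our irreducibility assumption, the kernel of $d_{A|_{M_b}}$ on $\Omega^0(M_b,ad P|_{M_b})$ consists exactly of the constant sections valued in the centre $u(1)\subset u(r)$; hence each $b_i$ is fibrewise constant and valued in the centre, \ie descends to a function $B\to u(1)$. Therefore $A''-A=\sum_i b_i(b)\,dt_i$ is the pullback to $M$ of a $u(1)$-valued 1-form on $B$, as required. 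The main obstacle I expect is globalising the fibrewise gauge transformations in the first step; once that is arranged the remainder is a direct consequence of the orthogonality in (\ref{decompositionhyperkahlerquotient}) together with the integrability present in Donaldson's adiabatic data.
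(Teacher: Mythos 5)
Your proof is correct and follows essentially the same route as the paper: globalise the fibrewise gauge transformations over the contractible base, then deduce $d_{A|_{M_b}}b_i=0$ from the adiabatic $G_2$ instanton condition together with the orthogonality of the subspaces $I_i(\Lie\mathcal{G}_b)A$ in (\ref{decompositionhyperkahlerquotient}), and finally invoke irreducibility. The only difference is that you spell out explicitly the curvature subtraction $(F_{A''}-F_A)^{(1,1)}=\sum_i d_{A|_{M_b}}b_i\wedge dt_i$ and the $F^{(1,1)}\wedge\underline\Theta=0$ step, which the paper compresses into ``our previous calculations imply $d_A\Phi_i=0$''.
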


\begin{proof}
Let $A$ and $A'$ be the two adiabatic $G_2$ instantons. On each fibre, we can apply some gauge transformation to move from $A|_{M_b}$ to $A'|_{M_b}$, and by the free action assumption this is unique up to an $S^1$ worth of ambiguity. On a contractible base $B$, we can find a smooth gauge transform to achieve so globally. So without loss of generality, $A$ and $A'$ agree as connections on each fibre. Then $A=A'+ \sum \Phi_i dt_i$, and our previous calculations imply $d_A \Phi_i=0$ on each K3 fibre, so $\Phi_i$ have to take value in the central $u(1)$.
\end{proof}

\begin{rmk}
If the smoothness assumptions fail on part of the moduli spaces $\mathcal{M}_b$, the discussion is still valid if the Fueter section avoids the singular locus of the moduli bundle. This is intimately related to the question of extending the instanton-Fueter correspondence to (partial) compactification of the moduli bundle, as mentioned by Haydys \cite{Haydys}. An interesting question is to understand what happens to the actual $G_2$ instantons arising from perturbations of the adiabatic model, but we shall not address this issue in this paper.
\end{rmk}

\subsection{Canonical 3-form and 4-form on the  moduli bundles}\label{Canonicalformsonmodulibundle}

The moduli bundle is equipped with the fibrewise hyperk\"ahler structures $\omega^{\mathcal{M } }_i$, the imaginary quaternion action of $T_b B$ on the tangent bundle of the fibre $M_b$, and the canonical horizontal distribution $\nabla^{\mathcal{M}}$. This is curiously analogous to some of the data appearing in Donaldson's adiabatic fibration. More precisely, this specifies a \textbf{canonical 3-form} and a \textbf{canonical 4-form} on $\mathcal{M}$, defined by
\[
\underline{\omega}^{\mathcal{M} }= \sum \omega^\mathcal{M}_i dt_i, \quad \underline{\Theta}^{\mathcal{M} }= -\sum_{cyc} \omega^\mathcal{M}_i dt_j dt_k,
\]
where $dt_1, dt_2, dt_3$ are an orthonormal basis of $T_b B$, and we use the horizontal distribution to graft $\omega_i^\mathcal{M}$ from fibres to $\mathcal{M}$. This is easily seen to be well defined independent of the choice of orthonormal basis.

The canonical 4-form is motivated by a variational viewpoint on the Fueter equation.
We consider a map
$N: [0,1]\times B\to \mathcal{M}$, which at fixed time $t$ restricts to a section $s_t$ of the moduli bundle, and defines a homotopy between a fixed section $s_0$ and $s=s_{t=1}$. Using the argument in Section \ref{adiabaticlimitofassociative}, it is not hard to see

\begin{lem}
	Critical points of the Chern-Simons type functional on $s$
	\[
	CS^{associative}(s)=\int_{[0,1]\times B} N^* \underline{\Theta}^{\mathcal{M} }
	\]
	are precisely the solutions to the Fueter equation.
	
\end{lem}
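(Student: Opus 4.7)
The plan is to mirror the argument that was given for associative sections in Section \ref{adiabaticlimitofassociative}, but now on the moduli bundle $\mathcal{M}\to B$. Take a one-parameter family of sections $s_u$ with $s_0 = s$ and vertical variation $v = \partial_u s_u|_{u=0}$, and extend this to a family of homotopies $N_u: [0,1]\times B \to \mathcal{M}$ fixing the left boundary $s_{t=0}$ and with $N_u(1,\cdot) = s_u$. By Cartan's magic formula,
\[
\frac{d}{du}\bigg|_{u=0} CS^{associative}(s_u) = \int_{[0,1]\times B} N^*\bigl(\iota_v d\underline{\Theta}^{\mathcal{M}} + d\,\iota_v \underline{\Theta}^{\mathcal{M}}\bigr).
\]
The first term vanishes by closedness of $\underline{\Theta}^{\mathcal{M}}$, which is the analogue of Donaldson's conditions $d_f\underline{\Theta}=0$ and $d_H\underline{\Theta}=0$ and will be established as part of the canonical form construction in the next Section. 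Stokes' theorem applied to the second term leaves only the $t=1$ boundary contribution $\int_B s^*\iota_v\underline{\Theta}^{\mathcal{M}}$, since $v$ vanishes at $t=0$.

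Next I would compute $s^*\iota_v\underline{\Theta}^{\mathcal{M}}$ explicitly. Two structural facts drive the calculation: first, each $\omega_i^{\mathcal{M}}$ is grafted fibrewise via the horizontal distribution, so it is purely vertical in the sense that $\omega_i^{\mathcal{M}}(\cdot,w)=0$ whenever $w$ is horizontal; second, the decomposition $ds(X) = X^H + \nabla^{\mathcal{M}}_X s$ for any $X\in T_bB$. Working in a local orthonormal frame $dt_1,dt_2,dt_3$ on $B$, one finds
\[
s^*\iota_v\omega_i^{\mathcal{M}}(X) = \omega_i^{\mathcal{M}}(v,\nabla^{\mathcal{M}}_X s),
\]
and wedging with $dt_j\wedge dt_k$ for cyclic $(i,j,k)$ isolates the coefficient of $dt_i$, giving
\[
s^*\iota_v\underline{\Theta}^{\mathcal{M}} = -\sum_{i} \omega_i^{\mathcal{M}}\bigl(v,\nabla^{\mathcal{M}}_{\partial/\partial t_i} s\bigr)\, dt_1\wedge dt_2 \wedge dt_3.
\]
Using $\omega_i^{\mathcal{M}}(a,b) = g^{\mathcal{M}}(I_i a,b)$ together with skew-adjointness of $I_i$, this equals $g^{\mathcal{M}}(v,\slashed{D}s)\, dt_1\wedge dt_2 \wedge dt_3$.

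Hence the first variation formula reads $\delta CS^{associative} = \int_B g^{\mathcal{M}}(v,\slashed{D}s)\, dt_1 dt_2 dt_3$, and since $v$ ranges over arbitrary vertical variations of $s$, a nondegeneracy argument using $g^{\mathcal{M}}$ gives $\slashed{D}s=0$ at critical points, and conversely. The main obstacle is the closedness of $\underline{\Theta}^{\mathcal{M}}$, which is needed both to make the functional well-defined modulo homotopies rel boundary and to eliminate the $\iota_v d\underline{\Theta}^{\mathcal{M}}$ term in the variation; this integrability condition is precisely the content of the forthcoming verification that $(\mathcal{M}\to B, \underline{\omega}^{\mathcal{M}}, \underline{\Theta}^{\mathcal{M}})$ satisfies the analogues of Donaldson's adiabatic equations, and so I would invoke the results of Section \ref{Canonicalformsonmodulibundle} and Section \ref{Canonicalfibrevolumeform} at this step.
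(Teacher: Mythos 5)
Your argument is correct and is in substance the computation the paper has in mind when it points back to Section \ref{adiabaticlimitofassociative}: the heart of the matter is the boundary-term identity $s^*\iota_v\underline{\Theta}^{\mathcal{M}}=g^{\mathcal{M}}(v,\slashed{D}s)\,dt_1dt_2dt_3$, which you derive correctly from the vertical grafting of $\omega_i^{\mathcal{M}}$, the splitting $ds(X)=X^{H}+\nabla^{\mathcal{M}}_X s$, and $\omega_i^{\mathcal{M}}(a,b)=g^{\mathcal{M}}(I_ia,b)$ with $I_i$ skew-adjoint. Where you diverge from the paper is in how the bulk term is handled: you invoke $d\underline{\Theta}^{\mathcal{M}}=0$, which in the paper is only proved \emph{after} this lemma (later in the same Section \ref{Canonicalformsonmodulibundle}, via the identification (\ref{ChernSimonssubmanifoldequalsChernSimonsgauge}) with the gauge-theoretic Chern--Simons functional; Section \ref{Canonicalfibrevolumeform} is not relevant to it). This is not circular, since that closedness proof does not use the present lemma, but the forward reference can be avoided entirely: define the variation by concatenating $N$ with the thin cylinder traced out by $s_u$, so that the $u$-derivative of $CS^{associative}(s_u)$ is computed from the added sliver alone and equals your boundary term directly, with no Cartan/Stokes step and no closedness needed --- closedness is only required for the homotopy invariance of the functional, which the paper's Remark explicitly defers. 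Two minor points to tidy: take the variations compactly supported (vanishing near $\partial B$) so that the side boundary $[0,1]\times\partial B$ contributes nothing in your Stokes step, and note that the final nondegeneracy argument is legitimate because $g^{\mathcal{M}}$ is a genuine metric on the fibres under the standing smoothness assumptions.
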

\begin{rmk}
We choose this notation in analogy with our discussions of adiabatic associative sections,  and we think of $N$ as a cylindrical  submanifold in $\mathcal{M}$. It will turn out to be invariant under homotopies fixing the boundary, so we think of this as a function of $s$ rather than $N$.	
\end{rmk}

Now we represent the 1-parameter family of sections of the moduli bundle by a 1-parameter family $A_t$ of connections on $P\to M$ joining $A_0$ to $A$, parametrised by $t\in [0,1]$. Restricted to the K3 fibres, the connections $A_t$ will be ASD. It is interesting to calculate the above Chern-Simons functional from the gauge theoretic viewpoint.

\begin{prop}
The above Chern-Simons type functional agrees with the Chern-Simons functional (\ref{ChernSimons}) defined by viewing $A_t$ as connections, \ie
\begin{equation}\label{ChernSimonssubmanifoldequalsChernSimonsgauge}
CS^{associative} (s) =CS^{instanton}(A)=\frac{-1}{4\pi^2}\int_{M\times [0,1]} \Tr(F_{A_t} \wedge \frac{ \partial A_t}{\partial t}   ) \wedge \underline{\Theta} \wedge dt.
\end{equation}
\end{prop}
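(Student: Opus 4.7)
The plan is to evaluate both sides of (\ref{ChernSimonssubmanifoldequalsChernSimonsgauge}) on the local orthonormal frame $(\partial/\partial t,\partial/\partial t_1,\partial/\partial t_2,\partial/\partial t_3)$ of $[0,1]\times B$ and match their fibrewise integrands. The geometric input, from Lemma~\ref{canonicalconnectionmodulibundle} and its corollary, is that $N_*(\partial/\partial t)$ is represented by $\partial A_t/\partial t$ in $T_{A_t}\mathcal{A}_b$, while $N_*(\partial/\partial t_\ell)$ decomposes as its horizontal lift plus a vertical vector represented by $\iota_{\partial/\partial t_\ell}F_{A_t}$. Both representatives are tangent to $\mu^{-1}(0)$ (the first because the $A_t$ stay fibrewise ASD, the second by Bianchi), so the symplectic form formula (\ref{hyperkahlerforms}) applies to these lifts without regard to the gauge ambiguity in the descent to $\mathcal{M}_b$.

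For the moduli side, pulling back $\underline{\Theta}^{\mathcal{M}}=-\sum_{\mathrm{cyc}(ijk)}\omega_i^{\mathcal{M}}\wedge dt_j\wedge dt_k$ is a combinatorial exercise: each $dt_\ell$ annihilates the purely vertical $N_*(\partial/\partial t)$ and reads off only the horizontal legs of the other $N_*(\partial/\partial t_\ell)$, so every cyclic summand contributes with sign $+1$ and one obtains
\[
(N^*\underline{\Theta}^{\mathcal{M}})(\partial_t,\partial_{t_1},\partial_{t_2},\partial_{t_3})=-\sum_{i=1}^3\omega_i^{\mathcal{M}}\bigl([\partial_t A_t],[\iota_i F_{A_t}]\bigr).
\]
Substituting (\ref{hyperkahlerforms}) rewrites $CS^{associative}(s)$ as $\tfrac{1}{4\pi^2}\int dt\,dt_1\,dt_2\,dt_3\sum_i\int_{M_b}\Tr(\partial_t A_t\wedge \iota_i F_{A_t})\wedge\omega_i$. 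For the gauge side, I decompose $F_{A_t}$ and $\partial_t A_t$ by horizontal–vertical type. Among the $(1,2)$-type pieces of $\Tr(F_{A_t}\wedge \partial_t A_t)$ that can wedge non-trivially with $\underline{\Theta}$, the $F^{(0,2)}\wedge(\partial_t A_t)^{(1,0)}$ contribution is annihilated because $F^{(0,2)}|_{M_b}$ is ASD and $\omega_i$ is self-dual; only $F^{(1,1)}\wedge(\partial_t A_t)^{(0,1)}$ survives. Expanding $F^{(1,1)}=\sum_\ell dt_\ell\wedge(\iota_\ell F)|_{\mathrm{fibre}}$ and applying Fubini reduces $CS^{instanton}(A)$ to the same form of fibrewise integrand.

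The final identification uses the graded-cyclic identity $\Tr(\iota_i F\wedge \partial_t A_t)=-\Tr(\partial_t A_t\wedge \iota_i F_{A_t})$ for matrix-valued $1$-forms, together with the orientation reversal $dt\wedge dt_1\,dt_2\,dt_3=-dt_1\,dt_2\,dt_3\wedge dt$; these two sign flips combine with the explicit $-\tfrac{1}{4\pi^2}$ in the definition (\ref{ChernSimons}) to reproduce exactly the $+\tfrac{1}{4\pi^2}$ of the moduli-side expression. The principal technical obstacle is precisely this sign bookkeeping — threading through wedge reorderings, orientation conventions, the moduli symplectic convention, and the graded cyclicity of trace — rather than any deep structural idea; the only structural input required is the single vanishing $F^{(0,2)}\wedge \omega_i=0$ coming from fibrewise ASD, which collapses the seven-dimensional curvature pairing into a fibrewise hyperk\"ahler pairing and makes the two Chern–Simons integrands manifestly coincide.
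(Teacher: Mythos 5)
Your frame-by-frame evaluation, the horizontal--vertical decomposition on the gauge side (including the observation that the $F^{(0,2)}\wedge(\partial_t A_t)^{hor}$ term dies because $F^{(0,2)}$ is fibrewise ASD and $\omega_i$ is self-dual), and the sign bookkeeping all follow the paper's computation. But the step you dismiss as harmless is exactly where the real content of the proposition sits, and your justification for it is incorrect. You claim that $\iota_{\partial/\partial t_\ell}F_{A_t}$ is tangent to $\mu_b^{-1}(0)$ ``by Bianchi'', so that the reduced symplectic form (\ref{hyperkahlerforms}) can be evaluated on the unprojected representatives. This fails in general: differentiating the fibrewise ASD condition $F_{A_t}\wedge\omega_i=0$ in a base direction gives $d_{A_t}(\iota_{\partial/\partial t_\ell}F_{A_t})\wedge\omega_i=-F_{A_t}\wedge\mathcal{L}_{\partial/\partial t_\ell}\omega_i$, which is nonzero precisely because the fibrewise hyperk\"ahler forms vary with $b$ (indeed, under Donaldson's conditions $\mathcal{L}_{\partial/\partial t_\ell}\omega_i$ is an ASD $2$-form, so its wedge with the ASD form $F^{(0,2)}$ has no reason to vanish). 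So $\iota_{\partial/\partial t_\ell}F_{A_t}$ is \emph{not} a lift of the moduli-bundle tangent vector lying in $T\mu_b^{-1}(0)$; the class of $\nabla^{\mathcal{M}}_{\partial/\partial t_\ell}s$ is by definition the orthogonal projection $pr_{T_{A_t}\mathcal{M}}(\iota_{\partial/\partial t_\ell}F_{A_t})$ (Lemma \ref{canonicalconnectionmodulibundle} and its corollary), and the discrepancy has components not only along $(\Lie\mathcal{G}_b)A_t$ but also along the $I_k(\Lie\mathcal{G}_b)A_t$ summands of (\ref{decompositionhyperkahlerquotient}). The moment-map identity only kills the pairing of the $(\Lie\mathcal{G}_b)A_t$-part with a vector tangent to $\mu_b^{-1}(0)$; the pairing $\omega_i^{\mathcal{A}}(I_i d_{A_t}\Phi,\partial_t A_t)=-g(d_{A_t}\Phi,\partial_t A_t)$ does not vanish for free, since $\partial_t A_t$ need not be in Coulomb gauge. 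So ``without regard to the gauge ambiguity'' is not available to you at this point.

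The paper keeps the projections in place through the computation and then removes the one on $\partial A_t/\partial t$ by a separate argument: writing $\partial A_t/\partial t-pr_{T_{A_t}\mathcal{M}}\partial A_t/\partial t=d_{A_t}\Phi$ (legitimate because $\partial A_t/\partial t$ \emph{is} tangent to $\mu_b^{-1}(0)$), it shows $\int_M\Tr(F_{A_t}\wedge d_{A_t}\Phi)\wedge\underline{\Theta}=0$ via the Bianchi identity, $d\underline{\Theta}=0$, Stokes over $M$ with boundary $\pi^{-1}(\partial B)$, and fibrewise ASD to kill the boundary term. Equivalently, a fibrewise integration by parts needs $d_H\underline{\Theta}=0$ (and $d_H\underline{\omega}=0$) to make the error vanish. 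Your argument never invokes any of Donaldson's integrability conditions beyond fibrewise ASD, which is a sign that the shortcut cannot be correct as stated: the removal of the projection genuinely uses the closedness of $\underline{\Theta}$. To repair your proof you must either reinstate the projections and reproduce an argument of this kind, or prove (using $d_H\underline{\omega}=0$, $d_H\underline{\Theta}=0$ and a gauge-fixing discussion, in the spirit of the key lemma of Section \ref{Theuniversalconnection2}) that $\sum_i I_i\,\iota_{\partial/\partial t_i}F_{A_t}$ lies in $T_{A_t}\mathcal{M}$, which is what actually makes the unprojected pairing equal the projected one after summing over $i$.
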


\begin{proof}
We need to understand the volume element $N^*\underline{\Theta}^\mathcal{M}$. This means to evaluate
\[
\underline{\Theta}^{\mathcal{M} }( ds_t( \frac{\partial}{\partial t_1} ) , ds_t( \frac{\partial}{\partial t_2} ), ds_t( \frac{\partial}{\partial t_3} ) , pr_{T_{A_t}\mathcal{M} }\frac{ \partial A_t}{\partial t}     )
\]
Now by the definition of $\underline\Theta^{\mathcal{M} }$ and the description of the canonical horizontal distribution, this is
\[
-\sum_i{\omega_i}^\mathcal{M} ( pr_{T_{A_t}\mathcal{M }} (\iota_{ \frac{\partial}{\partial t_i}  }  F_{A_t} ), pr_{T_{A_t}\mathcal{M} }\frac{ \partial A_t}{\partial t}         )
=-g^{\mathcal{M}_b}(  \sum_i I_i (\iota_{\frac{\partial}{\partial t_i} }   F_{A_t}   )  , pr_{T_{A_t}\mathcal{M} }\frac{ \partial A_t}{\partial t}          ),
\]
where $g^\mathcal{M}$ is the moduli $L^2$ metric (\ref{hyperkahlermetric}). It is convenient that we can drop the orthogonal projection operator of $ \sum_i I_i (\iota_\frac{\partial}{\partial t_i}    F_{A_t}   ) $, because it is paired with $pr_{T_{A_t}\mathcal{M} }\frac{ \partial A_t}{\partial t}  \in T_{A_t}\mathcal{M}$.

Now recalling the construction of the hyperk\"ahler quotient and the description of the symplectic forms (\ref{hyperkahlerforms}), one can write the above formula as
\[
\frac{1}{4\pi^2}\int_{M_b} \sum_i \Tr (   ( \iota_{ \frac{\partial}{\partial t_i} } F_{A_t}) \wedge pr_{T_{A_t}\mathcal{M} }\frac{ \partial A_t}{\partial t}         ) \wedge \omega_i.
\]
This can be interpreted as
\[
\begin{split}
N^*\underline{\Theta}^\mathcal{M} & =\frac{1}{4\pi^2}\int_{M_b} \sum_i \Tr (   ( \iota_{ \frac{\partial}{\partial t_i} } F_{A_t}) \wedge pr_{T_{A_t}\mathcal{M} }\frac{ \partial A_t}{\partial t}         ) \wedge  \omega_i \wedge dt_1dt_2dt_3dt\\
& =\frac{-1}{4\pi^2} \int_{M_b} \Tr (F_{A_t}^{(1,1)} \wedge pr_{T_{A_t}\mathcal{M} }\frac{ \partial A_t}{\partial t}         )\wedge \underline{\Theta} \wedge dt.
\end{split}
\]
Since the connections are ASD when restricted to fibres, this is equal to
\[
\frac{-1}{4\pi^2}\int_{M_b} \Tr (F_{A_t} \wedge pr_{T_{A_t}\mathcal{M} }\frac{ \partial A_t}{\partial t}         )\wedge \underline{\Theta} \wedge dt.
\]
Thus the Chern-Simons type functional
\[
\begin{split}
CS^{associative}(s)= & \frac{-1}{4\pi^2}\int_{B\times [0,1]}\int_{M_b } \Tr (F_{A_t} \wedge pr_{T_{A_t}\mathcal{M} }\frac{ \partial A_t}{\partial t}         )\wedge \underline{\Theta} \wedge dt \\
= & \frac{-1}{4\pi^2}\int_{M\times [0,1]} \Tr (F_{A_t} \wedge pr_{T_{A_t}\mathcal{M} }\frac{ \partial A_t}{\partial t}         )\wedge \underline{\Theta} \wedge dt .
\end{split}
\]
Now we observe $\frac{ \partial A_t}{\partial t}   $ is the variation of a family of ASD connections on $M_b$, so is tangent to $\mu_b^{-1}(0)$. This means $\frac{ \partial A_t}{\partial t} $ agrees with $pr_{T_{A_t}\mathcal{M} }\frac{ \partial A_t}{\partial t}          $ up to an element $d_{A_t} \Phi$ in $(\Lie \mathcal{G}_b) A_t$. Now using the Bianchi identity $d_{A_t} F_{A_t}=0$, and $d\underline{\Theta}=0$,
\[
\int_M \Tr (F_{A_t} \wedge d_{A_t} \Phi       )\wedge \underline{\Theta}=\int_M d \{ \Tr( F_{A_t}\wedge \Phi)    \wedge \underline{\Theta} \}.
\]
Here $M$ is noncompact, so we need to be a bit careful: $M$ is fibred over $B$, and the boundary of $M$ is just the preimage of $\partial B$, denoted $\pi^{-1}(\partial B)$. Using Stokes theorem, we can rewrite the above as
\[
\int_{\pi^{-1}(\partial B)} \Tr( F_{A_t}\wedge \Phi)    \wedge \underline{\Theta}=-\int_{\partial B} \sum_{cyc} dt_jdt_k \int_{M_b} \Tr( F_{A_t}\wedge \Phi)\wedge \omega_i =0.
\]
The final term is zero, because $F_{A_t}$ is ASD.

The upshot is that we can drop the projection operator, to get (\ref{ChernSimonssubmanifoldequalsChernSimonsgauge}).
\end{proof}

This provides a deeper explanation why the adiabatic $G_2$ instantons correspond to solutions of the Fueter equation on the dual side: they are critical points of essentially the same Chern-Simons functional. This suggests the correspondence may be a classical manifestation of a quantum duality, but it is beyond the author's competence to explore the quantum theory.

\begin{rmk}
Here is a subtlety. The gauge theoretic Chern-Simons functional can be defined for general connections, or for connections which restrict to ASD connections on K3 fibres. The concept of critical points will change, because we allow for different classes of variations. So it is more accurate to say adiabatic $G_2$ instantons are particular solutions to the critical point condition. Similar remarks apply to the parallel discussion about the 3-form $\underline{\omega}^\mathcal{M}$ below.
\end{rmk}

Another interesting consequence of the above proposition is

\begin{prop}
The canonical 4-form $\underline{\Theta}^\mathcal{M}$ is closed.
\end{prop}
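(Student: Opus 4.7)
The plan is to derive closure of $\underline{\Theta}^{\mathcal{M}}$ directly from the preceding proposition via a Chern–Simons argument. First I would recall that the gauge-theoretic Chern–Simons functional $CS^{instanton}$ in (\ref{ChernSimons}) is invariant under boundary-fixing homotopies of the path $A_{t}$: the first variation along such a family reduces, after integration by parts, to a term containing $d\underline{\Theta}$, which vanishes because $\underline{\Theta}$ is closed in Donaldson's adiabatic fibration. By the identity (\ref{ChernSimonssubmanifoldequalsChernSimonsgauge}), the submanifold-theoretic functional $CS^{associative}(s)=\int_{[0,1]\times B}N^{\ast}\underline{\Theta}^{\mathcal{M}}$ inherits the same invariance under boundary-fixing homotopies of $N$, since any homotopy of sections lifts to one of connections via the local product structure of $\mathcal{A}\to B$.

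Next I would translate this into pointwise closure via Stokes. Given a $2$-parameter family $N\colon[0,1]_{\tau}\times[0,1]_{t}\times B\to\mathcal{M}$ with $N(\tau,0,\cdot)=s_{0}$ and $N(\tau,1,\cdot)=s_{1}$ independent of $\tau$, Stokes' theorem yields
\begin{equation*}
\int_{[0,1]^{2}\times B}N^{\ast}d\underline{\Theta}^{\mathcal{M}}=\int_{\partial([0,1]^{2}\times B)}N^{\ast}\underline{\Theta}^{\mathcal{M}}.
\end{equation*}
The $t=0,1$ faces collapse, while the $\tau=0,1$ faces are two homotopies from $s_{0}$ to $s_{1}$ whose integrals agree by the homotopy invariance just established. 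Hence the left-hand side vanishes for every such family.

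Finally I would argue pointwise vanishing. At any $p\in\mathcal{M}_{b_{0}}$ and any choice of five tangent vectors at $p$, one constructs a $2$-parameter family $N$ whose partial derivatives $\partial_{\tau}N,\partial_{t}N,\partial_{t_{1}}N,\partial_{t_{2}}N,\partial_{t_{3}}N$ at $(0,0,b_{0})$ realize those vectors; the canonical horizontal distribution $\nabla^{\mathcal{M}}$ shows that sections are flexible enough for this, since the derivative of a section at a point can be prescribed freely in $T_{p}\mathcal{M}$ modulo its projection to $T_{b_{0}}B$. Then $d\underline{\Theta}^{\mathcal{M}}$ must vanish on every $5$-tuple at every point.

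The main obstacle I foresee is this last realization step: verifying that section-valued $2$-parameter families really do have enough pointwise flexibility, which requires some care about the section constraint. An alternative route that sidesteps this is a direct computation: decompose $d=d_{f}+d_{H}+F_{H}$ with respect to the canonical horizontal distribution on $\mathcal{M}$, and show each type piece of $d\underline{\Theta}^{\mathcal{M}}$ vanishes separately — fibrewise closedness of $\omega_{i}^{\mathcal{M}}$ kills the $d_{f}$ piece, the identities $\mathcal{L}_{\partial/\partial t_{i}}\omega_{j}^{\mathcal{M}}=\mathcal{L}_{\partial/\partial t_{j}}\omega_{i}^{\mathcal{M}}$ and $\sum_{i}\mathcal{L}_{\partial/\partial t_{i}}\omega_{i}^{\mathcal{M}}=0$ (which descend from the analogous Donaldson identities on $M$ via the definition of $\nabla^{\mathcal{M}}$) kill the $d_{H}$ piece, and the curvature piece is handled in the same spirit.
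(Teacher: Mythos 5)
Your argument is correct and is essentially the paper's own proof: both transfer the homotopy invariance of the gauge-theoretic Chern--Simons functional (which rests on $d\underline{\Theta}=0$) to $CS^{associative}$ via (\ref{ChernSimonssubmanifoldequalsChernSimonsgauge}) over arbitrary small domains $B'\subset B$, and then invoke Stokes' theorem to conclude $d\underline{\Theta}^{\mathcal{M}}=0$. The realization worry you flag is harmless (and is left implicit in the paper as well): $d\underline{\Theta}^{\mathcal{M}}$ has only components of horizontal degree $2$ and $3$, and since families of sections provide arbitrary vertical vectors together with lifts of an arbitrary coordinate frame on the base with freely prescribable vertical parts, multilinearity isolates and kills both components.
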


\begin{proof}
We can replace the base $B$ by any small smooth domains $B'\subset B$ in the above, and express 
\[
\int_{[0,1] \times B'} N^*\underline{\Theta}^\mathcal{M}
\]
as a gauge theoretic Chern-Simons functional by (\ref{ChernSimonssubmanifoldequalsChernSimonsgauge}), which is well defined up to boundary fixing homotopies because $d\underline{\Theta}=0$. But by Stokes theorem, for this homotopy invariance to be true on any cylindrical maps $N:[0,1]\times B'\to \mathcal{M}$, it is necessary and sufficient for $\underline{\Theta}^\mathcal{M}$ to be closed.
\end{proof}

Now we do an analogous discussion for the canonical 3-form $\underline{\omega}^\mathcal{M}$. Consider $\Sigma\subset B$ a 2-submanifold with boundary. We can restrict $P\to M$ over the locus $\Sigma$ to a principal bundle  $P|_\Sigma\to M|_\Sigma$. Recall $\underline{\omega}$ on $M$ restricted to $M|_\Sigma$ is closed, because $d_f\underline{\omega}=0$, $d_H \underline{\omega}=0$. Conversely, if this happens for all $\Sigma$, then $d_f\underline{\omega}=0$, $d_H \underline{\omega}=0$.

Let $s: \Sigma\to \mathcal{M}|_\Sigma$ be any section. Fix an arbitrary section $s_0$ as a base point, so $s(\Sigma)$ and $s_0$ are viewed as the boundary of the 3-fold $N: [0,1] \times \Sigma \to \mathcal{M}|_\Sigma$, and at each time $t$ we have a section $s_t: \Sigma\to \mathcal{M}|_{\Sigma}$.
We consider the \textbf{Chern-Simons type functional} 
\begin{equation}
CS^{holo}(s)=\int_{[0,1]\times \Sigma} N^*\underline{\omega}^{\mathcal{M} },
\end{equation}
which is a priori dependent on $N$. This functional can be interpreted in terms of gauge theory on $M$, by representing $s_t$ as a 1-parameter family of connections $A_t$ on $P|_\Sigma\to M|_\Sigma$. By a computation very similar to before,

\begin{prop}
The above Chern-Simons type functional is
\begin{equation}\label{ChernSimonsholomorphiccurveholomorhicbundle}
CS^{holo}(s)=\frac{1}{4\pi^2}\int_{ [0,1]\times M|_\Sigma} \Tr(F_{A_t} \wedge \frac{\partial A_t}{\partial t })\wedge \underline{\omega}\wedge dt.
\end{equation}
\end{prop}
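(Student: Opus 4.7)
The plan is to follow almost verbatim the proof of the preceding proposition for $\underline{\Theta}^{\mathcal{M}}$, replacing 4-forms by 3-forms and the condition $d\underline{\Theta}=0$ by $d\underline{\omega}|_{M|_\Sigma}=0$. The latter holds automatically: on $M|_\Sigma$ the horizontal degree is at most $2=\dim\Sigma$, so $d\underline{\omega}$ reduces to its $(1,3)$ and $(2,2)$ components, both of which vanish by Donaldson's conditions $d_f\underline{\omega}=0$ and $d_H\underline{\omega}=0$.

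First I would pick local orthonormal coordinates $t_1,t_2$ on $\Sigma$ so that $\underline{\omega}^{\mathcal{M}}|_{\mathcal{M}|_\Sigma}=\omega_1^{\mathcal{M}}\wedge dt_1+\omega_2^{\mathcal{M}}\wedge dt_2$, and evaluate $N^*\underline{\omega}^{\mathcal{M}}$ on the triple $(dN\partial_{t_1},dN\partial_{t_2},\partial_t N)$. Expanding the wedge of a 2-form and a 1-form, the horizontal parts of $dN\partial_{t_j}$ pair with $dt_i$ and the vertical parts $v_j=\nabla^{\mathcal{M}}_{\partial_{t_j}}s_t$ pair with $\omega_i^{\mathcal{M}}$, yielding
\[
N^*\underline{\omega}^{\mathcal{M}}=\bigl[-\omega_2^{\mathcal{M}}(v_1,v_t)+\omega_1^{\mathcal{M}}(v_2,v_t)\bigr]\,dt_1\wedge dt_2\wedge dt,
\]
where $v_j=pr_{T\mathcal{M}}\iota_{\partial_{t_j}}F_{A_t}$ and $v_t=pr_{T\mathcal{M}}\partial_t A_t$.

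Next, exactly as in the $\underline{\Theta}^{\mathcal{M}}$ case, I would use the hyperk\"ahler quotient description to drop the projections on the $v_j$: since $pr_{T\mathcal{M}}$ commutes with each $I_i$ (as $T\mathcal{M}$ is quaternionic) and $v_t$ lies orthogonal to $(\Lie\mathcal{G})A\oplus\bigoplus_k I_k(\Lie\mathcal{G})A$, one has $\omega_i^{\mathcal{M}}(v_j,v_t)=\omega_i^{\mathcal{A}}(\iota_{\partial_{t_j}}F_{A_t},v_t)=-\frac{1}{4\pi^2}\int_{M_b}\Tr(\iota_{\partial_{t_j}}F_{A_t}\wedge v_t)\wedge\omega_i$. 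Using $F^{(1,1)}=\sum_i dt_i\wedge\iota_{\partial_{t_i}}F_{A_t}$ and reassembling signs, the integral identity
\[
\int_{[0,1]\times\Sigma}N^*\underline{\omega}^{\mathcal{M}}=\frac{1}{4\pi^2}\int_{[0,1]\times M|_\Sigma}\Tr(F^{(1,1)}_{A_t}\wedge v_t)\wedge\underline{\omega}\wedge dt
\]
then follows, and a type count on $M|_\Sigma$ shows that neither $F^{(2,0)}$ (killed by $\dim\Sigma=2$) nor $F^{(0,2)}$ (killed by fibrewise ASD, $F^{(0,2)}\wedge\omega_i=0$) contributes, so the full $F_{A_t}$ may replace $F^{(1,1)}_{A_t}$.

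The main step is to replace the projected $v_t$ by the full $\partial_t A_t$. Decomposing $\partial_t A_t|_{M_b}=v_t+d_{A_t}\Phi_0$ on each fibre, the Bianchi identity and $d\underline{\omega}|_{M|_\Sigma}=0$ give
\[
\Tr(F_{A_t}\wedge d_{A_t}\Phi_0)\wedge\underline{\omega}=d\bigl\{\Tr(F_{A_t}\wedge\Phi_0)\wedge\underline{\omega}\bigr\},
\]
and Stokes converts the correction into a boundary integral over $\pi^{-1}(\partial\Sigma)$. The hard part is to verify that this boundary term vanishes: choosing $t_1$ locally normal to $\partial\Sigma$ so that $dt_1|_{\pi^{-1}(\partial\Sigma)}=0$, the pull-back of $\underline{\omega}$ reduces to $\omega_2\wedge dt_2$, the $dt_1$-component of $F^{(1,1)}$ disappears, the $dt_2$-component is killed by the extra $dt_2$ from $\underline{\omega}$, and the surviving $F^{(0,2)}\wedge\omega_2$ piece vanishes by fibrewise ASD. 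Finally, the horizontal component of $\partial_t A_t$ never contributes to $\Tr(F_{A_t}\wedge\partial_t A_t)\wedge\underline{\omega}$ on $M|_\Sigma$ by the same type count as above, so integrating over $[0,1]\times M|_\Sigma$ yields precisely~(\ref{ChernSimonsholomorphiccurveholomorhicbundle}). I expect this Stokes/boundary-vanishing calculation to be the only non-mechanical step, since the rest is structurally identical to the $\underline{\Theta}^{\mathcal{M}}$ case.
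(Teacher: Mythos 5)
Your proposal is correct and is essentially the paper's intended argument: the paper proves this proposition only by the remark ``by a computation very similar to before,'' and your adaptation of the $\underline{\Theta}^{\mathcal{M}}$ computation (evaluate $N^*\underline{\omega}^{\mathcal{M}}$, drop the projections by pairing with $T\mathcal{M}$, then remove $pr_{T\mathcal{M}}\partial_t A_t$ via Bianchi, closedness of $\underline{\omega}|_{M|_\Sigma}$ and Stokes, with the boundary term over $\pi^{-1}(\partial\Sigma)$ killed by type counting and fibrewise ASD) is exactly that computation. The only cosmetic remark is that some of the terms you kill by ASD (e.g.\ $F^{(0,2)}\wedge v_t\wedge\omega_i$ on a fibre) already vanish for degree reasons, but this does not affect correctness.
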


The key point is that the RHS of (\ref{ChernSimonsholomorphiccurveholomorhicbundle}) is invariant under boundary fixing homotopies, because $d\underline{\omega}=0$ on $M|_{\Sigma}$. Thus so must the LHS. This implies

\begin{prop}
The canonical 3-form $\underline{\omega}^\mathcal{M}$ is closed when restricted to $\mathcal{M}|_{\Sigma}$.
\end{prop}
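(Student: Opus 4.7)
The plan is to mirror verbatim the preceding argument for $\underline{\Theta}^{\mathcal{M}}$, with the integral running over $3$-dimensional cylinders rather than $4$-dimensional ones. The structure is: (i) express $\int N^*\underline{\omega}^{\mathcal{M}}$ as a gauge theoretic Chern--Simons integral via (\ref{ChernSimonsholomorphiccurveholomorhicbundle}); (ii) use $d\underline{\omega}=0$ on $M|_\Sigma$ to show the gauge side is invariant under boundary-fixing homotopies; (iii) translate homotopy invariance of the submanifold side into closedness of $\underline{\omega}^{\mathcal{M}}$ via a Stokes argument on $\mathcal{M}|_\Sigma$.

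For (i), I would fix an arbitrary small $2$-dimensional domain $\Sigma' \subset \Sigma$ with smooth boundary and an arbitrary cylindrical map $N : [0,1]\times \Sigma' \to \mathcal{M}|_\Sigma$, lift it to a $1$-parameter family $A_t$ of connections on $P|_{\Sigma'} \to M|_{\Sigma'}$, and quote (\ref{ChernSimonsholomorphiccurveholomorhicbundle}) to rewrite
\[
\int_{[0,1]\times \Sigma'} N^*\underline{\omega}^{\mathcal{M}} \;=\; \frac{1}{4\pi^2}\int_{[0,1]\times M|_{\Sigma'}} \Tr\bigl(F_{A_t}\wedge \tfrac{\partial A_t}{\partial t}\bigr)\wedge \underline{\omega}\wedge dt .
\]

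For (ii), I would first check that $\underline{\omega}$ is closed on $M|_{\Sigma'}$: in the horizontal-vertical bidegree decomposition $\underline{\omega}$ has type $(1,2)$, so $d\underline{\omega}$ decomposes into components of types $(1,3)$, $(2,2)$, $(3,1)$; Donaldson's conditions $d_f\underline{\omega}=0$ and $d_H\underline{\omega}=0$ kill the first two, while the $(3,1)$ component vanishes automatically since $\Sigma'$ is only $2$-dimensional. A routine Chern--Simons variational calculation via the Bianchi identity and Stokes then shows that for any boundary-fixing homotopy of $N$, the induced variation of the gauge theoretic integral reduces to boundary pieces on $\partial([0,1]\times M|_{\Sigma'}) = \{0,1\}\times M|_{\Sigma'} \cup [0,1]\times M|_{\partial \Sigma'}$, all of which vanish because $A_t$ is fixed on this entire boundary.

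For (iii), combining the above, $\int_{[0,1]\times \Sigma'} N^*\underline{\omega}^{\mathcal{M}}$ depends only on the boundary of $N$ for every such $\Sigma'$ and every cylindrical $N$. Applying Stokes on $\mathcal{M}|_\Sigma$ to an infinitesimal boundary-fixing variation of $N$, this homotopy invariance is equivalent to $d\underline{\omega}^{\mathcal{M}}(v_1,v_2,v_3,v_4)=0$ evaluated on arbitrary $4$-tuples of tangent vectors along $N([0,1]\times \Sigma')$, and hence to $d\underline{\omega}^{\mathcal{M}}=0$ on $\mathcal{M}|_\Sigma$. The main obstacle I anticipate is the careful bookkeeping of the boundary contributions at $[0,1]\times M|_{\partial \Sigma'}$ in the gauge-side integration by parts; this is precisely the reason for demanding the homotopy fix the whole boundary of the cylinder $[0,1]\times \Sigma'$ rather than only its temporal boundary. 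Granted this, the argument is a direct parallel of the previous proposition.
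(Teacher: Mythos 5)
Your proof is correct and matches the paper's argument essentially verbatim: both use the Chern--Simons identity (\ref{ChernSimonsholomorphiccurveholomorhicbundle}) together with the closedness of $\underline{\omega}$ on $M|_\Sigma$ (from $d_f\underline{\omega}=d_H\underline{\omega}=0$ plus $\dim\Sigma=2$) to deduce boundary-fixing homotopy invariance of the gauge side, and then Stokes' theorem on the submanifold side to conclude $d\underline{\omega}^{\mathcal M}=0$ on $\mathcal M|_\Sigma$. Your extra bookkeeping of bidegree types and boundary pieces is sound and only makes explicit what the paper leaves implicit.
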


Now since this is true for any choice of $\Sigma$, we know $d\underline{\omega}^{\mathcal{M}  }$ is a type (3,1) form. This is analogous to the following conditions in Donaldson's adiabatic fibration
\[
d_f \underline \omega=0, \quad d_H \underline \omega=0,
\]
which says exactly $d\underline\omega$ is a type (3,1) form.

\begin{rmk}
Conan Leung and J-H Lee \cite{Conan} had the idea of defining canonical 3-forms and 4-forms on moduli spaces of submanifold theoretic and gauge theoretic objects, which are related to ours in spirit but not in details.
\end{rmk}

\subsection{Canonical fibre volume form on the moduli bundle}\label{Canonicalfibrevolumeform}

We see in Section \ref{Canonicalformsonmodulibundle}  that most of the conditions in Donaldson's adiabatic fibrations have exact analogues for the moduli bundle $\mathcal{M}\to B$. This could be taken further to suggest that there is some kind of special fibration structure on $3+4m$ dimensions, which is analogous to $G_2$ geometry. To provide further evidence in this direction, we describe the \textbf{canonical fibre volume form} on the moduli bundle, which is analogous to $\underline{\mu}$ in Donaldson's adiabatic fibration, and we show it satisfies the relevant integrability condition.

On each fibre $\mathcal{M}_b$ of the moduli bundle, there is a volume form given by \[ \underline{\mu}^{\mathcal{M}  }=\frac{1}{(2m)!}(\omega_1^\mathcal{M})^{2m}=\frac{1}{(2m)!}(\omega_2^\mathcal{M})^{2m}=\frac{1}{(2m)!}(\omega_3^\mathcal{M})^{2m} ,\]
where $m$ is the quaternionic dimension of $\mathcal{M}|_b$. Using the canonical horizontal distribution $\nabla^{\mathcal{M} }$, this $\underline{ \mu }^{\mathcal{M} }$ is defined as a vertical form on the moduli bundle in degree $4m$.

\begin{prop}
The canonical fibre volume form $\underline{\mu}^{ \mathcal{M} }$ satisfies the integrability condition $d_{\nabla^\mathcal{M}  }\underline{\mu}^{ \mathcal{M} }=0$ with respect to the horizontal distribution $\nabla^\mathcal{M}$.
\end{prop}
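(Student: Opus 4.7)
The plan is to reduce the claim to a primitivity statement on each fibre $\mathcal{M}_b$, then compute the relevant variation via Lemma \ref{canonicalconnectionmodulibundle} and the hyperk\"ahler quotient formulation, and finally invoke the anti-self-duality of $\mathcal{L}_v\omega_i$ on $M_b$ established via Donaldson's conditions in Section \ref{Adiabaticspinstructure$G_2$geometry}. Since $\underline{\mu}^{\mathcal{M}}$ is a vertical form of top fibre degree $4m$, the statement $d_{\nabla^{\mathcal{M}}}\underline{\mu}^{\mathcal{M}}=0$ reduces (via Cartan's formula, using that $v$ is horizontal and $\underline{\mu}^{\mathcal{M}}$ is vertical) to $\mathcal{L}_v\underline{\mu}^{\mathcal{M}}|_{\mathcal{M}_b}=0$ for every horizontal lift $v$ of a vector field $\partial/\partial t_k$ on $B$. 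Via the identity $\underline{\mu}^{\mathcal{M}}=\frac{1}{(2m)!}(\omega_i^{\mathcal{M}})^{2m}$, independent of $i=1,2,3$ by the fibrewise hyperk\"ahler relations, this amounts to showing that $\nabla^{\mathcal{M}}_v\omega_i^{\mathcal{M}}$ is $\omega_i^{\mathcal{M}}$-primitive on $\mathcal{M}_b$.

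Next, using Lemma \ref{canonicalconnectionmodulibundle} and differentiating (\ref{hyperkahlerforms}) along the horizontal lift, one derives the variation formula
\[
\nabla^{\mathcal{M}}_v\omega_i^{\mathcal{M}}(a,b)=-\frac{1}{4\pi^2}\int_{M_b}\Tr(a\wedge b)\wedge\mathcal{L}_v\omega_i,\qquad a,b\in T_A\mathcal{M}_b,
\]
where the hyperk\"ahler quotient decomposition (\ref{decompositionhyperkahlerquotient}) is used to discard the contributions coming from the horizontal variation of $a,b$, which lie in the gauge-direction summands and are $\omega_i^{\mathcal{M}}$-orthogonal to $T_A\mathcal{M}_b$. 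The crucial input from Donaldson's setup now enters: by the remark at the end of Section \ref{Adiabaticspinstructure$G_2$geometry}, $\mathcal{L}_v\omega_i$ is anti-self-dual on the K3 fibre $M_b$, hence primitive $(1,1)$ with respect to each complex structure $I_j$, so in particular $I_j^*\mathcal{L}_v\omega_i=\mathcal{L}_v\omega_i$. Combining this with $(I_j^*)^2=\mathrm{id}$ on $2$-forms and change of variable under the orientation-preserving diffeomorphism $I_j:M_b\to M_b$ in the above integral yields $\nabla^{\mathcal{M}}_v\omega_i^{\mathcal{M}}(I_j^{\mathcal{M}}a,I_j^{\mathcal{M}}b)=\nabla^{\mathcal{M}}_v\omega_i^{\mathcal{M}}(a,b)$ for every $j=1,2,3$, so $\nabla^{\mathcal{M}}_v\omega_i^{\mathcal{M}}$ is a tri-$(1,1)$ (equivalently $Sp(1)$-invariant) $2$-form on the hyperk\"ahler manifold $\mathcal{M}_b$.

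It then remains to verify that any $Sp(1)$-invariant $2$-form on a hyperk\"ahler $4m$-manifold is automatically $\omega_j^{\mathcal{M}}$-primitive. For $m=1$ this is just the identification $\Lambda^2_-=\bigcap_j\Lambda^{1,1}_{I_j}$ combined with the fact that ASD forms on a 4-fold are $\omega_j$-primitive. In general it follows from $Sp(m)\times Sp(1)$-representation theory on $\Lambda^2(\mathbb{H}^m)\otimes\C=\Lambda^2(\C^2)\otimes S^2(\C^{2m})\oplus S^2(\C^2)\otimes\Lambda^2(\C^{2m})$: the $Sp(1)$-invariants sit in the first summand, whereas the K\"ahler triple $\omega_1^{\mathcal{M}},\omega_2^{\mathcal{M}},\omega_3^{\mathcal{M}}$ transforming as the $Sp(1)$-adjoint sits in the second, so the pairing $\Lambda^2\otimes\Lambda^2 T\to\R$ realising $\Lambda_{\omega_i^{\mathcal{M}}}$ vanishes between these orthogonal isotypic components by Schur's lemma.

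The main technical obstacle is this last step for $m>1$: the direct antisymmetry $a^k_{ij}=-a^k_{ji}$ available in the four-dimensional setting of Section \ref{Donaldsonadiabaticfibration} via $\omega_i\wedge\omega_j=0$ does not survive on a higher-dimensional hyperk\"ahler fibre, and must be replaced by the representation-theoretic argument sketched above, or equivalently by leveraging the weaker identity $\omega_i^{\mathcal{M}}\wedge(\omega_j^{\mathcal{M}})^{2m-1}=0$ for $i\neq j$ (itself a consequence of $\omega_i^{\mathcal{M}}$ being of type $(2,0)+(0,2)$ with respect to $I_j^{\mathcal{M}}$).
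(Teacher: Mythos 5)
Your approach is correct in substance but takes a genuinely different route from the paper. The paper proves the vanishing directly: it writes the variation of the fibre volume as $\tfrac{1}{2}\sum_j\tfrac{\partial g^{\mathcal{M}_b}}{\partial t}(a_j,a_j)$ for an orthonormal frame $\{a_j\}$ of $T_A\mathcal{M}_b$, replaces this by the average over the four frames $\{a_j\},\{I_1a_j\},\{I_2a_j\},\{I_3a_j\}$ (using that $T_A\mathcal{M}_b$ is a quaternionic module), substitutes the $L^2$ formula (\ref{hyperkahlermetric}), and observes that this average computes pointwise on the K3 fibre the trace of $\tfrac{\partial g_b}{\partial t}$ against $g_b$, i.e.\ $\tfrac{\partial}{\partial t}d\mathrm{Vol}_{M_b}$, which vanishes by $d_H\underline{\mu}=0$. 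Your route instead reduces to showing that $\nabla^{\mathcal{M}}_v\omega_i^{\mathcal{M}}$ is primitive, deduces this from anti-self-duality of $\mathcal{L}_v\omega_i$ on the K3 fibre (established in Section~\ref{Adiabaticspinstructure$G_2$geometry} via $d_H\underline{\omega}=0$ and $d_H\underline{\mu}=0$), and closes with $Sp(1)\times Sp(m)$ representation theory. The same Donaldson conditions enter, but at an earlier stage; the paper's averaging trick is more elementary, while your representation-theoretic endgame is arguably the more transparent way to see the primitivity for general $m$.

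Two minor points should be tightened. First, ``change of variable under the orientation-preserving diffeomorphism $I_j:M_b\to M_b$'' is not right: $I_j$ is a fibrewise endomorphism of $TM_b$, not a self-map of $M_b$. What you want is the pointwise algebraic fact that $I_j^*$ acts as the identity on $\Lambda^4T^*_xM_b$ (since $I_j\in SO(4)$), together with $I_j^*(\mathcal{L}_v\omega_i)=\mathcal{L}_v\omega_i$ for ASD forms, so that $\Tr(I_ja\wedge I_jb)\wedge\mathcal{L}_v\omega_i=I_j^*\bigl(\Tr(a\wedge b)\wedge\mathcal{L}_v\omega_i\bigr)=\Tr(a\wedge b)\wedge\mathcal{L}_v\omega_i$ at each point of the fibre; no integral change of variables is involved. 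Second, ``the horizontal variation of $a,b$ \ldots lie[s] in the gauge-direction summands'' overstates things: the horizontal variation of a vertical vector field has a $T_A\mathcal{M}_b$-component as well, but that piece is precisely the one absorbed into the Cartan formula for $\mathcal{L}_v\omega_i^{\mathcal{M}}$, while the residual pieces in $(\Lie\mathcal{G}_b)A$ and its $I_k$-rotates are $\omega_i^{\mathcal{M}}$-orthogonal to $T_A\mathcal{M}_b$ by the quaternionic invariance of (\ref{decompositionhyperkahlerquotient}). With these fixes the argument is sound.
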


\begin{rmk}
This is exactly the analogue of the condition $d_H\underline{\mu}=0$ for Donaldson's adiabatic fibrations. The notation $d_{\nabla^\mathcal{M}  }$ has the same meaning as $d_H$, namely a certain component of the exterior differentiation operator $d$.
\end{rmk}

\begin{proof}
We use $\nabla^{\mathcal{M}    }$ to give an infinitesimal trivialisation around $\mathcal{M}_b$ for $b\in B$; by the definition of $\nabla^{\mathcal{M}    }$, this is induced by an infinitesimal trivialisation of $P\to M\to B$ around $P|_{M_b}\to M_b\to b$, compatible with the horizontal distribution $H$. Let $\frac{\partial}{\partial t}\in T_b B$ be a tangent vector, which has a horizontal lift to the moduli bundle, denoted still by $\frac{\partial}{\partial t}$. We are required to show the Lie derivative $\mathcal{L}_{ \frac{\partial}{\partial t}   }\underline{\mu}^\mathcal{M}=0$, which in the trivialisation can be written simply as $\frac{\partial}{\partial t}   \underline{\mu}^\mathcal{M}=0$.

Let $a_j$ for $j=1, 2, \dots , 4m$ be an orthonormal basis of the tangent space of $A\in \mathcal{M}_b$. We can think of $a_j$ concretely as $ad(P|_{M_b})$-valued 1-forms on $M_b$. As a general fact in Riemannian geometry, the variation of the fibrewise volume can be written in terms of variation of Riemannian metric
\[
\frac{\partial}{\partial t}   \underline{\mu}^\mathcal{M}= \frac{1}{2}\sum_j\frac{\partial g^{\mathcal{M}_b }}{\partial t} ( a_j, a_j).
\] 
The crucial observation is that $T_A\mathcal{M_b}$ is a quaternionic module: for eack $k$, the $I_k a_j$ for $j=1, \dots 4m$ is also an orthonormal basis of $T_A\mathcal{M}_b$. Hence
\[
\frac{\partial}{\partial t}   \underline{\mu}^\mathcal{M}= \frac{1}{8}\sum_j \{\frac{\partial g^{\mathcal{M}_b }}{\partial t} ( a_j, a_j)+  \sum_{k=1}^{3}
\frac{\partial g^{\mathcal{M}_b  }}{\partial t} ( I_k a_j, I_ka_j)
\}.
\]
Now recalling the formula for the $L^2$ moduli metric (\ref{hyperkahlermetric}), we have
\[
\frac{\partial g^{\mathcal{M}_b} }{\partial t} ( a_j, a_j)= \frac{1}{4\pi^2} \int_{M_b} -\Tr_{u(r)}\{ \frac{\partial g_b}{\partial t  }( a_j, a_j)\} d\text{Vol}_{M_b}.
\]
Here $-\Tr_{u(r)}$ is the trace pairing on the $ad(P|_{M_b})$ part, and $\frac{\partial g_b}{\partial t  }$ is a pointwise symmetric bilinear form on the 1-form part of $a_j$, which is obtained by the Lie derivative of the fibrewise K3 metric $g_{M_b}$.

Thus up to a numerical factor $\frac{\partial  }{\partial t} \underline{\mu}^{\mathcal{M} }$ 
is the integral of
\[
-\frac{1}{2}\Tr_{u(r)}\{\frac{\partial g_b}{\partial t} ( a_j, a_j)+  \sum_{k=1}^{3}
\frac{\partial g_b}{\partial t} ( I_k a_j, I_ka_j)\} d\text{Vol}_{M_b}.
\]
A pointwise calculation on the K3 surface $M_b$ shows that this integrand is 
\[
-\Tr_{u(r)}\{g_{M_b} ( a_j, a_j)\} \frac{\partial }{\partial t} d\text{Vol}_{M_b}.
\]
This is because if we average the symmetric bilinear form $\frac{\partial g_b}{\partial t}$ over an orthonormal basis with respect to $g_b$, we will get a trace scalar multiplied by $g_b$.

But $d_H \underline{\mu}=0$ precisely means the fibre volume form does not have first order variation, so $\frac{\partial }{\partial t} d\text{Vol}_{M_b}=0$. This implies the above integrand is zero, so the integral  $\frac{\partial}{\partial t}   \underline{\mu}^\mathcal{M}=0$ as claimed.
\end{proof}

\begin{rmk}
The fibre volume $\int_{M_b} \underline{\mu}^\mathcal{M}$ is in fact intimately related to Donaldson's polynomial invariants, and the Proposition gives a local mechanism for its deformation invariance in this special case. (Compare Section \ref{Dualityofmaximalsubmanifoldequations})
\end{rmk}

We can now collect the integrability conditions on the canonical forms we constructed on the moduli bundle. The reader will not fail to notice the striking resemblance with the conditions defining Donaldson's adiabatic fibrations (\ref{adiabaticclosed}), (\ref{adiabaticcoclosed}), (\ref{maximalsubmanifold}).

\begin{thm}\label{canonicalformsintegrability}
The canonical 3-form $\underline{\omega}^{\mathcal{M } }$, the canonical 4-form $\underline{\Theta}^{\mathcal{M } }$, the base form $\underline{\lambda}$ and the canonical fibre volume form $\underline{\mu}^{\mathcal{M } }$ satisfy the equations
\begin{equation}\label{adiabaticclosedmodulibundle}
d_f \underline{\omega}^{ \mathcal{M } }=0, \quad  d_{\nabla^\mathcal{M} } \underline{\omega}^{\mathcal{M }  }=0, \quad d_f \underline{\lambda}=0,
\end{equation}
\begin{equation}\label{adiabaticcoclosedmodlibundle}
d_{\nabla^\mathcal{M} }\underline{\mu}^{ \mathcal{M }  }=0, \quad d_f \underline{\Theta}^{\mathcal{M }  }=0, 
\end{equation}
\begin{equation}\label{maximalsubmanifoldmodulibundle}
d_{\nabla^\mathcal{M} } \underline{\Theta}^{ \mathcal{M }  }=0.
\end{equation}
\end{thm}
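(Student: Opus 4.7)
The plan is to observe that this theorem is essentially an organisational synthesis of results already established in Sections \ref{Canonicalformsonmodulibundle} and \ref{Canonicalfibrevolumeform}, together with a short verification of the purely fibrewise equations. The basic device is to decompose the exterior derivative on $\mathcal{M}$ by horizontal/vertical bidegree with respect to $\nabla^{\mathcal{M}}$, in complete analogy with the decomposition $d = d_f + d_H + F_H$ on $M$. A closed form then has every bidegree-homogeneous component of $d$ vanishing separately, which will let us extract the individual equations from the two global closedness statements proved earlier.

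For the fibrewise closedness equations: $d_f\underline{\lambda} = 0$ is immediate since $\underline{\lambda}$ is pulled back from $B$. For $d_f\underline{\omega}^{\mathcal{M}} = 0$ and $d_f\underline{\Theta}^{\mathcal{M}} = 0$, the defining expressions $\underline{\omega}^{\mathcal{M}} = \sum\omega_i^{\mathcal{M}}dt_i$ and $\underline{\Theta}^{\mathcal{M}} = -\sum_{cyc}\omega_i^{\mathcal{M}}dt_jdt_k$ involve only horizontal one-form factors pulled back from $B$, so $d_f$ acts only on the $\omega_i^{\mathcal{M}}$-factors; and each $\omega_i^{\mathcal{M}}$ restricts to a hyperk\"ahler symplectic form on the hyperk\"ahler quotient $\mathcal{M}_b$, hence is fibrewise closed.

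For the horizontal equations: the Chern-Simons argument on the 4-cylinder $[0,1]\times B$ already established $d\underline{\Theta}^{\mathcal{M}} = 0$, and the parallel Chern-Simons argument on 3-cylinders $[0,1]\times\Sigma$ over 2-submanifolds $\Sigma\subset B$ established that $d\underline{\omega}^{\mathcal{M}}$ is of pure bidegree $(3,1)$. Decomposing by bidegree, the closedness of $\underline{\Theta}^{\mathcal{M}}$ at once yields $d_f\underline{\Theta}^{\mathcal{M}} = 0$ and $d_{\nabla^{\mathcal{M}}}\underline{\Theta}^{\mathcal{M}} = 0$ (the remaining $F_{\nabla^{\mathcal{M}}}\underline{\Theta}^{\mathcal{M}} = 0$ component being automatic but not part of the theorem statement), and the $(3,1)$-purity of $d\underline{\omega}^{\mathcal{M}}$ forces the would-be bidegree $(2,2)$ component $d_{\nabla^{\mathcal{M}}}\underline{\omega}^{\mathcal{M}} = 0$. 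Finally $d_{\nabla^{\mathcal{M}}}\underline{\mu}^{\mathcal{M}} = 0$ is exactly the proposition proved just above by exploiting the quaternionic module structure of $T_A\mathcal{M}_b$ and the identity $\frac{\partial}{\partial t}d\mathrm{Vol}_{M_b} = 0$ coming from $d_H\underline{\mu} = 0$ on $M$.

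Since every equation in the theorem therefore reduces to a statement already in hand, there is no new obstacle here; the genuine work was absorbed into the preceding two subsections, where the closedness of $\underline{\Theta}^{\mathcal{M}}$ and the bidegree restriction on $d\underline{\omega}^{\mathcal{M}}$ were derived from the gauge-theoretic interpretations of the Chern-Simons functionals via the integrability of Donaldson's adiabatic fibration data on $M$, and where $d_{\nabla^{\mathcal{M}}}\underline{\mu}^{\mathcal{M}} = 0$ was derived from the fibrewise symmetry $\frac{\partial g^{\mathcal{M}_b}}{\partial t}(a_j,a_j) + \sum_k \frac{\partial g^{\mathcal{M}_b}}{\partial t}(I_ka_j,I_ka_j)$ combined with $d_H\underline{\mu} = 0$. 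The only remaining step for the present theorem is the routine bidegree bookkeeping sketched above.
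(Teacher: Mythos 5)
Your proposal is correct and matches the paper's treatment: the theorem is stated as a synthesis of the preceding results, with $d_{\nabla^\mathcal{M}}\underline{\mu}^{\mathcal{M}}=0$ coming from the quaternionic-module argument, the closedness of $\underline{\Theta}^{\mathcal{M}}$ and the type-$(3,1)$ purity of $d\underline{\omega}^{\mathcal{M}}$ coming from the Chern-Simons arguments, and the remaining fibrewise equations being immediate from the fibrewise hyperk\"ahler forms and the pullback nature of $\underline{\lambda}$. The bidegree bookkeeping you describe is exactly how these statements decompose into the equations of the theorem.
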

The same results hold if we replace the structure group by $PU(r)$, with cosmetic changes in the arguments. It suffices to say there is also a trace pairing on $\Lie PU(r)\simeq \Lie SU(r)$, and the numerical normalisation we use is the trace pairing for the fundamental representation of $\Lie SU(r)$.

\section{Mukai dual fibration}\label{Mukaidualfibrations}

Given a data set $\pi: M\to B$, with $ (\underline{\omega}, \underline{\lambda}, \underline{\Theta}, \underline{\mu}, H) $ defining Donaldson's adiabatic fibration , we wish to produce a dual fibration, by replacing every K3 fibre with a Mukai dual K3 fibre. For backgrounds on Mukai duality in the K3 setting, the reader is advised to consult the companion paper \cite{Mukaidualitypaper}.

More precisely, our setup is a parametrised version of the Mukai dual construction (\cf \cite{Mukaidualitypaper}). Namely, we take a Hermitian bundle over $E\to M$ with associated $PU(r)$ bundle $P\to M$, such that over each fibre K3 surface $X=M_b$, the Mukai vector $v$ of the restricted bundle $E|_b\to X$ has Mukai pairing $(v,v)=0$; thus the moduli space $X^\vee=M_b^\vee$ of irreducible HYM connections on $E|_b\to X$ (called the Mukai dual and assumed to be compact and nonempty) must be K3 surfaces, and when $b\in B$ varies they fit together topologically into the moduli bundle $M^\vee\to B$ (compare with Section \ref{Themodulibundle}). The universal family of ASD 
$PU(r)$-connections exists automatically on a $PU(r)$-bundle over the fibred product $M\times_B M^\vee$, and since $M\times_B M^\vee$ has no torsion cohomology, this lifts to  a Hermitian vector bundle $\mathcal{E}\to M\times_B M^\vee$, such that for each $b\in B$, its restriction $\mathcal{E}|_b\to X\times X^\vee$
is a universal family of irreducible  HYM connections on $E|_b\to X$.

In this setup, we show we can put the structure of Donaldson's adiabatic fibration on $M^\vee$,  compatible with the hyperk\"ahler structure on $M^\vee_b$. We shall call the resulting data the \textbf{Mukai dual fibration}. This is done from two perspectives: verifying Donaldson's maximal submanifold equation by a cohomological computation (\cf Section \ref{Dualityofmaximalsubmanifoldequations}), and the geometric construction of all the data from the moduli bundle interpretation (\cf Section \ref{DualityofDonaldsonadiabaticfibrations}). The cohomological approach compresses the data more efficiently, but the geometric approach is essential for comparisons of gauge theoretic and submanifold theoretic objects on the two fibrations.

The rest of the Chapter is devoted to the question of putting an optimal global connection on the universal bundle $\mathcal{E}\to M\times_B M^\vee$, whose restriction for each fixed $b\in B$ is just the given HYM connections on the fibres of $\mathcal{E}|_b\to M_b\times M_b^\vee$. This is a major ingredient in the Nahm transform treated in  Chapter \ref{Dualityforgaugetheories}. There are two main steps: constructing a (twisted) \textbf{triholomorphic connection} on each $\mathcal{E}|_b\to M_b\times M_b^\vee$, and then further extend these to a (twisted) \textbf{generalised adiabatic $G_2$ instanton} on $\mathcal{E}\to M\times_BM^\vee$, to be defined in the text. The first step is dealt with in the companion paper \cite{Mukaidualitypaper}, and the primary consequence of the existence of such triholomorphic connections is that $M$ can be regarded as the \textbf{double Mukai dual fibration} (\cf Section \ref{Theuniversalconnection1triholomorphicproperty}). The second step deals with a highly overdetermined equation, and the question of existence relies heavily on the \textbf{integrability} inherent in Donaldson's adiabatic fibrations. For technical convenience to do with the nontrivial centre in $U(r)$, we choose to work first with $PU(r)$ connections and finally lift to $U(r)$ connections.

\subsection{Duality of maximal submanifold equations}\label{Dualityofmaximalsubmanifoldequations}

We shall examine the possibility of dualisation by testing Donaldson's maximal submanifold equation for the positive section $h: B\to H^2(X)=H^2(K3)$ (\cf Section  \ref{Donaldsonadiabaticfibration}). This is deceptively simple. Recall from Section \ref{Donaldsonadiabaticfibration} that the hyperk\"ahler forms satisfy $[\omega_i]=\frac{\partial h}{\partial t_i}$. We define a map $h^\vee: B\to H^2(X^\vee,\R)$ by the slant product
\begin{equation}
h^\vee= \tilde{\mu}\circ h, \quad \tilde{\mu}(\alpha)=-\frac{1}{2r} p_1(ad (\mathcal{E}|_b))\cup \alpha /[X ], \forall \alpha\in H^2(X).
\end{equation}
Here $p_1$ means the first Pontrjagin class, and $\tilde{\mu}: H^2(X)\to H^2(X^\vee)$ is the higher rank generalisation of \textbf{Donaldson's $\mu$-map} in the context of 4-manifold polynomial invariants.
The hyperk\"ahler forms on a K3 and on the Mukai dual are related by $
[\omega_i^{X^\vee}]=\tilde{\mu}([\omega_i]), 
$
their volumes are equal: $\int_{X^\vee} [\omega_i^{X^\vee}]^2=\int_X [\omega_i]^2$, and $\tilde{\mu}$ is an isometry on the second real cohomology (\cf \cite{Mukaidualitypaper}). For rank 2 bundle case, this volume amounts to the well known polynomial invariants on K3 surfaces. From these facts, it is immediate that

\begin{prop}\label{Mukaidualpositivesection}
	If $h$ is the positive section encoding Donaldson's adiabatic fibration, then $h^\vee$ also satisfies the maximal submanifold equation. Morever $\frac{\partial h^\vee}{\partial t_i}=[\omega_i^{X^\vee}]\in H^2(X^\vee)$, so the positive section $h^\vee$ is compatible with the hyperk\"ahler structure on $M^\vee\to B$. 
\end{prop}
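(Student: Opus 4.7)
The plan is to deduce both claims directly from the linear-algebraic properties of the $\mu$-map recorded in the paragraph just before the statement, combined with the variational characterisation (\ref{areafunctional}) of maximal positive sections. The argument is essentially formal once one knows that $\tilde{\mu}: H^2(X,\R)\to H^2(X^\vee,\R)$ is a linear isometry of the cup-product pairings and that $\tilde{\mu}([\omega_i])=[\omega_i^{X^\vee}]$.

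First I would dispose of the compatibility statement by the chain rule: since $\tilde{\mu}$ is a fixed linear map and $h^\vee=\tilde{\mu}\circ h$,
\[
\frac{\partial h^\vee}{\partial t_i}=\tilde{\mu}\Big(\frac{\partial h}{\partial t_i}\Big)=\tilde{\mu}([\omega_i])=[\omega_i^{X^\vee}],
\]
using $\frac{\partial h}{\partial t_i}=[\omega_i]$ from Donaldson's setup. The same isometry property implies $\int_{X^\vee}\frac{\partial h^\vee}{\partial t_i}\cup\frac{\partial h^\vee}{\partial t_j}=\int_X\frac{\partial h}{\partial t_i}\cup\frac{\partial h}{\partial t_j}$; in particular the Gram matrix on the RHS is positive definite, so the tangent spaces of the image of $h^\vee$ remain maximal positive $3$-subspaces of $H^2(X^\vee,\R)$, showing that $h^\vee$ is genuinely a positive section.

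For the maximal submanifold equation, the key observation is that pointwise equality of Gram matrices means the area integrands in (\ref{areafunctional}) coincide: $\det^{1/3}\big(\int_{X^\vee}[\omega_i^{X^\vee}]\cup[\omega_j^{X^\vee}]\big)=\det^{1/3}\big(\int_X[\omega_i]\cup[\omega_j]\big)$, hence $\text{Area}(h^\vee)=\text{Area}(h)$. Moreover, because $\tilde{\mu}$ is a linear isomorphism (both spaces having dimension $22$ and the same signature), the map $h\mapsto \tilde{\mu}\circ h$ is a bijection between positive sections $B\to H^2(X,\R)$ and positive sections $B\to H^2(X^\vee,\R)$ in a neighbourhood of our datum, intertwining the two area functionals. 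Therefore $h^\vee$ is a critical point of (\ref{areafunctional}) for $H^2(X^\vee)$-valued variations if and only if $h$ is a critical point for $H^2(X)$-valued variations, and the maximal submanifold equation for $h^\vee$ follows from that for $h$.

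There is no real obstacle in this argument: the only nontrivial input is the isometry property of $\tilde{\mu}$ on second real cohomology and the identification $\tilde{\mu}([\omega_i])=[\omega_i^{X^\vee}]$, both of which the author imports from the companion paper \cite{Mukaidualitypaper}. Once the proposition is established, invoking Donaldson's machinery recalled in Section \ref{Donaldsonadiabaticfibration} automatically produces the full data set $(\underline{\omega}^\vee,\underline{\lambda}^\vee,\underline{\Theta}^\vee,\underline{\mu}^\vee,H^\vee)$ on $M^\vee\to B$ satisfying (\ref{adiabaticclosed})--(\ref{maximalsubmanifold}), with the fibrewise hyperk\"ahler periods agreeing with those of the Mukai dual K3 surfaces.
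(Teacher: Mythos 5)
Your proposal is correct and follows essentially the same route as the paper: the compatibility claim is immediate from $\tilde{\mu}([\omega_i])=[\omega_i^{X^\vee}]$, and the maximality of $h^\vee$ is deduced, exactly as in the text, from the fact that the isometry $\tilde{\mu}$ makes the area functionals of $h$ and $h^\vee$ agree, so criticality is transferred. You spell out the implicit step that composition with $\tilde{\mu}$ gives a bijection of positive sections intertwining the two functionals, which the paper leaves tacit, but this is elaboration rather than a different argument.
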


\begin{proof}
If we start from any positive section $h: B\to H^2(X)$, not necessarily maximal, then since $\tilde{\mu}$ is isometric, $h^\vee=\tilde{\mu}\circ h$ has the same area functional. Maximal submanifolds are the critical points of the area functional, hence if $h$ is a maximal submanifold so must $h^\vee$.
\end{proof}

The upshot is that by Donaldson's result (see the review in Section \ref{Donaldsonadiabaticfibration}), this dual positive section $h^\vee$ encodes the data of a dual adiabatic fibration on $\mathcal{M}\to B$, which we see is compatible with fibrewise hyperk\"ahler structures.

\subsection{Duality of Donaldson's adiabatic fibrations}\label{DualityofDonaldsonadiabaticfibrations}

We wish to enhance the cohomological understanding of the Mukai dual fibration to a geometric understanding. HYM connections induce $PU(r)$ ASD instantons, so we readily specialise the $PU(r)$ version of Theorem \ref{canonicalformsintegrability} to achieve

\begin{thm}\label{MukaidualfibrationisDonaldsonadiabaticfibration}
Under the setup of the introduction to this chapter, the bundle $\pi^\vee: M^\vee\to B   $ inherits the canonical 3-form $\underline{\omega}^\vee$, the canonical 4-form $\underline{\Theta}^\vee$, the base 3-form $\underline{\lambda}$ and the fibre volume form $\underline{\mu}^\vee$ from its interpretation as a moduli bundle, and these structures satisfy all the requirements of Donaldson's adiabatic fibration. Morever, the fibres of $\pi: M\to B$ and $\pi^\vee: M^\vee\to B$ have the same fibre volume 1.
\end{thm}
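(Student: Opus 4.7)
The plan is to deduce this theorem essentially as a direct specialisation of the $PU(r)$ version of Theorem \ref{canonicalformsintegrability} to the case where the fibrewise moduli spaces have quaternionic dimension one, combined with the cohomological isometry properties of Donaldson's $\mu$-map used in Section \ref{Dualityofmaximalsubmanifoldequations}.

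First, I would recast $\pi^\vee: M^\vee \to B$ as a moduli bundle in the sense of Chapter \ref{Fueterequationsadiabatic$G_2$instantonsmodulibundles}. By the definition of HYM connection (\ref{twistedadiabatic$G_2$instantonfibrewiseHYM}), the central $U(1)$ part of the curvature is prescribed by the harmonic representative $\mathcal{B}$ of $\frac{1}{r}c_1(E)$, and the traceless part is an ASD connection on the associated $PU(r)$-bundle $P|_b \to X$. Thus the fibre $M^\vee_b$ is precisely the moduli space of ASD $PU(r)$ connections on $P|_b$ with the prescribed topological invariants, and the smoothness and regularity assumptions of Chapter \ref{Fueterequationsadiabatic$G_2$instantonsmodulibundles} hold by the irreducibility hypothesis in the setup.

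Next I would invoke the $PU(r)$ version of Theorem \ref{canonicalformsintegrability} directly: it supplies a canonical horizontal distribution $\nabla^{\mathcal{M}}$, canonical forms $\underline{\omega}^\vee = \sum \omega^{\mathcal{M}}_i dt_i$, $\underline{\Theta}^\vee = -\sum_{cyc} \omega^{\mathcal{M}}_i dt_j dt_k$, $\underline{\mu}^\vee = \frac{1}{2}(\omega_i^{\mathcal{M}})^2$, and verifies the full set of integrability equations (\ref{adiabaticclosedmodulibundle}), (\ref{adiabaticcoclosedmodlibundle}), (\ref{maximalsubmanifoldmodulibundle}), which are exactly Donaldson's equations (\ref{adiabaticclosed}), (\ref{adiabaticcoclosed}), (\ref{maximalsubmanifold}). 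Because $M^\vee_b$ is now a hyperk\"ahler 4-manifold, the $\omega^{\mathcal{M}}_i$ form a genuine hyperk\"ahler triple, so the pointwise description matches the linear algebraic model of Section \ref{linearalgebraicmodelsection} tautologically, provided the $dt_i$ are taken orthonormal with respect to the appropriate base metric.

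The remaining substantive point is the fibre volume normalisation and the identification of the base metric. I would use the Mukai isometry property of $\tilde{\mu}:H^2(X)\to H^2(X^\vee)$ from the companion paper \cite{Mukaidualitypaper}: since $[\omega_i^{\mathcal{M}}] = \tilde{\mu}([\omega_i])$ by Proposition \ref{Mukaidualpositivesection}, the cup-product pairings satisfy $\int_{M_b^\vee} \omega_i^{\mathcal{M}}\wedge \omega_j^{\mathcal{M}} = \int_{M_b} \omega_i\wedge \omega_j$. Taking $i=j$ and using Donaldson's original normalisation $\int_{M_b}\underline{\mu}=1$ on the source side gives $\int_{M^\vee_b}\underline{\mu}^\vee = 1$. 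The same identity shows that the base metric induced on $B$ from $M^\vee$ via the formula $g^{base}(\partial_{t_i},\partial_{t_j}) = \frac{1}{2}\int_{M^\vee_b}\omega^{\mathcal{M}}_i\wedge\omega^{\mathcal{M}}_j$ coincides with the one induced from $M$, so the orthonormal frame $\{dt_i\}$ used in the construction is unambiguous.

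The main obstacle I anticipate is purely bookkeeping: making sure the normalisation factor $\frac{1}{4\pi^2}$ in the $L^2$ moduli metric (\ref{hyperkahlermetric})–(\ref{hyperkahlerforms}) exactly matches the cohomological Mukai isometry, so that the cohomology class computation above correctly translates into the geometric fibre volume. This is a convention check rather than a genuine mathematical difficulty, since all the hard analytic and topological inputs are already packaged in Theorem \ref{canonicalformsintegrability} and in the companion paper's treatment of the Mukai map.
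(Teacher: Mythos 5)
Your proposal matches the paper's proof strategy exactly: recast $M^\vee\to B$ as a moduli bundle of $PU(r)$ ASD connections, invoke the $PU(r)$ version of Theorem \ref{canonicalformsintegrability} to obtain the full set of Donaldson equations, and use the isometry property of the $\mu$-map (recorded in Section \ref{Dualityofmaximalsubmanifoldequations} and established in the companion paper) to identify the cohomological pairings, hence the base metric and the fibre volume normalisation $\int_{M^\vee_b}\underline{\mu}^\vee=1$. You spell out the fibre volume step and the base metric identification in slightly more detail than the paper, which simply asserts the specialisation, but the underlying argument is the same.
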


In other words, the structures on the moduli bundle give the geometric realisation of the Mukai dual fibration provided by Proposition \ref{Mukaidualpositivesection}. This is useful for comparing gauge theory/submanifolds on $M$ and $M^\vee$.

\begin{thm}
Adiabatic associative sections on the Mukai dual fibration are equivalent to adiabatic $G_2$ instantons on the principal $PU(r)$-bundle $P\to M$ up to gauge equivalence.
\end{thm}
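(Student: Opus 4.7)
The plan is to chain together three facts already set up in the paper, so that the statement becomes essentially a bookkeeping exercise.

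First, I would identify the Mukai dual fibration with the moduli bundle. By construction (see the introduction to Chapter \ref{Mukaidualfibrations}), the fibre $M^\vee_b$ is precisely the moduli space of irreducible ASD $PU(r)$-connections on $P|_{M_b}\to M_b$, i.e.\ $M^\vee = \mathcal{M}$ as smooth fibre bundles over $B$. Moreover, by Theorem \ref{MukaidualfibrationisDonaldsonadiabaticfibration}, the data $(\underline{\omega}^\vee, \underline{\Theta}^\vee, \underline{\lambda}, \underline{\mu}^\vee)$ coming from Donaldson's adiabatic fibration structure on $M^\vee$ coincides with the canonical data on $\mathcal{M}$ produced by the moduli-bundle construction of Section \ref{Themodulibundle}; in particular, the three symplectic forms $\omega_i^{M^\vee}$ obtained by contracting $\underline{\omega}^\vee$ with an orthonormal frame of $TB$ are the hyperk\"ahler triple inherited from the $L^2$ moduli metric, and the Ehresmann connection is the canonical horizontal distribution $\nabla^{\mathcal{M}}$.

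Second, I would invoke Proposition \ref{associativesectionisFueter} applied to the Donaldson adiabatic fibration $M^\vee\to B$: a section $s:B\to M^\vee$ is an adiabatic associative section if and only if it satisfies the Fueter equation $\sum_i I_i \nabla^{\mathcal{M}}_{\partial/\partial t_i} s = 0$, where the quaternionic action is induced by $\underline{\omega}^\vee$. Under the identification of step one, this is exactly the Fueter equation for the moduli bundle as defined in Section \ref{TheFueterequationrecap}.

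Third, I would apply the instanton--Fueter correspondence of Section \ref{Fueterequation$G_2$instantons}. In one direction, if $A$ is an adiabatic $G_2$ instanton on $P\to M$, then restricted to each K3 fibre it is ASD (equation (\ref{ASDfibre})), so represents a section of $\mathcal{M}=M^\vee$; the proposition preceding Proposition \ref{Fueterimpliesmonopole} shows this section solves the Fueter equation. Conversely, Proposition \ref{Fueterimpliesmonopole} lifts any Fueter section to an adiabatic $G_2$ instanton on $P\to M$. Proposition \ref{instantonFuetercorrespondenceuniqueness} controls the ambiguity: two such lifts differ by a gauge transformation of $P$ and possibly the addition of a 1-form on $B$ with values in the centre of the relevant Lie algebra.

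The final point—and essentially the only place where the $PU(r)$ hypothesis is used beyond replacing \textbf{HYM} by \textbf{ASD}—is that for the structure group $PU(r)$ the fibre Lie algebra is $\mathfrak{su}(r)$, which is simple and hence has trivial centre. Consequently the central $u(1)$-valued 1-forms appearing in Proposition \ref{instantonFuetercorrespondenceuniqueness} vanish, leaving only ordinary gauge equivalence of $P\to M$. Combining the three steps gives the stated bijection. The main conceptual obstacle is already absorbed into Theorem \ref{MukaidualfibrationisDonaldsonadiabaticfibration}, whose identification of the two sets of canonical structures on $M^\vee$ is what makes the notion of adiabatic associative section on $M^\vee$ agree literally with the Fueter equation on $\mathcal{M}$.
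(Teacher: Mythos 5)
Your proposal is correct and follows essentially the same route as the paper: identify $M^\vee$ with the moduli bundle via Theorem \ref{MukaidualfibrationisDonaldsonadiabaticfibration}, use Proposition \ref{associativesectionisFueter} to translate adiabatic associative sections into Fueter sections, and apply the instanton--Fueter correspondence of Section \ref{Fueterequation$G_2$instantons}. Your added remark that the triviality of the centre of $\mathfrak{su}(r)$ kills the $u(1)$-twisting ambiguity is a correct elaboration of what the paper dismisses as a cosmetic $PU(r)$ modification.
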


\begin{proof}
We showed in Section \ref{Fueterequation$G_2$instantons} that adiabatic $G_2$ instantons are equivalent to the solutions of the Fueter equation on the moduli bundle; the modification to $PU(r)$ bundles is cosmetic. Here $M^\vee\to B$ is the moduli bundle with its canonical structures, and Proposition \ref{associativesectionisFueter} says adiabatic associative sections are the same as solutions to the Fueter equation, hence the result. 
\end{proof}

\subsection{The universal connection I: triholomorphic property}\label{Theuniversalconnection1triholomorphicproperty}

We aim to put an optimal connection on the global universal bundle $\mathcal{E}\to M\times_B M^\vee$ over the fibred product. We  focus first on the associated $PU(r)$ bundle and work on $X=M_b$ for any fixed $b\in B$, and recall from the companion paper \cite{Mukaidualitypaper} how to extend the tautological family of irreducible ASD $PU(r)$ connections on $X$ parametrised by $\tau\in X^\vee$, into a \textbf{triholomorphic} $PU(r)$ connection over $X\times X^\vee$, meaning its curvature has Dolbeault type $(1,1)$ for any complex structure in the hyperk\"ahler triple of $X\times X^\vee$. It may be commented that a more direct approach working  with $U(r)$ bundles has many topological subtleties coming from the non-discrete centre, which we shall circumvent.

There is a universal connection $\nabla^{univ}$ on the tautological infinte dimensional principal $PU(r)$ bundle $P|_{X} \times \mathcal{A}_b\to X\times \mathcal{A}_b$, where $\mathcal{A}_b$ is (the locus of irreducible connections inside) the affine space of $PU(r)$ connections on $P|_{X}\to X$, and $\mathcal{G}_b$ is the group of $PU(r)$ gauge transformations.  This $\nabla^{univ}$ can be thought of as a $u(r)$-valued 1-form $A^{univ}$ on $P|_{M_b} \times \mathcal{A}_b$. For $A\in \mathcal{A}_b$,
\begin{equation}
\begin{cases}
A^{univ}|_{P|_{M_b}\times\{A\}}= A, \\
A^{univ}(a)= G_A d_A^* a, \quad a\in T_A\mathcal{A}_b,
\end{cases}
\end{equation}
where the Green operator $G_A$  
is the inverse of the Laplacian $\Lap_A=d_A^*d_A$; morever $\nabla^{univ}$ descends to the quotient bundle $(P|_{X}\times \mathcal{A}_b   )/\mathcal{G}_b \to X \times \mathcal{A}_b/\mathcal{G}_b$.  The curvature $F(\nabla^{univ})$ at the point $(x,A)\in X\times \mathcal{A}_b$ is given by
\begin{equation}\label{universalconnectioncurvatureformula}
\begin{cases}
F( \nabla^{univ} ) (u_1, u_2)= F_A(u_1, u_2), \quad u_1, u_2\in T_x X, \\
F(  \nabla^{univ}  )( a, u   )=\langle a, u\rangle, \quad a\in T_A \mathcal{A}, u\in T_x X,\quad  d_A^*a=0, \\
F(    \nabla^{univ})(a_1, a_2)= -2 G_A \{ a_1, a_2 \}, \quad a_1, a_2 \in T_A \mathcal{A},\quad d_A^*a_1=d_A^*a_2=0.
\end{cases}
\end{equation}
where $\{ a_1, a_2\}$ means pointwise taking the Lie bracket on the bundle factor, and contracting the 1-form factor using the metric on $X$.

Now we pull back $\nabla^{univ}$ to $X^\vee$ along the tautological map $X^\vee \to \mathcal{A}_b/\mathcal{G}_b$ to obtain a connection, still denoted $\nabla^{univ}$, on the $PU(r)$ bundle associated to $\mathcal{E}|_b\to X\times X^\vee$.

\begin{lem}
(\cf \cite{Mukaidualitypaper}) The $PU(r)$ connection $\nabla^{univ}$ is triholomorphic.
\end{lem}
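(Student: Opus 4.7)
The plan is to inspect the three natural pieces into which the universal curvature $F(\nabla^{univ})$ decomposes under the splitting
\begin{equation*}
\Lambda^2 T^*(X\times X^\vee) = \Lambda^2 T^*X \,\oplus\, (T^*X \otimes T^*X^\vee) \,\oplus\, \Lambda^2 T^*X^\vee,
\end{equation*}
and to verify invariance under each of the product complex structures $I_i^{X\times X^\vee} = I_i^X \oplus I_i^{X^\vee}$ for $i=1,2,3$. Since the three summands are individually $I_i^{X\times X^\vee}$-stable, the triholomorphicity question decouples into three independent checks. On the two pure 4-manifold summands, triholomorphicity is equivalent to the ASD condition; on the mixed summand it is exactly invariance under $(I_i^X)^* \otimes (I_i^{X^\vee})^*$.

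For the pure $X$-piece, the first line of (\ref{universalconnectioncurvatureformula}) gives $F_A|_{T_xX\times T_xX}$, which is ASD because $A$ is a HYM $PU(r)$-connection on the K3 surface $X$. For the pure $X^\vee$-piece, evaluated on two tangent vectors represented by harmonic $ad P$-valued 1-forms $a_1, a_2\in \Omega^1(X, ad P|_X)$ in Coulomb gauge, the third line of (\ref{universalconnectioncurvatureformula}) gives $-2G_A\{a_1,a_2\}$. Under $I_i^{X^\vee}$ the representing 1-forms transform as $a\mapsto -I_i^*a$, which is the content of the convention $I_i a = -a\circ I_i$ identifying the hyperk\"ahler structure on $X^\vee=\mathcal{M}_b$ with that on harmonic 1-forms, compatibly with (\ref{hyperkahlerforms}). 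Since $I_i$ is a pointwise isometry of $\Lambda^1(X)$, the bracket $\{-,-\}$, which couples the Lie bracket on $ad P$ with the metric pairing on 1-forms, is unaffected: $\{-I_i^*a_1,-I_i^*a_2\} = \{a_1,a_2\}$, so this piece is $I_i^{X^\vee}$-invariant, hence ASD on $X^\vee$.

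For the mixed piece, which by the second line of (\ref{universalconnectioncurvatureformula}) is the pairing $a(u)$ for $u\in T_xX$ and $a\in T_{[A]}X^\vee \subset \Omega^1(X, ad P|_X)$, applying $I_i^X\oplus I_i^{X^\vee}$ to both arguments yields
\begin{equation*}
(-I_i^*a)(I_i^X u) = -(a\circ I_i^X)(I_i^X u) = -a((I_i^X)^2 u) = -a(-u) = a(u),
\end{equation*}
so the mixed piece is fixed by $(I_i^{X\times X^\vee})^*$, hence of type $(1,1)$ for every $I_i$ on $X \times X^\vee$. Together with the two pure pieces, this proves triholomorphicity.

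The main obstacle is not analytic but book-keeping: the argument hinges on correctly reconciling the sign convention $I_i a = -a\circ I_i$ with the identification of $T_{[A]}X^\vee$ as harmonic ASD $ad P$-valued 1-forms in Coulomb gauge, and with the explicit form of the hyperk\"ahler structure on $X^\vee$ recorded in (\ref{hyperkahlerforms}). Once these conventions are pinned down, the triholomorphic property reduces to the three short identities above; no new analytic input is required beyond the explicit curvature formula (\ref{universalconnectioncurvatureformula}) and the ASD property of $A$.
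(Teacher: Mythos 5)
Your verification is correct. Note that the paper itself gives no proof of this lemma — it is deferred to the companion paper \cite{Mukaidualitypaper} — so there is nothing internal to compare against; but your argument is the natural one: since each summand of $\Lambda^2T^*(X\times X^\vee)$ is preserved by $I_i\oplus I_i$, triholomorphicity reduces to checking $F(I_i\cdot,I_i\cdot)=F$ on the three blocks of (\ref{universalconnectioncurvatureformula}), and your three identities (ASD-ness of $F_A$, isometry-invariance of the pairing in $\{a_1,a_2\}$, and $(I_ia)(I_iu)=a(u)$ under the convention $I_ia=-a\circ I_i$) do exactly that. The one ingredient you use tacitly but correctly is that applying $I_i$ to a tangent vector of $X^\vee$ is implemented on its Coulomb-gauge harmonic representative by the pointwise action $a\mapsto I_ia$ (the quaternionic module property of the linearised ASD plus Coulomb-gauge equations, reflected in the decomposition (\ref{decompositionhyperkahlerquotient})), so that the curvature formula may legitimately be evaluated on $I_ia$; it would be worth stating this explicitly, but it is standard and causes no gap.
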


The triholomorphic property of the universal connection has implication on duality: it induces by restriction a family of ASD $PU(r)$ connection over $X^\vee$ parametrised by $X$. \emph{Suppose} that these are all \emph{irreducible} for all $x\in X$, then $X$ is naturally identified as the moduli space of ASD $PU(r)$ connections on $E'\simeq\mathcal{E}^\vee|_x\to X^\vee$, and the hyperk\"ahler structure on $X$ induced from this moduli interpretation agrees with the original hyperk\"ahler structure on $X$. (\cf \cite{Mukaidualitypaper}) Hence we can construct the Mukai dual fibration $M^{\vee\vee}\to B$ of $M^\vee \to B$, called the \textbf{double Mukai dual fibration}, whose underlying differentiable fibration is the same as $M\to B$.

\begin{prop}\label{doubleMukaidualfibration}
The double Mukai dual fibration is isomorphic to $M\to B$ as Donaldson's adiabatic fibrations.
\end{prop}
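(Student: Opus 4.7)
The plan is to reduce the problem to the fibrewise K3-level double Mukai duality, which is a result of the companion paper \cite{Mukaidualitypaper}, together with Donaldson's theorem that an adiabatic fibration is determined by its positive section $h:B\to H^2(X,\R)$.

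First I would establish a canonical identification $M^{\vee\vee}\to M$ over $B$. For each $b\in B$, the triholomorphic universal connection $\nabla^{univ}$ on $\mathcal{E}|_b\to X\times X^\vee$ gives, by restriction to the second factor, a family of irreducible ASD $PU(r)$ connections on $X^\vee$ parametrised by $x\in X$. By the K3-level double Mukai duality \cite{Mukaidualitypaper}, this identifies $X$ with the Mukai dual of $X^\vee$ as a hyperk\"ahler K3 surface, and globalising in $b\in B$ yields a canonical diffeomorphism $M^{\vee\vee}\simeq M$ over $B$ under which the fibrewise hyperk\"ahler structures agree.

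Next I would check that the two Donaldson adiabatic fibration structures on $M$ coincide under this identification. By Theorem \ref{MukaidualfibrationisDonaldsonadiabaticfibration}, the Mukai dual fibration carries the canonical data coming from its moduli bundle interpretation, so $M^{\vee\vee}\to B$ likewise carries the canonical adiabatic structure encoded by its positive section. By the theory reviewed in Section \ref{Donaldsonadiabaticfibration}, it therefore suffices to compare positive sections. Applying Proposition \ref{Mukaidualpositivesection} twice gives
\[
h^{\vee\vee}=\tilde{\mu}^{X^\vee}\circ\tilde{\mu}^{X}\circ h.
\]
The key cohomological input from \cite{Mukaidualitypaper} is that $\tilde{\mu}^{X^\vee}\circ\tilde{\mu}^{X}$ is the canonical identification $H^2(X,\R)\to H^2(X^{\vee\vee},\R)=H^2(X,\R)$. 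Hence $h^{\vee\vee}=h$, and Donaldson's uniqueness statement recovering the full adiabatic fibration data from the positive section then forces the two adiabatic structures on $M$ to agree, including the base form $\underline{\lambda}$, the fibre volume $\underline{\mu}$, and the horizontal distribution $H$ (which is pinned down by $d_H\underline{\omega}=0$ and $d_H\underline{\mu}=0$).

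The main obstacle is the cohomological assertion $\tilde{\mu}^{X^\vee}\circ\tilde{\mu}^{X}=\mathrm{id}$, with all signs and normalisations tracked correctly; this is a question purely about a single K3 pair and is handled in the companion paper via the K\"unneth decomposition of $p_1(\mathrm{ad}\,\mathcal{E}|_b)$ on $X\times X^\vee$ combined with the symmetry between the two factors. Once this K3-level statement is in hand, the globalisation over $B$ is essentially formal, since every piece of data on the Mukai dual fibration was constructed canonically from fibrewise hyperk\"ahler data and a horizontal distribution that is itself uniquely determined by Donaldson's integrability conditions.
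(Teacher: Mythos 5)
Your proposal is correct and takes essentially the same route as the paper: one reduces to equality of the positive sections, which by Donaldson's theory determine all the adiabatic fibration data, and deduces $h^{\vee\vee}=h$ from the fibrewise K3-level duality supplied by the companion paper. The paper packages the fibrewise input as equality of the hyperk\"ahler periods of $M^{\vee\vee}\to B$ and $M\to B$ (i.e.\ that the hyperk\"ahler structure on $X$ from its moduli interpretation agrees with the original one) rather than as the identity $\tilde{\mu}^{X^\vee}\circ\tilde{\mu}^{X}=\mathrm{id}$ on $H^2$, but this is the same argument in substance, and indeed only the equality on the period classes $[\omega_i]$ is actually needed.
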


\begin{proof}
It is enough to check they define the same positive section $h^{\vee\vee}=h$. This follows from the fact that the fibrewise hyperk\"ahler periods on $M^{\vee\vee}\to B$ and $M\to B$ are equal.
\end{proof}

We have so far identified the desirable structure for the global connection on $\mathcal{E}$ restricted to fibres with fixed $b\in B$. In the next few Sections we wish to understand what happens when $b\in B$ varies.

\subsection{Generalised adiabatic $G_2$ instanton}\label{generalisedadiabtic$G_2$instantons}

We introduce a special structure called generalised adiabatic $G_2$ instantons, for applications to the Nahm transform treated in Chapter \ref{Dualityforgaugetheories}. A basic analogy is that generalised adiabatic $G_2$ instantons are related to adiabatic $G_2$ instantons, in the same way triholomorphic connections are related to ASD connections.

\begin{Def}
A $PU(r)$ \textbf{generalised  adiabatic $G_2$ instanton} is a connection on the associated $PU(r)$ bundle of $\mathcal{E}\to M\times_B M^\vee$, such that restricted to $M_b \times M_b ^\vee$ it is triholomorphic, and the horizontal-vertical type (1,1) part of its curvature $F$ satisfies
\begin{equation}\label{generalisedadiabatic$G_2$istantonequation}
\sum_i I_i \iota_{\frac{\partial}{\partial t_i } }  F^{(1,1)}=0.
\end{equation}
where $\frac{\partial}{\partial t_i }$ is an orthonormal basis of $B$, lifted to $M\times_B M^\vee$ using the canonical horizontal distribution on $M\times_B M^\vee\to B$, and the complex structures $I_i$ act on the 1-forms in the fibre direction. 
\end{Def}

We can motivate this condition by analogy with the adiabatic $G_2$ instantons $\alpha$, which as we recall amounts to the fibrewise ASD condition, and the condition
\[
\sum_i \omega_i \wedge  \iota_{\frac{\partial}{\partial t_i } }  F(\alpha)^{(1,1)}=0.
\]
This second condition is the same as 
\[
\sum_i I_i \iota_{\frac{\partial}{\partial t_i } }  F(\alpha)^{(1,1)}=0,
\]
where the equation holds on K3 fibres instead of higher dimensional fibres.
This definition in fact generalises to bundles with arbitrary hyperk\"ahler fibres.

\begin{Def}
A $U(r)$ \textbf{twisted generalised adiabatic $G_2$ instanton} is a $U(r)$ connection on $\mathcal{E}\to M\times_B M^\vee$, such that 
the (1,1) part of its curvature satisfies (\ref{generalisedadiabatic$G_2$istantonequation}), and when restricted to $M_b\times M_b^\vee$, its associated $PU(r)$ connection is triholomorphic and its central curvature satisfies
\begin{equation}\label{twistedgeneralisedadiabatic$G_2$instantoncentral}
\frac{\sqrt{-1}}{2\pi r} \Tr  F_A^{(0,2)}= \mathcal{B}+\mathcal{B}',
\end{equation}
where $\mathcal{B}$ is the harmonic representative of $\frac{1}{r}c_1(E)$, and $-\mathcal{B}'$ is the harmonic representative of $\frac{1}{r}c_1(E')$, where $E'$ is the underlying Hermitian bundle of $\mathcal{E}^\vee|_x \to M_b^\vee$    for any $x\in M_b$.
\end{Def}

This is the higher dimensional analogue of twisted adiabatic $G_2$ instantons, where the twisting is introduced likewise for compatibility with Chern class constraints.

\subsection{The universal connection II}\label{Theuniversalconnection2}

The rest of this Chapter will extend the fibrewise triholomorphic $PU(r)$ connection $\nabla^{univ}$ from Section \ref{Theuniversalconnection1triholomorphicproperty}, to a generalised adiabatic $G_2$ instanton  over $M\times_B M^\vee$, still to be denoted $\nabla^{univ}$. This amounts to solving a linear PDE (\ref{generalisedadiabatic$G_2$istantonequation}) for the unknown horizontal  covariant derivative operator
 $\nabla^{univ}_{\frac{\partial}{\partial t_k} }$, which clearly decouples along different fibres $M_b\times M^\vee_b$. 
The problem is overdetermined, but the integrability conditions for Donaldson's adiabatic fibrations will work in our favour to ensure the existence of solution.

To set up, denote $X=M_b$ with local coordinates $x_i$, and $X^\vee=M_b^\vee$ with local coordinates $\tau_i$. We shall work in the first order normal neighbourhood of $X\times X^\vee\subset M\times_B M^\vee$, where the horizontal distribution canonically trivialises the total space as a Cartesian product of $X\times X^\vee$ with the first order neighbourhood $b[\epsilon]$ of $b\in B$; the coordinates on $b[\epsilon]$ are denoted $t_i$. The effect of $F_H$ does not appear over the first order normal neighbourhood, and we can write Lie derivatives $\mathcal{L}_{\frac{\partial}{\partial t_k} }$ simply as $\frac{\partial}{\partial t_k}$.

Recall we are given
a family of $PU(r)$ ASD connections $A$ on $X$, 
parametrised by the product  $X^\vee\times b[\epsilon]$. We fix a topological identification of the underlying bundles with $P|_X\to X$, and write
\[
b_k=\frac{\partial A}{\partial t_k}, \quad \beta_j=\frac{\partial A}{\partial \tau_j} \in \Omega^1(X, ad(P|_{X})). 
\]
If we arrange the topological identification carefully, which amounts to a choice of gauge, we can impose the Coulumb gauge condition $d_A^* b_k=0$ along $X\times X^\vee$. The term $\beta_j$ arises from variation of ASD connections for the fixed $b\in B$, so it satisfies the linearised ASD equation over $X=M_b$. Write $\beta_j'$ as the $L^2$ projection of $\beta_j$ to $T_{A}X^\vee\subset \Omega^1(X, ad P|_{M_b})$.

There is a tautological map $f: X^\vee\times b[\epsilon]\to \mathcal{A}_b/\mathcal{G}_b$ just like in Section \ref{Theuniversalconnection1triholomorphicproperty}, which pulls back the universal connection to a $PU(r)$ connection $\nabla^{univ}$ over $X\times X^\vee\times b[\epsilon]$, and agrees with the triholomorphic connection over $X\times X^\vee$. Here the Green operator involved in the definition of $\nabla^{univ}$ uses the fixed hyperk\"ahler metric on the central fibre $X$. Although $\nabla^{univ}$ is only given on $X\times X^\vee\times b[\epsilon]$, 
the type $(0,2)$ and type $(1,1)$ components of the curvature $F(\nabla^{univ})$ are well defined along $X\times X^\vee$ since they involve at most one horizontal differentiation.

\begin{lem}
Some \textbf{curvature components} of $\nabla^{univ}$ are given by
\begin{equation}
\begin{cases}
F( \nabla^{univ} ) (\frac{\partial }{\partial t_i}, \frac{\partial }{\partial x_j})=\langle b_i, \frac{\partial }{\partial x_j}\rangle,
\\
F(    \nabla^{univ})(\frac{\partial }{\partial t_i}, \frac{\partial }{\partial \tau_j})= -2 G_A \{ b_i, \beta_j' \}.
\end{cases}
\end{equation}
\end{lem}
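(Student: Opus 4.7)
The plan is to obtain both identities by pullback naturality together with the explicit universal curvature formulas (\ref{universalconnectioncurvatureformula}). By construction on the first order neighbourhood, $\nabla^{univ}$ over $X\times X^\vee\times b[\epsilon]$ is $(\mathrm{id}_X\times f)^*A^{univ}$ for the classifying map $f: X^\vee\times b[\epsilon]\to \mathcal{A}_b/\mathcal{G}_b$, $(\tau,t)\mapsto[A(\tau,t)]$, so $F(\nabla^{univ})=(\mathrm{id}_X\times f)^*F(A^{univ})$. Both claimed components pair a horizontal base direction with a vertical direction, and the only real content is to identify Coulomb-gauge lifts of the pushforward vectors in $T\mathcal{A}_b$, since (\ref{universalconnectioncurvatureformula}) is stated on the Coulomb slice.

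First I would note that $b_i=\partial A/\partial t_i$ is by our arrangement already a Coulomb lift of $df(\partial/\partial t_i)$: this is precisely the content of having chosen the topological trivialisation so that $d_A^*b_k=0$ along $X\times X^\vee$. Hence applying the middle line of (\ref{universalconnectioncurvatureformula}) to the pair $(b_i,\partial/\partial x_j)$ gives $\langle b_i,\partial/\partial x_j\rangle$, which is the first claimed identity. For the second identity, I would compute the Coulomb lift of $df(\partial/\partial \tau_j)$: the ambient representative is $\beta_j$, which lies in the linearised ASD kernel $\ker d_A^+$ on $X$ but need not be in Coulomb gauge. The orthogonal decomposition
\[
\ker d_A^+ = T_A X^\vee\oplus d_A\bigl(\Omega^0(X,\mathrm{ad}\,P|_X)\bigr)
\]
then writes $\beta_j=\beta_j'+d_A\phi_j$ for some $\phi_j$, so $\beta_j'$ is the Coulomb representative. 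Applying the last line of (\ref{universalconnectioncurvatureformula}) to the pair of Coulomb lifts $(b_i,\beta_j')$ yields $-2G_A\{b_i,\beta_j'\}$, as required.

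The only delicate point — more a bookkeeping check than an obstacle — is the legitimacy of replacing $\beta_j$ by its Coulomb projection $\beta_j'$. Two Coulomb lifts of the same tangent vector in $\mathcal{A}_b/\mathcal{G}_b$ differ by an element of $d_A\Omega^0$, i.e.\ by an infinitesimal vertical gauge transformation; under such a shift $A^{univ}$ changes by a gauge transformation and $F(A^{univ})$ by adjoint conjugation, so the value of the universal curvature on the \emph{quotient} is unchanged. This is exactly the gauge invariance underlying the descent of $\nabla^{univ}$ to the quotient bundle recalled in Section \ref{Theuniversalconnection1triholomorphicproperty}, and it justifies substituting $\beta_j'$ for $\beta_j$ throughout. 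No further calculation is needed.
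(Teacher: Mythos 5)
Your proposal is correct and follows essentially the same route as the paper: the paper's proof is exactly that the stated components are the pullback of the universal curvature formula (\ref{universalconnectioncurvatureformula}), using that $b_i$ and $\beta_j'$ are the Coulomb-gauge representatives of the relevant tangent directions. Your extra bookkeeping (the decomposition $\ker d_A^+=T_AX^\vee\oplus d_A\Omega^0$ and the gauge-invariance of the descended curvature) just makes explicit why $\beta_j$ may be replaced by $\beta_j'$, which the paper leaves implicit.
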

\begin{proof}
This is merely the pullback of the curvature formula (\ref{universalconnectioncurvatureformula}), since $b_i$ and $\beta_j'$ are both in the Coulumb gauge.
\end{proof}

Our next goal is to show

\begin{prop}\label{universalconnectiongeneralisedadiabatic$G_2$equationlocalversion}
(\textbf{Local existence})
The (1,1) type curvature component for the connection $\nabla^{univ}$ satisfies (\ref{generalisedadiabatic$G_2$istantonequation}) along $X\times X^\vee$.
\end{prop}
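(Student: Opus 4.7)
The plan is to unfold $\iota_{\partial/\partial t_i}F^{(1,1)}(\nabla^{univ})$ via the universal curvature formulas, reduce the equation on $X\times X^\vee$ to a single identity on $X$, and then close by Donaldson's integrability conditions.

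Using the curvature formulas just established, at a point of $X\times X^\vee$ the 1-form $\iota_{\partial/\partial t_i}F^{(1,1)}$ has $TX$-component equal to $b_i\in\Omega^1(X,\mathrm{ad}\,P|_X)$ and $TX^\vee$-component equal to $\sum_j(-2G_A\{b_i,\beta_j'\})\,d\tau_j$. The equation $\sum_i I_i\iota_{\partial/\partial t_i}F^{(1,1)}=0$ splits into
\begin{equation*}
\text{(a)}\ \sum_i I_i^X b_i =0 \text{ on }X,\qquad
\text{(b)}\ \sum_{i,j}G_A\{b_i,\beta_j'\}\,I_i^{X^\vee}(d\tau_j)=0\text{ on }X^\vee.
\end{equation*}
For (b) I would use the identification $T_AM^\vee\cong\{\beta_j'\}\subset\Omega^1(X,\mathrm{ad}\,P|_X)$, which intertwines $I_i^{X^\vee}$ with $I_i^X$, together with the K\"ahler antisymmetry $\{b,I_i\beta\}=-\{I_ib,\beta\}$ on $X$; after a direct rearrangement (b) reads $\{G_A(\sum_iI_ib_i),\beta_k'\}=0$ for every $k$, which is immediate from (a).

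The heart of the matter is (a). By construction of the tautological map, the section $t\mapsto[A(\tau,t)]$ in $\mathcal{A}_b/\mathcal{G}_b$ is the horizontal section of $\pi^\vee:M^\vee\to B$ with respect to the canonical horizontal distribution of Section \ref{DualityofDonaldsonadiabaticfibrations}, so $pr_{T_A\mathcal{M}}b_i=0$. Combined with the Coulomb condition $d_A^*b_i=0$, the orthogonal decomposition (\ref{decompositionhyperkahlerquotient}) places $b_i\in\bigoplus_{k=1}^3 I_k(\Lie\mathcal{G}_b)A$, and irreducibility of $A$ yields a unique expansion $b_i=\sum_{k=1}^3 I_k\,d_A\phi_k^i$ with $\phi_k^i\in\Omega^0(X,\mathrm{ad}\,P|_X)$. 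To pin down $\phi_k^i$, linearize the fibrewise ASD condition for the varying metric: differentiating $F_A\wedge\omega_j(t)=0$ at $t=0$ gives $(d_Ab_i)^+\wedge\omega_j=-F_A\wedge\partial_{t_i}\omega_j$, and since $F_A$ and $\partial_{t_i}\omega_j$ are both ASD on $X$ (the latter by Section \ref{Adiabaticspinstructure$G_2$geometry}), decomposing $(d_Ab_i)^+=\sum_k c_{ik}\omega_k$ yields $c_{ik}=\tfrac{1}{2}\langle F_A,\partial_{t_i}\omega_k\rangle$. Applying the hyperk\"ahler Weitzenb\"ock identity $(d_A I_k d_A\phi)^+=\tfrac{1}{2}\Delta_A\phi\cdot\omega_k$ to the expansion then forces $\Delta_A\phi_k^i=\langle F_A,\partial_{t_i}\omega_k\rangle$.

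Donaldson's integrability conditions now close (a). From $d_H\underline{\omega}=0$ we have $\partial_{t_i}\omega_k=\partial_{t_k}\omega_i$ and from $d_H\underline{\Theta}=0$ we have $\sum_i\partial_{t_i}\omega_i=0$; since $\Delta_A$ on $\Omega^0(\mathrm{ad}\,P)$ is invertible by irreducibility, these translate to $\phi_k^i=\phi_i^k$ and $\sum_i\phi_i^i=0$ respectively. Expanding $\sum_iI_ib_i$ via $I_iI_k=-\delta_{ik}+\sum_l\epsilon_{ikl}I_l$ gives
\begin{equation*}
\sum_iI_ib_i = -\sum_i d_A\phi_i^i + \sum_l I_l\,d_A\Bigl(\sum_{i,k}\epsilon_{ikl}\phi_k^i\Bigr),
\end{equation*}
and both summands vanish, the first by tracelessness and the second by the antisymmetric-symmetric pairing $\sum_{i,k}\epsilon_{ikl}\phi_k^i=0$. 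The main obstacle is verifying the hyperk\"ahler Weitzenb\"ock identity in the covariant ASD setting with the correct normalization and signs; the conceptual payoff is that $d_H\underline{\omega}=0$ and $d_H\underline{\Theta}=0$, the defining integrability conditions of Donaldson's adiabatic fibration beyond the fibrewise hyperk\"ahler structure, are exactly what promotes $\nabla^{univ}$ to a generalized adiabatic $G_2$ instanton.
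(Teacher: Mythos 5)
Your proof is correct and hinges on the same pivotal identity as the paper's, namely $I_1b_1+I_2b_2+I_3b_3=0$ along each fibre, together with the same reduction of the $X^\vee$-component to the $X$-component via $\{b,I\beta\}=-\{Ib,\beta\}$ (only note the Green operator should sit outside the bracket, $G_A\{\sum_iI_ib_i,\beta_k'\}=0$, a cosmetic slip). Where you differ is in how the key identity is established. The paper shows that $\sigma=\sum_iI_ib_i$ satisfies both the linearised ASD equation and the Coulomb condition — using the elementary wedge identities, the variation of the fibrewise ASD equation, and $d_H\underline{\omega}=0$, $d_H\underline{\Theta}=0$ — so $\sigma\in T_AX^\vee$; since each $b_i$ is orthogonal to $T_AX^\vee$ by the horizontality of the setup and $T_AX^\vee$ is quaternionic, $\sigma$ is also orthogonal to $T_AX^\vee$, hence zero. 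You instead use horizontality and the Coulomb gauge at the outset to place each $b_i$ in $\bigoplus_kI_k(\Lie\mathcal{G}_b)A$ via the decomposition (\ref{decompositionhyperkahlerquotient}), solve for the potentials $\phi^i_k$ through the Weitzenb\"ock-type identity (which is exactly the paper's elementary lemma applied to $a=d_A\phi$, the ASD-ness of $F_A$ killing the off-diagonal terms), and then read Donaldson's two conditions as symmetry and tracelessness of the matrix $(\phi^i_k)$, after which the vanishing is pure quaternion algebra. Your route costs a bit more — invertibility of $\Delta_A$ from irreducibility, and care with signs: with the paper's conventions $(d_AI_kd_A\phi)^+=-\tfrac12\Delta_A\phi\,\omega_k$, so $\Delta_A\phi^i_k=-\langle F_A,\partial_{t_i}\omega_k\rangle$, though the sign is immaterial since only symmetry and trace are used — and buys an explicit formula for the horizontal variation, $b_i=-\sum_kI_kd_AG_A\langle F_A,\partial_{t_i}\omega_k\rangle$, which the paper's indirect membership-plus-orthogonality argument does not provide; also, you do not actually need ASD-ness of $\partial_{t_i}\omega_j$ here, since wedging against the ASD form $F_A$ only sees its anti-self-dual part.
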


We prepare an elementary lemma about general connections on a hyperk\"ahler surface. It underlies the familiar fact that, the solution set of the linearised ASD equation with the Coulumb gauge condition is a quaternionic module.

\begin{lem}
On a hyperk\"ahler K3 surface, if $A$ is a connection, and $a$ is an adjoint bundle valued 1-form, then
\begin{equation}
d_A (I_i a  )\wedge \omega_j= 
\begin{cases}
(-d_A^* a )\underline{\mu}\quad &\text{if } i=j,\\
\sum_k \epsilon_{jik}d_A a \wedge \omega_k \quad &\text{if } i\neq j,
\end{cases}
\end{equation}
and
\begin{equation}
d_A^* (I_i a ) \underline{\mu}= d_A a \wedge \omega_i.
\end{equation}
\end{lem}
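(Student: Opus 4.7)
The plan is to derive both identities from a single pointwise linear-algebraic identity on 1-forms combined with the standard adjoint relation $d_A^* = -*d_A*$ on 1-forms in dimension four, exploiting that each $\omega_j$ is closed (being a K\"ahler form for a parallel complex structure).

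First I would establish the pointwise identity
\[
a \wedge \omega_i = -*(I_i a)
\]
for any $ad(P|_{X})$-valued 1-form $a$. In an orthonormal frame $e_1,\dots,e_4$ adapted to the hypercomplex triple, with $\omega_1 = e^{12}+e^{34}$, $\omega_2 = e^{13}-e^{24}$, $\omega_3 = e^{14}+e^{23}$, and using the paper's convention $I_i a = -a\circ I_i$, this is an easy check on the basis 1-forms.

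Given this identity, the second formula is almost immediate. Applying $d_A$ to $a\wedge\omega_i = -*(I_i a)$ and using $d\omega_i=0$ yields $d_A a \wedge \omega_i = -d_A*(I_i a)$, and the right-hand side equals $d_A^*(I_i a)\,\underline{\mu}$ by the adjoint relation (the bundle-valued version is inherited from the scalar case since $d_A - d$ is zeroth order). For the first formula, I would apply the same procedure to the identity $I_i a \wedge \omega_j = -*(I_j I_i a)$ (obtained by replacing $a$ with $I_i a$), giving $d_A(I_i a)\wedge \omega_j = d_A^*(I_j I_i a)\,\underline{\mu}$, and then expand via the quaternionic relation $I_j I_i = -\delta_{ij} + \sum_k \epsilon_{jik} I_k$. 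The case $i=j$ reduces to $(-d_A^* a)\,\underline{\mu}$, matching the first branch. The case $i\neq j$ reduces to $\sum_k \epsilon_{jik}\, d_A^*(I_k a)\,\underline{\mu}$, and the already-proven second identity rewrites this as $\sum_k \epsilon_{jik}\, d_A a\wedge \omega_k$, exactly as required.

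The only real obstacle is bookkeeping of signs in the pointwise identity $a\wedge \omega_i = -*(I_i a)$: between the convention $I_i a = -a\circ I_i$, the choice of orientation, and the sign $**=-1$ on 3-forms in $4$-dimensional Riemannian geometry, it is easy to accumulate compensating errors that happen to cancel on one basis element but not another. Once the identity is pinned down by checking it on all four basis 1-forms against all three $\omega_i$, the rest of the argument is purely formal, and no geometric input beyond parallelism and closedness of the hyperk\"ahler triple is needed.
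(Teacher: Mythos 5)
Your argument is correct, but it follows a genuinely different route from the paper's. The paper proves the lemma by brute force: it uses that $I_1,I_2,I_3$ are parallel, works in a geodesic frame $\{dx_l\}$, writes $d_A(I_i a)\wedge\omega_j=\sum_{k,l}\nabla_k a_l\, dx_k\wedge I_i dx_l\wedge\omega_j$ and $d_A^*a=-\sum_l\nabla_l a_l$, and then compares the two sides by a ``mechanical and elementary linear algebraic calculation.'' You instead package all the pointwise linear algebra into the single identity $a\wedge\omega_i=-*(I_i a)$ (which is correct with the paper's conventions $I_i a=-a\circ I_i$, $\underline{\mu}=\tfrac12\omega_1^2$ fixing the fibre orientation), and then everything else is formal: closedness of $\omega_i$ plus $d_A^*=-*d_A*$ on $1$-forms gives the second identity in one line, and the first identity follows by applying the second to $b=I_i a$ and expanding $I_jI_i$ via the quaternion relations — which, as the paper notes, hold on $1$-forms precisely because of the convention $I_i a=-a\circ I_i$. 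Your approach buys transparency: all sign conventions are concentrated in one identity checked once on basis covectors, the first branch is derived from the second rather than computed independently, and the roles of parallelism/closedness of the triple and of the quaternionic algebra are explicit. The paper's approach requires no auxiliary Hodge-star identity and no adjoint formula, at the cost of leaving the actual frame computation unstated. The one point to be careful about in your version (which you flag yourself) is that the sign in $a\wedge\omega_i=-*(I_i a)$ and in $d_A^*=-*d_A*$ both depend on the orientation, so they must be checked against the same volume form $\underline{\mu}$; with that done consistently, your proof is complete.
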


\begin{proof}
One uses that $I_1, I_2,I_3$ are parallel, to see that for $a=\sum_l a_l dx_l$ where $\{dx_l\}$ is a geodesic frame at the point of interest,
\[
\begin{cases}
d_A(I_i a) \wedge \omega_j= \sum_{k,l}   \nabla_k a_l dx_k\wedge I_idx_l \wedge \omega_j, \\
d_A^* a= - \sum_i \nabla_i a_i.
\end{cases}
\]
The rest is a mechanical and elementary linear algebraic calculation.
\end{proof}

The next lemma is where the \textbf{integrability conditions} for Donaldson's adiabatic fibration come in.

\begin{lem}
Assuming $\frac{\partial}{\partial t_i}$ is an orthonormal basis at $b\in B$, then for $b_k= \frac{\partial A}{\partial t_k}$,
\begin{equation}\label{universalconnectioncurvatureisadiabatic$G_2$instantonlocalversionkeylemma}
I_1 b_1+ I_2b_2+I_3 b_3=0.
\end{equation}
\end{lem}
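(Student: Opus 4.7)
The plan is to show that $\sum_k I_k b_k$ lies simultaneously in the tangent subspace $T_A\mathcal{M}_b$ and in its orthogonal complement with respect to the hyperk\"ahler quotient decomposition (\ref{decompositionhyperkahlerquotient}), so it must vanish. For the first inclusion, I would differentiate the $t$-dependent ASD condition $F_A \wedge \omega_i(t)=0$ in the direction $t_k$ at $t=0$, obtaining
\[
d_A b_k \wedge \omega_i = -F_A \wedge \frac{\partial \omega_i}{\partial t_k}.
\]
Applying the preceding hyperk\"ahler lemma (which rewrites $d_A^*(I_i a)\underline{\mu} = d_A a\wedge\omega_i$ and dualises $d_A(I_i a)\wedge\omega_j$) together with the Coulomb gauge $d_A^* b_k = 0$, a short computation yields
\[
d_A^*\Bigl(\sum_k I_k b_k\Bigr)\underline{\mu} = -F_A\wedge\sum_k\frac{\partial \omega_k}{\partial t_k}=0,
\]
\[
d_A\Bigl(\sum_k I_k b_k\Bigr)\wedge\omega_j = -F_A \wedge \sum_{k,l}\epsilon_{jkl}\frac{\partial \omega_l}{\partial t_k} = 0,
\]
where the first vanishing uses Donaldson's condition $d_H\underline{\Theta}=0$ (i.e.\ $\sum_k\partial\omega_k/\partial t_k = 0$), and the second uses $d_H\underline{\omega}=0$ (i.e.\ $\partial\omega_l/\partial t_k = \partial\omega_k/\partial t_l$, making the antisymmetric sum collapse). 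Thus $\sum_k I_k b_k$ satisfies both the linearised ASD equation and the Coulomb gauge, placing it in $T_A\mathcal{M}_b$.

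For the second inclusion, the family $A(\tau,t)$ arises from the canonical horizontal trivialisation of $M\times_B M^\vee\to B$ over $b[\epsilon]$, so the section $t\mapsto(\tau,t)$ is horizontal with respect to the canonical horizontal distribution on the moduli bundle $M^\vee\to B$; by Lemma \ref{canonicalconnectionmodulibundle} the $T_A\mathcal{M}_b$-component of $b_k$ therefore vanishes. Combined with the Coulomb gauge, the decomposition (\ref{decompositionhyperkahlerquotient}) forces
\[
b_k = \sum_{i=1}^3 I_i\, d_A \phi_i^k
\]
for some $\phi_i^k\in \Omega^0(M_b,\, ad(P|_{M_b}))$. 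Applying the quaternion relations $I_k I_i = -\delta_{ki}+\sum_j \epsilon_{kij} I_j$ then gives
\[
\sum_k I_k b_k = -d_A\Bigl(\sum_k \phi_k^k\Bigr) + \sum_j I_j\, d_A\Bigl(\sum_{k,i}\epsilon_{kij}\phi_i^k\Bigr),
\]
which sits in $(\Lie\mathcal{G}_b)A \oplus \bigoplus_j I_j(\Lie\mathcal{G}_b)A = (T_A\mathcal{M}_b)^\perp$.

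Intersecting gives $\sum_k I_k b_k \in T_A\mathcal{M}_b \cap (T_A\mathcal{M}_b)^\perp = \{0\}$, which is the claim. The main subtlety is that the ASD condition and Coulomb gauge alone place $\sum_k I_k b_k$ only in $T_A\mathcal{M}_b$, leaving an unconstrained $T_A\mathcal{M}_b$-piece, so the horizontality of the parametrisation is essential; correspondingly, the careful bookkeeping of exactly the two integrability conditions $d_H\underline{\omega}=0$ and $d_H\underline{\Theta}=0$ in the first step is the technical heart of the argument, mirroring the role of these conditions in making the overdetermined generalised adiabatic $G_2$ equation consistent.
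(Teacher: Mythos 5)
Your argument is correct and follows essentially the same route as the paper: the two integrability conditions $d_H\underline{\omega}=0$ and $d_H\underline{\Theta}=0$, combined with the Coulomb gauge and the variation of the fibrewise ASD equation, place $\sum_k I_k b_k$ in $T_A\mathcal{M}_b$, while horizontality of the parametrisation plus the quaternionic invariance of the decomposition (\ref{decompositionhyperkahlerquotient}) place it in the orthogonal complement, forcing it to vanish. Your explicit rewriting $b_k=\sum_i I_i d_A\phi_i^k$ with the quaternion relations is just a more concrete rendering of the paper's appeal to the quaternionic module property, not a different proof.
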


\begin{proof}
We compute using the above lemma, and the Coulumb gauge condition, 
\[
\begin{split}
d_{A} (I_1 b_1+I_2b_2+I_3b_3    )\wedge \omega_1=& -(d_{A}^* b_1)\underline{\mu} + d_{A} b_2 \wedge \omega_3 -d_{A} b_3\wedge \omega_2\\
=& d_{A} b_2 \wedge \omega_3 -d_{A} b_3\wedge \omega_2.
\end{split}
\]
Now we take the variation of the ASD condition $F_{A} \wedge \omega_i=0$ when the fibre varies, to see
\[
d_{A} b_k \wedge \omega_i+ F_{A} \wedge \frac{\partial \omega_i}{\partial t_k}
=
\frac{\partial}{\partial t_k} ( F_{A} \wedge \omega_i     )=0.
\]
Thus by comparing the two equations, and then applying Donaldson's condition $d_H\underline{\omega}=0$,
\[
d_{A} (I_1 b_1+I_2b_2+I_3b_3    )\wedge \omega_1=F_{A} \wedge ( 
\frac{\partial \omega_2}{\partial t_3}-
\frac{\partial \omega_3}{\partial t_2}
)=0.
\]
Similarly, 
\[
d_{A} (I_1 b_1+I_2b_2+I_3b_3    )\wedge \omega_2=d_{A} (I_1 b_1+I_2b_2+I_3b_3    )\wedge \omega_3=0,
\]
and therefore the adjoint valued 1-form $I_1 b_1+I_2b_2+I_3 b_3$ satisfies the linearised ASD condition
\begin{equation}
d_{A}^+ ( I_1 b_1+I_2b_2+I_3 b_3       )=0.
\end{equation} 
We also have by an analogous argument
\begin{equation}
\begin{split}
\underline{\mu}d_{A}^*(I_1 b_1+I_2b_2+I_3b_3        )=& d_{A} b_1\wedge \omega_1+ d_{A} b_2\wedge \omega_2+ d_{A} b_3\wedge \omega_3 \\
=& -F_{A} \{  \frac{\partial \omega_1}{\partial t_1} +\frac{\partial \omega_2}{\partial t_2}+\frac{\partial \omega_3}{\partial t_3}         \}=0.
\end{split}
\end{equation}
Thus we see that $I_1 b_1+I_2b_2+ I_3 b_3$ is also in the Coulumb gauge. In short, 
\[
I_1 b_1+I_2b_2+ I_3 b_3\in T_{A}X^\vee\subset \Omega^1(X, ad P|_{M_b}).
\]

Finally, from the definition of the canonical horizontal distribution on  $M^\vee\to B$, by writing $\frac{\partial}{\partial t_k}$ as a horizontal vector, we are imposing that
$b_k=\frac{\partial A}{\partial t_k}$ has zero $L^2$  projection to 
the finite dimensional space $T_{A}X^\vee\subset \Omega^1(X, ad P|_{M_b})$.
By the quaternionic module property, so must $I_1 b_1+ I_2b_2+I_3 b_3$. This implies (\ref{universalconnectioncurvatureisadiabatic$G_2$instantonlocalversionkeylemma})
as required.
\end{proof}

We can now prove Proposition \ref{universalconnectiongeneralisedadiabatic$G_2$equationlocalversion}.

\begin{proof}
Equation (\ref{generalisedadiabatic$G_2$istantonequation}) boils down to two pieces
\[
\sum_i F( \nabla^{univ}  )( \frac{\partial}{\partial t_i},  I_i \frac{\partial}{\partial x_j}     )=0, 
\quad
\sum_i F( \nabla^{univ}  )( \frac{\partial}{\partial t_i},  I_i \frac{\partial}{\partial \tau_j}     )=0.
\]
Using the curvature formula (\ref{universalconnectioncurvatureformula}), these are equivalent to, respectively
\[
\sum_i \langle b_i,  I_i \frac{\partial}{\partial x_j}     \rangle=0,
\quad
G_{A}\sum_i \{ b_i,  I_i \beta_j     \}=0.
\]
The first equation is tautologically equivalent to (\ref{universalconnectioncurvatureisadiabatic$G_2$instantonlocalversionkeylemma}). The second is also implied by (\ref{universalconnectioncurvatureisadiabatic$G_2$instantonlocalversionkeylemma}), because $\{ b_i, I_i\beta_j  \}=-\{ I_ib_i, \beta_j \}$ at every point on $X$.
\end{proof}

We can perform this construction to define the horizontal directional derivatives $\nabla^{univ}_{\frac{\partial}{\partial t_k}}$ for all $b\in B$, so that 
 (\ref{generalisedadiabatic$G_2$istantonequation}) is solved fibrewise:

\begin{prop}
The fibrewise triholomorphic connection can be extended to a $PU(r)$ generalised adiabatic $G_2$ connection $\nabla^{univ}$ over $M\times_B M^\vee$.
\end{prop}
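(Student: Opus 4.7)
The plan is to globalise the local construction from Proposition \ref{universalconnectiongeneralisedadiabatic$G_2$equationlocalversion}, which established the existence of horizontal covariant derivative operators $\nabla^{univ}_{\partial/\partial t_k}$ satisfying (\ref{generalisedadiabatic$G_2$istantonequation}) along each fibre $M_b \times M_b^\vee$. The key observation is that this equation is \emph{fibrewise}: restricted to a given fibre it only involves the prescribed triholomorphic data along that fibre and the values of the horizontal connection component along it, so the global problem decouples into independent problems, one for each $b\in B$.

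First I would cover $B$ by a locally finite open cover $\{U_\alpha\}$ over which $\pi:M\to B$ trivialises and a topological identification of the underlying $PU(r)$-bundles on fibres can be chosen. Over each $U_\alpha$, Proposition \ref{universalconnectiongeneralisedadiabatic$G_2$equationlocalversion} produces a local $PU(r)$ connection $\nabla^{univ,\alpha}$ on $\mathcal{E}|_{\pi^{-1}(U_\alpha)\times_{U_\alpha}(\pi^\vee)^{-1}(U_\alpha)}$ which restricts to the prescribed triholomorphic connection on each fibre $M_b\times M_b^\vee$ with $b\in U_\alpha$, and satisfies (\ref{generalisedadiabatic$G_2$istantonequation}) along every such fibre.

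Next I would patch these via a partition of unity. Since the local connections $\nabla^{univ,\alpha}$ and $\nabla^{univ,\beta}$ coincide on fibres, their difference on $U_\alpha\cap U_\beta$ is an $ad(\mathcal{E})$-valued horizontal 1-form, of the form $\sum_k \phi^{\alpha\beta}_k\, dt_k$. The equation (\ref{generalisedadiabatic$G_2$istantonequation}) is affine linear in the horizontal component of the connection once the fibrewise triholomorphic part is fixed (the difference of two solutions is governed by a fibrewise Dirac-type operator $\sum_k I_k d_{A^{fibre}}$ applied to $\phi_k$), so the solution set with prescribed fibrewise restriction is an affine space and partition-of-unity averaging preserves the solution property. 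Fixing a reference $\alpha_0$ and a subordinate partition of unity $\{\rho_\alpha\}$, I would set
\[
\nabla^{univ} := \nabla^{univ,\alpha_0} + \sum_\alpha \rho_\alpha\bigl(\nabla^{univ,\alpha}-\nabla^{univ,\alpha_0}\bigr),
\]
a well-defined global $PU(r)$ connection on $\mathcal{E}$ which restricts to the fibrewise triholomorphic connection and satisfies (\ref{generalisedadiabatic$G_2$istantonequation}) by linearity.

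The main expected obstacle is the patching step, but it is mild: the essential inputs are the linearity of the equation in the horizontal component and the canonicity of the fibrewise triholomorphic restriction, both already in hand from Proposition \ref{universalconnectiongeneralisedadiabatic$G_2$equationlocalversion} and the companion paper. Smoothness in $b$ is automatic, because the local pullback construction in Proposition \ref{universalconnectiongeneralisedadiabatic$G_2$equationlocalversion} depends smoothly on the ASD connection on the fibre, which in turn varies smoothly over $M^\vee$.
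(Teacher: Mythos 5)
Your proposal is correct, but it globalises the local statement by a different mechanism than the paper does. The paper's route is to observe that the local construction of Proposition \ref{universalconnectiongeneralisedadiabatic$G_2$equationlocalversion} is \emph{canonical} at each $b\in B$: the horizontal covariant derivatives $\nabla^{univ}_{\frac{\partial}{\partial t_k}}$ along $M_b\times M_b^\vee$ are pinned down by pulling back the universal connection on $\mathcal{A}_b/\mathcal{G}_b$ along the tautological map, using the canonical first-order trivialisation coming from the horizontal distribution (the auxiliary choice of gauge with $d_A^*b_k=0$ enters only the curvature computation, not the connection itself, since everything descends to the quotient). Performing this for every $b$ therefore already yields a single globally defined connection satisfying (\ref{generalisedadiabatic$G_2$istantonequation}) fibrewise, with smooth dependence on $b$ the only point to note; no cover of $B$ and no patching are needed. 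Your partition-of-unity argument is nevertheless sound: since the cutoff functions are pulled back from $B$, their differentials are horizontal, the quadratic correction terms are of horizontal type $(2,0)$, and the fibrewise restriction is unchanged, so the type $(1,1)$ equation, being affine in the horizontal component once the fibrewise triholomorphic part is fixed, is preserved under the convex combination — exactly the Dirac-type linearisation $\sum_k I_k d_{A}\phi_k=0$ you identify. What your route buys is robustness (it would work even if the local solutions were non-canonical, needing only that they agree fibrewise); what the paper's route buys is a distinguished $\nabla^{univ}$ obtained without auxiliary choices of cover and cutoffs, which is convenient later when $\nabla^{univ}$ is used as the kernel of the Nahm transform. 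One small caveat: as stated, the local proposition verifies the equation along the central fibre of a first-order neighbourhood of a single $b$, so reading it as producing a solution over a whole chart $U_\alpha$ already presupposes the canonicity-plus-smoothness observation that makes your subsequent patching step redundant rather than essential.
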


\subsection{Twisted generalised adiabatic $G_2$ instantons}\label{Technicalmodificationsnonzeroc1}

We now lift the $PU(r)$ generalised adiabatic $G_2$ instanton to $U(r)$ connection on $\mathcal{E}\to M\times_B M^\vee$. This requires additionally  specifying a $U(1)$ connection on $\Lambda^r \mathcal{E}$. Since $B$ is contractible, the line bundle $\Lambda^r \mathcal{E}\to M\times_B M^\vee$ is topologically the tensor product of line bundles $L\to M$ and $L'\to M^\vee$.

\begin{lem}
There is a $U(1)$ twisted adiabatic $G_2$ instanton on $L\to M$, and similarly for $L'\to M^\vee$.
\end{lem}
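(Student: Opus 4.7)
My plan is to construct the connection on $L\to M$ directly by a two-step modification of any initial connection, using the abelian ($U(1)$) specialisation of the techniques behind the instanton--Fueter correspondence of Section~\ref{Fueterequation$G_2$instantons}; the construction on $L'\to M^\vee$ is then identical, since by Theorem~\ref{MukaidualfibrationisDonaldsonadiabaticfibration} $M^\vee\to B$ is itself a Donaldson's adiabatic fibration. The simplification in the abelian case is that, because $H^1(K3,\R)=0$, the fibrewise moduli space of $U(1)$ HYM connections on $L|_{M_b}$ is a single point, so the Fueter equation is vacuous and existence reduces to an explicit construction.

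First, put a chosen smooth $A_0$ into fibrewise HYM form: on each $M_b$ the forms $\frac{\sqrt{-1}}{2\pi}F_{A_0}|_{M_b}$ and $\mathcal{B}_b$ are cohomologous, and the equation $d_f\xi_b = \mathcal{B}_b - \frac{\sqrt{-1}}{2\pi}F_{A_0}|_{M_b}$ has a unique solution in Coulumb gauge $d_f^*\xi_b = 0$ because $H^1(M_b,\R)=0$; by elliptic regularity $\xi_b$ varies smoothly in $b$, defining a smooth vertical 1-form $\xi$ on $M$, and $A_1 := A_0 + 2\pi\sqrt{-1}\,\xi$ is fibrewise HYM. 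Next, set $\beta_k := \iota_{\partial/\partial t_k}F_{A_1}^{(1,1)}$. Specialising the hyperk\"ahler quotient decomposition (\ref{decompositionhyperkahlerquotient}) to $U(1)$, where $T_{A_1}\mathcal{M}=0$ and $(\Lie\mathcal{G}_b)A_1 = d_f\Omega^0(M_b)$, gives the $L^2$-orthogonal decomposition $\Omega^1(M_b, i\R) = \bigoplus_{k=0}^3 I_k\, d_f\Omega^0(M_b)$, uniquely yielding
\[
\sum_{k=1}^3 I_k\beta_k \;=\; d_f\Phi_0 \;+\; \sum_{k=1}^3 I_k\, d_f\Phi_k
\]
for smooth families of functions $\Phi_k$ on the fibres (modulo constants). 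Put $A := A_1 + \sum_{k=1}^3 \Phi_k\, dt_k$; this preserves fibrewise HYM and, by a computation identical to the one in the proof of Proposition~\ref{Fueterimpliesmonopole}, achieves $\sum_{k=1}^3 I_k\iota_{\partial/\partial t_k}F_A^{(1,1)} = d_f\Phi_0$ on each fibre.

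The main step is to show that the obstruction $\Phi_0$ is fibrewise constant, whence $F_A^{(1,1)}\wedge\underline\Theta = 0$ and $A$ is a $U(1)$ twisted adiabatic $G_2$ instanton on $L$. Applying $d_f^*$ to the decomposition, noting $d_f^*(I_k d_f\Phi_k)\underline\mu = d_f d_f\Phi_k\wedge\omega_k = 0$ and invoking the identity $d_f^*(I_k\alpha)\underline\mu = d_f\alpha\wedge\omega_k$ from Section~\ref{Theuniversalconnection2}, reduces the problem to
\[
\Delta_f\Phi_0\cdot\underline\mu \;=\; \sum_{k=1}^3 d_f\beta_k\wedge\omega_k;
\]
the $(1,2)$-part of Bianchi $dF_{A_1}=0$ then gives $d_f\beta_k = \partial_{t_k}F_{A_1}^{(0,2)} = 2\pi\sqrt{-1}\,\partial_{t_k}\mathcal{B}_b$, so it suffices to verify the pointwise identity $\sum_k \partial_{t_k}\mathcal{B}_b\wedge\omega_k = 0$ on each fibre. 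This is where the integrability of Donaldson's adiabatic fibration enters decisively: because $\mathcal{B}^+_b$ lies in the parallel span of $\omega_1,\omega_2,\omega_3$ on the hyperk\"ahler $K3$ fibre, one has the pointwise identity $\mathcal{B}_b\wedge\omega_k = (c_1(L)\cup[\omega_k]_b)\,\underline\mu$; differentiating, the condition $d_H\underline\mu = 0$ kills the volume-form variation, while $d_H\underline\Theta = 0$ gives both $\sum_k\partial_{t_k}\omega_k = 0$ pointwise and $\sum_k\partial_{t_k}[\omega_k] = 0$ cohomologically, producing
\[
\sum_k \partial_{t_k}\mathcal{B}_b\wedge\omega_k \;=\; \sum_k \partial_{t_k}\!\bigl(\mathcal{B}_b\wedge\omega_k\bigr) - \mathcal{B}_b\wedge\sum_k \partial_{t_k}\omega_k \;=\; c_1(L)\cup\Bigl(\sum_k\partial_{t_k}[\omega_k]\Bigr)\underline\mu - 0 \;=\; 0.
\]
Hence $\Delta_f\Phi_0 = 0$ on each fibre, compactness forces $\Phi_0$ to be fibrewise constant, and the normalisation of $\Phi_0$ modulo constants lets us take $\Phi_0 = 0$. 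The main obstacle throughout is this last verification: the vanishing of $\Phi_0$ is not formal, and requires precisely the pointwise and cohomological consequences of Donaldson's conditions $d_H\underline\Theta = d_H\underline\mu = 0$, in the same integrability-dependent spirit as the rest of the chapter.
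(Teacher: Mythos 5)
Your proposal is correct and is essentially the paper's own argument: normalise the connection to have fibrewise harmonic curvature $2\pi\sqrt{-1}\,\mathcal{B}$, apply the adjustment from the proof of Proposition \ref{Fueterimpliesmonopole} (using $H^1(K3)=0$) to reach the twisted monopole form, and then kill the Higgs field using the identity $\mathcal{B}\wedge\omega_i=(\int_X c_1(L)\cup[\omega_i])\,\underline{\mu}$ together with Donaldson's conditions $d_H\underline{\mu}=0$ and $d_H\underline{\Theta}=0$. Your fibrewise computation $\Delta_f\Phi_0\,\underline{\mu}=\sum_k d_f\beta_k\wedge\omega_k=0$ is just a pointwise repackaging of the paper's step that $\mathcal{B}\wedge\underline{\Theta}$ is closed, followed by its integration-by-parts argument that adiabatic $G_2$ monopoles are adiabatic $G_2$ instantons.
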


\begin{proof}
(Sketch) Let $\mathcal{B}$ be the type (0,2) form which fibrewise is the harmonic representative of $c_1(L)$. Choose a connection $A$ on $L\to M$, such that its restriction to K3 fibres has curvature $2\pi \sqrt{-1}\mathcal{B}$. Using the proof of Lemma \ref{Fueterimpliesmonopole} and the fact that $H^1(K3)=0$, we can adjust $A$ such that $(A, \Phi_0)$ is a `twisted adiabatic $G_2$ monopole' similar to (\ref{adiabaticmonopole11typecurvature}),  for some Higgs field $\Phi_0$:
\begin{equation}\label{twistedadiabatic$G_2$monopoleistwistedadiabaticinsttanton}
F_A \wedge \underline{\Theta}= (*_4 d_A \Phi_0) \wedge \underline{\lambda} +2\pi \sqrt{-1} \mathcal{B}\wedge \underline{\Theta}.
\end{equation}
Using $d\underline{\Theta}=0$, $d_H \underline{\mu}=0$ and  $\mathcal{B}\wedge \omega_i= ( \int_X c_1(L)\wedge \omega_i  )\underline{\mu}$, we observe
\[
\begin{split}
& d\mathcal{B}\wedge \underline{\Theta}= d\{ \mathcal{B}\wedge  \underline{\Theta}   \}
=d\{  -\sum_{cyc}(  \int_{M_b} \mathcal{B}\wedge \omega_i       )  dt_jdt_k  \wedge \underline{\mu}    \}  \\
& =d\{  -\sum_{cyc} (\int_{M_b} c_1(L)\wedge \omega_i       )  dt_jdt_k \wedge \underline{\mu}         \} 
=d\{ ( \int_{M_b} c_1(L)\wedge \underline{\Theta}      )  \wedge \underline{\mu}         \} \\
& =  (\int_{M_b} c_1(L)\wedge d \underline{\Theta}      )  \wedge \underline{\mu}  +  (\int_{M_b} c_1(L)\wedge  \underline{\Theta}      )  \wedge d\underline{\mu}   =0.   
\end{split}
\]
Using this fact, we take $d_A$ of (\ref{twistedadiabatic$G_2$monopoleistwistedadiabaticinsttanton}) and argue as in Proposition \ref{Adiabatic$G_2$monopoleadiabatic$G_2$instanton} to show $d_A\Phi_0=0$ on K3 fibres, so $A$ is a twisted adiabatic $G_2$ instanton.
\end{proof}

\begin{thm}\label{generalisedadiabatic$G_2$instantonexistencequestion}
In the setup of the introduction to this Chapter, there is a twisted generalised adiabatic $G_2$ instanton on $\mathcal{E}\to M\times_B M^\vee$, still denoted $\nabla^{univ}$, which restricts to the tautological HYM instantons over $M_b$ for $b\in B$.
\end{thm}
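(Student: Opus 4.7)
The plan is to manufacture $\nabla^{univ}$ on $\mathcal{E}$ by splitting off the central part. Since $B$ is contractible, $\Lambda^r\mathcal{E}\to M\times_B M^\vee$ is topologically the tensor product of pullbacks of line bundles $L\to M$ and $L'\to M^\vee$, with fibrewise first Chern classes matching $c_1(E|_{M_b})$ and $c_1(E'|_{M_b^\vee})$ (up to signs dictated by the conventions on $\mathcal{B}, \mathcal{B}'$). The preceding lemma furnishes a $U(1)$ twisted adiabatic $G_2$ instanton on each of $L$ and $L'$; pulling these back to $M\times_B M^\vee$ and tensoring produces a $U(1)$ connection on $\Lambda^r\mathcal{E}$, which combined with the $PU(r)$ generalised adiabatic $G_2$ instanton from Section \ref{Theuniversalconnection2} defines a $U(r)$ connection $\nabla^{univ}$ on $\mathcal{E}$.

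To check the defining conditions of a twisted generalised adiabatic $G_2$ instanton, I separate the $PU(r)$ and central contributions. The fibrewise triholomorphic condition on the associated $PU(r)$ connection is immediate from Section \ref{Theuniversalconnection1triholomorphicproperty}. The fibrewise central constraint (\ref{twistedgeneralisedadiabatic$G_2$instantoncentral}) is local on each $M_b\times M_b^\vee$: the $U(1)$ twisted adiabatic $G_2$ condition on $L$ forces its fibre curvature over $M_b$ to be $2\pi\sqrt{-1}$ times the harmonic representative of $c_1(L|_{M_b})$, and similarly for $L'$; since $\mathcal{E}|_{M_b\times\{\tau\}}\cong E|_b$ and $\mathcal{E}|_{\{x\}\times M_b^\vee}\cong E'$, the sum reproduces $-2\pi\sqrt{-1}r(\mathcal{B}+\mathcal{B}')$. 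For the horizontal-vertical $(1,1)$ equation (\ref{generalisedadiabatic$G_2$istantonequation}), I use linearity: the $PU(r)$ part satisfies it by Proposition \ref{universalconnectiongeneralisedadiabatic$G_2$equationlocalversion}. For the pullback of $F_L$, the curvature has no $M^\vee$-fibre component, so $\sum_i I_i\iota_{\partial/\partial t_i}F_L^{(1,1)}$ reduces, as a vertical $1$-form on $X\times X^\vee$, to the corresponding operator on $X$ applied to $F_L^{(1,1)}$, which vanishes by the computation in Section \ref{Fueterequation$G_2$instantons} (equivalence with $F_L^{(1,1)}\wedge\underline{\Theta}=0$). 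The argument for $F_{L'}$ is symmetric.

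The hardest step is ensuring that the resulting $\nabla^{univ}$ actually \emph{restricts} to the prescribed tautological family of HYM instantons on each $M_b\times M_b^\vee$, not merely to some gauge-equivalent triholomorphic $U(r)$ connection with the correct central curvature class. The $PU(r)$ factor does so by construction, and on each fibre the constructed $U(r)$ connection and the tautological HYM connection induce the same $PU(r)$ connection and have cohomologous central curvatures; hence they differ by a central closed $u(1)$-valued $1$-form on $M_b\times M_b^\vee$. Using $H^1(K3)=0$ on both K3 factors together with a K\"unneth argument, this discrepancy is exact fibrewise, so can be absorbed by a fibrewise $U(1)$ gauge transformation. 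Since $B$ is contractible one can patch these gauge adjustments into a smooth global $U(1)$ gauge transformation on $\mathcal{E}$, yielding the desired $\nabla^{univ}$; the earlier verifications of (\ref{generalisedadiabatic$G_2$istantonequation}) and (\ref{twistedgeneralisedadiabatic$G_2$instantoncentral}) are preserved because they are invariant under $U(1)$ gauge transformations pulled back from $M\times_B M^\vee$.
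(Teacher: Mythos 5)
Your proposal is correct and follows essentially the same route as the paper: lift the $PU(r)$ generalised adiabatic $G_2$ instanton to $U(r)$ by prescribing the tensor-product $U(1)$ connection on $\Lambda^r\mathcal{E}\simeq L\otimes L'$ supplied by the preceding lemma. The extra verifications you supply (the central curvature check, the vanishing of $\sum_i I_i\iota_{\partial/\partial t_i}$ applied to the pulled-back abelian curvatures, and the fibrewise central gauge adjustment using $H^1(K3)=0$ to match the tautological HYM family on the nose) are details the paper leaves implicit, and they are sound.
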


\begin{proof}
We can prescribe the tensor product connection on $\Lambda^r\mathcal{E}\simeq L\otimes L'$, which is a $U(1)$ twisted generalised adiabatic $G_2$ instanton. This data allows us to lift the $PU(r)$ generalised $G_2$ instanton to a $U(r)$ twisted generalised $G_2$ instanton.
\end{proof}

\begin{rmk}
The topological identification $\Lambda^r\mathcal{E}\simeq L\otimes L'$ causes non-uniqueness in the construction: one can twist $\nabla^{univ}$ by adding an exact $u(1)$-valued 1-form pulled back from $M^\vee$, or add a $u(1)$-valued 1-form pulled back from $B$, to obtain different solutions.
\end{rmk}

\section{Duality for gauge theories}\label{Dualityforgaugetheories}

The twisted generalised $G_2$ instanton $\nabla^{univ}$ on the universal bundle $\mathcal{E}\to M\times_B M^\vee$ allows us to define the \textbf{Nahm transform}, which transforms twisted $G_2$ instantons on a Hermitian bundle $\mathcal{F}\to M$ with a certain slope potential (\cf Definition \ref{slopepotentialdef}), to  twisted $G_2$ instantons on a new Hermitian bundle $\hat{\mathcal{F}}\to M^\vee$ with a certain slope potential. The main ingredients aside from the existence theory of $\nabla^{univ}$, are the Nahm transform on K3 surfaces 
(\cf the companion paper \cite{Mukaidualitypaper}, which uses core ideas from \cite{BraamBaal}), and the information on the variation of the Dirac operator in Chapter \ref{Adiabaticspinstructures}.

With more input from the companion paper \cite{Mukaidualitypaper}, and using the instanton-Fueter correspondence in Section \ref{Fueterequation$G_2$instantons}, we show, under some regularity assumptions, a version of the \textbf{Fourier inversion theorem}, which says that the inverse Nahm transform is isomorphic to the given twisted adiabatic $G_2$ instanton. This is another manifestation of the duality between $M$ and $M^\vee$.

\subsection{The Nahm transform}\label{TheNahmtransform}

Assume the setup of Chapter \ref{Mukaidualfibrations}.
Let $\pi:M\to B$ be a Donaldson's adiabatic fibration, with Mukai dual fibration $\pi^\vee: M^\vee\to B$, and there is a twisted generalised adiabatic $G_2$ instanton $\nabla^{univ}$ on the universal bundle $\mathcal{E}\to M\times_B M^\vee$.

\begin{rmk}
For a more familiar analogy, $\nabla^{univ}$ shall play the same role as the universal connection on the Poincar\'e line bundle for the Nahm transform on an Abelian surface \cite{BraamBaal}. In less precise terms,  $\nabla^{univ}$ behaves like the Schwartz kernel of Fourier transform operator, or algebraic cycles acting as correspondences.
\end{rmk}

Suppose we are given a Hermitian vector bundle $\mathcal{F}\to M$ with a connection $\alpha$, whose restriction to each fibre is an irreducible HYM connection. Restricted to any K3 fibre $X=M_b$, we assume $\mathcal{F}|_b\to X$ has a different Mukai vector compared to the Hermitian bundle $E|_b\simeq \mathcal{E}|_\tau\to X$ for any $\tau\in X^\vee$. (If the Mukai vectors are the same, then $\alpha$ is represented by a section of $\pi^\vee: M^\vee\to B$, which is the situation of Section \ref{Fueterequation$G_2$instantons}.) Assume also that the gradient of the slope potential function of the bundles $\mathcal{F}\to M$ and $E\to M$ are the same; this ensures that for any fixed $b\in B$, $\tau\in X^\vee$, the connection $\alpha_\tau$ on $\Hom(\mathcal{E}|_\tau, \mathcal{F}|_b)$ obtained by coupling $\alpha$ to $\nabla^{univ}$ has no central curvature, so is actually ASD.

Then for any fixed $b\in B$, we can perform the \textbf{Nahm transform on the K3 fibres}, treated in the companion paper \cite{Mukaidualitypaper}. We briefly recall how this is defined. We couple the Dirac operator on the negative spinor bundle $S^-_X$ (\cf Section \ref{Curvatureoperators}) to the connection $\alpha_\tau$ parametrised by $\tau \in X^\vee$,
\begin{equation}
D^-_{\alpha_\tau} : \Gamma(X , S^-_X\otimes \Hom(\mathcal{E}|_{\tau } , \mathcal{F}|_{b}) ) \to \Gamma(X , S^+_X\otimes\Hom(\mathcal{E}|_{\tau} , \mathcal{F}|_{b}) ).
\end{equation}
The formal adjoint operator is the coupled Dirac operator $D^+_{\alpha_\tau}$.
A Bochner formula argument shows that $D^+_{\alpha_\tau}$ has vanishing kernel, using the assumption that $\Hom(\mathcal{E}|_\tau , \mathcal{F}|_b)$ has no global covariantly constant section on $X$, which is a consequence of our setup. Thus the kernels of $D^-_{\alpha_\tau}$ fit together into a vector bundle $\hat{\mathcal{F}}|_b$ over $X^\vee$, equipped with a natural connection $\hat{\alpha}|_b$ which turns out to be HYM, with the same slope as the bundle  $E'|_b\simeq \mathcal{E}^\vee|_x\to X^\vee$ for any $x\in X$. The Mukai vector of $\hat{\mathcal{F}}|_b$ is the Fourier-Mukai transform of the Mukai vector of $\mathcal{F}|_b$, and in particular $\hat{\mathcal{F}}|_b\not \simeq E'|_b\to X^\vee$. (\cf \cite{Mukaidualitypaper} for more details).

Now we turn to the problem on the 7-fold and allow $b\in B$ to vary. The cokernel vanishing property guarantees these bundles $\hat{\mathcal{F}}|_b$ to fit into a smooth vector bundle $\hat{\mathcal{F}}\to M^\vee$. We can put a canonical connection $\hat{\alpha}$ on $\hat{\mathcal{F}}$ as follows. 
We think of the vector bundle $\hat{\mathcal{F}}|_b=\ker D^-_{\alpha_\tau}$ as a subbundle of the infinite rank vector bundle $\hat{\mathcal{H} }\to M^\vee,
$
whose fibre at the point $(b, \tau)\in M^\vee$ is \[ \hat{\mathcal{H} }|_{b, \tau}  = \Gamma(X , S^-_X\otimes \Hom(\mathcal{E}|_{\tau } , \mathcal{F}|_{b}) ) .\] This bundle $\hat{\mathcal{H}}$ has a natural tautological connection $\hat{d}$, induced from the given connections on $S^-_X\to M$, $\mathcal{F}\to M$, and $\mathcal{E}\to M\times_B M^\vee$, because a first order trivialisation of all the ingredients $S^-_X$, $\mathcal{F}$, $\mathcal{E}$  around a given copy of $X\subset M\times_B M^\vee$ implies a first order trivialisation on $\hat{\mathcal{H} }$. The reader should bear in mind that $\hat{d}$ is a connection over $M^\vee$, so involves differentiation in the $X^\vee$ and the horizontal direction, but not in the $X$ direction.

The subbundle $\hat{\mathcal{F}}\to M^\vee$ is then equipped with a Hermitian metric and the subbundle connection $\hat{\alpha}$. More concretely,  denote by $G_\tau$ the Green operator acting on $\Gamma(X, S^+_X \otimes \Hom(\mathcal{E}|_{\tau} , \mathcal{F}|_{b})   ) $, \ie the inverse to the Laplace operator $\Lap_{\alpha_\tau}=\nabla^*_{\alpha_\tau} \nabla_{\alpha_\tau}$, which by virtue of the ASD property of $\alpha_\tau$ is 
\begin{equation}\label{Lichnerowiczformula}
\Lap_{\alpha_\tau}= D^-_{\alpha_\tau }D^+_{ \alpha_\tau} .
\end{equation}
Thus the $L^2$ orthogonal projection operator from $\hat{\mathcal{H}}|_{b,\tau}$ to the subspace $\hat{\mathcal{F}}|_b$ is given by $
P=1-D^+_{\alpha_\tau} G_\tau D^-_{\alpha_\tau}, 
$
and the covariant derivative associated to $\hat\alpha$ is $\hat{\nabla}= P\hat{d}$.

\begin{Def}
The pair of the Hermitian bundle $\hat{\mathcal{F}}\to M^\vee$ with the canonical connection $\hat{\alpha}$, is called the \textbf{Nahm transform} of the pair $(\mathcal{F}, \alpha)$.
\end{Def}

Our setup here is compatible with the K3 version of the Nahm transform when restricted to fibres. This gives

\begin{lem}(\cf \cite{Mukaidualitypaper})\label{Nahmtransformpreservesadiabatic$G_2$instantononK3}
The Nahm transform $(\hat{\mathcal{F}},  \hat{\alpha} )$ restricted to each K3 fibre on $M^\vee$ is HYM,  and the gradient of the slope potentials of $\hat{\mathcal{F}}\to M^\vee$ and $E'\to M^\vee$ are the same.
\end{lem}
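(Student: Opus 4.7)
The plan is to reduce the statement entirely to the K3 version of the Nahm transform established in the companion paper \cite{Mukaidualitypaper}, by certifying that the 7-dimensional construction restricts compatibly to each K3 fibre of $M^\vee \to B$.

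First I would fix $b \in B$ and set $X = M_b$, $X^\vee = M_b^\vee$. All the ingredients in the definition of $(\hat{\mathcal{F}}, \hat{\alpha})$ restrict compatibly to $X^\vee$: $\alpha|_X$ is by hypothesis an irreducible HYM connection on $\mathcal{F}|_X \to X$; the restriction $\nabla^{univ}|_{X \times X^\vee}$ is triholomorphic by the definition of a twisted generalised adiabatic $G_2$ instanton (Section \ref{generalisedadiabtic$G_2$instantons}); and the matching slope hypothesis on $\mathcal{F}$ and $E$ forces the central curvature of $\alpha_\tau$ on $\Hom(\mathcal{E}|_\tau, \mathcal{F}|_b) \to X$ to vanish, so $\alpha_\tau$ is genuinely ASD for each $\tau$. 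Hence the fibre $\hat{\mathcal{F}}|_b = \ker D^-_{\alpha_\tau}$ over $X^\vee$ is, by construction, precisely the K3 Nahm transform of $(\mathcal{F}|_X, \alpha|_X)$ in the sense of \cite{Mukaidualitypaper}.

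Next I would check that $\hat{\alpha}|_{X^\vee}$ coincides with the K3 Nahm transform connection. The tautological connection $\hat{d}$ on $\hat{\mathcal{H}} \to M^\vee$ differentiates in the $M^\vee$ directions only, so restricted to a vector tangent to $X^\vee$ it reduces to the naive $\tau$-derivative along $X$ held fixed, which is precisely the tautological connection used in the fibrewise construction. The projector $P = 1 - D^+_{\alpha_\tau} G_\tau D^-_{\alpha_\tau}$ is built from data intrinsic to $X$, so commutes with restriction to $X^\vee$. Consequently $\hat{\alpha}|_{X^\vee}$ agrees with the K3 Nahm transform connection, and the HYM property together with the explicit identification of the central curvature as $2\pi\sqrt{-1}$ times the harmonic representative of $(1/\hat r)c_1(\hat{\mathcal{F}}|_b)$ follows directly from the K3 theorem.

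For the slope potential claim I would differentiate the slope potential $b \mapsto (1/\hat r)\int_{M^\vee_b} c_1(\hat{\mathcal{F}}) \cup h^\vee$ in the direction $\partial/\partial t_i$, producing $(1/\hat r)\int c_1(\hat{\mathcal{F}}|_b) \cup [\omega^\vee_i]$; by the Fourier--Mukai computation of the Mukai vector of $\hat{\mathcal{F}}|_b$ in \cite{Mukaidualitypaper}, together with the identity $[\omega_i^\vee] = \tilde{\mu}[\omega_i]$ from Section \ref{Dualityofmaximalsubmanifoldequations}, this agrees with the analogous expression for $E'|_b$. The only potentially delicate step is the last slope identification, but it is exactly the content of the K3 Nahm transform slope formula, so the whole argument amounts to bookkeeping and verifying that the global 7-dimensional connection $\hat{\alpha}$ specialises fibrewise to the K3 Nahm transform connection; no new analytical difficulty arises beyond \cite{Mukaidualitypaper}.
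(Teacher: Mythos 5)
Your proposal is correct and matches the paper's approach: the paper offers no standalone argument for this lemma, simply observing that the global construction of $(\hat{\mathcal{F}},\hat{\alpha})$ restricts on each K3 fibre to the Nahm transform of \cite{Mukaidualitypaper} (whose HYM and slope statements are then quoted), which is exactly the reduction you carry out. Your extra bookkeeping — that $\alpha_\tau$ is honestly ASD under the slope hypothesis, that $\hat{d}$ and the projector $P$ restrict compatibly to $X^\vee$, and that the slope claim follows from the fibrewise Fourier--Mukai/slope formula together with $[\omega_i^\vee]=\tilde{\mu}[\omega_i]$ — is just a more explicit rendering of the same argument.
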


Our next goal is to compute the curvature matrix of  $\hat{\alpha}$, which is very similar to the K3 setting in \cite{Mukaidualitypaper}, and in essence  goes back to the paper \cite{BraamBaal} for Nahm transforms on Abelian surfaces. We reproduce it here for the reader's benefit. The summation convention will be used.

Let $\hat{f}^j  \in \Gamma( X, S^-_X \otimes \Hom( \mathcal{E}|_{\tau}, \mathcal{F}|_b   ) ) $ with $\tau\in X^\vee, b\in B$ be a local orthonormal framing of $\hat{\mathcal{F}}\to M^\vee$,  where $j=1,2\ldots , rk(\hat{\mathcal{F}})$. For a section $\hat{s}(\tau,b)= \sum_j \hat{s}_j \hat{f}^j$, with $\hat{s}_j$ being local  $C^\infty$ functions on $M^\vee$, one can compute \[
\hat{\nabla} \hat{s}
= P\hat{d} \hat{s} =(1-D_{\alpha_\tau}^+ G_\tau D^-_{\alpha_\tau}   ) [ \hat{d} \sum \hat{s}_j (\tau,b)\hat{f}^j  ].
\]
In components, we can write \[
\hat{\nabla} \hat{s}= (d \hat{s}_j + \hat{\alpha}_{jk} \hat{s}_k       ) \hat{f}^j,
\]
where the connection matrix of $\alpha$   is 
$
\hat{\alpha}_{jk}= \langle \hat{f}^j, \hat{d} \hat{f}^k\rangle.
$
The curvature matrix is
\[
\hat{F}_{ij}= d\hat{\alpha}_{ij} +\hat{\alpha}_{ik} \wedge \hat{\alpha}_{kj}=
\langle \hat{d}\hat{f}^i, \wedge \hat{d} \hat{f}^j   \rangle+ \langle \hat{f}^i, \wedge \hat{d}^2\hat{f}^j   \rangle
+ 
\langle\hat{f}^i, \hat{d}\hat{f}^k   \rangle \wedge \langle\hat{f}^k, \hat{d}\hat{f}^j   \rangle.
\]
Now $\langle \hat{d} \hat{f}^i, \hat{f}^k   \rangle=-\langle\hat{f}^i, \hat{d}\hat{f}^k   \rangle$ by the compatibility with Hermitian structures, so the third term above is recognized as $-\langle P \hat{d} \hat{f}^i,\wedge \hat{d}\hat{f}^j   \rangle$, and
\[
\begin{split}
\hat{F}_{ij}= & \langle \hat{d}\hat{f}^i, \wedge \hat{d}\hat{f}^j   \rangle- 
\langle P \hat{d} \hat{f}^i,\wedge \hat{d}\hat{f}^j   \rangle + \langle \hat{f}^i, \wedge \hat{d}^2\hat{f}^j   \rangle\\
= & \langle  D_{\alpha_\tau}^+ G_\tau D^-_{\alpha_\tau}      \hat{d}\hat{f}^i, \wedge \hat{d}\hat{f}^j   \rangle
= \langle   G_\tau D^-_{\alpha_\tau}      \hat{d}\hat{f}^i, \wedge D_{\alpha_\tau}^- \hat{d} \hat{f}^j  \rangle+ \langle \hat{f}^i, \wedge \hat{d}^2\hat{f}^j   \rangle.
\end{split}
\]
But since $ D^-_{\alpha_\tau}\hat{f}^i=0$,
\[ D^-_{\alpha_\tau}      \hat{d}\hat{f}^i=[  D^-_{\alpha_\tau},       \hat{d}   ]\hat{f}^i.
\]
To summarize,
\begin{lem}
The \textbf{curvature matrix} of $\hat{\alpha}$ is 
\begin{equation}\label{curvaturematrixNahmtransform}
\hat{F}_{ij}=\langle   G_\tau [D^-_{\alpha_\tau} ,     \hat{d}]\hat{f}^i, \wedge [D_{\alpha_\tau}^-, \hat{d}] \hat{f}^j  \rangle+ \langle \hat{f}^i, \wedge \hat{d}^2\hat{f}^j   \rangle.
\end{equation}
\end{lem}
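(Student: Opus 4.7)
The plan is to compute $\hat{F}_{ij}$ directly from the intrinsic formula for the curvature of a subbundle connection $\hat{\nabla}=P\hat{d}$, working in the local orthonormal frame $\{\hat{f}^j\}$ of $\hat{\mathcal{F}}\subset \hat{\mathcal{H}}$. First I would write any local section as $\hat{s}=\hat{s}_j \hat{f}^j$ to read off the connection matrix $\hat{\alpha}_{jk}=\langle \hat{f}^j,\hat{d}\hat{f}^k\rangle$; note that $P$ drops out at this stage because each $\hat{f}^k$ already lies in $\ker D^-_{\alpha_\tau}$ and $P$ is self-adjoint. Substituting into the standard identity $\hat{F}_{ij}=d\hat{\alpha}_{ij}+\hat{\alpha}_{ik}\wedge \hat{\alpha}_{kj}$ gives an expression with three pieces: $\langle \hat{d}\hat{f}^i,\wedge \hat{d}\hat{f}^j\rangle$, $\langle \hat{f}^i,\wedge \hat{d}^2\hat{f}^j\rangle$, and the quadratic term $\langle \hat{f}^i,\hat{d}\hat{f}^k\rangle\wedge \langle \hat{f}^k,\hat{d}\hat{f}^j\rangle$.

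The key algebraic step is to resum the quadratic term. Using compatibility of $\hat{d}$ with the Hermitian structure on $\hat{\mathcal{H}}$, one has $\langle \hat{d}\hat{f}^i,\hat{f}^k\rangle=-\langle \hat{f}^i,\hat{d}\hat{f}^k\rangle$, and the completeness relation $\sum_k \hat{f}^k\langle \hat{f}^k,\,\cdot\,\rangle = P$ converts the quadratic piece into $-\langle P\hat{d}\hat{f}^i,\wedge \hat{d}\hat{f}^j\rangle$. Combined with the first term this collapses to $\langle (1-P)\hat{d}\hat{f}^i,\wedge \hat{d}\hat{f}^j\rangle$, plus the algebraic leftover $\langle \hat{f}^i,\wedge \hat{d}^2\hat{f}^j\rangle$.

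To finish, I would insert the explicit formula $1-P=D^+_{\alpha_\tau}G_\tau D^-_{\alpha_\tau}$ and move $D^+_{\alpha_\tau}$ onto the right-hand factor by formal adjointness, which is legitimate because $G_\tau$ is defined via the Lichnerowicz-type identity (\ref{Lichnerowiczformula}) and commutes with the relevant operators. This produces $\langle G_\tau D^-_{\alpha_\tau}\hat{d}\hat{f}^i,\wedge D^-_{\alpha_\tau}\hat{d}\hat{f}^j\rangle$. Finally, since $D^-_{\alpha_\tau}\hat{f}^i=0$ by construction, the term $\hat{d}\circ D^-_{\alpha_\tau}$ hitting $\hat{f}^i$ vanishes and we may replace $D^-_{\alpha_\tau}\hat{d}\hat{f}^i$ by the commutator $[D^-_{\alpha_\tau},\hat{d}]\hat{f}^i$, yielding (\ref{curvaturematrixNahmtransform}).

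The computation is essentially pointwise on each fibre $(b,\tau)$ and requires no deep input beyond the cokernel vanishing that makes $G_\tau$ well defined; the only real obstacle is bookkeeping — carefully tracking signs when Hermitian-conjugating connection pieces and when commuting $D^+_{\alpha_\tau}$ past the wedge product — but no analytic subtlety arises, and crucially nothing in this lemma yet uses the variation of the Dirac operator, which enters only when one later tries to show that $\hat{\alpha}$ is itself a twisted adiabatic $G_2$ instanton.
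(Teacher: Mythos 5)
Your proposal is correct and follows essentially the same route as the paper: read off $\hat{\alpha}_{jk}=\langle\hat f^j,\hat d\hat f^k\rangle$, resum the quadratic term into $-\langle P\hat d\hat f^i,\wedge\hat d\hat f^j\rangle$ via Hermitian compatibility and the completeness of the frame, insert $1-P=D^+_{\alpha_\tau}G_\tau D^-_{\alpha_\tau}$, move $D^+_{\alpha_\tau}$ across by formal adjointness, and use $D^-_{\alpha_\tau}\hat f^i=0$ to pass to the commutator. The only cosmetic point is that the adjointness step needs nothing more than $D^+_{\alpha_\tau}$ being the formal adjoint of $D^-_{\alpha_\tau}$ in the fibrewise $L^2$ pairing (the commutation of $G_\tau$ with other operators is not required at this stage), exactly as in the paper.
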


This computation is formal in nature. The actual geometric content is to interpret the curvature term $\hat{d}^2$ and the variation of the coupled fibrewise Dirac operator $[D_{\alpha_\tau}^-, \hat{d}]$. We shall denote as in Chapter \ref{Mukaidualfibrations} the coordinates on $X$ as $x_i$, the coordinates on $X^\vee$ as $\tau_i$, and the base coordinates in the first order neighbourhood of $b\in B$ as $t_i$.
We will be interested mainly in the terms which contribute to the horizontal-vertical type $(1,1)$ component of the curvature $\hat{F}$, namely $\sum_{i,j}\hat{F}(\frac{\partial}{\partial \tau_i} , \frac{\partial}{\partial t_j} )d\tau_i \wedge dt_j$.

\begin{rmk}
Even though
the horizontal distribution $H$ is not integrable in general, we can always choose coordinates over the first order neighbourhood such that $\frac{\partial}{\partial t_i}$ gives $H$.
\end{rmk}

We first analyse $\hat{d}^2$. The tautological connection $\hat{d}$ on the infinite rank bundle $\hat{\mathcal{H}}$ is induced from the natural connections on $S^-_X\to M$, $\mathcal{E}\to M\times_B M^\vee$, and $\mathcal{F}\to M$, and so its curvature is tautologically induced by the curvature of these 3 individual connections. If $s$ is a section in $\Gamma(X, S^-_X\otimes \Hom(\mathcal{E}|_\tau, \mathcal{F}|_b))$ representing an element in $\hat{\mathcal{H}}_{\tau,b} $, then $\hat{d}^2$ acting on $s$ just means the sum of the following 3  curvature operators acting on $s$ pointwise on $X$.

\begin{itemize}
\item The connection on $S^-_X\to M$ is $\nabla^{LC,-}$ from Chapter \ref{Adiabaticspinstructures}. Since this is pulled back from $M$ to $M\times_B M^\vee$,  the curvature component  $F(\nabla^{LC,-})(\frac{\partial}{\partial \tau_i} ,\frac{\partial}{\partial t_j}         )=0$ does not contribute to $\hat{d}^2$.  

\item The connection on $\mathcal{E}$ was denoted $\nabla^{univ}$, and its relevant curvature component is $F(\nabla^{univ})( \frac{\partial}{\partial \tau_i} ,\frac{\partial}{\partial t_j}       )d\tau_i \wedge dt_j$. Since this is acting on a Hom bundle, in terms of matrices we need to take the minus transpose action.

\item The connection on $\mathcal{F}$ is $\alpha$, and its contribution is zero similar to the $S^-_X$ case.
\end{itemize}

Combining these remarks,
\begin{lem}
The term $\langle \hat{f}^i, \wedge \hat{d}^2\hat{f}^j   \rangle$ is equal to
\begin{equation}\label{contributiontoNahmtransformcurvaturehatdsquare}
\langle \hat{f}^i, \{ \sum_{k,l} F(\nabla^{univ})( \frac{\partial}{\partial \tau_k} ,\frac{\partial}{\partial t_l}       )d\tau_k \wedge dt_l  \}\hat{f}^j  \rangle.
\end{equation}
\end{lem}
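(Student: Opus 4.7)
The plan is to unpack the definition of $\hat{d}$ as a tensor product of connections on the three factors $S^-_X$, $\mathcal{E}$, and $\mathcal{F}$, so that the curvature $\hat{d}^2$ acts pointwise on sections of $S^-_X\otimes \Hom(\mathcal{E}|_\tau,\mathcal{F}|_b)$ as the sum of three contributions, one from each factor. Concretely, if $s$ is a local section representing an element of $\hat{\mathcal{H}}$ over a neighbourhood in $M^\vee$, then
\begin{equation*}
\hat{d}^2 s = F(\nabla^{LC,-}) \cdot s \;+\; F(\nabla^{univ})^\vee \cdot s \;+\; F(\alpha)\cdot s,
\end{equation*}
where $F(\nabla^{univ})^\vee$ denotes the induced curvature action on the $\Hom$-factor (acting by minus precomposition on the $\mathcal{E}$-slot). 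This reduction is standard for tensor-product connections and matches the description in the excerpt: a simultaneous first-order trivialisation of the three bundles on a copy of $X$ induces a first-order trivialisation of $\hat{\mathcal{H}}$, and $\hat{d}$ is defined precisely via such trivialisations.

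Next I would show that in the mixed horizontal--vertical slot $(\frac{\partial}{\partial \tau_k},\frac{\partial}{\partial t_l})$, the contributions from $\nabla^{LC,-}$ and $\alpha$ both vanish. The key point is functorial: $S^-_X\to M$ and $\mathcal{F}\to M$ are defined on $M$ and pulled back to $M\times_B M^\vee$ along the natural projection $\pi: M\times_B M^\vee \to M$. Hence the relevant connections are pullbacks, and so are their curvatures: $F(\pi^*\nabla) = \pi^*F(\nabla)$. Since $\frac{\partial}{\partial \tau_k}$ is tangent to the $X^\vee$ fibre of $M\times_B M^\vee\to M$, one has $\pi_*\frac{\partial}{\partial \tau_k}=0$, so the pullback 2-form vanishes on any pair that involves $\frac{\partial}{\partial\tau_k}$. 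This eliminates the $S^-_X$ and $\mathcal{F}$ contributions in (\ref{contributiontoNahmtransformcurvaturehatdsquare}).

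The only surviving term therefore comes from the universal connection on $\mathcal{E}\to M\times_B M^\vee$, whose restriction to the $(\frac{\partial}{\partial \tau_k},\frac{\partial}{\partial t_l})$ slot is genuinely nontrivial. Pairing with $\hat{f}^i$ on the left and applying to $\hat{f}^j$ gives the right-hand side of (\ref{contributiontoNahmtransformcurvaturehatdsquare}), with the understanding that $F(\nabla^{univ})$ acts on the $\Hom(\mathcal{E}|_\tau,\mathcal{F}|_b)$-valued spinor through its natural action on the $\mathcal{E}$-slot (this absorbs the minus transpose mentioned just before the lemma).

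The main obstacle I anticipate is purely bookkeeping: one must be careful that the ``natural'' connection on $\hat{\mathcal{H}}$ built from first-order trivialisations is genuinely the tensor product of the three individual connections, and one must track signs introduced by viewing the curvature of $\nabla^{univ}$ as acting on a Hom-bundle rather than on $\mathcal{E}$ itself. Both are routine once the formalism is set up, and none of the deeper inputs of the paper (Donaldson's integrability conditions, the generalised $G_2$ instanton equation) are needed for this particular lemma — they will instead enter when (\ref{contributiontoNahmtransformcurvaturehatdsquare}) is combined with the computation of $[D^-_{\alpha_\tau},\hat{d}]$ to show that $\hat{\alpha}$ itself is a twisted adiabatic $G_2$ instanton.
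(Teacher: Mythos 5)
Your proposal is correct and follows essentially the same route as the paper: decompose $\hat{d}^2$ into the three tautological curvature contributions from $S^-_X$, $\mathcal{E}$ and $\mathcal{F}$, observe that the $S^-_X$ and $\mathcal{F}$ pieces vanish in the mixed $(\partial/\partial\tau_k,\partial/\partial t_l)$ slot because those connections are pulled back from $M$, and keep only the $F(\nabla^{univ})$ term acting (with the minus-transpose convention) on the $\Hom$-factor. No gap to report.
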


Next we analyse the variation of the coupled fibrewise Dirac operator 
$[ D_{\alpha_\tau}^-, \hat{d} ]$, which in components has the formula
\[
[\hat{d}, D_{\alpha_\tau}^-]=\sum [ \nabla_{\frac{\partial}{\partial t_i}}, D^-_{\alpha_\tau}    ] dt_i +\sum [ \nabla_{\frac{\partial}{\partial \tau_i}}, D^-_{\alpha_\tau}    ] d\tau_i.
\]
Here $\nabla$ refers to the natural covariant derivatives of $\hat{d}$, so really comes from the 3 individual connections as above. Those commutators are operators acting on coupled negative spinors.

\begin{itemize}
\item The most fundamental contribution is when we ignore the coupling with $\alpha$ and $\nabla^{univ}$. Then we are dealing with the variation of the fibrewise uncoupled Dirac operator
$\sum[ \nabla^{LC}_{ \frac{\partial}{\partial t_i}  } , D^-      ] dt_i$
, which is the subject of the delicate calculations in Section \ref{Curvatureoperators}.

\item The presence of the connection $\nabla^{univ}$ has two effects. Let us write
\[
\begin{cases}
\Omega= \sum F(\nabla^{univ})( \frac{\partial}{\partial x_i},   \frac{\partial}{\partial t_j}  )dx_i \wedge dt_j, 
\\
 \Omega'=\sum F(\nabla^{univ})( \frac{\partial}{\partial x_i},   \frac{\partial}{\partial \tau_j}  )dx_i \wedge d\tau_j.
\end{cases}
\]
The Clifford actions of these curvature tensors are
\[
\begin{cases}
\Omega\cdot s=\sum F(\nabla^{univ})( \frac{\partial}{\partial x_i},   \frac{\partial}{\partial t_j}  )c_X(dx_i) s \otimes dt_j, \\
\Omega'\cdot s=\sum F(\nabla^{univ})( \frac{\partial}{\partial x_i},   \frac{\partial}{\partial \tau_j}  )c_X(dx_i)s \otimes d\tau_j.
\end{cases}
\]
The contribution to the variation of the coupled fibrewise Dirac operator $[D^-_{\alpha_\tau} ,     \hat{d}]$ is $\Omega+\Omega'$, which are Clifford operator valued 1-forms. We remark that the action of this operator on the $\Hom(\mathcal{E},\mathcal{F})$ factor of $s$ involves minus transpose in matrix language.

\item 
The connection $\alpha$ has curvature matrix $F(\alpha)$, and its horizontal-vertical (1,1) part is 
\[
F(\alpha)^{(1,1)}=\sum F(\alpha)( \frac{\partial}{\partial x_i},   \frac{\partial}{\partial t_j}  )dx_i \wedge dt_j.
\]
The Clifford action of this curvature operator is
\[
F(\alpha)^{(1,1)}\cdot s=\sum F(\alpha)^{(1,1)}( \frac{\partial}{\partial x_i},   \frac{\partial}{\partial t_j}  )c_X(dx_i) s \otimes dt_j.
\]
The contribution to $[ D^-_{\alpha_\tau} ,     \hat{d} ]$ is $F(\alpha)^{(1,1)}\cdot$ viewed as the Clifford operator valued 1-form.
\end{itemize}

Combining these contributions, and collecting $dt_i$ terms together,
\begin{lem}
The  (1,1) type component of $
\langle   G_\tau [D^-_{\alpha_\tau} ,     \hat{d}]\hat{f}^i, \wedge [D_{\alpha_\tau}^-, \hat{d}] \hat{f}^j  \rangle$
can be written as the sum of
\[
\langle   G_\tau ( [D^- ,     \hat{d}]+ \Omega+F(\alpha)^{(1,1)})\cdot\hat{f}^i, \wedge   \Omega'\cdot
 \hat{f}^j  \rangle
\]
and
\[
\langle   G_\tau \Omega'\cdot\hat{f}^i, \wedge  ( [D^- ,     \hat{d}]+ \Omega+F(\alpha)^{(1,1)})\cdot
\hat{f}^j  \rangle.
\]
\end{lem}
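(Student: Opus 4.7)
The plan is to read the decomposition of the commutator $[D^-_{\alpha_\tau},\hat{d}]$ provided by the previous lemma according to the \emph{form degree} on $M^\vee$: horizontal (i.e.\ valued in $dt_j$) versus vertical (i.e.\ valued in $d\tau_j$). Once this bigrading is established, the identity follows by expanding the wedge product $\langle G_\tau\cdot,\wedge\cdot\rangle$ and keeping only the two cross terms, which are exactly those that contribute to the $(1,1)$-type component.

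First I would check that among the three horizontal-variation contributions to $[D^-_{\alpha_\tau},\hat{d}]$, namely $[\nabla^{LC}_{\partial/\partial t_i},D^-]$, the Clifford action of $\Omega$, and the Clifford action of $F(\alpha)^{(1,1)}$, each one is an operator-valued $1$-form taking values in $dt_j$: the first by construction of the variation along $\partial/\partial t_i$, and the other two because both $\Omega$ and $F(\alpha)^{(1,1)}$ are defined with one leg in the $\partial/\partial t_j$ direction. Similarly, the vertical contributions come only from $\Omega'$: the spin connection $\nabla^{LC,-}$ and the connection $\alpha$ are pulled back from $M$ and so contribute no $\partial/\partial\tau$ curvature, and $D^-$ itself depends only on $b\in B$ and hence commutes with $\nabla_{\partial/\partial\tau_i}$. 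Writing $H:=[D^-,\hat{d}]+\Omega+F(\alpha)^{(1,1)}\cdot$ and $V:=\Omega'\cdot$, we thus have
\begin{equation*}
[D^-_{\alpha_\tau},\hat{d}]=H+V,
\end{equation*}
where $H$ is $dt$-valued and $V$ is $d\tau$-valued.

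Next I would expand
\begin{equation*}
\langle G_\tau[D^-_{\alpha_\tau},\hat{d}]\hat{f}^i,\wedge[D^-_{\alpha_\tau},\hat{d}]\hat{f}^j\rangle=\langle G_\tau H\hat{f}^i,\wedge H\hat{f}^j\rangle+\langle G_\tau H\hat{f}^i,\wedge V\hat{f}^j\rangle+\langle G_\tau V\hat{f}^i,\wedge H\hat{f}^j\rangle+\langle G_\tau V\hat{f}^i,\wedge V\hat{f}^j\rangle.
\end{equation*}
The first summand has form degree $dt\wedge dt$ and is of type $(2,0)$, the last has form degree $d\tau\wedge d\tau$ and is of type $(0,2)$, so neither contributes to the horizontal-vertical $(1,1)$ component. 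The two middle summands have mixed degree $dt\wedge d\tau$ and $d\tau\wedge dt$, which together constitute precisely the $(1,1)$ part. Substituting the expressions for $H$ and $V$ produces the claimed formula.

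The only potential obstacle is the bookkeeping of form degrees in the middle bullet of the preceding lemma: one must ensure that the Clifford-action convention $\Omega\cdot s$, $\Omega'\cdot s$ does not produce additional wedge factors beyond the $dt_j$, $d\tau_j$ explicitly written (the $c_X(dx_i)$ factor acts on spinors, not as an exterior form on $M^\vee$). With this convention fixed, everything reduces to the algebraic identity $(H+V)\wedge(H+V)=H\wedge H+H\wedge V+V\wedge H+V\wedge V$ paired via $G_\tau$, which is symmetric bookkeeping and requires no further geometric input.
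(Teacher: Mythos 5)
Your proof is correct and follows essentially the same route the paper intends: you make explicit the $dt$- versus $d\tau$-bigrading of $[D^-_{\alpha_\tau},\hat d]$ from the preceding bullet-list analysis (with $H=[D^-,\hat d]+\Omega+F(\alpha)^{(1,1)}$ horizontal and $V=\Omega'$ vertical, using that $\nabla^{LC,-}$ and $\alpha$ are pulled back from $M$ and that the fibrewise metric depends only on $b$), and then the $(1,1)$-component is precisely the two cross terms of $(H+V)\wedge(H+V)$. The paper leaves this bookkeeping implicit ("collecting $dt_i$ terms together"); you have merely spelled it out.
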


\subsection{Nahm transform preserves adiabatic $G_2$ instantons}\label{Nahmtransformpreservesadiabatic$G_2$instantons}

Assume in this Section that $\alpha$ on the bundle $\mathcal{F}\to M$ is a twisted adiabatic $G_2$ instanton.
Consider $\hat{F}^{(1,1)}\wedge \underline{\Theta}^{M^\vee}$, or equivalently 
\[
(I_1 dt_2 dt_3+ I_2 dt_3 dt_1+I_3 dt_1dt_2) \hat{F}^{(1,1)}
\]
where $I_k$ acts on the 1-forms $d\tau_j$ but not on $dt_j$. The contributing terms  from last Section will cancel out exactly.

\begin{lem}\label{contributioncurvatureofNahmtransformhatdsquaredterm1}
The contributing term $\sum F(\nabla^{univ})( \frac{\partial}{\partial \tau_i} ,\frac{\partial}{\partial t_j}       )d\tau_i \wedge dt_j$ to $\hat{d}^2$ satisfies
\[
(I_1 dt_2 dt_3+ I_2 dt_3 dt_1+I_3 dt_1dt_2)\{\sum F(\nabla^{univ})( \frac{\partial}{\partial \tau_i} ,\frac{\partial}{\partial t_j}       )d\tau_i \wedge dt_j \}=0.
\]
Therefore the contribution (\ref{contributiontoNahmtransformcurvaturehatdsquare}) vanishes.
\end{lem}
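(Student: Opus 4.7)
The plan is to reduce the identity to the defining equation of a generalised adiabatic $G_2$ instanton. Recall from Section \ref{generalisedadiabtic$G_2$instantons} that $\nabla^{univ}$ being such an instanton means the horizontal-vertical $(1,1)$ component of its curvature satisfies
\[
\sum_k I_k \iota_{\partial/\partial t_k} F(\nabla^{univ})^{(1,1)} = 0,
\]
where the $I_k$ act on vertical 1-forms. Now the vertical direction of the fibred product $M\times_B M^\vee$ splits canonically as $TM_b \oplus TM_b^\vee$, with coordinates $x_i$ and $\tau_i$ respectively, and the complex structures $I_k$ on the fibre are induced pointwise from the compatible hyperk\"ahler triples $\omega_k^{M_b}$ and $\omega_k^{M_b^\vee}$, both indexed by the same orthonormal basis $\partial/\partial t_k$ of $T_bB$. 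Consequently $I_k$ respects the splitting, and the above identity decomposes into two independent pieces, one supported on each factor. In particular, extracting the $d\tau_i$ piece yields
\[
\sum_{i,k} F(\nabla^{univ})\bigl(\tfrac{\partial}{\partial \tau_i}, \tfrac{\partial}{\partial t_k}\bigr) \, I_k(d\tau_i) = 0.
\]

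Next I would perform a direct wedge-product computation to show that the expression in the lemma is exactly (up to sign) $dt_1 dt_2 dt_3$ times the left-hand side above. The key calculation: for each $k$, the wedge $dt_{k+1}\wedge dt_{k+2} \wedge d\tau_i \wedge dt_j$ vanishes unless $j=k$, in which case a short sign check reduces it to $-dt_1 dt_2 dt_3 \wedge d\tau_i$. Summing over $k$ and pulling out the common factor $dt_1dt_2dt_3$ gives
\[
(I_1 dt_2 dt_3+ I_2 dt_3 dt_1+I_3 dt_1dt_2)\wedge \sum_{i,j} F\bigl(\tfrac{\partial}{\partial \tau_i},\tfrac{\partial}{\partial t_j}\bigr) d\tau_i \wedge dt_j = -\,dt_1 dt_2 dt_3 \wedge \sum_{i,k} F\bigl(\tfrac{\partial}{\partial \tau_i},\tfrac{\partial}{\partial t_k}\bigr) I_k(d\tau_i),
\]
which vanishes by the previous display.

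The second half of the lemma — that the contribution (\ref{contributiontoNahmtransformcurvaturehatdsquare}) vanishes under the same operator $(I_1 dt_2 dt_3+\dots)$ — is then immediate, because the only term in $\langle \hat{f}^i,\hat{d}^2\hat{f}^j\rangle$ of type $d\tau \wedge dt$ is precisely the $\nabla^{univ}$ curvature piece just handled; the $S^-_X$ and $\alpha$ contributions to $\hat{d}^2$ have no mixed $d\tau\wedge dt$ component. The main thing to be careful with is bookkeeping: keeping straight which $I_k$ acts on what (the $d\tau_i$ but not the $dt_j$), and the fact that $I_k$ preserves the $TM_b \oplus TM_b^\vee$ decomposition so that the two `halves' of the generalised adiabatic $G_2$ equation can be extracted independently — this latter point is what makes the identity work separately on $d\tau$ and $dx$ components.
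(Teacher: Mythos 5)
Your proof is correct and follows the same route as the paper: the paper's proof is simply the observation that the identity is the $d\tau$-component of the twisted generalised adiabatic $G_2$ instanton equation $\sum_k I_k \iota_{\partial/\partial t_k} F(\nabla^{univ})^{(1,1)}=0$ satisfied by $\nabla^{univ}$, which splits into independent $dx$- and $d\tau$-pieces exactly as you argue. Your explicit wedge/sign bookkeeping just spells out what the paper leaves implicit.
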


\begin{proof}
This can be rephrased as one of the components of the twisted generalised adiabatic $G_2$ instanton equation
$
\sum_k I_k \iota_{\frac{\partial}{\partial t_k} } F(\nabla^{univ})^{(1,1)} =0.
$
\end{proof}

\begin{lem}
The operator valued 1-form $[D^- ,     \hat{d}]+ \Omega\cdot+ F(\alpha)^{(1,1)} \cdot$ satisfies
\begin{equation}\label{variationofcoupledDiracoperator}
(I_1^{S^+}dt_2dt_3+I_2^{S^+}dt_3dt_1+I_3^{S^+}dt_1dt_2 ) \{ [D^- ,     \hat{d}]+ \Omega \cdot   + F(\alpha)^{(1,1)}\cdot    \}=0.
\end{equation}
\end{lem}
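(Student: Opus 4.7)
The plan is to split the operator-valued $1$-form into its three natural summands and to verify that the composition $\sum_{k}I_{k}^{S^{+}}(\,\cdot\,)_{k}$ annihilates each one separately.

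The first summand $\sum_{k}I_{k}^{S^{+}}[\nabla^{LC}_{\partial/\partial t_{k}},D^{-}]$ vanishes by the identity (\ref{variationofuncoupledDiracoperator$G_2$instantonequation}) established in Section \ref{Curvatureoperators}; that is the place where Donaldson's conditions $d_{H}\underline{\omega}=0$ and $d_{H}\underline{\Theta}=0$ enter, so this first piece already carries all of the ``integrability'' content on the base.

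For the second summand I would rewrite
\[
\sum_{k}I_{k}^{S^{+}}\Omega_{k}\;=\;\sum_{k,i}F(\nabla^{univ})\!\Bigl(\tfrac{\partial}{\partial x_{i}},\tfrac{\partial}{\partial t_{k}}\Bigr)\,I_{k}^{S^{+}}\,c_{X}(dx_{i}).
\]
The Clifford identity $I_{k}^{S^{+}}\,c_{X}(dx_{i})|_{S^{-}_{X}}=c_{X}(I_{k}\,dx_{i})|_{S^{-}_{X}}$, which follows from the formula $I_{k}^{S^{+}}(v\cdot s)=c_{X}(I_{k}v)\cdot s$ on negative spinors from Section \ref{Adiabaticspinstructure$G_2$geometry} together with the sign convention $I_{k}a=-a\circ I_{k}$ on $1$-forms, then converts the above into Clifford multiplication by the vertical $1$-form
\[
\sum_{k,i}F(\nabla^{univ})\!\Bigl(\tfrac{\partial}{\partial x_{i}},\tfrac{\partial}{\partial t_{k}}\Bigr)\,I_{k}\,dx_{i},
\]
which is exactly the $T^{*}X$-component of $\sum_{k}I_{k}\,\iota_{\partial/\partial t_{k}}F(\nabla^{univ})^{(1,1)}$. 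This vanishes because $\nabla^{univ}$ is a generalised adiabatic $G_{2}$ instanton in the sense of (\ref{generalisedadiabatic$G_2$istantonequation}), by Theorem \ref{generalisedadiabatic$G_2$instantonexistencequestion}. The minus-transpose on the $\Hom(\mathcal{E},\mathcal{F})$-factor is harmless since the identity is linear in the curvature. The third summand $\sum_{k}I_{k}^{S^{+}}(F(\alpha)^{(1,1)})_{k}$ is handled in exactly the same way: the same Clifford identity reduces its vanishing to $\sum_{k}I_{k}\,\iota_{\partial/\partial t_{k}}F(\alpha)^{(1,1)}=0$, which is the $F(\alpha)^{(1,1)}\wedge\underline{\Theta}=0$ half of the twisted adiabatic $G_{2}$ instanton equation (Definition \ref{slopepotentialdef}), translated from $4$-form to $1$-form language by the same manipulation as in the proof of Proposition \ref{Fueterimpliesmonopole}.

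Summing the three vanishings gives (\ref{variationofcoupledDiracoperator}). There is no single hard step, because the heavy lifting was already done in Section \ref{Curvatureoperators} and in Theorem \ref{generalisedadiabatic$G_2$instantonexistencequestion}; the only delicate point is the bookkeeping of the Clifford identity with the paper's sign convention for $I_{k}$ on $1$-forms, ensuring that the three identities --- variation of the uncoupled fibrewise Dirac operator, the generalised instanton equation for $\nabla^{univ}$, and the instanton equation for $\alpha$ --- all produce the same algebraic pattern ``$\sum_{k}I_{k}\iota_{\partial/\partial t_{k}}(\,\cdot\,)=0$'' and therefore combine in parallel.
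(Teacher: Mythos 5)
Your proposal is correct and follows essentially the same route as the paper: the uncoupled term is killed by the identity (\ref{variationofuncoupledDiracoperator$G_2$instantonequation}) from Section \ref{Curvatureoperators}, and the $\Omega$ and $F(\alpha)^{(1,1)}$ terms are converted via the Clifford identity $I_k^{S^+}c_X(dx_i)=c_X(I_k dx_i)$ on negative spinors into the conditions $\sum_k I_k\iota_{\partial/\partial t_k}F(\nabla^{univ})^{(1,1)}=0$ and $\sum_k I_k\iota_{\partial/\partial t_k}F(\alpha)^{(1,1)}=0$, i.e.\ the generalised and twisted adiabatic $G_2$ instanton equations. The only cosmetic difference is that the paper treats the $F(\alpha)^{(1,1)}$ term by declaring it "exactly analogous" to the $\Omega$ computation, whereas you spell out the translation to $1$-form language; the content is identical.
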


\begin{proof}
If we ignore the $\Omega$ and the $F(\alpha)^{(1,1)}$ term, then this is the content of equation (\ref{variationofuncoupledDiracoperator$G_2$instantonequation}), which is the result of a very delicate calculation.

As for the contribution of $\Omega$,
\[
\begin{split}
& (I_1^{S^+}dt_2dt_3+I_2^{S^+}dt_3dt_1+I_3^{S^+}dt_1dt_2 )\Omega \cdot \\
=& (I_1^{S^+}dt_2dt_3+I_2^{S^+}dt_3dt_1+I_3^{S^+}dt_1dt_2 )\{ \sum \Omega( \frac{\partial }{\partial x_i},   \frac{\partial }{\partial t_j} ) c_X(dx_i) dt_j \}\\
=& dt_1dt_2dt_3\{      
\sum \Omega( \frac{\partial }{\partial x_i},   \frac{\partial }{\partial t_j} ) c_X(I_j dx_i) 
           \}\\
=& c_X \{ (I_1dt_2dt_3+I_2dt_3dt_1+I_3dt_1dt_2 )
\Omega\}=0.
\end{split}
\]
This last step is a component of the twisted generalised adiabatic $G_2$ instanton equation. Here the Clifford action is independent of wedging by $dt_i$.

The discussion for $F(\alpha)^{(1,1)}$ is exactly analogous to $\Omega$.
\end{proof}

\begin{lem}
The Clifford operator valued 1-form $\Omega'$ acting on a coupled negative spinor $s$ satisfies
\begin{equation}\label{Omega'istriholomorphicspinorinterpretation}
\sum_{i,j} \Omega'( \frac{\partial}{\partial x_i} ,   \frac{\partial}{\partial \tau_j}      ) c_X ( dx_i )s \otimes I_k d\tau_j= -I_k^{S^+} \Omega'\cdot s.
\end{equation}
in the local bases $\{ \frac{\partial}{\partial x_i} \}$ and  $\{ \frac{\partial}{\partial \tau_j} \}$.
\end{lem}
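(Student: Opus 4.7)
The key input is the triholomorphic property of $\nabla^{univ}$ on $X \times X^\vee$, which asserts that $F := F(\nabla^{univ})$ has Dolbeault type $(1,1)$ with respect to each of the hyperk\"ahler complex structures $J_k = I_k^X \oplus I_k^{X^\vee}$. The plan is to transpose this curvature condition into the Clifford-theoretic language of the lemma by straightforward linear algebra at a point, after fixing conventions.

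First I would choose orthonormal bases $\{\partial_{x_i}\}$ for $T_x X$ and $\{\partial_{\tau_j}\}$ for $T_\tau X^\vee$, write $A^k, B^k$ for the antisymmetric matrices of $I_k^X, I_k^{X^\vee}$ in these bases, and set $F_{ij} := F(\partial_{x_i}, \partial_{\tau_j})$. The mixed-direction $(1,1)$-condition $F(I_k^X u, I_k^{X^\vee} v) = F(u,v)$ for $u \in TX$, $v \in TX^\vee$ becomes the matrix equation $(A^k)^T F B^k = F$, which using $(A^k)^T = -A^k$ is equivalent to the clean identity
\[
A^k F \;=\; F B^k.
\]

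Next I would expand both sides of the claimed identity. Using the convention $I_k\alpha = -\alpha\circ I_k$ on $1$-forms, one has $I_k d\tau_j = -\sum_m B^k_{jm} d\tau_m$ and $I_k dx_i = -\sum_l A^k_{il} dx_l$. Substituting into the left-hand side gives $-\sum_{l,j}(FB^k)_{lj}\, c_X(dx_l)s \otimes d\tau_j$. For the right-hand side, the basic relation $I_k^{S^+}(c_X(v)\cdot s) = c_X(I_k v)\cdot s$ for $s \in S^-_X$ (the defining property of the operators $I_k^{S^+} = \tfrac12 c_X(\omega_k)$ of Chapter \ref{Adiabaticspinstructures}) converts $-I_k^{S^+}\Omega'\cdot s$ into $-\sum_{l,j}(A^kF)_{lj}\, c_X(dx_l)s \otimes d\tau_j$, again invoking antisymmetry of $A^k$. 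The desired equality therefore collapses to $A^kF = FB^k$, which has just been established.

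I expect no genuine obstacle: the argument is pure index bookkeeping, with the Hom-bundle coefficients passing through passively since the identity is pointwise in the bundle factor. The only points to watch are the sign conventions when transferring $I_k$ between vectors and $1$-forms, and the fact that $c_X(dx_i)$ sends $S^-_X$ to $S^+_X$, so that the operator $I_k^{S^+}$ in the final expression acts on the positive spinor output where it should.
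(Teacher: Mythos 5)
Your proposal is correct and takes essentially the same route as the paper: both proofs reduce the identity to the mixed $(1,1)$-invariance $\Omega'(u,-I_kv)=\Omega'(I_ku,v)$ coming from the triholomorphic property, combined with the relation $I_k^{S^+}\,c_X(v)\cdot s=c_X(I_kv)\cdot s$ on negative spinors and the minus-precomposition convention for $I_k$ on $1$-forms; the paper simply transfers $I_k$ across the dual pairing index-free, where you encode the same step as the matrix identity $A^kF=FB^k$. Your restriction to orthonormal frames is harmless since both sides of the identity are tensorial contractions, even though the coordinate bases $\{\partial/\partial x_i\}$, $\{\partial/\partial\tau_j\}$ in the statement need not be orthonormal.
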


\begin{proof}
We compute using the triholomorphic property of $\Omega'$ (which is part of the definition of twisted generalised adiabatic $G_2$ instantons)
\[
\begin{split}
&\sum_{i,j} \Omega'( \frac{\partial}{\partial x_i} ,   \frac{\partial}{\partial \tau_j}      ) c_X ( dx_i ) \otimes I_k d\tau_j 
=\sum_{i,j} \Omega'( \frac{\partial}{\partial x_i} ,  -I_k \frac{\partial}{\partial \tau_j}      ) c_X ( dx_i ) \otimes  d\tau_j \\
=& \sum_{i,j} \Omega'( I_k \frac{\partial}{\partial x_i} ,   \frac{\partial}{\partial \tau_j}      ) c_X ( dx_i ) \otimes  d\tau_j 
=\sum_{i,j} \Omega'(  \frac{\partial}{\partial x_i} ,   \frac{\partial}{\partial \tau_j}      ) c_X ( -I_kdx_i ) \otimes  d\tau_j \\
=&  -I_k^{S^+}\sum_{i,j} \Omega'(  \frac{\partial}{\partial x_i} ,   \frac{\partial}{\partial \tau_j}      ) c_X ( dx_i ) \otimes  d\tau_j.
\end{split}
\]
This acting on the coupled negative spinor is the claim.
\end{proof}

We will now come to the crux of the \textbf{miraculous cancellation}.

\begin{prop}\label{contributioncurvatureofNahmtransformvariationofDiracoperator}
The term $\langle   G_\tau ( [D^- ,     \hat{d}]+ \Omega+ F(\alpha)^{(1,1)})\cdot\hat{f}^i, \wedge   \Omega'\cdot
\hat{f}^j  \rangle$ satisfies
\begin{equation}\label{contributioncurvatureofNahmtransformvariationofDiracoperatorequation}
(I_1dt_2dt_3+I_2dt_3dt_1+I_3dt_1dt_2 )\langle   G_\tau ( [D^- ,     \hat{d}]+ \Omega+F(\alpha)^{(1,1)})\cdot\hat{f}^i, \wedge   \Omega'\cdot
\hat{f}^j  \rangle=0.
\end{equation}
Completely analogous result holds for \[\langle   G_\tau \Omega'\cdot\hat{f}^i, \wedge  ( [D^- ,     \hat{d}]+ \Omega+F(\alpha)^{(1,1)})\cdot
\hat{f}^j  \rangle.\]
\end{prop}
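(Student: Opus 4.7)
The plan is to reduce (\ref{contributioncurvatureofNahmtransformvariationofDiracoperatorequation}) to the operator identity (\ref{variationofcoupledDiracoperator}) via the triholomorphic relation (\ref{Omega'istriholomorphicspinorinterpretation}), by transferring the $I_k$ action from $d\tau$-covectors onto the coupled positive spinor factor. Write $A:=[D^-,\hat{d}]+\Omega\cdot+F(\alpha)^{(1,1)}\cdot$, a $dt$-valued Clifford-operator-valued $1$-form, and expand
\[
A\cdot\hat{f}^i=\sum_k \varphi_k^i\otimes dt_k,\qquad \Omega'\cdot\hat{f}^j=\sum_l\chi_l^j\otimes d\tau_l,
\]
with $\varphi_k^i,\chi_l^j$ coupled positive spinors on $X$. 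The pairing in (\ref{contributioncurvatureofNahmtransformvariationofDiracoperatorequation}) then reads $\sum_{k,l}\langle G_\tau\varphi_k^i,\chi_l^j\rangle\,dt_k\wedge d\tau_l$, and wedging on the left with $I_1 dt_2dt_3+I_2 dt_3dt_1+I_3 dt_1dt_2$, in which each $I_p$ acts on $d\tau$, allows only the $k=p$ term to survive in each $dt$-factor. The expression collapses to
\[
dt_1dt_2dt_3\wedge\sum_{k,l}\langle G_\tau\varphi_k^i,\chi_l^j\rangle\,I_k d\tau_l.
\]

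Next I would shift $I_k$ from $d\tau_l$ onto the spinor side. The identity (\ref{Omega'istriholomorphicspinorinterpretation}) says exactly that the operation $d\tau_l\mapsto I_k d\tau_l$, applied inside $\Omega'\cdot\hat{f}^j$, is the same as $-I_k^{S^+}$ acting on the positive spinor factor; hence $\sum_l\langle G_\tau\varphi_k^i,\chi_l^j\rangle I_k d\tau_l=-\sum_l\langle G_\tau\varphi_k^i, I_k^{S^+}\chi_l^j\rangle\,d\tau_l$. Now $I_k^{S^+}=\tfrac{1}{2}c_X(\omega_k)$ is Clifford multiplication by a real self-dual $2$-form, which a short Clifford-algebra computation shows is skew-Hermitian on the coupled positive spinor bundle; moreover, since $\omega_k$ is parallel on the hyperk\"ahler K3 fibre, $I_k^{S^+}$ is parallel for $\nabla^{LC,+}$, so it commutes with the tensor-product covariant derivative on $S^+_X\otimes\Hom(\mathcal{E}|_\tau,\mathcal{F}|_b)$, with $\nabla^*\nabla$, and, via the Lichnerowicz-type identity (\ref{Lichnerowiczformula}) specialised to an ASD connection on a Ricci-flat K3, with $\Lap_{\alpha_\tau}$ and with $G_\tau=\Lap_{\alpha_\tau}^{-1}$. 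Moving $I_k^{S^+}$ across the Hermitian pairing (skew-Hermitian sign) and then through $G_\tau$ transforms the previous display into $\sum_l\langle G_\tau I_k^{S^+}\varphi_k^i,\chi_l^j\rangle\,d\tau_l$.

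Summing over $k=1,2,3$ and pulling $I_k^{S^+}$ inside produces $\sum_k I_k^{S^+}\varphi_k^i=\bigl(\sum_k I_k^{S^+}A_k\bigr)\hat{f}^i$, which vanishes by (\ref{variationofcoupledDiracoperator}); this establishes (\ref{contributioncurvatureofNahmtransformvariationofDiracoperatorequation}). The analogous identity for $\langle G_\tau\Omega'\cdot\hat{f}^i,\wedge A\cdot\hat{f}^j\rangle$ follows by the same argument, up to one extra sign coming from swapping a $dt_k$ past a $d\tau_l$. The hard part is the commutation of $I_k^{S^+}$ with $G_\tau$: without Ricci flatness of the K3 fibre and the ASD property of $\alpha_\tau$, the Lichnerowicz formula would produce curvature corrections obstructing this commutation and the cancellation would collapse. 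The other two inputs, (\ref{Omega'istriholomorphicspinorinterpretation}) and (\ref{variationofcoupledDiracoperator}), have already absorbed respectively the triholomorphicity of $\nabla^{univ}$ in the mixed fibre-horizontal directions and the full strength of Donaldson's adiabatic fibration conditions, which is what makes the miraculous cancellation possible.
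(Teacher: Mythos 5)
Your proof is correct and follows essentially the same route as the paper: you use (\ref{Omega'istriholomorphicspinorinterpretation}) to move the $I_k$ action from the $d\tau$-factor onto the positive spinor as $-I_k^{S^+}$, then use anti-self-adjointness of $I_k^{S^+}$ and its commutation with $G_\tau$ (via the Lichnerowicz identity and the ASD property) to transport it across to the other slot, and finally conclude via (\ref{variationofcoupledDiracoperator}). You have merely expanded the argument in explicit components rather than operator form.
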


\begin{proof}
The complex structures are acting on the $d\tau_i$ factors here. So by (\ref{Omega'istriholomorphicspinorinterpretation}),
\[
\begin{split}
& I_k \langle   G_\tau ( [D^- ,     \hat{d}]+ \Omega+F(\alpha)^{(1,1)})\cdot\hat{f}^i, \wedge   \Omega'\cdot
\hat{f}^j  \rangle \\
=& \langle   G_\tau ( [D^- ,     \hat{d}]+ \Omega+F(\alpha)^{(1,1)})\cdot\hat{f}^i, \wedge 
(\sum_{p,q} \Omega'(  \frac{\partial}{\partial x_p} ,   \frac{\partial}{\partial \tau_q}      ) c_X ( dx_p ) \otimes I_k d\tau_q   )
\cdot
\hat{f}^j  \rangle   
\\
=&  -\langle   G_\tau ( [D^- ,     \hat{d}]+ \Omega+F(\alpha)^{(1,1)})\cdot\hat{f}^i, \wedge I_k^{S^+}
\Omega'
\cdot
\hat{f}^j  \rangle  .
\end{split}
\]
The operators $I_k^{S^+}$ on positive spinors are anti-self-adjoint, so the above is
\[
\langle I_k^{S^+}  G_\tau ( [D^- ,     \hat{d}]+ \Omega+ F(\alpha)^{(1,1)})\cdot\hat{f}^i, \wedge 
\Omega'
\cdot
\hat{f}^j  \rangle.
\]
Now crucially, since $\alpha_\tau$ is ASD, the Lichnerowicz formula (\ref{Lichnerowiczformula}) and the hyperk\"ahler nature of the K3 fibres imply that the square of the Dirac operator commutes with the natural complex structure actions $I_k^{S^+}$, and therefore the Green operator $G_\tau$ commutes with $I_k^{S^+}$. We remark that this observation is also the core of our proof in \cite{Mukaidualitypaper} that the Nahm transform on K3 surfaces preserves the HYM condition.

Thus we convert the above to
\[
\langle   G_\tau I_k^{S^+} ( [D^- ,     \hat{d}]+ \Omega+F(\alpha)^{(1,1)})\cdot\hat{f}^i, \wedge 
\Omega'
\cdot
\hat{f}^j  \rangle.
\]
Hence the LHS of (\ref{contributioncurvatureofNahmtransformvariationofDiracoperatorequation}) is
\[
\langle   G_\tau \{ 
I_1^{S^+}dt_2dt_3+I_2^{S^+}dt_3dt_1+I_3^{S^+}dt_1dt_2
\} ( [D^- ,     \hat{d}]+ \Omega+F(\alpha)^{(1,1)})\cdot\hat{f}^i, \wedge   \Omega'\cdot
\hat{f}^j  \rangle ,
\]
which vanishes by the delicate input (\ref{variationofcoupledDiracoperator}).
\end{proof}

Finally we arrived at the promised result

\begin{thm}
Under the Nahm transform, twisted adiabatic $G_2$ instantons on $\mathcal{F}\to M$ with the same slope potential as
$E\to M$ up to a constant, 
 map to  twisted adiabatic $G_2$ instantons on $\hat{\mathcal{F}}\to M^\vee$
with the same slope potential as $E'\to M^\vee$ up to a constant.
\end{thm}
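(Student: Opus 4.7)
The strategy is to assemble the pieces already laid out in this Section. By definition, being a twisted adiabatic $G_2$ instanton on $\hat{\mathcal F}\to M^\vee$ requires two things: (i) on every K3 fibre $M^\vee_b$, the connection $\hat\alpha$ is HYM with the prescribed central part, and (ii) the horizontal-vertical type $(1,1)$ curvature satisfies $\sum_k I_k \iota_{\partial/\partial t_k}\hat F^{(1,1)}=0$, equivalently $\hat F^{(1,1)}\wedge\underline{\Theta}^{M^\vee}=0$. Condition (i), together with the matching of slope potential gradients, is exactly the content of Lemma \ref{Nahmtransformpreservesadiabatic$G_2$instantononK3}, whose input is the Nahm transform on a single K3 treated in the companion paper. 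The slope potentials themselves may a priori differ by an additive constant on $B$, which can only alter $\hat\alpha$ by a $u(1)$-valued 1-form pulled back from $B$ and so is immaterial.

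For (ii), I would apply the curvature formula (\ref{curvaturematrixNahmtransform}), which splits $\hat F_{ij}$ into the ``curvature of the ambient connection'' term $\langle \hat f^i,\wedge\hat d^2\hat f^j\rangle$ and the ``variation of the coupled Dirac operator'' term $\langle G_\tau [D^-_{\alpha_\tau},\hat d]\hat f^i,\wedge [D^-_{\alpha_\tau},\hat d]\hat f^j\rangle$. The contraction $(I_1 dt_2dt_3+I_2dt_3dt_1+I_3dt_1dt_2)$ kills the first term because the $(1,1)$ piece of $\hat d^2$ along horizontal-$X^\vee$ directions is precisely $\sum F(\nabla^{univ})(\partial_{\tau_i},\partial_{t_j})\,d\tau_i\wedge dt_j$, and vanishing of its quaternionic contraction is the content of Lemma \ref{contributioncurvatureofNahmtransformhatdsquaredterm1}, which is itself a component of the twisted generalised adiabatic $G_2$ instanton equation satisfied by $\nabla^{univ}$ (Theorem \ref{generalisedadiabatic$G_2$instantonexistencequestion}).

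The second term breaks into the two symmetric cross terms identified just before Proposition \ref{contributioncurvatureofNahmtransformvariationofDiracoperator}, and that Proposition shows that each vanishes after the quaternionic contraction. The essential ingredients there are: the triholomorphic property of $\Omega'$ rewritten as (\ref{Omega'istriholomorphicspinorinterpretation}); the commutation of the Green operator $G_\tau$ with $I_k^{S^+}$, which is a consequence of the Lichnerowicz formula (\ref{Lichnerowiczformula}) together with the hyperk\"ahler nature of the K3 fibres and the ASD property of $\alpha_\tau$; and the miraculous identity (\ref{variationofcoupledDiracoperator}) for the variation of the coupled Dirac operator, which absorbs all three of the contributions $[D^-,\hat d]$, $\Omega\cdot$ and $F(\alpha)^{(1,1)}\cdot$ simultaneously. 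It is this last identity that encodes the full force of Donaldson's adiabatic fibration conditions, via the calculations of Section \ref{Curvatureoperators}, together with the triholomorphic property of $\nabla^{univ}$ and the hypothesis that $\alpha$ itself is a twisted adiabatic $G_2$ instanton.

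The only step that really needs care is verifying that the slope-potential gradient match on $M\to B$ really does propagate to a slope-potential gradient match on $M^\vee\to B$: this is the point at which the HYM condition on $\alpha_\tau$ has to produce the correct central curvature on $\hat\alpha$ to match the harmonic representative of $c_1(\hat{\mathcal F})/\mathrm{rk}(\hat{\mathcal F})$ minus that of $c_1(E')/r$ on each fibre. This is the fibrewise statement from \cite{Mukaidualitypaper} together with smoothness in $b$, which follows from smoothness of the family $\alpha_\tau$. Once this is in hand, conditions (i) and (ii) together say exactly that $(\hat{\mathcal F},\hat\alpha)$ is a twisted adiabatic $G_2$ instanton with slope potential differing from that of $E'$ by an additive constant, completing the proof.
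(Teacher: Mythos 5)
Your proposal is correct and follows essentially the same route as the paper: it combines Lemma \ref{Nahmtransformpreservesadiabatic$G_2$instantononK3} for the fibrewise HYM and slope-potential statements with Lemma \ref{contributioncurvatureofNahmtransformhatdsquaredterm1} and Proposition \ref{contributioncurvatureofNahmtransformvariationofDiracoperator} to kill the quaternionic contraction of $\hat F^{(1,1)}$, exactly as the paper does. Your write-up is more expansive — spelling out the ingredients inside each cited result — but the logical structure and the supporting lemmas invoked are the same.
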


\begin{proof}
Combining Proposition \ref{contributioncurvatureofNahmtransformvariationofDiracoperator} with Lemma \ref{contributioncurvatureofNahmtransformhatdsquaredterm1}, all the matrix entries of
\[
(I_1 dt_2dt_3+I_2dt_3dt_1+I_3dt_1dt_2)      \hat{F}^{(1,1)}
\]
vanish as required. Now combine with Lemma \ref{Nahmtransformpreservesadiabatic$G_2$instantononK3} to see the Theorem.
\end{proof}

\begin{rmk}
Recall the Nahm transform is defined if $\alpha$ is fibrewise HYM with the required slope potential, and the (twisted) adiabatic instantons are  critical points of a gauge-theoretic Chern-Simons type function. It seems likely that the Nahm tranform induces an equality between the Chern-Simons functionals for fibrewise HYM connections on $\mathcal{F}\to M$ and $\hat{\mathcal{F}}\to M^\vee$, which would give a deeper explanation of the Theorem.
\end{rmk}

\subsection{The inverse Nahm transform and duality}

The theory of  Nahm transform resembles the Fourier transform; in particular there is an analogue of the Fourier inversion theorem.
In the context of HYM connections on K3 surfaces, this is known as the inverse Fourier-Mukai transform in algebraic geometry, and is studied using derived category techniques \cite{Bartocci1}\cite{Huy2}\cite{Mukai1}. The differential geometric counterpart is the inverse Nahm transform on K3 surfaces, treated in our companion paper \cite{Mukaidualitypaper} using techniques similar to \cite{BraamBaal}.

Now let $(\mathcal{F}, \alpha)$  be a twisted adiabatic $G_2$ instanton over $M$ as in the setup of Section \ref{TheNahmtransform}, and let $(\hat{\mathcal{F}}, \hat{\alpha})$ be its Nahm transform, which is also a twisted adiabatic $G_2$ instanton by Section \ref{Nahmtransformpreservesadiabatic$G_2$instantons}.
In order to view $M\to B$ as the Mukai dual fibration of $M^\vee\to B$ , we assume as in Section \ref{Theuniversalconnection1triholomorphicproperty}  that for each $b\in B$, the family of HYM connections induced by $\nabla^{univ}$ on $E'|_b\simeq\mathcal{E}^\vee|_x\to X^\vee$ parametrised by $x\in X=M_b$ are all irreducible. Assume also that the fibrewise restriction $(\hat{\mathcal{F}}|_b,\hat{\alpha}|_b)$ of $(\mathcal{F}, \alpha)$,  which is a HYM connection over $X^\vee$, does not contain any HYM connection on $E'|_b\to X^\vee$ as an irreducible factor (in particular, this is true if $\hat{\alpha}$ is irreducible); this is to ensure some cokernel vanishing condition as in the beginning of Section \ref{TheNahmtransform}.

Under these assumptions, we can make the dualised $\nabla^{univ}$ on $\mathcal{E}^\vee\to M^\vee\times_B M$  play the role of the twisted generalised adiabatic $G_2$ instanton, to define the Nahm transform of $(\hat{\mathcal{F}}, \hat{\alpha})$, which is a twisted adiabatic $G_2$ instanton $(\hat{\hat{\mathcal{F}}}, \hat{\hat{\alpha}})$ over $M$ (here one uses the fact that the slope of $\mathcal{F}$ agrees with $E'$); we call $(\hat{\hat{\mathcal{F}}}, \hat{\hat{\alpha}})$ the \textbf{inverse Nahm transform}. For every fixed $b\in B$, this construction agrees with the setup of inverse Nahm transform on K3 surfaces in \cite{Mukaidualitypaper}. This implies the following fibrewise statement:

\begin{prop}
(\cf \cite{Mukaidualitypaper}) There is a canonical comparison map $\mathcal{F}|_b\to \hat{\hat{\mathcal{F}} }|_b$ over the K3 surface $X=M_b$, which is an isometric isomorphism, and identifies the irreducible HYM connection $\alpha|_b$ with $\hat{\hat{\alpha}}|_b$.
\end{prop}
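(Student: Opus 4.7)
The proof I have in mind is the Fourier-inversion argument for Nahm transforms on hyperk\"ahler surfaces, adapted to the K3 fibres of Donaldson's adiabatic fibration; since the statement is fibrewise, everything takes place on $X = M_b$ and $X^\vee = M_b^\vee$ for fixed $b \in B$, with all the ambient adiabatic data frozen. The strategy has three components: construct a tautological comparison map $\iota_x: \mathcal{F}|_{b,x} \to \hat{\hat{\mathcal{F}}}|_{b,x}$ varying smoothly in $x$, verify that its image lies in the relevant double Dirac kernel, and identify it as an isometric isomorphism that intertwines $\alpha|_b$ with $\hat{\hat{\alpha}}|_b$.

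For the construction, I would use that $\hat{\mathcal{F}}|_{b,\tau} = \Ker D^-_{\alpha_\tau}$ sits as a finite-dimensional subspace of $\Gamma(X, S^-_X \otimes \Hom(\mathcal{E}|_\tau, \mathcal{F}|_b))$, so evaluation at the point $x \in X$ defines a linear map into $(S^-_X)|_x \otimes \Hom(\mathcal{E}|_{(x,\tau)}, \mathcal{F}|_{b,x})$. Pairing with $\phi \in \mathcal{F}|_{b,x}$ in the $\mathcal{F}$-slot, dualising in the $\mathcal{E}$-slot, and using the canonical isomorphism $S^-_{X^\vee}|_\tau \simeq (S^-_X)|_x$ coming from the parallel spinor $\Phi$ of Section \ref{Adiabaticspinstructure$G_2$geometry}, assembles the data into a well-defined section $\iota_x(\phi) \in \Gamma(X^\vee, S^-_{X^\vee} \otimes \Hom(\mathcal{E}^\vee|_{(b,x)}, \hat{\mathcal{F}}|_b))$, which is our candidate for the double Nahm transform of $\phi$.

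The heart of the argument is checking $\hat{D}^-_{\hat{\alpha}_x}\iota_x(\phi) = 0$, i.e. that $\iota_x(\phi)$ actually lies in $\hat{\hat{\mathcal{F}}}|_{b,x}$. One commutes the dual Dirac operator past the projector $P = 1 - D^+_{\alpha_\tau} G_\tau D^-_{\alpha_\tau}$ used to define $\hat{\mathcal{F}}$ inside the infinite-rank Hilbert bundle; every commutator is controlled by a curvature component of $\nabla^{univ}$, and the triholomorphic property of $\nabla^{univ}$ combined with the HYM Lichnerowicz identity $D^-_{\alpha_\tau}D^+_{\alpha_\tau} = \nabla^*_{\alpha_\tau}\nabla_{\alpha_\tau}$ (which forces $G_\tau$ to commute with the quaternionic action on $S^+_X$) makes the obstruction assemble into a total divergence on $X$ and vanish after integration. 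This is structurally the same cancellation as in Proposition \ref{contributioncurvatureofNahmtransformvariationofDiracoperator}. Once $\iota_x$ is established, injectivity follows from non-degeneracy of evaluation together with irreducibility of $\alpha|_b$ (a vector in the kernel would produce a covariantly flat section of a $\Hom$-bundle with distinct Mukai vectors), and Mukai-vector arithmetic for the double Fourier--Mukai transform yields $\text{rk}(\hat{\hat{\mathcal{F}}}|_b) = \text{rk}(\mathcal{F}|_b)$, giving a fibrewise isomorphism. The isometry and connection-preserving properties follow formally: unwinding the $L^2$ inner product on $\hat{\hat{\mathcal{F}}}|_{b,x}$ via a Green-function computation returns the pointwise inner product on $\mathcal{F}|_{b,x}$, and the horizontal derivative of $\iota_x(\phi)$ in $x$ reproduces $\alpha\phi$ because $\alpha$ and $\hat{\alpha}$ play symmetric roles in the two iterations of the construction.

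The main obstacle is the kernel identity of the third step, which is equivalent to identifying the Schwartz kernel of the double Nahm transform with a delta function on the diagonal of $X \times X$. This is the K3 analogue of the classical Fourier--Mukai / Parseval inversion identity and rests on a careful double Green-function calculation plus the quaternionic commutation property of $G_\tau$; this is the technical heart of the companion paper \cite{Mukaidualitypaper}, and the present fibrewise statement is essentially a reformulation of that result, with $b \in B$ parametrising the family.
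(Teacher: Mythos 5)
This proposition is not proved in the paper at all: it is imported wholesale from the companion paper \cite{Mukaidualitypaper}, and the only argument the paper supplies is the one-line observation preceding the statement, namely that for each fixed $b\in B$ the data $(\hat{\mathcal{F}}|_b,\hat{\alpha}|_b)$, $\nabla^{univ}|_{M_b\times M_b^\vee}$ and the dualised setup coincide with the setup of the inverse Nahm transform on K3 surfaces treated there, so the fibrewise statement is literally the K3 inversion theorem applied fibre by fibre. You correctly recognise this in your closing paragraph, so in that sense your proposal matches the paper's treatment: both defer the substance (the identification of the double-transform kernel with the original bundle) to \cite{Mukaidualitypaper}, and what you add is a sketch of how that K3-level argument should go, in the standard Braam--van Baal style (evaluation map, commuting the dual Dirac operator past the projector $P=1-D^+_{\alpha_\tau}G_\tau D^-_{\alpha_\tau}$, the quaternionic commutation of $G_\tau$, and a Green-function/delta-kernel computation).

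Two cautions on the sketch itself. First, it is not a proof as written: the step you yourself identify as the heart of the matter, the delta-function identification of the Schwartz kernel of the double transform, is exactly the difficult content of the companion paper, so nothing in your text independently establishes the proposition. Second, one device you invoke is misplaced: the identification $S^-_{X^\vee}|_\tau\simeq (S^-_X)|_x$ cannot come from the parallel spinor $\Phi$ of Chapter \ref{Adiabaticspinstructures}, since $\Phi$ is an isomorphism $S^+_X\to S_B$ (positive fibre spinors with base spinors) and is a $G_2$-level object; for fixed $b$ the comparison you need is a purely hyperk\"ahler statement about $X\times X^\vee$, and whatever identification of spinor spaces enters the inversion argument must be built from the fibrewise hyperk\"ahler structures (and the universal connection), not from the adiabatic $G_2$ spin geometry of $M\to B$.
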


The following analogue of \textbf{Fourier inversion} theorem can be viewed as a manifestation of the duality between $M$ and $M^\vee$.

\begin{thm}\label{Fourierinversion}
 The inverse Nahm transform $(\hat{\hat{\mathcal{F}}},\hat{\hat{\alpha}})$ is isomorphic to $(\mathcal{F},\alpha)$ up to possibly twisting by a $u(1)$-valued 1-form pulled back from $B$.
\end{thm}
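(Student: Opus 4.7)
My plan is to globalize the fibrewise comparison map provided by the preceding proposition, and then apply a uniqueness argument for twisted adiabatic $G_2$ instantons to control the remaining ambiguity.

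First, I will take the canonical isometric fibrewise isomorphisms $\Psi_b : \mathcal{F}|_b \to \hat{\hat{\mathcal{F}}}|_b$ given by the preceding proposition, and argue that they depend smoothly on $b \in B$. Smoothness should follow from the canonical nature of the construction: the universal connection $\nabla^{univ}$, the coupled fibrewise Dirac operators, their Green operators, and the resulting kernel bundles all depend smoothly on $b$, and the comparison map is tautologically built from this data. The outcome is a smooth isometric bundle isomorphism $\Psi : \mathcal{F} \to \hat{\hat{\mathcal{F}}}$ covering the identity on $M$, which fibrewise identifies $\alpha|_b$ with $\hat{\hat{\alpha}}|_b$.

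Second, I will pull back $\hat{\hat{\alpha}}$ along $\Psi$ to form a connection $\alpha^\sharp := \Psi^* \hat{\hat{\alpha}}$ on $\mathcal{F}$. By the main result of Section \ref{Nahmtransformpreservesadiabatic$G_2$instantons} applied twice, $\hat{\hat{\alpha}}$ is a twisted adiabatic $G_2$ instanton, and hence so is $\alpha^\sharp$ after pullback by the isometric $\Psi$. By the fibrewise identification, $\alpha^\sharp|_{M_b} = \alpha|_{M_b}$ for every $b$, so the difference is purely horizontal: $\alpha^\sharp - \alpha = \sum_i \Phi_i\, dt_i$ with $\Phi_i \in \Gamma(M, u(\mathcal{F}))$. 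I can then invoke the uniqueness argument of Proposition \ref{instantonFuetercorrespondenceuniqueness}, adapted to the twisted setting. The adaptation is cosmetic: the twist by $\mathcal{B}$ only prescribes the central part of the fibrewise curvature, which is already matched on both sides by the fibrewise HYM condition, so the analysis of the Higgs-type fields $\Phi_i$ proceeds verbatim. The conclusion is $d_\alpha \Phi_i = 0$ on each K3 fibre, and irreducibility of $\alpha|_b$ (which holds by the Mukai vector hypothesis on $\mathcal{F}$) then forces each $\Phi_i$ to take values in the central $u(1) \subset u(\mathcal{F}|_b)$. Being fibrewise covariantly constant on a connected K3, $\Phi_i$ descends to a smooth real function on $B$; hence $\sum_i \Phi_i\, dt_i$ is the pullback of a $u(1)$-valued 1-form from $B$, which is exactly the ambiguity asserted in the theorem.

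The step I expect to require the most care is verifying the smoothness of $\Psi$ in the base direction. The fibrewise statement is essentially pointwise in $b$, whereas smoothness is a family-type statement requiring the kernel bundles of the relevant Dirac operator families to vary smoothly as the underlying K3 metric and the universal connection deform. The canonicity of the inverse Nahm construction makes this plausible, but it ought to be checked carefully using the smoothness of $\nabla^{univ}$ established in the previous chapter and standard Fredholm theory for smooth families of elliptic operators; once this is in hand, the rest of the argument is a direct application of material already developed in the paper.
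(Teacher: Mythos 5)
Your proposal is correct and follows essentially the same route as the paper: the paper likewise reduces the theorem to the fibrewise inversion statement plus an adaptation of Proposition \ref{instantonFuetercorrespondenceuniqueness}, phrasing this as $\alpha$ and $\hat{\hat{\alpha}}$ defining the same Fueter section on an appropriate moduli bundle of HYM connections. Your explicit attention to the smoothness of the comparison map $\Psi$ in the base direction is a point the paper passes over silently, but it does not change the substance of the argument.
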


\begin{proof}
From the fibrewise statement, the bundles $\mathcal{F}\to M$ and $\hat{\hat{\mathcal{F}}}\to M$ are identified as Hermitian vector bundles, and the connections $\alpha$ and $\hat{\hat{\alpha}}$ define the same Fueter section on some appropriate moduli bundle $\mathcal{M}\to B$ of HYM connections. An almost verbatim adaption of Proposition \ref{instantonFuetercorrespondenceuniqueness} to HYM situation gives the claim.
\end{proof}

\begin{rmk}
It is conceivable that $\alpha$ and $\hat{\hat{\alpha}}$ are canonically isomorphic without the extra twist, but we do not pursue this because the analogue in the K3 setting (\cf \cite{Mukaidualitypaper}) is already  difficult.
\end{rmk}

\end{document}